\crefname{equation}{}{}
\crefname{algocf}{Algorithm}{Algorithms}
\crefname{equation}{}{} 
\colorlet{refkey}{orange!20}
\colorlet{labelkey}{blue!30}
\crefname{algocf}{Algorithm}{Algorithms}
\numberwithin{equation}{section}
\newtheorem{theorem}{Theorem}[section]
\newtheorem{proposition}[theorem]{Proposition}
\newtheorem{lemma}[theorem]{Lemma}
\newtheorem{claim}[theorem]{Claim}
\crefname{claim}{Claim}{Claims}
\newtheorem{observation}{Observation}
\newtheorem{corollary}[theorem]{Corollary}
\newtheorem*{question*}{Question}
\theoremstyle{definition}
\newtheorem{definition}[theorem]{Definition}
\newtheorem{phenomenon}[theorem]{Phenomenon}
\newtheorem{question}[theorem]{Question}
\newtheorem*{definition*}{Definition}
\theoremstyle{remark}
\newtheorem{remark}{Remark}
\DeclarePairedDelimiter{\ceil}{\lceil}{\rceil}
\DeclarePairedDelimiter{\floor}{\lfloor}{\rfloor}
\newcommand{\mb}{\mathbb}
\newcommand{\on}{\operatorname}
\newcommand{\ER}{Erd\H{o}s-R\'enyi }
\newcommand{\rank}{\mathrm{rank}}
\newcommand{\Span}{\mathrm{Span}}
\newcommand{\supp}{\mathrm{supp}}
\newcommand{\Bin}{\mathrm{Bin}}
\newcommand{\Ber}{\mathrm{Ber}}
\newcounter{subcase}[section]
\title{On the Rank, Kernel, and Core of Sparse Random Graphs}
\author[DeMichele]{Patrick DeMichele}
\address[DeMichele, Glasgow]{Department of Computer Science, Stanford University, Stanford, CA.}
\email{demichel@stanford.edu}
\author[Glasgow]{Margalit Glasgow}
\email{mglasgow@stanford.edu}
\thanks{M.G. supported by NSF award DGE-1656518.}
\author[Moreira]{Alexander Moreira}
\address[Moreira]{Department of Mathematics, Stanford University, Stanford, CA.}
\email{amoreira@stanford.edu}
\begin{document}
\maketitle

\begin{abstract}
We study the rank of the adjacency matrix $A$ of a random \ER graph $G\sim \mathbb{G}(n,p)$. It is well known that when $p = (\log(n) - \omega(1))/n$, with high probability, $A$ is singular. We prove that when $p = \omega(1/n)$, with high probability, the corank of $A$ is equal to the number of isolated vertices remaining in $G$ after the Karp-Sipser leaf-removal process, which removes vertices of degree one and their unique neighbor. We prove a similar result for the random matrix $B$, where all entries are independent Bernoulli random variables with parameter $p$. Namely, we show that if $H$ is the bipartite graph with bi-adjacency matrix $B$, then the corank of $B$ is with high probability equal to the max of the number of left isolated vertices and the number of right isolated vertices remaining after the Karp-Sipser leaf-removal process on $H$.
Additionally, we show that with high probability, the $k$-core of $\mathbb{G}(n, p)$ is full rank for any $k \geq 3$ and $p = \omega(1/n)$. This partially resolves a conjecture of Van Vu~\cite{vu_crmt} for $p = \omega(1/n)$. Finally, we give an application of the techniques in this paper to gradient coding, a problem in distributed computing.

\end{abstract}

\section{Introduction} \label{sec:introduction}

Let $A \sim \mathbb{A}(n, p)$ be a random symmetric matrix in $\{0, 1\}^{n \times n}$ where each entry above the diagonal is an i.i.d. Bernoulli random variable that equals $1$ with probability $p$ and $0$ otherwise, and where the diagonal entries are $0$. Equivalently, $A$ is the adjacency matrix of an \ER graph $G \sim \mathbb{G}(n, p)$ on $n$ nodes where an edge appears between each pair of nodes with probability $p$. 

Determining the rank of $A$ has been a question of considerable interest. Initial results by Tao, Costello, and Vu \cite{symmetric} proved that $A$ is non-singular with high probability if $p$ is a fixed constant. This result was later improved by Costello and Vu in \cite{cv_graphs} to show that $A$ is non-singular with high probability if $p > (1+\varepsilon)\log(n)/n$. This work also showed that $p = \log(n)/n$ is a threshold for singularity as $A$ is singular with high probability if $p < (1-\varepsilon)\log(n)/n$.  Recent work by Basak and Rudelson \cite{br} shows this threshold is sharp in the sense that if $p\geq \log(n)/n+\omega(1/n)$, then $A$ is non-singular with high probability, and if $p\leq \log(n)/n-\omega(1/n)$, then $A$ is singular with high probability. Furthermore, they prove the same result holds for the asymmetric matrix $B\sim \mathbb{B}(n,p)$ in $\{0,1\}^{n\times n}$ where each entry is an independent Bernoulli random variable with parameter $p$. More generally, an fundamental question in discrete random matrix theory, dating back to Komlos \cite{komlos}, studies the exact probability that a random matrix is singular. See the survey of Van Vu~\cite{vu_crmt} and the references therein for discussion of the singularity problem.

Following these threshold results, a natural question is determining the rank of $A$ and $B$ when $p$ is small enough that these random matrices are singular with high probability. One approach to understand the rank, previously emphasized by Costello and Vu, is through the following phenomenon: 
\begin{phenomenon}\label{phe:1}
Linear dependencies among the columns of random matrices arise from small structures.
\end{phenomenon}

Costello and Vu \cite{cv_sparse}  showed that when $p=\Omega(\log(n)/n)$, with high probability, all dependencies in the column space of $A$ result from collections of $k=\Theta(1)$ columns with at most $k-1$ non-zero rows. These so-called \textit{non-expanding sets} allowed Costello and Vu to prove that when $p=\Omega(\log(n)/n)$, with high probability,
\begin{equation*}
    \text{rank($A$)}= \min_{S\subseteq V(G(A))} (n-|S|+|N(S)|)
\end{equation*}
where $N(S)$ denotes the neighborhood of the vertex set $S$, $G(A)$ denotes the graph with adjacency matrix $A$, and $V(G)$ denotes the vertex set of a graph $G$.

Other work has shown additional instances of Phenomenon~\ref{phe:1}. Addario-Berry and Eslava~\cite{hitting} showed that with probability $1- o(1)$, if you expose the non-zero entries of $A$ or $B$ randomly one at time, the matrix becomes singular at the exact moment when any row or column of all zeros disappears. For $B \sim \mathbb{B}(n, p)$, with $\log(n)/n \leq p \leq 1/2$, Tikhomirov \cite{tikhomirov} and Jain et al. \cite{jain} show that the probability of singularity of $B$ is nearly exactly the probability that $B$ has an all-zero row or column (or two equivalent rows and columns when $p = 1/2$). Further work  \cite{jain_corank, bernoulli_rank} shows that for various ranges of $p$, the probability that $B$ has corank $k$ is nearly the probability that $k$ rows are all zero.

A second approach towards understanding the rank of $A \sim \mathbb{A}(n,  p)$ or $B \sim \mathbb{B}(n, p)$ is via the Karp-Sipser core of a graph, defined below. Originally introduced in \cite{ks} to study maximal matchings, the Karp-Sipser core of graph $G$ is obtained by iteratively performing a leaf-removal process, which at each step removes a vertex of degree one and its unique neighbor. At the end of the process, there remain only isolated vertices and a subgraph of minimum degree at least two, which we denote $G_{\on{KS}}$. Notably, this leaf-removal process, which we also call \em peeling\em,  does not change the corank of the graph\footnote{We say the corank of a graph to mean the corank of its adjacency matrix.}, and so the corank of $G$ equals the corank of the $G_{\on{KS}}$ plus the number of isolated vertices remaining after the leaf-removal process. The following lower bound on the corank of $A$ immediately follows:
\begin{equation}\label{eq:lb_A}
    \on{corank}(A) \geq |I_{\on{KS}}|,
\end{equation}
where $I_{\on{KS}}$ is the set of isolated vertices remaining after the Karp-Sipser leaf-removal on $G(A)$. For the asymmetric matrix $B$, a similar lower bound holds. We define $H(B)$ to be the bipartite graph with bi-adjacency matrix $B$ and left and right vertex sets $L$ and $R$ respectively. Then using the invariance of the corank during peeling, we obtain:
\begin{equation}\label{eq:lb_B}
    \on{corank}(B) \geq \max\left(|I_{\on{KS}} \cap L|, |I_{\on{KS}} \cap R| \right),
\end{equation}
where $I_{\on{KS}}$ is the set of isolated vertices remaining after the Karp-Sipser leaf-removal on $H(B)$.

A central question asks if these lower bounds on the corank are tight. Two important works yield results on this front for $p = \Theta(1/n)$. Bordenave, Lelarge, and Salez show in \cite{bls} that as $n$ grows, the corank of the Karp-Sipser core of $G(A)$ divided by $n$ tends to zero. That is, almost surely, $$\lim_{n \rightarrow \infty} \frac{\on{corank}(A) - |I_{\on{KS}}|}{n} = 0.$$ A related asymptotic formula was shown by Coja-Oghlan et al.~\cite{sparse_rank} for the asymmetric random matrix $B \sim \mathbb{B}(n, p)$. They showed\footnote{This follows from evaluating their rank formula and comparing to the asymptotic size they give for $I_{\on{KS}}$.} that in probability,
$$\lim_{n \rightarrow \infty} \frac{\on{corank}(B) - |I_{\on{KS}} \cap L|)}{n} = 0.$$
Both of these results are proved without explicitly connecting the corank to the size of $I_{\on{KS}}$ (or $I_{\on{KS}} \cap L$), but rather by showing that asymptotically these two values coincide. Due to their connections with the size of maximal matchings, the asymptotic size of $G_{\on{KS}}$ and $I_{\on{KS}}$ are well known, and can be studied via differential equations. Karp and Sipser \cite{ks} and the tighter analysis of Aronson, Frieze and Pittel in \cite{aronson} showed in that for $G \sim \mathbb{G}(n, p)$, in probability, $$\lim_{n \rightarrow \infty} \frac{|I_{\on{KS}}|}{n} = \frac{\gamma^* + \gamma_* + \gamma^*\gamma_* - 1}{pn},$$
where $\gamma_*$ is the smallest root to the equation $x = pn\exp(-pne^{-x})$, and $\gamma^* = pne^{-\gamma_*}$. Analogously, for the bipartite graph $H(B)$ with $B \sim \mathbb{B}(n, p)$, Coja-Oghlan et al. showed that in probability, $$\lim_{n \rightarrow \infty} \frac{|I_{\on{KS}} \cap L|}{n} = \frac{\gamma^* + \gamma_* + \gamma^*\gamma_* - 1}{pn}.$$

Our main result is to show that the lower bounds in eqs.~\ref{eq:lb_A} and \ref{eq:lb_B} are sharp for $p = \omega(1/n)$. In the vein of Phenomenon~\ref{phe:1}, we prove a new structural characterization of linear dependencies in $A$ and $B$, which holds with high probability, and tightens Costello and Vu's notion of non-expanding sets. On the event that this characterization holds, it follows almost directly that \em the Karp-Sipser leaf-removal and isolated vertex removal process is precisely equivalent to removing all vertices whose respective rows and columns are part of linear dependencies\em. Equivalently, on this event, the Karp-Sipser core of $A$ is full rank. We state this result in the following theorem.

\begin{restatable}[Rank of $A$]{theorem}{thmksA}\label{thm:ks}
Let $p = \omega(1/n)$ and $p \leq 1/2$ and $A \sim \mathbb{A}(n, p)$. Let $G = G(A)$. With probability $1 - o(1)$,
$$\on{corank}(A\left({G_{\on{KS}}}\right)) = 0.$$ 

Equivalently, with probability $1 - o(1)$,
$$\on{corank}(A) = |I_{\on{KS}}|,$$
where $I_{\on{KS}}$ is the set of isolated vertices remaining after the Karp-Sipser leaf-removal on $G$. 

\end{restatable}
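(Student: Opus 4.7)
The invariance of corank under Karp-Sipser leaf removal already gives the lower bound $\on{corank}(A) \ge |I_{\on{KS}}|$ for free and reduces the theorem to showing that, with probability $1-o(1)$, the induced subgraph $C := G_{\on{KS}} \setminus I_{\on{KS}}$ has $A(C)$ of full rank. If $v \in \ker A(C) \setminus \{0\}$ has minimal support $S$, then reading off $(A(C)v)_u = 0$ coordinatewise forces
\[
|N_G(u) \cap S| \neq 1 \quad \text{for every } u \in V(C),
\]
since otherwise the unique neighbor $w \in S$ of such a $u$ would satisfy $v_w = 0$, contradicting $w \in \supp v$. Call a set $S \subseteq V(C)$ with this property \emph{core-bad}; the task is to show that, with probability $1-o(1)$, no nonempty core-bad set exists. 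This is precisely the ``linear dependencies come from small structure'' principle (Phenomenon~\ref{phe:1}) specialized to the core.

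I would proceed by a union bound over $s = |S|$, partitioned into three regimes. For small $s$, up to a threshold $s_0$ chosen so that $sp$ stays bounded, the expected number of $s$-sets $S \subseteq V(G)$ for which every vertex of $V(G)$ has $0$ or at least $2$ neighbors in $S$ is at most
\[
\binom{n}{s}\bigl(1 - sp(1-p)^{s-1}\bigr)^{n} \le \exp\!\bigl(s\log(en/s) - snp\, e^{-sp}(1-o(1))\bigr),
\]
which one checks sums to $o(1)$ in this range. Moving from the core-restricted condition to this graph-wide condition is an independent claim that with high probability every core-bad $S$ is essentially graph-bad, controlled by showing that peeled vertices attached to $S$ form a subcritical branching-process-like cluster when $sp$ is small. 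For moderate $s_0 < s \le \alpha n$, one exploits typical structural properties of $C$ that hold w.h.p.\ (size $\Theta(n)$, mild expansion, Chernoff-concentrated edge counts) together with a refined union bound stratifying $S$ by the number of edges to $V(C) \setminus S$. For large $s$ with $|V(C) \setminus S|$ small, the complement inherits symmetric constraints from the minimum-degree-$2$ property of $C$ and is handled by the small-set analysis applied to $V(C)\setminus S$.

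The main obstacle is the moderate regime when $d := np$ is $\omega(1)$ but $o(\log n)$: the Karp-Sipser core is then a proper, non-trivially structured random subgraph whose edge distribution differs from $\mathbb{G}(|V(C)|, p)$ because of the conditioning imposed by peeling, so the crude overcount $\binom{n}{s}$ must be replaced by a bound adapted to the actual distribution of $C$. I expect this to be where the paper's sharpening of Costello-Vu's notion of non-expanding sets does the essential work, likely via a two-stage exposure of the randomness in $G$ that first identifies the core structurally and then uses the remaining edge-randomness to forbid core-bad supports.
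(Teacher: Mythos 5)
Your reduction to the combinatorial condition (every $u \in V(C)$ has $0$ or $\ge 2$ neighbors in $S$) is correct as a \emph{necessary} condition for $S$ to support a kernel vector of $A(G_{\on{KS}})$, but the plan built on top of it has two breakdowns. First, the target ``no nonempty core-bad set exists'' is false: since $G_{\on{KS}}$ has minimum degree $\ge 2$ by construction, every $u \in V(G_{\on{KS}})$ has $|N_G(u)\cap V(G_{\on{KS}})| \ge 2$, so $S = V(G_{\on{KS}})$ itself is core-bad whenever the Karp-Sipser core is nonempty, which happens with probability $1-o(1)$ once $np$ exceeds a constant. So one cannot rule out core-bad sets outright; one must show instead that no core-bad set actually carries a kernel vector, which is a genuinely different and much harder statement than a first-moment calculation on the support condition.

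Second, even restricting to the small regime, the claim that $\binom{n}{s}\bigl(1-sp(1-p)^{s-1}\bigr)^n$ sums to $o(1)$ is false in the regime $d := np = \omega(1)$ but $d = o(\log n)$, which is precisely the new regime the theorem covers. For $s=2$ this expected count is $\Theta(n^2 e^{-2d})$, which diverges when $d = o(\log n)$ because graph-bad pairs (e.g.\ two degree-one vertices sharing their unique neighbor, giving two identical columns) occur abundantly. These bad sets are real linear dependencies, and the paper's whole point is not that they are absent but that they are \emph{precisely} the tree dependencies $\mathcal{T}_k$ (Theorem~\ref{char:square}), which the Karp-Sipser leaf removal exactly peels away. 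The paper then never conditions on or union-bounds over subsets of the random set $V(G_{\on{KS}})$; it characterizes minimal dependencies of the full matrix $A$ and transfers this to the core via the corank invariance and support-containment lemmas (Lemma~\ref{lemma:peeling2}, Lemma~\ref{lemma:support_minimal}), deducing that a kernel vector of $A(G_{\on{KS}})$ would force a core vertex into a tree dependency, a contradiction. Your proposed ``core-bad $\Rightarrow$ graph-bad'' upgrade also fails without further work: a vertex $u$ peeled as the unique neighbor of a degree-one vertex can retain neighbors in the core and in particular exactly one neighbor in $S$, so the branching-process heuristic does not cover this case. Finally, the large-support regime requires anti-concentration (the sparse and quadratic Littlewood-Offord lemmas, Lemmas~\ref{lo_sparse} and~\ref{lo_quadratic}), which your sketch does not invoke at all; expansion and complement symmetry alone do not reach the needed bounds when $d = o(\log n)$.
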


For the asymmetric matrix $B$, the story is slightly complicated by the fact that typically, there are ``complex'' linear dependencies that involve large sets of rows or columns of $B$. These linear dependencies can not be characterized using the same simple structural characterization we prove for the symmetric matrix. Fortunately, we are able to a prove a modified characterization result which shows that with high probability, such ``complex'' linear dependencies only appear in the rows of $B$ or the columns of $B$, \em but not both\em. It follows on this event that the bi-adjacency matrix of the Karp-Sipser core of $H(B)$ has full row or column rank. We state this result and the resulting characterization of the rank of $B$ in the following theorem. 


\begin{restatable}[Rank of $B$]{theorem}{thmksB}\label{thm:ksB}
Let $p = \omega(1/n)$ and $p \leq 1/2$ and $B \sim \mathbb{B}(n, p)$. Let $H = H(B)$ be a bipartite graph with $n$ left vertices $L$ and $n$ right vertices $R$. With probability $1 - o(1)$, either 
\begin{enumerate}
    \item $\on{corank}(B({H_{\on{KS}}})) = 0,$ or
    \item $\on{corank}(B({H_{\on{KS}}})^\top) = 0.$
\end{enumerate}

It follows that with probability $1 - o(1)$,
$$\on{corank}(B) = \max(|I_{\text{KS}} \cap L|, |I_{\text{KS}} \cap R|),$$

where $I_{\on{KS}}$ is the set of isolated vertices remaining after the Karp-Sipser leaf-removal on $H$.
\end{restatable}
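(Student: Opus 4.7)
The plan is to follow the template of Theorem~\ref{thm:ks} for the symmetric matrix, replacing its structural characterization with a weaker ``one-sided'' version that accommodates complex dependencies on rows \emph{or} columns but not both.

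\textbf{Step 1 (peeling preserves corank).} A single peeling step pivots on a leaf: if $\ell\in L$ has unique neighbor $r\in R$, then the $\ell$-th row of $B$ is $e_r$, and using it to clear column $r$ exhibits $B$ as row-equivalent to the block-diagonal $\mathrm{diag}([1],B_{-\ell,-r})$, so $\on{corank}(B)=\on{corank}(B_{-\ell,-r})$; right-leaves are symmetric. Iterating, $\on{corank}(B)$ equals the corank of the biadjacency matrix of $H_{\on{KS}}\cup I_{\on{KS}}$, which is block-diagonal with a core block $B(H_{\on{KS}})$ of shape $|L_c|\times|R_c|$ and appended zero rows/columns for vertices in $I_{\on{KS}}$. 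Since each peeling removes one vertex per side, $|L_c|+|I_{\on{KS}}\cap L|=|R_c|+|I_{\on{KS}}\cap R|$, so if $B(H_{\on{KS}})$ has full row rank then $|L_c|\leq|R_c|$, giving $|I_{\on{KS}}\cap L|\geq|I_{\on{KS}}\cap R|$ and $\on{corank}(B)=|I_{\on{KS}}\cap L|=\max(|I_{\on{KS}}\cap L|,|I_{\on{KS}}\cap R|)$; the full-column-rank case is symmetric. So the corank formula follows from the dichotomy.

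\textbf{Step 2 (small dependencies come from non-expanding sets).} Mirroring the structural core of Theorem~\ref{thm:ks} and in the spirit of Phenomenon~\ref{phe:1}, I would prove that w.h.p.\ every minimal left null vector of $B$ of support size at most $n/\mathrm{polylog}(n)$ is supported on a non-expanding set $U\subseteq L$ (i.e.\ $|N(U)|<|U|$), and similarly for right null vectors. For each candidate $U$ with $|N(U)|\geq|U|$, the probability of a relation supported exactly on $U$ is bounded row-by-row via Odlyzko/Littlewood-Offord, and a union bound over $U$ closes the argument for $p=\omega(1/n)$. A first-moment calculation then rules out small non-expanding subsets in the min-degree-$2$ core $H_{\on{KS}}$, so such dependencies cannot obstruct the dichotomy for $B(H_{\on{KS}})$.

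\textbf{Step 3 (large dependencies are at most one-sided).} The genuinely new content is showing that large null vectors cannot appear on both sides of $B(H_{\on{KS}})$ simultaneously. Suppose $u^\top B_c=0$ and $B_c v=0$ with $|U|,|V|\geq n/\mathrm{polylog}(n)$; then the submatrix $B_c[U,V]$ admits $u_U$ as a left and $v_V$ as a right null vector. I would expose $B_c[U,V]$ last and apply a two-sided anti-concentration bound: Littlewood-Offord on $u_U$ forces each row of $B_c[U,V]$ into a fixed hyperplane with probability at most $1-cp$, contributing $(1-cp)^{|U|}$, and likewise $v_V$ yields a factor $(1-cp)^{|V|}$ on columns; an $\varepsilon$-net over $(u,v)$ then beats the $\binom{n}{|U|}\binom{n}{|V|}$ support union bound precisely because $p=\omega(1/n)$.

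\textbf{Main obstacle.} The hardest part is calibrating the small/large support threshold so that Step~2's first-moment bound is tight down to $p=\omega(1/n)$ while Step~3's two-sided anti-concentration gain overwhelms the union bound in both dimensions at once. A secondary technicality is that null vectors of $B(H_{\on{KS}})$ need not extend to null vectors of $B$, so Step~2 must either be executed directly on $B(H_{\on{KS}})$ (conditioning on the peeling) or phrased so that peeled vertices absorb exactly the small-support dependencies; I expect this delicate interleaving to be the technical core of the proof.
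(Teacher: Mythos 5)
Your Step~1 is correct: peeling preserves corank (this is Lemma~\ref{lemma:peeling} in the paper), and since each peeling step removes one vertex from each side of the bipartition, $|L_c|+|I_{\on{KS}}\cap L|=|R_c|+|I_{\on{KS}}\cap R|$, which lets you turn the dichotomy on $B(H_{\on{KS}})$ into the $\max$ formula exactly as you describe.

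The characterization in Step~2 is not strong enough. The paper proves (Theorem~\ref{char:bgc}) that w.h.p.\ all small minimal dependencies are \emph{tree} dependencies: the non-zero rows of $B_S$ form the edge-vertex incidence matrix of a tree on $S$. The tree structure, not merely non-expansion, is what the Karp--Sipser argument needs: a tree always has a leaf, that leaf corresponds to a degree-one vertex of $H$, and peeling it cascades until the whole tree (hence all of $S$) is removed. A set $S$ that is merely non-expanding ($|N(S)|<|S|$) can have every vertex of degree at least two, in which case leaf-removal never touches it, so ``small dependencies of $B$ are non-expanding'' does not yield ``small dependencies of $B$ are peeled.''

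More fundamentally, Steps~2 and~3 as written run the probabilistic arguments directly on the core $B(H_{\on{KS}})$: Step~3 exposes $B_c[U,V]$ and applies Littlewood--Offord row-by-row, and Step~2 asks for a first-moment union bound over non-expanding subsets of $H_{\on{KS}}$. Since $H_{\on{KS}}$ is a highly conditioned function of all of $B$ whose edges do not form a product measure, neither argument goes through naively. You flag this as a ``secondary technicality,'' but it is the central obstruction, and the paper's route bypasses it entirely. The missing tool is the second part of Lemma~\ref{lemma:peeling}: the union of supports of kernel vectors of the peeled matrix is contained in the union of supports of kernel vectors of the original $B$. With it, the dichotomy is proved on the raw matrices $B$ and $B^\top$ (where i.i.d.\ structure is available) and then transferred: if $B(H_{\on{KS}})$ had a nonzero kernel vector, some core vertex would lie in the support of a kernel vector of $B$, but the tree characterization forces that support to lie among peeled and isolated vertices --- a contradiction. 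Your threshold concern is also real: a two-regime split at $n/\mathrm{polylog}(n)$ leaves the per-row anti-concentration gain ($O(1/\sqrt{d})$, Lemma~\ref{lo_sparse}) too weak to close the Markov bound on two-sided defects, which is why the paper inserts two intermediate regimes (Lemmas~\ref{lemma:medium} and~\ref{lemma:medium2}) before the constant-fraction case.
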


Our main tool in proving these theorems is a new characterization of all \em minimally \em linearly dependent sets of columns that occur with constant probability in $A$ and $B$.

\begin{definition}\label{def:minimal}
A matrix in $M \in \mathbb{R}^{m \times k}$ is a \em minimal linear dependency\em~if it satisfies the following two properties:
\begin{enumerate}
\item There exists $x \in \mathbb{R}^k$ such that $Mx = 0$ and $\on{supp}(x) = [k]$. 
\item 
$\on{rank}(M) = k-1$. 
\end{enumerate}
\end{definition}
We sometimes will abbreviate and call such a matrix a minimal dependency.

It is a simple exercise to show that every linearly dependent set of columns has a subset of columns which form a minimal linear dependency. One can additionally show that in any matrix $M$, if the $i$th column $M_i$ is in the span of the remaining columns $\{M_j\}_{j \neq i}$, then there exists some subset of columns of $M$ including $M_i$ which is a minimal dependency (see Lemma~\ref{lemma:support_minimal}). Let $$\mathcal{M}_k \subset \bigcup_{m \geq 1} \{0,1\}^{m \times k}$$ denote the set of \{0, 1\}-matrices of $k$ columns which are minimal linear dependencies. For our main results, we distinguish among $\mathcal{M}_k$ one particular class of minimal linear dependencies, illustrated in Figure~\ref{fig:dependencies_char}(a).  

\begin{figure}
    \centering
    \includegraphics[width=15cm]{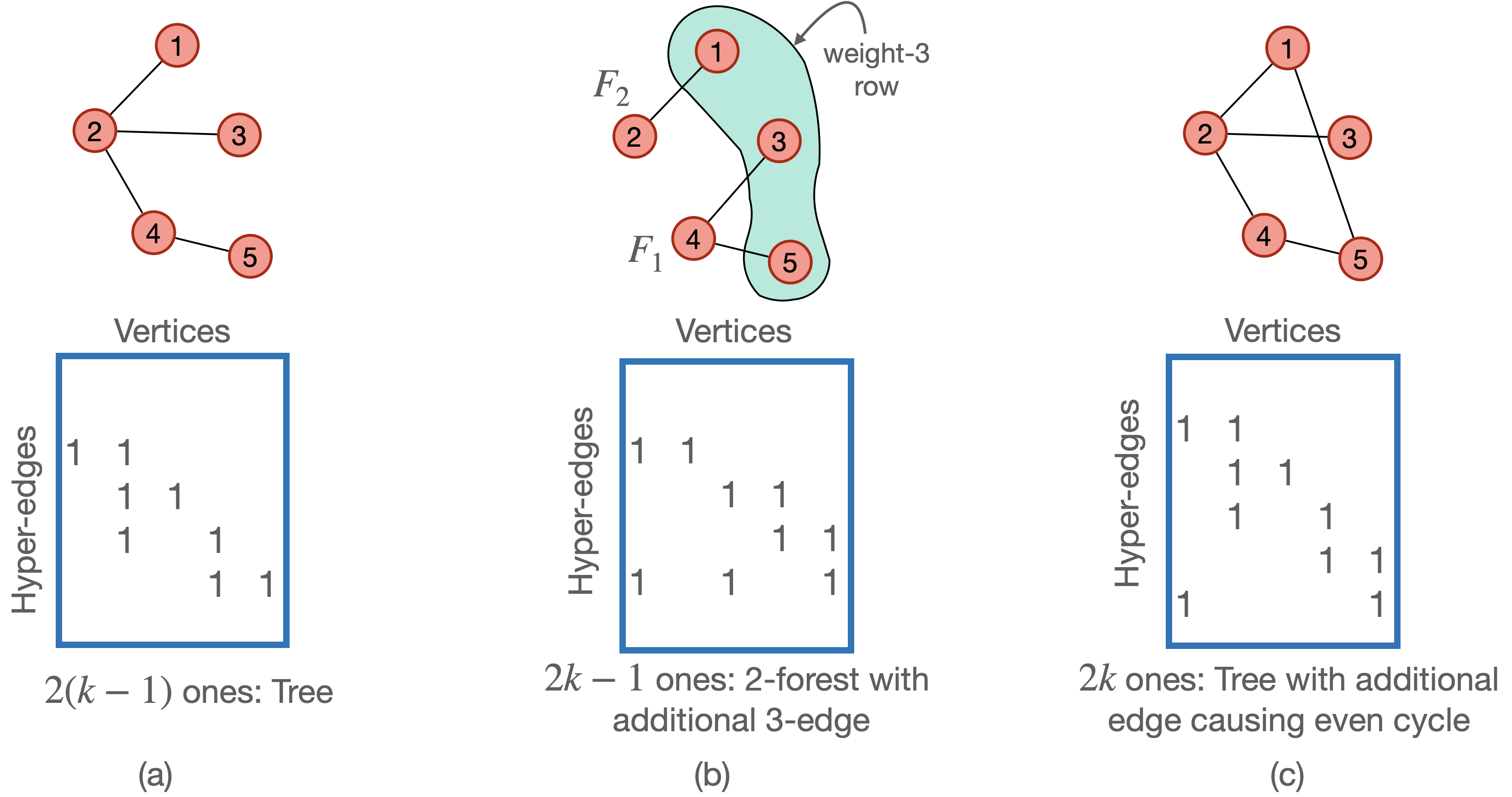}
    \caption{Instances of the three classes of minimal dependencies of $k = 5$ columns that occur with constant probability in $A$ and $B$ with $p = \Theta(1/n)$. For $p = \omega(1/n)$, with probability $1-o(1)$ only tree dependencies (a) appear. (a) An element of $\mathcal{T}_k$. (b) An element of $\mathcal{T}_k^+$ (see Section~\ref{sec:small}). (c) An element of $\mathcal{T}_k^C$ (see Section~\ref{sec:small}).}\label{fig:dependencies_char}
\end{figure}

\begin{definition}[Tree dependency]\label{def:tree} Define $\mathcal{T}_k$ as the set of matrices $M\in \bigcup_m \{0,1\}^{m\times k}$ with exactly $2k - 2$ $1$'s such that the non-zero columns of $M$ form the edge-vertex incidence matrix of a tree on $k$ vertices. We call a matrix in $\cup_k \mathcal{T}_k$ a \em tree dependency\em.
\end{definition}

The following theorem shows that with high probability, all minimal dependencies in $A$ are tree dependencies. 

\begin{restatable}[Characterization of minimal dependencies in $A$]{theorem}{charsquare}\label{char:square}
Let $A \sim \mathbb{A}(n, p)$, where $p = \omega(1/n)$ and $p \leq 1/2$. With probability $1 - o(1)$, for all $k$, all minimal dependencies of $k$ columns of $A$ are in $\mathcal{T}_k$.
\end{restatable}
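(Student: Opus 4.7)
The plan is to prove the theorem by a first-moment calculation. First, from Definition~\ref{def:minimal} I would extract two elementary structural facts about any minimal dependency $M \in \{0,1\}^{t\times k}$: (i) $t \geq k-1$ since $\on{rank}(M) = k-1$; and (ii) every row of $M$ has at least two ones, since a row with a single $1$ in column $j$ would force $x_j = 0$ in the unique null vector $x$, contradicting $\supp(x) = [k]$. In the borderline case $t = k-1$ with every row having exactly two ones, the rows encode the edges of a graph on $k$ vertices with $k-1$ edges; the rank condition is equivalent to this graph being connected and bipartite (over $\mathbb{R}$ the incidence matrix of a graph has corank equal to the number of bipartite components, and the full-support null-vector condition forces every vertex to lie in one bipartite component), which then forces the graph to be a tree, so $M \in \mathcal{T}_k$. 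Hence any non-tree minimal dependency falls into one of two types: (A) $t = k-1$ but some row has $\geq 3$ ones; or (B) $t \geq k$.

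Next, for fixed parameters $(k, t, e)$ and a fixed pattern of $e$ ones in a $t \times k$ matrix, I would bound the probability that the columns $S$ of $A$ realize such a minimal dependency with support rows exactly $T$. Up to minor corrections when $T \cap S \neq \emptyset$ (to handle the symmetry of $A$), this probability is at most $p^e \exp(-p k(n-k))$, since we need the $e$ prescribed entries to be $1$ and all other entries in columns $S$ to be $0$. Summing over $\binom{n}{k}\binom{n}{t}$ choices of $(S, T)$ and crude bounds $N_{k,t,e}$ on the number of admissible patterns, the expected count with parameters $(k, t, e)$ is at most
\[
\binom{n}{k}\binom{n}{t}\, N_{k,t,e}\, p^e \exp(-p k(n-k)).
\]
Writing $j = t - (k-1)$ and $\ell = e - (2k-2)$, the ratio of this bound to the tree case $(j, \ell) = (0, 0)$ is essentially $n^j p^\ell$ times a polynomial factor in $k, t$ coming from $N_{k,t,e}$.

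For type (A), $j = 0$ and $\ell \geq 1$, giving a ratio $\leq \on{poly}(k) \cdot p = o(1)$ that sums nicely over $\ell$. For type (B), each extra row carries at least two ones so $\ell \geq 2j$, yielding a ratio $\leq \on{poly}(k,t) \cdot (np^2)^j$. The main obstacle will be the intermediate regime $p = \omega(1/\sqrt{n})$, where $np^2 \to \infty$ and this naive ratio bound fails. My plan is to dispatch this regime by combining two observations: first, the factor $\exp(-pk(n-k))$ decays super-polynomially in $k$ once $pn \to \infty$, so the total expected number of minimal dependencies (tree or not) of size $k \geq 2$ is already tiny when $pn \gg \log n$; second, for $p = \Omega(\log n/n)$ one can appeal to the non-singularity results of Costello--Vu and Basak--Rudelson to conclude that \emph{no} minimal dependencies occur at all. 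Thus it suffices to analyze $\omega(1/n) \leq p = o(\log n / n)$, in which $np^2 = o((\log n)^2 / n) = o(1)$ and the ratio bound applies. Summing the resulting contributions over $k \geq 2$, over $(j,\ell) \neq (0,0)$, and over the admissible pattern types then shows the expected number of non-tree minimal dependencies is $o(1)$, so Markov's inequality completes the proof.
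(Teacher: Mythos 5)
Your small-$k$ analysis---rows need at least two ones, the rank constraint $t \ge k-1$, the tree characterization in the borderline case $e = 2k-2$, and a first-moment sum weighted by roughly $(np^2)^j p^{\ell-2j}$---closely tracks the paper's Small Case~1 (Observations~\ref{observation1}--\ref{observation2}, Lemma~\ref{lemma:classification}, Proposition~\ref{prop:small}), and with some care about controlling the pattern count $N_{k,t,e}$ via the connectivity of minimal dependencies (Lemma~\ref{lemma:connect}) it should work for $k$ up to roughly $n/\operatorname{poly}(d)$ with $d := pn$. The gap is at large $k$, and you have misdiagnosed where the difficulty lies. You flag $p = \omega(1/\sqrt{n})$ as the obstacle and dispatch it via non-singularity, but the real obstacle is $k = \Theta(n)$ in the regime $\omega(1) = d = o(\log n)$, which non-singularity does not cover since $A$ \emph{is} singular there.

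In that regime a union bound over $(S, T, \text{pattern})$ cannot give $o(1)$. When $t = \Theta(n)$ and $e = \Theta(nd)$, the number of admissible $\{0,1\}$-patterns is at least $\binom{tk}{e} = e^{\Omega(n\log n)}$, and neither the row-weight constraint nor connectivity shrinks this enough to beat $\binom{n}{k}\binom{n}{t}$; at $k = n$ there is only one choice of $S$, the sum over patterns is exactly the probability you are trying to bound, and the method is circular. Even the cheap witness your plan ultimately relies on---a row of $A_S$ with a single $1$---fails at this scale: at $k = n/2$ the probability that no such row exists is about $\exp\bigl(-\tfrac{nd}{4}e^{-d/2}\bigr)$, and since $\tfrac{d}{4}e^{-d/2} < \ln 2$ for every $d$, this cannot absorb $\binom{n}{n/2}$. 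The paper handles $k \ge 9n/d$ with an entirely different mechanism: for $9n/d \le k \le n/C$ a sequential row-exposure argument combined with the sparse Littlewood--Offord inequality (Lemma~\ref{lo_sparse}, Lemma~\ref{lemma:medium2}), and for $k \ge n/C$ a Markov-inequality argument on the number of columns lying in the span of the others, combined with the quadratic Littlewood--Offord inequality (Lemma~\ref{lo_quadratic}) and a pseudoinverse construction (Lemmas~\ref{lemma:large2_A} and~\ref{lemma:large_plug}). These anti-concentration arguments are the heart of the theorem and have no counterpart in your proposal.
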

Similarly, either all subsets of columns of $B$ or of $B^\top$ that are minimal linear dependencies, are tree dependencies. 
\begin{restatable}[Characterization of minimal dependencies in $B$]{theorem}{charbgc}\label{char:bgc}
Let $B \sim \mathbb{B}(n, p)$, where $p = \omega(1/n)$ and $p \leq 1/2$. With probability $1 - o(1)$, one of the following holds:
\begin{enumerate}
    \item For all $k$, all minimal dependencies of $k$ columns of $B$ are in $\mathcal{T}_k$.
    \item For all $k$, all minimal dependencies of $k$ columns of $B^\top$ are in $\mathcal{T}_k$.
\end{enumerate} 
\end{restatable}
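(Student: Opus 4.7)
The plan is to bound the probability of the bad event $\mathcal{E}_L \cap \mathcal{E}_R$, where $\mathcal{E}_R$ denotes the event that the columns of $B$ admit a non-tree minimal dependency and $\mathcal{E}_L$ the analogous event for the columns of $B^\top$. Since showing $\Pr[\mathcal{E}_L \cap \mathcal{E}_R] = o(1)$ is equivalent to the theorem, I would follow the overall strategy used to establish Theorem~\ref{char:square}, splitting the analysis by the size $k$ of the putative minimal dependency into small and large regimes. The key structural observation is that $\mathcal{E}_L$ and $\mathcal{E}_R$ each impose expansion-failure constraints on opposite sides of the bipartite graph $H(B)$, which are difficult to satisfy simultaneously when $p = \omega(1/n)$.

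For bounded $k$, I would enumerate the combinatorial types of bipartite minimal column dependencies of $k$ columns. Each such dependency corresponds to a choice of $k$ right vertices (columns) and at most $k$ left vertices (support rows) whose induced bipartite subgraph has rank $k-1$ and admits a full-support kernel vector on the columns; up to the three classes drawn in Figure~\ref{fig:dependencies_char}, the non-tree types are the tree-plus-edge class $\mathcal{T}_k^+$ and the even-cycle class $\mathcal{T}_k^C$. A first moment calculation mirroring the one for Theorem~\ref{char:square} shows that for $p = \omega(1/n)$ the expected number of realized instances of either class is $o(1)$: fixing $k$ columns with the prescribed edge pattern and requiring no incidences of these columns outside the support rows contributes a factor $(1-p)^{k(n-O(k))} \le e^{-\omega(k)}$ that overwhelms the polynomial choice count. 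This gives $\Pr[\mathcal{E}_R \cap \{k \le K\}] = o(1)$ (and similarly for $\mathcal{E}_L$) for every constant $K$, which is strictly stronger than what is required.

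For unbounded $k$, I would adapt the Costello--Vu non-expanding set analysis, as in the proof of Theorem~\ref{char:square}. A non-tree minimal column dependency of $B$ of size $k$ corresponds to a right subset $S$ with $|S| = k$, $|N(S)| \le k$, and the induced bipartite subgraph carrying strictly more than $k-1$ edges. Stratifying by the expansion ratio $|N(S)|/|S|$ and by the edge excess, a union bound analogous to the one in Theorem~\ref{char:square} controls $\Pr[\mathcal{E}_R \cap \{k > K\}]$, and by symmetry controls $\Pr[\mathcal{E}_L \cap \{k > K\}]$ as well. Most terms in the union bound already vanish individually; the delicate intermediate scales of $k$ are where the two events must interact.

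The main obstacle, and the reason Theorem~\ref{char:bgc} is only a disjunctive characterization, is controlling the non-tree dependency types whose individual probability of occurrence in $B$ or $B^\top$ does not vanish. It is plausible that for $p = \omega(1/n)$ there are non-tree dependencies of size on the order of $\log n$ occurring with positive probability in $B$, so one cannot hope to rule them out on the column side alone. The technical heart of the proof is therefore a joint bound: a tight non-expansion on some $R_0 \subseteq R$ witnessing $\mathcal{E}_R$ and a tight non-expansion on some $L_0 \subseteq L$ witnessing $\mathcal{E}_L$ cannot coexist except with probability $o(1)$. I would establish this via a refined union bound that couples the two sides of $H(B)$ simultaneously, exploiting the fact that for $p = \omega(1/n)$ the bipartite graph is sufficiently expanding that two-sided tight non-expansion becomes a vanishingly rare event, even though one-sided tight non-expansion does not.
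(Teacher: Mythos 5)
Your high-level decomposition into small and large $k$ is sound, and the small-domain first-moment calculation mirrors the paper's Corollary~\ref{cor:small} and Lemmas~\ref{lemma:medium}, \ref{lemma:medium2}. But the latter half of the proposal misdiagnoses where the disjunction actually comes from and relies on a method that cannot reach the regime that forces it.

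You write that the disjunction is needed because ``there are non-tree dependencies of size on the order of $\log n$ occurring with positive probability in $B$.'' That is false. The paper proves (Corollary~\ref{cor:small} together with Lemmas~\ref{lemma:medium} and \ref{lemma:medium2}, applied to both $B$ and $B^\top$) that with probability $1-o(1)$ \emph{every} minimal dependency of $k \le n/C$ columns — in $B$ \emph{and} in $B^\top$ — is a tree dependency. There is nothing at $\Theta(\log n)$ (or any sublinear) scale that survives. The only dependencies that cannot be ruled out are those of size $\Theta(n)$: after the Karp--Sipser peeling, the residual bi-adjacency matrix may be rectangular, and a short-and-fat rectangular matrix necessarily carries a minimal dependency of $\Theta(n)$ columns (see the remark following the theorem). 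This is the regime in which neither side can be cleaned up individually and the disjunction becomes unavoidable.

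Your proposed tool for large $k$ — ``a refined union bound that couples the two sides of $H(B)$'' via non-expanding sets — cannot work at $k = \Theta(n)$, for the same reason the paper abandons union bounds in Large Case 2: $\binom{n}{k}$ is exponential in $n$ and the per-set probability is not nearly small enough to compensate. The paper's Lemma~\ref{lemma:large2_B} instead uses a first-moment argument on columns: if both $B$ and $B^\top$ have dense kernel vectors, then by Observation~\ref{obs_3} at least $t$ columns $B_i$ lie in the span of the rest, while simultaneously a dense left kernel vector survives. Conditioning on $B_{/i}$, the column $B_i$ is independent and must be orthogonal to that dense left kernel vector; the sparse Littlewood--Offord Lemma~\ref{lo_sparse} makes this an $O(1/\sqrt{d})$ event. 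Markov then gives the bound. This anti-concentration step — not expansion and not a union bound over subsets — is the piece your proposal is missing, and without it the $\Theta(n)$ regime (the one regime that genuinely requires the disjunction) stays open.
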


\begin{remark}
We do not expect the theorem above can be improved, in the sense that often $B$ or $B^\top$ do have minimal linear dependencies with $\Theta(n)$ columns. Intuitively, this is because short and fat rectangular matrices contain minimal dependencies with many columns. When we peel off the rows and columns involved in small minimal linear dependencies, the remaining matrix, $B(H_{\on{KS}})$, may be rectangular if there are more small minimal linear dependencies in the rows than in the columns (or vice versa). This forces a minimal dependency with many columns in $B$ (or $B^\top$).
\end{remark}

\begin{remark}
When $p=\Theta(1/n)$, Theorems~\ref{char:square} and \ref{char:bgc} fail due to the presence of two other classes of minimal dependencies which appear with constant probability in $A$ and $B$. These two classes are pictured in Figure~\ref{fig:dependencies_char}(b)(c) and defined formally in Section~\ref{sec:small}. 
\end{remark}

\subsection{Additional Results}

A second substructure of random graphs which has received considerable attention is the $k$-core of $G$: the largest induced subgraph of $G$ with minimal degree $k$. The $k$-core, which we denote $\on{core}_k(G)$, can be obtained by iteratively peeling off vertices of degree less than $k$. When the average degree of $G$ is greater than some constant $c = c(k)$, the $k$-core suddenly emerges and has linear size in the number of vertices of $G$ \cite{cores}. The exact size of the $k$-core is given by Pittel, Spencer, and Wormald in \cite{pittel}. 

Van Vu conjectured in \cite{vu_crmt} that for $p = \Theta(1/n)$, for a constant $k \geq 3$, almost surely, the adjacency matrix of $\on{core}_k(G)$ is non-singular. We prove the following result, which confirms the conjecture of Vu for $p = \omega(1/n)$. Let $\on{core}_k(A)$ denote the adjacency matrix of the $k$-core of the graph with adjacency matrix $A$.
\begin{restatable}{theorem}{thmcore}\label{thm:core}
Let $A \sim \mathbb{A}(n, p)$, where $p = \omega(1/n)$ and $p \leq \frac{3\log(n)}{n}$. For any constant $k \geq 3$, with probability $1 - o(1)$,
$$\on{corank}(\on{core}_k(A)) = 0.$$ 
\end{restatable}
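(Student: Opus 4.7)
The key geometric observation is that a tree dependency cannot live inside a subgraph of minimum degree at least $3$. Indeed, by Definition~\ref{def:tree}, a tree dependency on $k'$ column vertices $v_1, \ldots, v_{k'}$ in $H$ forces the submatrix of $A(H)$ on columns $\{v_1, \ldots, v_{k'}\}$ to have exactly $2(k'-1)$ ones, and this count equals $\sum_i \deg_H(v_i)$ (because the non-zero rows of column $v_i$ are exactly the $H$-neighbors of $v_i$). Averaging, some $v_i$ has $\deg_H(v_i) \leq 1$. Taking $H = \on{core}_k(G)$ with $k \geq 3$ contradicts the minimum-degree bound $\deg_{\on{core}_k(G)}(v_i) \geq k \geq 3$, so no tree dependency can be realized inside $\on{core}_k(G)$.

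The plan is therefore to establish the following analogue of Theorem~\ref{char:square} for the principal submatrix $A(\on{core}_k(G))$:

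\textbf{Claim.} With probability $1 - o(1)$, every minimal linear dependency among the columns of $A(\on{core}_k(G))$ is a tree dependency.

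Combined with the observation above, the Claim forces $A(\on{core}_k(G))$ to contain no minimal dependencies among its columns at all, and hence (by Lemma~\ref{lemma:support_minimal}) to have corank zero.

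To establish the Claim I would mirror the proof strategy of Theorem~\ref{char:square}: perform a union bound over the possible ``shapes'' of non-tree minimal dependencies (the small shapes $\mathcal{T}_k^+$ and $\mathcal{T}_k^C$ appearing in Figure~\ref{fig:dependencies_char}(b)(c), plus larger shapes), and show that with probability $1-o(1)$ none is realized as a substructure of $\on{core}_k(G)$. Because $\on{core}_k(G)$ is a subgraph of $G$, any embedding of a bad configuration is in particular an embedding into $G$, so the estimates developed for Theorem~\ref{char:square} give a starting bound; the extra constraint that every vertex of the embedding must have degree at least $k$ in $\on{core}_k(G)$ only tightens these bounds, since it forbids embeddings whose support vertices have small degree in $G$.

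The main obstacle is that a minimal dependency in $A(\on{core}_k(G))$ is \emph{not} literally a minimal dependency in $A$: the equation $\sum c_i A_{v_i}=0$ need only hold when restricted to the rows indexed by $V(\on{core}_k(G))$, with $\sum c_i A_{v_i}$ potentially supported on the vertices peeled off during the $k$-core process. Thus the characterization of Theorem~\ref{char:square} cannot be invoked as a black box and the counting arguments must be re-run in the submatrix setting, carefully accounting for how rows outside the core can conspire to create dependencies that are not tree-shaped in $A$ itself. The upper bound $p \leq 3\log(n)/n$ enters in the quantitative counting estimates for large configurations; for $p$ past this threshold the classical non-singularity results \cite{br} imply $A$ itself is non-singular, and the case must be handled separately (or reduced to Theorem~\ref{thm:ks} after observing that the $k$-core is essentially all of $G$ in that regime).
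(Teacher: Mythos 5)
Your high-level plan matches the paper's proof, and the key observation is correct: in a tree dependency the total number of ones, which equals $\sum_i \deg_{\on{core}_k(G)}(v_i)$, is $2(k'-1) < 2k'$, so some vertex has degree at most one — impossible in a graph of minimum degree $k \geq 3$. This is exactly the structural fact the paper exploits, building the hypothesis $|\supp(A^R_S)| \geq 3|S|$ into its small-case lemma (Lemma~\ref{lemma:small}) so that the random-walk counting argument has a $3|S|$ ones threshold rather than the $2|S|-2$ threshold of $\mathcal{T}_k$. You also correctly identify the main obstacle — that a minimal dependency in $A(\on{core}_k(G))$ is \emph{not} a minimal dependency in $A$, since the columns of $A_S$ may have single-$1$ rows supported on peeled vertices — so Theorem~\ref{char:square} cannot be invoked as a black box.

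The genuine gap is in the "re-run the counting" step, and it is most serious in the analogue of Large Case 2. For dependencies of $\Theta(n)$ columns the paper's argument is not a near-replay of the proof for $A$: because the submatrix being analyzed is $A[C_k(A)]$ for the \emph{random} set $C_k(A)$, the Markov-plus-anti-concentration scheme built on Observation~\ref{obs_3} does not transfer directly. The paper (Lemma~\ref{lemma2core}) has to introduce the notion of a vertex $i$ being ``normal'' — meaning $V(\on{core}_k(A)) = \{i\} \cup V(\on{core}_k(A^{(i)}))$ — prove via Claim~\ref{claim:T} and the $k$-core size estimate of \cite{pittel} that all but about $n/d$ vertices are normal, restrict the Markov bound to these normal indices, and then carry out the conditioning argument and pseudoinverse construction on the random vertex set $C_k(A^{(n)})$ rather than on $[n-1]$. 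None of this appears in the proposal, and without it the passage from a large-support kernel vector of $A(\on{core}_k(G))$ to the event ``$A_n$ lands in a low-probability subspace'' does not go through, because $C_k(A)$ depends on $A_n$.

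Two smaller remarks. First, your intermediate Claim (``every minimal dependency of $A(\on{core}_k(G))$ is a tree dependency'') ends up vacuous in the small regime: the $\geq 3|S|$ ones constraint already rules out $\mathcal{T}_k$, $\mathcal{T}_k^+$, and $\mathcal{T}_k^C$ at once (all have fewer than $3|S|$ ones), so the paper proves directly that no minimal dependency of any small support exists, with no tree intermediary. Second, the medium and large cases require a union bound not just over supports $S$ but also over candidate core vertex sets $R$ with $|R| \geq (1-1/d^7)n$, and one needs the $k$-core size guarantee (Lemma~\ref{lemma:core_size}) to make that bound subexponential; your proposal does not mention this $R$-union bound.
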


We remark that Ferber et al. \cite{core_fkss} recently resolved the conjecture of Vu, after a preliminary version of the present paper was made publicly available. This was independent work, and the proof methods are quite different.

We additionally show an application of our techniques to a problem in distributed computing. For this application, we show that our characterization of minimal dependencies also holds for some ensembles of sparse rectangular random matrices. Using this characterization, we can bound the distance from a fixed vector to the span of a random matrix. This is useful for analyzing \em gradient codes\em, an object used to provide redundancy in distributed computing. We discuss the particular results in Section~\ref{sec:gc}.

\subsection{Notation}
For a graph $G$, we use $A(G)$ to denote the adjacency matrix of $G$. We use $G(A)$ to denote the graph with adjacency matrix $A$. For a bipartite graph $H = (L \cup R, E)$, we use $B(H)$ to denote the bi-adjacency matrix of $H$, where $L$ indexes rows of $B(H)$, and $R$ indexes columns. We use $H(B)$ to denote the bipartite graph with bi-adjacency matrix $B$. We use $V(G)$ or $V(A)$ to denote the vertex set of a graph $G$ or $G(A)$.

For $n \in \mathbb{N}$, let $[n]$ denote the set $\{1, \ldots , n\}$. For a matrix $M \in \mathbb{R}^{m \times n}$ and $S \subseteq [n]$, we let $M_S$ denote the submatrix of $M$ with columns indexed by $S$. For $S \subseteq [m]$, we let $M^S$ denote the submatrix of $M$ with rows indexed by $S$. Furthermore, for an index $i\in \mathbb{N}$, we let $M_i$ denote the $i$th column of $M$ and we let $M_{\backslash i}$ denote $M$ with the $i$th column removed. We define $M^{\backslash i}$ for rows analogously. We let $M^{(i)}$ denote $M$ with both the $i$th row and $i$th column removed. Finally, we let $M^{\top}$ denote the transpose of $M$. We say the corank of $M$ equals $m - \on{rank}(M)$.

For vectors $x \in \mathbb{R}^n$ and $y \in \mathbb{R}^m$, let  $(x, y) \in \mathbb{R}^{m + n}$ denote the concatenation of $x$ and $y$. We use $e_i$ to denote the $i$th standard basis vector.


\subsection{Outline}
In Section~\ref{sec:overview}, we give a technical overview of the proofs of the main theorems. We break down the proofs of Theorems~\ref{char:square} and \ref{char:bgc} thematically into Sections~\ref{sec:small} and \ref{sec:large}, where we prove the main technical lemmas. We conclude the proof in Section~\ref{sec:char}. In Section~\ref{sec:ks}, we prove Theorems~\ref{thm:ks} and \ref{thm:ksB} using Theorems~\ref{char:square} and \ref{char:bgc}.

\section{Technical Overview}\label{sec:overview}

In this section, we give a high level overview of the proof of the main theorems in this paper.

\subsection{Overview of Proposition~\ref{char:square} and Proposition~\ref{char:bgc}}  For simplicity, in this overview we primarily focus on the asymmetric matrix $B \sim \mathbb{B}(n, p)$, where $p = d/n$.  We provide comments on when and how the techniques differ for the symmetric matrix $A \sim \mathbb{A}(n, p)$.

Our proof of each characterization result is divided into two domains: a ``small'' domain and a ``large'' domain. The small domain considers possible kernel vectors with support at most $\frac{n}{d}$. The large domain considers possible kernel vectors with support at least $\frac{n}{d}$.  Thematically, this is similar to the more-involved casework based on levels of ``structuredness'' of vectors used classically to show that the kernel of $A$ is empty for $p > \log(n)/n$. (See eg.~\cite{invertibility, tikhomirov, jain})

\subsubsection{Small Domain}
The main idea for the small domain comes from the following two simple observations. 

\begin{observation}\label{observation1}
Let $M \in \{0, 1\}^{m \times k}$ be a matrix. Then if some row of $M$ contains exactly one $1$, then there is no $x \in \mathbb{R}^k$ such that $Mx = 0$ and $\supp(x) = [k]$.
\end{observation}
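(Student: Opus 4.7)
The plan is to argue by direct contradiction on the coordinate of $Mx$ indexed by the row with a single $1$. Suppose for contradiction that some $x \in \mathbb{R}^k$ satisfies $Mx = 0$ and $\supp(x) = [k]$, and let $i \in [m]$ be an index of a row of $M$ with exactly one nonzero entry, say $M_{ij} = 1$ and $M_{ij'} = 0$ for all $j' \neq j$.

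The key step is simply to expand the $i$th coordinate of $Mx$: since only the $(i,j)$ entry of the $i$th row is nonzero, we have
$$(Mx)_i = \sum_{j' = 1}^{k} M_{ij'} x_{j'} = x_j.$$
Combined with the assumption $Mx = 0$, this forces $x_j = 0$, which contradicts $\supp(x) = [k]$ (which requires $x_j \neq 0$).

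There is essentially no obstacle here; the statement is an immediate consequence of how matrix-vector multiplication interacts with a row that has a unique nonzero entry. The only thing to verify is the convention that $\supp(x) = [k]$ means $x_{j'} \neq 0$ for every $j' \in [k]$, which is the standard reading used throughout the paper. Consequently the proof should be a single short paragraph rather than a multi-step argument.
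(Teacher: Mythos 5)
Your proof is correct and takes essentially the same approach as the paper: both observe that a row with a single $1$ in column $j$ forces the $j$th coordinate of any kernel vector to vanish. You phrase it directly as $(Mx)_i = x_j = 0$, while the paper phrases it as column $j$ not being in the span of the remaining columns; these are the same one-line argument.
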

\begin{proof} If there is only one column of $M$ with a $1$ at a certain row-index, that column is linearly independent of all other columns in $M$. Thus, no dependency including all columns of $M$ exists. \end{proof}

\begin{observation}\label{observation2}
Let $M \in \{0, 1\}^{m \times k}$ be a matrix.
If $M$ has support size less than $2k-2$, then $M$ is not a minimal linear dependency.
\end{observation}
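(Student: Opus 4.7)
The plan is to prove the contrapositive by lower-bounding the number of ones in any minimal linear dependency $M$. Intuitively, a minimal dependency has two constraints pulling in opposite directions: the rank condition forces many nonzero rows, while the full-support kernel condition forces each nonzero row to have at least two ones. These two observations combine to give the desired $2k-2$ bound.

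More concretely, suppose $M$ is a minimal linear dependency with $k$ columns; the plan is to proceed in two steps. First, I would argue that $M$ has at least $k-1$ nonzero rows. This is because $\on{rank}(M) = k-1$ by Definition~\ref{def:minimal}, and the rank of a matrix is trivially bounded above by its number of nonzero rows (the zero rows contribute nothing to the row space). Second, I would argue that every nonzero row of $M$ contains at least two ones. This is an immediate application of Observation~\ref{observation1}: if some row had exactly one $1$, say in column $j$, then for any $x$ with $Mx = 0$ the corresponding entry forces $x_j = 0$, contradicting the existence of a fully-supported kernel vector guaranteed by Definition~\ref{def:minimal}.

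Putting these two steps together, let $r_i$ denote the number of ones in row $i$ and let $S$ be the set of nonzero rows. Then the total support of $M$ equals $\sum_{i \in S} r_i \geq 2|S| \geq 2(k-1) = 2k-2$, which is the claimed bound. Taking the contrapositive yields the statement of the observation.

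I do not anticipate a serious obstacle here: the argument is essentially a one-line double count, where the only subtlety is noticing that the rank condition and the full-support condition from Definition~\ref{def:minimal} interact to give matching lower bounds on nonzero rows and on entries per nonzero row. The edge case $k=1$ is vacuous since then $2k-2 = 0$, and the zero-column matrix is the only minimal dependency.
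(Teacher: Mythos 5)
Your proof is correct and follows essentially the same route as the paper's: both use the rank condition to get at least $k-1$ nonzero rows, then invoke Observation~\ref{observation1} to force at least two ones per nonzero row, and multiply. The only additions in your write-up (spelling out the Observation~\ref{observation1} consequence and the vacuous $k=1$ case) are harmless elaborations, not a different argument.
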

\begin{proof} The requirement that $M$ is rank $k-1$ means that at least $k-1$ rows must have at least one $1$ in them. Hence it follows from Observation~\ref{observation1} that each of these rows must contain at least two 1s. Thus, if $M$ is a minimal dependency, it contains at least $2k-2$ $1$'s.
\end{proof}

Within the small domain, we subdivide into two cases, again depending on the size of the support of the kernel vector. The results for the small domain are proved in Section~\ref{sec:small}.

\paragraph{\bf{Small Case 1}}
The goal of this case is to prove that with high probability, for any $k \leq \Theta(n/d^2)$, and $S \subset [n]$ with $|S| = k$, if $B_S$ is a minimal dependency, then $B_S \in \mathcal{T}_k$. We will take a union bound over all such sets $S$ to prove this. Fix $S$ with $|S| = k$, and define $L$ to be the number of $1$s in $B_S$. Due to Observation~\ref{observation2}, we only must consider the case when $L \geq 2k - 2$. Further, if $L = 2k - 2$, then if $B_S$ is a minimal dependency, it must also be in $\mathcal{T}_k$ (see Lemma $\ref{lemma:classification}$).  

By Observation~\ref{observation1}, $B_S$ is not a minimal dependency if there exists a row with exactly one $1$ in $B_S$. To lower bound this probability, we condition on $L$, consider the random process which exposes the $L$ positions of $1$s in $B_S$ randomly one at a time. Consider the random walk which increases by $1$ every time we expose a $1$ in an already occupied row, and stays constant otherwise. As long as the value of this random walk is less than $L/2$ at the time all $L$ $1$s are exposed, we know that $B_S$ is not a minimal dependency. When $L \geq 2k - 1$, this occurs with small enough probability that after a union bound on all sets $S$, we succeed with probability $1-o(1)$.

For the case of symmetric matrices, we modify this argument slightly by considering the random walk which also increases every time a $1$ is exposed in the ``symmetric'' portion of $A_S$, that is, a row of $A_S$ indexed by an element of $S$. 

\paragraph{\bf{Small Case 2}}
This case rules out with high probability any minimal dependencies of $k$ columns of $B$ for $\frac{n}{d^2} \lesssim k \lesssim \frac{n}{d}$. Again we use Observation~\ref{observation1} and take a union bound over all submatrices $B_S$ for sets $S$ of size $k$. We use a Chernoff bound to show that with very high probability, $B_S$ has a row with a single $1$.

\subsubsection{Large Domain}
As $k$ approaches $n/d$, we can no longer expect $B_S$ to have rows with a single $1$ with high enough probability, so we need to look beyond Observation~\ref{observation1} to rule out linear dependencies. Our main tool in ruling out larger minimal dependencies are the following Littlewood-Offord anti-concentration results for sparse vectors, from Costello and Vu~\cite{cv_graphs}.




\begin{restatable}[c.f. \cite{cv_graphs} Lemma 8.2]{lemma}{losparse}\label{lo_sparse}
Let $v \in \mathbb{R}^n$ be a deterministic vector with support at least $m$. Let $z \in \mathbb{R}^n$ be a random vector with i.i.d.~Bernoulli entries with parameter $p \leq 1/2$. For any $c$,
$$\Pr\left[v^\top z=c\right] \leq \left(\frac{1}{\sqrt{\pi/2}}\right)\frac{1}{\sqrt{mp}}+\left(e^{(\ln(2)-1)mp}\right).$$
In particular, for $mp\geq 9$, we have:
$$\Pr\left[ v^\top z=c\right] \leq \frac{1}{\sqrt{mp}}.$$
\end{restatable}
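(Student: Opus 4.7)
The plan is to reduce this sparse Littlewood-Offord-type anti-concentration bound to the classical Erd\H{o}s Littlewood-Offord inequality for Rademacher sums, by extracting independent $\pm 1$ bits from pairs of Bernoulli coordinates, and then to control via a Chernoff-type bound how many such bits we successfully extract.

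Concretely, I would first restrict attention to $\mathrm{supp}(v)$, so without loss of generality $v \in \mathbb{R}^m$ has all nonzero entries and $z \in \{0,1\}^m$. Next, partition $[m]$ into $\lfloor m/2 \rfloor$ disjoint pairs $(i_k, j_k)$ chosen so that $v_{i_k} \ne v_{j_k}$ within each pair; the degenerate case in which a single value occurs in a majority of the coordinates of $v$ is handled separately via the local CLT for the binomial. Then I would condition on the pair-sums $s_k := z_{i_k} + z_{j_k} \in \{0,1,2\}$: a pair with $s_k \in \{0,2\}$ contributes deterministically to $v^\top z$, while a pair with $s_k = 1$ (which happens independently across pairs with probability $2p(1-p)$) contributes $v_{i_k}$ or $v_{j_k}$ each with probability exactly $1/2$. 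Writing $T = \{k : s_k = 1\}$, we obtain $|T| \sim \mathrm{Bin}(\lfloor m/2 \rfloor, 2p(1-p))$ and, conditional on $T$,
\begin{equation*}
v^\top z \;=\; C_T \;+\; \sum_{k \in T} \xi_k \cdot \tfrac{v_{i_k} - v_{j_k}}{2},
\end{equation*}
where $C_T$ is a constant, the $\xi_k$ are i.i.d.\ Rademacher signs, and all coefficients are nonzero by our choice of pairing.

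The Erd\H{o}s Littlewood-Offord inequality then gives, for each fixed $T$ with $|T| \geq 1$,
\begin{equation*}
\Pr\left[v^\top z = c \,\middle|\, T\right] \;\leq\; \binom{|T|}{\lfloor |T|/2 \rfloor} \, 2^{-|T|} \;\leq\; \sqrt{\tfrac{2}{\pi |T|}}.
\end{equation*}
Taking expectation over $T$ and splitting according to whether $|T|$ is of order $mp$ or atypically small produces the leading term $\sqrt{2/(\pi mp)} = (1/\sqrt{\pi/2})\cdot 1/\sqrt{mp}$, while the binomial lower tail $\Pr[|T|\text{ small}]$ is controlled by a standard Chernoff/MGF computation starting from $\mathbb{E}[e^{-\lambda |T|}] = (1 - 2p(1-p)(1 - e^{-\lambda}))^{\lfloor m/2\rfloor}$ and using $p \leq 1/2$ (so that $2p(1-p) \geq p$). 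An optimization over the Chernoff parameter $\lambda$ yields the specific exponential constant $\ln(2) - 1$.

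The main obstacle is not any single step in isolation but the constant-bookkeeping required to hit exactly $\sqrt{2/\pi}$ in the main term and $\ln(2) - 1$ in the exponent, rather than some asymptotically equivalent but looser constants. The concluding ``in particular'' clause, that the two terms combine into $1/\sqrt{mp}$ whenever $mp \geq 9$, then follows from a direct numerical check at $mp = 9$ together with the monotonicity of both terms in $mp$.
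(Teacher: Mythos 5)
The paper itself does not prove this lemma; it cites it as Lemma 8.2 of Costello--Vu (\cite{cv_graphs}). So there is no in-paper proof to compare against, and I am evaluating your proposal on its own terms. Your high-level framework is the right one: extract a random set $T$ of ``live'' coordinates, apply the Erd\H{o}s Littlewood--Offord bound $\binom{|T|}{\lfloor |T|/2\rfloor}2^{-|T|}\le\sqrt{2/(\pi|T|)}$ conditionally on $T$, and control $\Pr[|T|\text{ small}]$ by Chernoff. Your MGF optimization giving $\ln 2 - 1$ is also exactly the right calculus. But the specific pairing construction cannot yield the stated constants, and this is a genuine gap, not mere bookkeeping.

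The problem is that your $|T|\sim\mathrm{Bin}(\lfloor m/2\rfloor,\,2p(1-p))$ has mean at most $mp(1-p)\le mp$, whereas the target main term $\sqrt{2/(\pi mp)}$ requires $|T|\ge mp$ to hold with all-but-exponentially-small probability. Since $mp$ is at or above the mean of $|T|$, $\Pr[|T|<mp]$ is $\Theta(1)$, not exponentially small, so the second term blows up. Quantitatively, at $p=1/2$ the best the pairing gives is $\mathbb{E}\sqrt{2/(\pi|T|)}\approx\sqrt{8/(\pi m)}=2\sqrt{2}/\sqrt{\pi m}$, which exceeds the target $\sqrt{2/\pi}/\sqrt{mp}=2/\sqrt{\pi m}$ by a factor of $\sqrt{2}$; this loss cannot be recovered by re-tuning the split point, since the surplus in the polynomial term is not exponentially small. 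The ``degenerate case'' is also glossed over: if a single value occurs in $t>m/2$ of the support coordinates you can only form $m-t$ useful pairs, and when $m/2<t<m$ neither the pairing argument nor a pure binomial local CLT suffices on its own. The fix is to abandon pairing in favor of the multiplicative decomposition available precisely because $p\le 1/2$: write $z_i=\eta_i\epsilon_i$ with $\eta_i\sim\mathrm{Ber}(2p)$ and $\epsilon_i\sim\mathrm{Ber}(1/2)$ independent, restrict to $\mathrm{supp}(v)$, and set $T=\{i:\eta_i=1\}$. Then $|T|\sim\mathrm{Bin}(m,2p)$ has mean $2mp$; conditionally on $T$, $v^\top z=\sum_{i\in T}v_i\epsilon_i$ with all coefficients nonzero, so LO applies directly. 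Splitting at $|T|<mp$ (now half the mean) gives
$$\Pr[v^\top z=c]\le\sqrt{\tfrac{2}{\pi mp}}+\Pr\bigl[\mathrm{Bin}(m,2p)\le mp\bigr]\le\sqrt{\tfrac{2}{\pi mp}}+e^{(\ln 2-1)mp},$$
where the Chernoff step is exactly your MGF computation applied to $\mathrm{Bin}(m,2p)$ at threshold $mp$. This hits both constants, requires no assumption on which values $v$ takes, and makes the degenerate case vanish because $|T|$ depends only on the support of $v$.
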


In the symmetric matrix case, we will additionally use a quadratic Littlewood-Offord result, originally due to Costello, Tao, and Vu~\cite{symmetric}, and adapted to sparse random vectors in \cite{cv_graphs}. 

\begin{lemma}[c.f. \cite{cv_graphs} Lemma 8.4]\label{lo_quadratic}
Let $M \in \mathbb{R}^{n \times n}$ be a deterministic matrix with a least $m$ non-zero entries in each of $m$ distinct columns of $M$. Let $z \in \mathbb{R}^n$ be the random vector with i.i.d.~Bernoulli entries with parameter $p \leq 1/2$. Then for any fixed $c$,
$$\Pr\left(z^\top Mz = c\right) = O\left(\frac{1}{(mp)^{1/4}}\right).$$
\end{lemma}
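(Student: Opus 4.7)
The plan is to reduce this quadratic anti-concentration bound to the linear Littlewood-Offord lemma (Lemma \ref{lo_sparse}) via the decoupling-and-squaring strategy of Costello, Tao, and Vu. First, randomly partition $[n] = I \sqcup J$ by independent fair coin tosses, write $z = (x, y)$ with $x = z|_I$ and $y = z|_J$, and expand
$$z^\top M z = x^\top M_{II} x + x^\top M_{IJ} y + y^\top M_{JI} x + y^\top M_{JJ} y.$$
Each of the $\geq m$ nonzero entries in a distinguished column of $M$ lands in the $I \times J$ block $M_{IJ}$ with probability $1/4$, so by a Chernoff bound, with probability $\Omega(1)$ over the partition there exist $\Omega(m)$ distinct columns of $M_{IJ}$ (indexed by $j \in J$) each containing $\Omega(m)$ nonzero entries; condition on such a favorable partition.

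Second, apply the squaring/decoupling trick in $y$. Letting $y'$ be an independent copy of $y$,
$$\Pr(z^\top M z = c)^2 \;\leq\; \E_x\!\left[\Pr\!\bigl(f(x, y) = f(x, y') \,\big|\, x\bigr)\right],$$
where $f(x,y) := z^\top M z$. Setting $\delta := y - y'$ and $\tau := y + y'$, the difference $f(x,y)-f(x,y')$ simplifies to a linear form $v(x, \tau)^\top \delta$ in $\delta$ with coefficient vector $v(x, \tau) := (M_{IJ}^\top + M_{JI}) x + (M_{JJ}^{\on{sym}})^\top \tau$, where $M_{JJ}^{\on{sym}} := \tfrac{1}{2}(M_{JJ} + M_{JJ}^\top)$. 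Conditioning additionally on $\tau$, the entries of $\delta$ on the random set $T := \{j \in J : \tau_j = 1\}$ are i.i.d.\ uniform $\pm 1$ and vanish elsewhere, so the event reduces to $\sum_{j \in T} v(x, \tau)_j \delta_j = 0$.

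The crux is to show that for a typical pair $(x, \tau)$, the restriction of $v(x, \tau)$ to $T$ has $\Omega(mp)$ nonzero coordinates; the classical Erd\H{o}s-Littlewood-Offord bound on the resulting $\pm 1$ sum then gives $O((mp)^{-1/2})$ for the inner probability, whose square root yields the claimed $O((mp)^{-1/4})$. To establish this support lower bound, focus on the $\Omega(m)$ distinguished columns of $M_{IJ}$ produced in the first step: for each such column $j$, the coefficient $(M_{IJ}^\top x)_j$ is the inner product of $x$ with a vector of support $\Omega(m)$, which by Lemma \ref{lo_sparse} equals any specific value (in particular, cancels against the $\tau$-contribution) with probability at most $O((mp)^{-1/2})$, while independently $\tau_j = 1$ with probability $2p(1-p) = \Theta(p)$. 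A first/second-moment computation then guarantees that with probability $1 - o(1)$ over $(x, \tau)$, at least $\Omega(mp)$ of these distinguished indices simultaneously lie in $T$ and have $v(x, \tau)_j \neq 0$. The main technical obstacle is precisely this step: one must verify that the column-sparsity hypothesis on $M$ survives the cascade of restrictions (to the cross block $M_{IJ}$, to the random support $T$, and to the random evaluation at $x$) so that enough anti-concentration remains to drive the classical Littlewood-Offord bound in the final step, and it is the initial random partition plus Chernoff that delivers the slack needed at each stage.
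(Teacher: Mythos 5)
The paper cites this result from Costello and Vu (Lemma 8.4 of~\cite{cv_graphs}) without proof, so I am assessing your reconstruction on its own merits. The overall plan is correct and is indeed the Costello--Tao--Vu decoupling argument underlying the cited lemma: random partition, squaring via Cauchy--Schwarz, rewriting $f(x,y)-f(x,y')$ as a linear form $\delta^\top v(x,\tau)$, conditioning on $\tau$ so that $\delta|_T$ is i.i.d.\ $\pm1$, then invoking linear anti-concentration. The algebra and the computation $\Pr[\tau_j=1]=2p(1-p)$ are right. The genuine gap is in the step you yourself flag as the obstacle, and it is twofold. First, ``with probability $1-o(1)$'' is the wrong quantitative target: after squaring you are bounding $\E_{x,\tau}\bigl[\Pr(\delta^\top v = 0 \mid x,\tau)\bigr]$, and the contribution from the bad event --- fewer than $\Omega(mp)$ nonzero coordinates of $v|_T$ --- must itself be $O\bigl((mp)^{-1/2}\bigr)$, not merely $o(1)$, or the exponent $-1/4$ is not recovered. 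Second, a Chebyshev-style second-moment bound on $N := \#\{j : j \in T,\ v_j \neq 0\}$ is not readily available: the events $\{v_j \neq 0\}$ are all functions of the same $x$ and are correlated, so the variance has $\Theta(m^2)$ cross-terms you have not controlled. The step that actually closes the argument is a first-moment/Markov argument, not a variance one: Chernoff over the independent $\tau_j$'s gives $|T \cap \{\text{distinguished}\}| = \Omega(mp)$ except with probability $e^{-\Omega(mp)}$; for each of those indices Lemma~\ref{lo_sparse} gives $\Pr[v_j = 0 \mid \tau] = O((mp)^{-1/2})$, so the expected number of vanishing coordinates among them is $O((mp)^{1/2})$, and Markov bounds the probability that more than half of them vanish by $O((mp)^{-1/2})$. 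You need to spell that out; as written, the key step is asserted rather than proved.

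A secondary issue: the $x$-coefficient appearing in $v_j$ is $M_{ij}+M_{ji}$, not $M_{ij}$, and for a general asymmetric $M$ these can cancel even when column $j$ is dense, which would invalidate the support hypothesis you feed to Lemma~\ref{lo_sparse}. The original Costello--Vu statement assumes $M$ symmetric (so $M_{ij}+M_{ji}=2M_{ij}$); the version quoted in this paper silently drops that assumption, and your proof as written needs either to reinstate it or to verify that the density survives passing to the symmetrization $\tfrac{1}{2}(M+M^\top)$.
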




Again we subdivide into two cases.
\paragraph{\bf{Large Case 1}}
This case rules out with high probability minimal dependencies of $k$ columns of $B$ for $n/d \lesssim k \leq n/c$, for some constant $c$. In this case, we again perform a union bound over all submatrices $B_S$ for sets $S$ of size $k$. For each $S$, we consider the random process $B^{[1]}_S, B^{[2]}_S, \cdots B^{[n]}_S = B_S$, where we expose rows of $B_S$ one at a time. (Recall that $B^{[i]}_S$ is the submatrix of $B$ given by restricting to the columns in $S$ and the first $i$ rows.) At each step $i$, we keep track of the kernel of $B_S^{[i]}$. Before any rows are exposed, the dimension of the kernel is equal to $k$. Since our goal is to show that the kernel of $B_S$ contains no vectors with support size $k$, we leverage Lemma~\ref{lo_sparse} as follows: if the kernel of $B^{[i]}_S$ contains a vector $v$ with support $k$, then it is likely that the next row exposed, $B_S^{\{i+1\}}$, will \em not \em be orthogonal to $v$. After exposing enough rows, we show that with high probability, we ``knock out'' all candidate kernel vectors with large support. Notice that we only need to ``knock out'' such vectors at most $k$ times.

This approach of Large Case 1 breaks down when $k$ approaches $n$. For instance, in the extreme case when $k = n$, we would need to ``knock-out'' a kernel vector \em every single time \em we expose a row of $B$. This certainly won't occur with high probability. Hence we use a different approach when $k \geq n/c$ for some constant $c$.  

\paragraph{\bf{Large Case 2}}
Our strategy for ruling out dependencies on the order of $\Theta(n)$ columns is inspired by the approach of Ferber, Kwan, Sauermann~\cite{simple} to show the almost sure non-singularity of $B$ when $p > \log(n)/n$. For the symmetric matrix $A$, we combine these ideas with some techniques used by Costello, Tau, and Vu in~\cite{symmetric} to show almost sure non-singularity of $A$ when $p = 1/2$. The following simple observation is key to our argument for both $A$ and $B$.
\begin{observation}\label{obs_3}
Let $M \in \mathbb{R}^{n \times n}$ be a matrix. If $Mv = 0$, and $v$ has support size $k$, then there are at least $k$ columns $M_i$ such that $M_i \in \on{Span}(\{M_j\}_{j \neq i})$.
\end{observation}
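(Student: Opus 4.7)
The plan is to prove this observation by directly rearranging the linear dependence equation $Mv = 0$. This is essentially immediate, so the ``proof'' is really just unpacking definitions.

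First I would let $S = \on{supp}(v) \subseteq [n]$, so that $|S| = k$ by assumption. The equation $Mv = 0$ then reads
\[
\sum_{j \in S} v_j M_j = 0.
\]
Next, for each fixed index $i \in S$, since $v_i \neq 0$, I would isolate the $i$th term and divide through by $v_i$ to obtain
\[
M_i = -\sum_{j \in S \setminus \{i\}} \frac{v_j}{v_i} M_j,
\]
which exhibits $M_i$ as an explicit linear combination of $\{M_j\}_{j \in S \setminus \{i\}} \subseteq \{M_j\}_{j \neq i}$. In particular $M_i \in \on{Span}(\{M_j\}_{j \neq i})$.

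Since this applies to every $i \in S$, and $|S| = k$, this produces (at least) $k$ distinct column indices with the desired property, proving the claim. There is no real obstacle here; the only thing to notice is that the indices producing witnesses come from $\on{supp}(v)$ itself, which is precisely why the count is $k$ rather than something smaller.
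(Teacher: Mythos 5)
Your proof is correct and is exactly the intended argument; the paper states this observation without proof precisely because isolating the $i$th term of $\sum_{j\in\supp(v)} v_j M_j = 0$ and dividing by $v_i\neq 0$ is the whole content. Nothing to add.
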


For the asymmetric case, recall that our goal is to show that either $B$ or $B^\top$ has no kernel vector with support size at least $n/c$. Let $\mathcal{E}_i$ be the event that $B$ has a \em left \em kernel vector with support size at least $n/c$, \em and \em that column $B_i$ is in the span of the remaining columns $\{B_j\}_{j \neq i}$. Then by Markov's Law and Observation~\ref{obs_3}, the probability that both $B$ and $B^\top$ have kernel vectors with support size at least $n/c$ is at most $c\mathbb{P}[\mathcal{E}_i]$. To bound this probability, consider the matrix $B_{/i}$ formed by removing the $i$th column of $B$. We define $\mathcal{F}_i$ to be the event that $B_{/i}$ has a left kernel vector with support size at least $n/c$, \em and \em that column $B_i$ is in the span of the remaining columns $\{B_j\}_{j \neq i}$. Observe that any left kernel vector of $B$ is a left kernel vector of $B_{/i}$, and so $\mathcal{E}_i \subseteq \mathcal{F}_i$. To bound the probability of $\mathcal{F}_i$, we condition on $B_{/i}$ and  leverage the independence of $B_i$ from $B_{/i}$. Assume $B_{/i}$ has some left kernel vector $v$ with $|\on{supp}(v)| \geq n/c$. Lemma~\ref{lo_sparse} tells us that with probability $1 - O(1/\sqrt{d})$, the column $B_i$ is not orthogonal to $v$, and hence $B_i$ is not in the span of the remaining columns $\{B_j\}_{j \neq i}$. It follows that $\Pr[\mathcal{E}_i] \leq \Pr[\mathcal{F}_i] = O(1/\sqrt{d})$.

In the symmetric case, we let $\mathcal{E}_i$ be the event that the column $A_i$ is in the span of the remaining columns $\{A_j\}_{j \neq i}$. Again we use Markov's Law and Observation~\ref{obs_3} to show that the probability that $A$ has a kernel vector of support size of at least $n/c$ is at most $c\mathbb{P}[\mathcal{E}_n]$. To bound $\mathbb{P}[\mathcal{E}_n]$, we condition on $A^{(n)}$, the matrix formed by removing the $n$th row and column of $A$, and leverage the randomness of $A_n$. 

We consider two main cases: one in which $A^{(n)}$ has a kernel vector with large support, and one in which it doesn't. If $A^{(n)}$ has a kernel vector $v$ with large support (on the order of $\Theta(n)$), we use Lemma~\ref{lo_sparse} to show that with probability $1 - O(1/\sqrt{d})$, $A_n^{/n}$ is orthogonal to $v$, and hence $A_n$ is not in the span of the remaining columns $\{A_j\}_{j \neq n}$ (recall that $A_n^{/n}$ denotes the vector with all but the $n$th entry $A_n$). If $A^{(n)}$ has no kernel vectors with large support, we construct a ``pseudoinverse'' $P$ for $A^{(n)}$, for which $(A_n^{/n})^\top PA_n^{/n} \neq 0$ implies that $A_n$ is not in the span of $\{A_j\}_{j \neq n}$. We leverage some additional results from the small domain and Large Case 1 to show that $P$ is dense. Lemma~\ref{lo_quadratic} then guarantees that $(A_n^{/n})^\top PA_n^{/n} \neq 0$ with probability $1 - O(1/\sqrt[4]{d})$.

The results for the large domain are proved in Section~\ref{sec:large}.

\begin{remark}
The roadmap of this proof shows the two places where our techniques break down when $d$ is a constantFirst, in Small Case 1, our union bound only succeeds in ruling out minimal dependencies outside of $\mathcal{T}_k$ with probability $e^{-\Theta(dk)}$. When $k$ and $d$ are constant, this probability no longer goes to $0$. One can check that with constant probability, there are some minimal dependencies outside $\mathcal{T}_k$, but they are contained in two other classes of special structures, which we formally define in Section~\ref{sec:small} as $\mathcal{T}_k^+$ and $\mathcal{T}_k^C$ (pictured in Figure~\ref{fig:dependencies_char}).
Second, our results for Large Case 2, based on the Littlewood-Offord lemmas, require $d$ to go to infinity to get a result which holds with probability that tends to $1$.
\end{remark}

\subsection{Overview of Theorem~\ref{thm:ks} and Theorem~\ref{thm:ksB}} Theorems~\ref{thm:ks} and \ref{thm:ksB} follow directly by combining two linear-algebraic claims about the Karp-Sipser leaf-removal process with the results of Theorems~\ref{char:square} and \ref{char:bgc}. Specifically, to prove Theorem~\ref{thm:ks} on the rank of $A$, we use the following three claims:
\begin{enumerate}
    \item The corank of a matrix is invariant under the Karp-Sipser leaf removal process.
    \item If the Karp-Sipser core $A_{\on{KS}}$ has a vector in $x$ its kernel, then there must be a kernel vector $y$ of $A$ whose support contains the support of $x$.
    \item If $Ay = 0$, then by the characterization of minimal dependencies in Theorem~\ref{char:square}, for all $i \in \on{supp}(y)$, the vertex $i$ will be removed during the Karp-Sipser leaf-removal process or become isolated after this process. Thus, vertex $i$ will not be in the Karp-Sipser core of $A$.
\end{enumerate}
The proof of Theorem~\ref{thm:ksB} on the rank of $B$ is similar. These claims are proved in Section~\ref{sec:ks}. 

\subsection{Overview of Theorem~\ref{thm:core}}
To prove Theorem~\ref{thm:core}, we need to show that the kernel of $\on{core}_k(A)$ is empty. We use the same four cases as in the proof of Theorems~\ref{char:square} and \ref{char:bgc} above. 

The techniques used to rule out minimal dependencies in each of these cases is largely the same as before. In the small domain, we leverage the fact that any subset of $\ell$ columns of $\on{core}_k(A)$ for $k \geq 3$ must have at least $3\ell$ $1$'s.

In Large Case 2, we adapt our argument in a more involved way. We define $\mathcal{E}_i$ to be the event that $\on{core}_k(A)_i$ is in the span of the remaining columns of $\on{core}_k(A)$ \em and \em that the $k$-core is ``normal'' with respect to vertex $i$. Formally, we say the $k$-core is normal with respect to $i$ if vertex $i$ is in the $k$-core, and if we remove vertex $i$ from the $k$-core, the remaining graph still has minimum degree at least $k$. Equivalently and symbolically, this means
$$V(\text{core}_k(A)) = i \cup V(\text{core}_k(A^{(i)})).$$

We proceed to show that the $k$-core is normal with respect to all but $o(n)$ vertices. It follows from Observation~\ref{obs_3} and Markov's Law, that the probability that $\on{core}_k(A)$ has a kernel vector with support size at least $n/c$ is at most $c\Pr[\mathcal{E}_n] + o(1)$. 

We can bound $\Pr[\mathcal{E}_n]$ by conditioning on $A^{(n)}$ and leveraging the independent randomness of $A_n$. In a similar vein as before, we consider two main cases: one in which $\text{core}_k(A^{(n)})$ has a kernel vector with large support, and one in which it doesn't. The logic is similar as before.

We prove these results in Section~\ref{sec:core}.

\section{The Small Domain}\label{sec:small}
\subsection{Definitions and Lemmas for Characterizing Minimal Dependencies}
We gather here a few definitions and lemmas that we use in the small domain. We summarize notation in Table~\ref{NotationTable}.

\begin{remark}
While it is sufficient for our main results to consider only tree dependencies (Definition~\ref{def:tree}), we state our results in the small domain to hold for $d \geq d_0$, for $d_0$ a universal constant. For constant values of $d$, with constant probability, $A$ and $B$ include small minimal dependencies that are not tree dependencies, defined below. We believe understanding these dependencies may be of independent interest.
\end{remark}

\begin{table}[t]
\begin{center}
\begin{tabular}{| c | c | }
  \hline 
  $\mathcal{M}_k$    & Set of minimal dependencies with $k$ columns (Definition~\ref{def:minimal}) \\
  \hline
  $\mathcal{T}_k$    & Set of tree dependencies of $k$ columns (Definition~\ref{def:tree}, Figure~\ref{fig:dependencies_char}(a)) \\
  \hline
  $\mathcal{T}_k^+$    & Set of two-forest dependencies of $k$ columns (Definition~\ref{def:forest}, Figure~\ref{fig:dependencies_char}(b)) \\
  \hline
  $\mathcal{T}_k^C$    & Set of tree-with-added-edge dependencies of $k$ columns (Definition~\ref{def:cycle}, Figure~\ref{fig:dependencies_char}(c)) \\
  \hline
$\mathcal{S}_{L, k}$    & Subset of matrices  in $\mathcal{M}_k$ with exactly $L$ ones. (Definition~\ref{def:Sell}) \\
  \hline
  $\mathcal{S}_{L, k}'$    & Subset of matrices  in $\mathcal{M}_k$ with exactly $L$ ones and exactly $k$ non-zero columns. (Definition~\ref{def:Sellprime}) \\
  \hline
\end{tabular}
\caption{Notation in this work}\label{NotationTable}
\end{center}
\vspace{-0.8cm}
\end{table}

\begin{definition}[Two-forest dependency]\label{def:forest}
Let $\mathcal{T}_k^+$ be the set of matrices $M\in \bigcup_m \{0,1\}^{m\times k}$ with exactly $2k - 1$ $1$'s satisfying the following:

\begin{enumerate}
    \item $M$ has $k-1$ non-zero rows: $k-2$ rows supported on two entries and one row supported on three entries.
    \item The submatrix of $M$ restricted to the rows of support $2$ is the edge-vertex incidence matrix of a forest $F$ with two connected components $F_1,F_2$.
    \item The row of support size three contains $1$'s at column indices $a,b,c$ where $a,b \in F_i$ are connected by an even-length path, and $c \in F_j$ for $\{i, j\} = \{1, 2\}$.
\end{enumerate}
\end{definition}

\begin{definition}[Tree-with-added-edge dependency]\label{def:cycle}
Define $\mathcal{T}_k^C$ as the set of matrices $M\in \bigcup_m \{0,1\}^{m\times k}$ with $2k$ $1$'s such that the non-zero columns of $M$ form the edge-vertex incidence matrix of a tree on $k$ vertices, with an added edge between two vertices in the tree of odd distance from each other. The additional edge may create a multi-edge in the this graph.
\end{definition}

The first lemma allows us to show that the minimal dependencies we encounter with constant probability have the graph-structures depicted in Figure~\ref{fig:dependencies_char}.
\begin{definition}\label{def:Sell}
Define $\mathcal{S}_{L,k}$ to be the set of matrices $M \in \mathcal{M}_k$ which contain exactly $L$ ones. 
\end{definition}

\begin{definition}\label{def:Sellprime}
Define $\mathcal{S}_{L, k}' \subset \mathcal{S}_{L, k}$ to be the subset of matrices in $\mathcal{S}_{L, k}$ which have exactly $k$ non-zero rows. 
\end{definition}

The following lemma relates these sets to the sets $\mathcal{T}_k, \mathcal{T}_k^+$ and $\mathcal{T}_k^C$ introduced earlier in Definitions~\ref{def:tree}, \ref{def:forest}, and \ref{def:cycle}.

\begin{restatable}[Classification of Dependencies]{lemma}{lemmaclassification}\label{lemma:classification}
We have the following three equivalences:
\begin{enumerate}
    \item $\mathcal{S}_{2k-2, k} = \mathcal{T}_k$.
    \item $\mathcal{S}_{2k-1, k} = \mathcal{T}_k^+$.
    \item $\mathcal{S}_{2k, k}' = \mathcal{T}_k^C$.
\end{enumerate}
\end{restatable}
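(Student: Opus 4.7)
The plan is to treat all three parts via a uniform template. The core setup is a row-count bookkeeping argument: by Observation~\ref{observation1}, every non-zero row of any $M \in \mathcal{M}_k$ carries at least two $1$'s, and $\rank(M)=k-1$ forces at least $k-1$ non-zero rows. Combined with the exact number $L$ of $1$'s in $M$, this pins down the row structure. For $L=2k-2$ we get exactly $k-1$ rows of support $2$; for $L=2k-1$ we get $k-2$ rows of support $2$ and one row of support $3$; for $L=2k$ with exactly $k$ non-zero rows, all $k$ rows have support $2$. In each case the support-$2$ rows can be interpreted as the edge-vertex incidence matrix of a graph $G$ on $k$ vertices (its columns), possibly with isolated vertices.

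The main tool is the classical rank formula for the unsigned incidence matrix of a graph over $\mathbb{R}$: for a graph on $k$ vertices with $I$ isolated vertices and $\beta$ bipartite non-isolated components, the rank equals $k-I-\beta$, and the kernel decomposes as a direct sum of $1$-dimensional pieces, one alternating $\pm 1$ assignment per bipartite non-isolated component and one free coordinate per isolated vertex (non-bipartite components contribute trivially). Imposing $\rank(M)=k-1$ and $\supp(x)=[k]$ then rigidly determines the graph structure. For part~(1), this forces $I=0$, $\beta=1$, and no non-bipartite components, so $G$ is a single bipartite component on $k$ vertices with $k-1$ edges, i.e.\ a tree on $k$ vertices. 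For part~(3), the same reasoning with $k$ edges forces $G$ to be a connected bipartite graph on $k$ vertices with $k$ edges, which is exactly a tree plus one extra edge whose endpoints lie at odd tree-distance (so the induced cycle has even length), matching Definition~\ref{def:cycle}. The forward inclusions in both parts are immediate: the bipartition of the underlying (bipartite, connected) graph gives the alternating kernel vector with full support, and the rank is $k-1$ by the formula.

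For part~(2), let $F$ be the graph formed by the $k-2$ support-$2$ rows, and split on $\delta \in \{0,1\}$, the rank gain from adjoining the support-$3$ row. The case $\delta=0$ forces (by the same argument as in (1)) $F$ to be a connected bipartite graph on $k$ vertices with $k-2$ edges, which is impossible. So $\delta=1$ and $I+\beta=2$; the full-support condition rules out non-bipartite components of $F$, and together with the edge count this forces $F$ to be a forest with exactly two tree components $F_1 \sqcup F_2$ (possibly one being an isolated vertex) covering all $k$ vertices. Writing the $2$-dimensional kernel of $F$'s incidence matrix as $(\alpha_1,\alpha_2)$ (alternating $\pm 1$ on $F_i$, or simply the free coordinate when $F_i$ is a single vertex), the support-$3$ row imposes a single linear equation on $(\alpha_1,\alpha_2)$; a direct case check on the locations of the three $1$'s shows that this equation has a $1$-dimensional solution space with both $\alpha_i \neq 0$ precisely when two of the $1$'s lie in the same bipartition class of one $F_i$ (equivalently, at even distance) and the third lies in the other component. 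This matches Definition~\ref{def:forest}.

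The main obstacle is the bookkeeping in part~(2): one must systematically rule out the sub-configurations---three $1$'s in a single component, a same-component pair connected by an odd-length path, isolated-vertex configurations with the pair landing in the wrong place, and so on---in which the kernel fails to have support $[k]$. Each such sub-case reduces to a short explicit computation in the $(\alpha_1,\alpha_2)$ parameterization, so the difficulty is organizational rather than technical.
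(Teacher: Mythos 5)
Your argument is correct, and it takes a genuinely different route from the paper. The paper breaks the lemma into three separate sub-lemmas (one per equality), proves each inclusion by hand, and the key tool for the reverse inclusions is a bespoke ``connectivity of minimal dependencies'' lemma (Lemma~\ref{lemma:connect}) together with ad hoc nullity bounds (e.g., arguing at most two components in part (2) because three components would give nullity at least $3$). The forward inclusions are done by explicitly constructing the alternating kernel vector from tree paths. You instead invoke the classical rank formula for the unsigned incidence matrix of a (multi)graph over $\mathbb{R}$ --- $\mathrm{rank} = k - I - \beta$, with the kernel decomposing as one free coordinate per isolated vertex and one alternating $\pm 1$ vector per bipartite non-isolated component --- and let that single fact drive all three parts. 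The trade-off is clean: your route is more conceptual and unified, with the full-support condition and the edge count directly forcing $(I,\beta)=(0,1)$ (parts (1), (3)) or $(I,\beta)$ summing to $2$ with $F$ a two-tree forest (part (2)), at the cost of citing a standard algebraic graph theory fact; the paper's route is longer but self-contained, proving its connectivity lemma from scratch. Both handle the delicate sub-cases of part (2) (same-component pair at odd vs.\ even distance, all three $1$'s in one component, isolated-vertex degeneracies) via essentially the same $(\alpha_1,\alpha_2)$-constraint bookkeeping, and both correctly absorb the multi-edge possibility in part (3) as a $2$-cycle, which is even. One small note: when you write that in part (1) ``this forces $I=0$, $\beta=1$,'' be explicit that the alternative $(I,\beta)=(1,0)$ is killed by the full-support requirement (for $k\ge 2$), not by the rank condition alone --- you do gesture at this (``imposing $\ldots$ and $\supp(x)=[k]$''), but it is worth stating plainly since the rank formula by itself does not distinguish the two cases.
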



Lemma~\ref{lemma:classification} is proved in Appendix~\ref{apx:matrices}. The main step in its proof is the following ``connectivity'' lemma, also proved in Appendix~\ref{apx:matrices}.

\begin{restatable}[Connectivity of Minimal Dependencies]{lemma}{lemmaconnect}\label{lemma:connect}
Let $M \in \mathcal{M}_k$, and let $G$ be the hypergraph on $k$ vertices with hyperedge-vertex incidence matrix $M$, where the rows of $M$ index hyperedges, and the columns index vertices. Then all vertices in $G$ are connected.
\end{restatable}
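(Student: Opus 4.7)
The plan is to prove the contrapositive: if the hypergraph $G$ associated to $M$ is disconnected, then $M$ cannot be a minimal linear dependency. Suppose therefore that $G$ has at least two connected components, and pick any partition $[k] = S_1 \sqcup S_2$ with both $S_1, S_2 \neq \emptyset$ such that no hyperedge of $G$ contains vertices in both parts. By definition of the incidence matrix, this means that every row of $M$ is supported either entirely on the columns indexed by $S_1$ or entirely on the columns indexed by $S_2$.

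After permuting rows, $M$ has block-diagonal form
\[
M = \begin{pmatrix} M_1 & 0 \\ 0 & M_2 \end{pmatrix},
\]
where $M_1$ has its $|S_1|$ columns indexed by $S_1$ and $M_2$ has its $|S_2|$ columns indexed by $S_2$. Now take the vector $x \in \mathbb{R}^k$ from the definition of minimal dependency, which satisfies $Mx = 0$ and $\supp(x) = [k]$. Writing $x = (x_{S_1}, x_{S_2})$, the block structure of $M$ forces $M_1 x_{S_1} = 0$ and $M_2 x_{S_2} = 0$ separately. Since $\supp(x) = [k]$, both $x_{S_1}$ and $x_{S_2}$ are fully supported and in particular nonzero, so each block $M_i$ has a nontrivial kernel. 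Hence $\on{rank}(M_i) \leq |S_i| - 1$ for $i = 1, 2$.

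Using the standard fact that $\on{rank}$ is additive on block-diagonal matrices, this gives
\[
\on{rank}(M) = \on{rank}(M_1) + \on{rank}(M_2) \leq (|S_1| - 1) + (|S_2| - 1) = k - 2,
\]
which contradicts the assumption $\on{rank}(M) = k - 1$ from the definition of $\mathcal{M}_k$. Therefore $G$ must be connected.

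There is no real obstacle here: the argument is a one-step reduction using the block-diagonal structure forced by disconnectedness and the additivity of rank. The only thing to double-check when writing out the formal proof is the mild technicality that the partition of the rows into blocks really is well-defined (rows with empty support can be placed in either block without affecting rank), and that ``full support of $x$'' immediately gives the strict rank-drop $\on{rank}(M_i) \leq |S_i|-1$ on each block.
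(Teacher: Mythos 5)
Your proof is correct and is essentially the paper's argument: both split the full-support kernel vector along the disconnected components to produce two kernel vectors with disjoint supports, then conclude the rank of $M$ is at most $k-2$ (the paper phrases this as $\dim\ker(M)\ge 2$, which is the same by rank--nullity). The block-diagonal bookkeeping is a cosmetic reorganization of the same idea.
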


\subsection{Small Case 1}

Our main goal in the Small Case 1 is to prove the following general proposition, which holds for $d$ as small as $1$. The corollary that follows contains the necessary ingredients from Small Case 1 for proving Theorems~\ref{char:square} and \ref{char:bgc}.

\begin{proposition}[Small Case 1]\label{prop:small}
Let $M$ be any of the following random matrices for $d \geq 1$:
\begin{enumerate}
    \item $M = A$, where $A \sim \mathbb{A}(n, \frac{d}{n})$
    \item $M = A^{[n - 1]}$, where $A \sim \mathbb{A}(n, \frac{d}{n})$
    \item $M = B$, where $B \sim \mathbb{B}(n, \frac{d}{n})$
\end{enumerate}
There exists a universal constant $c_{\ref{prop:small}}$ such that for any set $S \subset [n]$ of size $k\in[1,\frac{n}{8e^4d^2}]$, we have

\begin{enumerate}
    \item $\Pr[M_S \in \mathcal{M}_k \setminus \left(\mathcal{T}_k \cup \mathcal{T}_{k}^+ \cup \mathcal{T}_{k}^C \right)] =\left(e^{- d + c_{\ref{prop:small}}\log(d)}\right)^k\left(\frac{k}{n}\right)^{k+1}$.
    \item  $\Pr[M_S \in \mathcal{T}_k^+]\leq \left(\left(e^{- d+c_{\ref{prop:small}}\log( d)}\right)\left(\frac{k}{n}\right)\right)^k$.
    \item  $\Pr[M_S \in \mathcal{T}_k^C]\leq \left(\left(e^{- d+c_{\ref{prop:small}}\log( d)}\right)\left(\frac{k}{n}\right)\right)^k$.
    \item $\Pr[M_S \in \mathcal{T}_k]\leq  \left(e^{- d+c_{\ref{prop:small}}\log( d)}\right)^k\left(\frac{k}{n}\right)^{k-1}$.
\end{enumerate}

\end{proposition}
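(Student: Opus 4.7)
The plan is to condition on $L$, the number of ones in $M_S$, and invoke \cref{lemma:classification} to identify which minimal dependency class can arise at each level of $L$. By \cref{observation2}, $M_S \in \mathcal{M}_k$ forces $L \geq 2k - 2$, and \cref{lemma:classification} gives $\mathcal{S}_{2k - 2, k} = \mathcal{T}_k$, $\mathcal{S}_{2k - 1, k} = \mathcal{T}_k^+$, and $\mathcal{S}_{2k, k}' = \mathcal{T}_k^C$. So each of items (2)--(4) concerns a specific value of $L$ (respectively $2k - 1$, $2k$, $2k - 2$), with an ``exactly $k$ non-zero rows'' restriction in item (3), while item (1) covers either $L = 2k$ with at most $k - 1$ non-zero rows, or $L \geq 2k + 1$.

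For item (4) I would count directly. By Cayley's formula there are $k^{k-2}$ labeled trees on $k$ vertices, and at most $n^{k - 1}$ ways to injectively assign the $k - 1$ edges to rows of $M$. Each configuration equals $M_S$ with probability at most $p^{2k-2}(1-p)^{mk-(2k-2)} \leq (d/n)^{2k-2} e^{-dk+O(d)}$, where $m$ is the number of rows of $M$. Collecting,
\[
\Pr[M_S \in \mathcal{T}_k] \leq k^{k-2} n^{k-1} (d/n)^{2k-2} e^{-dk} \leq (k/n)^{k-1} \left(e^{-d + 2\log d}\right)^k,
\]
matching item (4) for $c_{\ref{prop:small}} \geq 2$. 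The arguments for items (2) and (3) are parallel: a two-forest on $k$ vertices with one extra triple-row, respectively a tree with one added edge, contributes only a polynomial-in-$k$ multiplicative factor relative to the tree count, while each extra $1$ beyond the $2k - 2$ of a tree costs an extra factor of $d \cdot k / n$ in the expected count (one factor of $p = d/n$ for the new entry and a factor of at most $k$ for its column placement), producing the additional $(k/n)$ factor distinguishing items (2) and (3) from item (4).

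For item (1) I would further enumerate by $R$, the number of non-zero rows. By \cref{observation1} each non-zero row has at least two ones, so $R \leq L/2$, and the exclusion of $\mathcal{T}_k^C$ forces $R \leq k - 1$ when $L = 2k$. I would bound the count of $(L, R)$-configurations by $\binom{m}{R}\binom{Rk}{L}$ and multiply by $p^L (1-p)^{mk - L}$. The dominant term is $L = 2k$, $R = k - 1$; Stirling gives
\[
\binom{n}{k-1}\binom{(k-1)k}{2k} p^{2k} e^{-dk} \lesssim (k/n)^{k+1} \left(e^{-d + O(\log d)}\right)^k.
\]
The hypothesis $k \leq n/(8 e^4 d^2)$ ensures that the sum over $L \geq 2k + 1$ is geometric with ratio $\leq 1/(2e^2)$, so the total is within a constant of the leading term, giving item (1).

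The main obstacle is the symmetric setting $M = A$ (and $M = A^{[n-1]}$), where entries in the $S \times S$ block obey $A_{ij} = A_{ji}$. For a tree dependency placed with some edge-row $i \in S$ and both edge-columns $j, l \in S$, symmetry forces extra $1$'s at $(j, i)$ and $(l, i)$ in $A_S$, which would violate the count of $2k - 2$ ones unless they are absorbed by other edges. Rather than track the matching conditions exactly, I would observe that restricting to trees whose edge-rows lie outside $S$ is an upper bound on the symmetric tree count, and under our range of $k$ this restriction only sacrifices a $(1 - k/n)^{k - 1} = 1 - o(1)$ factor. Together with an analogous correction for the residual enumeration (the number of independent Bernoulli entries in $A_S$ is $nk - \binom{k+1}{2}$ rather than $nk$), the same bounds carry over to the symmetric and near-symmetric cases, up to a constant absorbed into $c_{\ref{prop:small}}$.
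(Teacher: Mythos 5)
Your approach is a genuinely different route from the paper's. The paper reveals the ones of $M_S$ one at a time and tracks a random walk that increments when a one lands in the symmetric below-diagonal block or lands in an already-occupied asymmetric row; stochastic domination by a binomial plus a summing lemma (Lemma~\ref{masterlemma:small}) then gives the four bounds simultaneously. You instead enumerate placements directly (Cayley's count of trees, an injection into rows, etc.) and sum Bernoulli probabilities. Both routes are plausible, and for the purely asymmetric $M = B$ your arithmetic comes out to the right order of magnitude.

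The gap is in the symmetric cases $M = A$, $M = A^{[n-1]}$, and it is more serious than a constant-factor correction. Your key step is the sentence ``restricting to trees whose edge-rows lie outside $S$ is an upper bound on the symmetric tree count.'' That claim is unproved, and as stated it does not follow. For $\mathcal{T}_k$ it happens to be true -- and in fact an equality -- but for a nontrivial reason you would need to supply: if any tree edge-row $i$ lies in $S$, then by symmetry rows $a_i,b_i \in S$ are forced nonzero, so the rows of $S$ that are nonzero encode edges with both endpoints again in that same set, giving as many edges as vertices, hence a cycle, contradicting the tree structure. Nothing like this holds for $\mathcal{T}_k^C$ or for $\mathcal{M}_k \setminus (\mathcal{T}_k \cup \mathcal{T}_k^+ \cup \mathcal{T}_k^C)$: for instance, if the induced subgraph $G[S]$ is a single odd cycle $v_1 v_2 \cdots v_k v_1$, then the $k\times k$ symmetric block of $A_S$ \emph{is} already a valid element of $\mathcal{T}_k^C$ with all $k$ edge-rows inside $S$. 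Those placements use only $k$ independent Bernoulli variables to produce $2k$ ones, so their per-configuration probability is $p^{k}$, not $p^{2k}$; they are \emph{more} likely, not less, per placement, so an ``upper bound'' by injecting into asymmetric placements does not follow from a count comparison alone. One can check that the number of such symmetric configurations is small enough (on the order of $(k-1)!/2$, and Stirling makes the total contribution comparable to the asymmetric one), but that is a separate estimate you would need to carry out; the sentence as written skips it. The factor $(1-k/n)^{k-1}$ you invoke is also not doing what you claim -- it is a ratio of available rows, not a probability correction, and does not justify transferring an independent-Bernoulli bound to a block where entries are pairwise tied. In the paper's argument this issue simply does not arise: the random walk is explicitly charged for each one exposed in $E_{\mathrm{SymBD}}$, which is precisely the accounting your enumeration omits.
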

Union bounding over the sets $S$ yields the following corollary when $d = \omega(1)$.
\begin{corollary}\label{cor:small}
Let $A \sim \mathbb{A}(n, \frac{d}{n})$ and $B \sim \mathbb{B}(n, \frac{d}{n})$ for $d = \omega(1)$. Then each of the following hold with probability $1 - o(1)$:
\begin{enumerate}
    \item $\forall S \in [n]$ with $k = |S| \leq \frac{n}{8e^4d^2}$, $B_S \notin \mathcal{M}_k \setminus \mathcal{T}_k$.
    \item $\forall S \in [n]$ with $k = |S| \leq \frac{n}{8e^4d^2}$, $A_S \notin \mathcal{M}_k \setminus \mathcal{T}_k$.
    \item $\forall x$ with $|\supp(x)| \leq \frac{n}{8e^4d^2}$ and $n \in \supp(x)$, $A^{[n - 1]}x \neq 0$.
\end{enumerate}
\end{corollary}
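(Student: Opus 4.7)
The plan is to derive the corollary from Proposition~\ref{prop:small} by a straightforward union bound, with a small additional reduction needed for item (3).

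For items (1) and (2), I fix $k \in \{1, \ldots, \lfloor n/(8e^4 d^2)\rfloor\}$ and, for each fixed $S$ with $|S|=k$, decompose
\begin{equation*}
\Pr[M_S \in \mathcal{M}_k \setminus \mathcal{T}_k] \leq \Pr[M_S \in \mathcal{M}_k \setminus (\mathcal{T}_k \cup \mathcal{T}_k^+ \cup \mathcal{T}_k^C)] + \Pr[M_S \in \mathcal{T}_k^+] + \Pr[M_S \in \mathcal{T}_k^C],
\end{equation*}
and apply items 1, 2, 3 of Proposition~\ref{prop:small} to each term. Multiplying by $\binom{n}{k} \leq (en/k)^k$ and writing $\alpha_d := e^{1 - d + c_{\ref{prop:small}} \log d}$, each of the three contributions collapses to $\alpha_d^k$ up to a constant (and possibly a factor of $k/n$). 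Since $d = \omega(1)$ forces $\alpha_d = o(1)$, summing the resulting geometric-type series over $k \geq 1$ yields a total of $O(\alpha_d) = o(1)$, proving items (1) and (2).

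For item (3), I add one upstream reduction: if $A^{[n-1]} x = 0$ with $S := \supp(x) \ni n$, then column $n$ of $A^{[n-1]}$ lies in the span of $\{(A^{[n-1]})_j : j \in S \setminus \{n\}\}$, so by Lemma~\ref{lemma:support_minimal} there exists $T \subseteq S$ with $n \in T$ such that $A^{[n-1]}_T$ is a minimal dependency, and necessarily $|T| \leq n/(8e^4 d^2)$. It thus suffices to union bound over all such $T$ the probability that $A^{[n-1]}_T \in \mathcal{M}_{|T|}$, now using \emph{all four} bounds of Proposition~\ref{prop:small}---we must rule out tree dependencies here too. The key accounting is that the constraint $n \in T$ reduces the count from $\binom{n}{k}$ to $\binom{n-1}{k-1} \leq (en/k)^{k-1}$, and this saving of $n/k$ precisely compensates for the weaker $(k/n)^{k-1}$ factor (as opposed to $(k/n)^k$) in the tree-dependency bound of item~4. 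Every contribution reduces again to $O(\alpha_d^k)$, and summing over $k \geq 1$ gives $o(1)$; the $k=1$ boundary case is just that column $n$ of $A^{[n-1]}$ is zero, which has probability $(1-p)^{n-1} = e^{-d(1+o(1))} = o(1)$.

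The main (mild) subtlety is the bookkeeping in item (3): tree dependencies, which are unavoidable in items (1)--(2), can here be ruled out only because fixing $n \in T$ removes one degree of freedom from the union bound that is then matched exactly by the larger per-$T$ bound in item~4 of Proposition~\ref{prop:small}. Beyond this observation, the argument is purely a sum-of-geometric-series calculation.
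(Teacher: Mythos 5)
Your proof is correct and takes the same route the paper has in mind (the paper simply says the corollary follows by union bounding over sets $S$ using Proposition~\ref{prop:small}, without spelling out the details). In particular, your handling of item (3) is exactly the point that deserves care: you correctly reduce via Lemma~\ref{lemma:support_minimal} to a minimal dependency $T$ containing $n$, and your accounting observation is right --- fixing $n\in T$ costs a factor $k/n$ in the number of candidate sets, which exactly offsets the fact that item~4 of Proposition~\ref{prop:small} only gives $(k/n)^{k-1}$ rather than $(k/n)^k$. One small slip: $\binom{n-1}{k-1}\leq (en/k)^{k-1}$ is not quite right (the correct elementary bound is $\binom{n-1}{k-1}\leq (en/(k-1))^{k-1}$, or equivalently $\binom{n-1}{k-1}=\tfrac{k}{n}\binom{n}{k}\leq e(en/k)^{k-1}$), but the extra constant factor of $e$ is absorbed into $\alpha_d^k$ and does not affect the conclusion.
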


\begin{proof}[Proof of Proposition~\ref{prop:small}]
We introduce some notation, pictured in Figure~\ref{fig:symmetric}. Fix a set $S$ of $k$ columns and consider the submatrix $M_S$ induced by these columns. If $M \in \{A, A^{[n-1]}\}$, we define the following partition of the entries of $M_S$. Let $E_{Sym}$ be the set of entries of $A_S$ whose rows are indexed by values in $S$. Let $E_{SymBD}$ be subset of entries in $E_{Sym}$ that are below the diagonal of $A$, and hence mutually independent. Let $E_{Asym}$ be the set of entries whose rows are not in $S$, and finally, let $E = E_{Asym} \cup E_{SymBD}$ be the full set of mutually independent entries that determine $M_S$. If $M = B$, we define $E_{Asym}$ to be all of the entries in $M_S$. Formally:
\begin{equation*}
\begin{split}
&E_{Sym} := \begin{cases}\{(i, j): i, j \in S \} & M \in \{A, A^{[n-1]}\} \\ \emptyset & M = B\end{cases}\\
&E_{SymBD} := \begin{cases}\{(i, j): i, j \in S, j < i \} & M \in \{A, A^{[n-1]}\} \\ \emptyset & M = B\end{cases}\\
&E_{Asym} := \begin{cases}\{(i, j): j \in S, i \notin S \} & M \in \{A, A^{[n-1]}\} \\ \{(i, j): j \in S\} & M = B\end{cases}\\
&E := E_{SymBD} \cup E_{Asym}
\end{split}
\end{equation*}
\begin{figure}
        \centering
        \begin{tabular}{cccc}
                 \includegraphics[width=4cm]{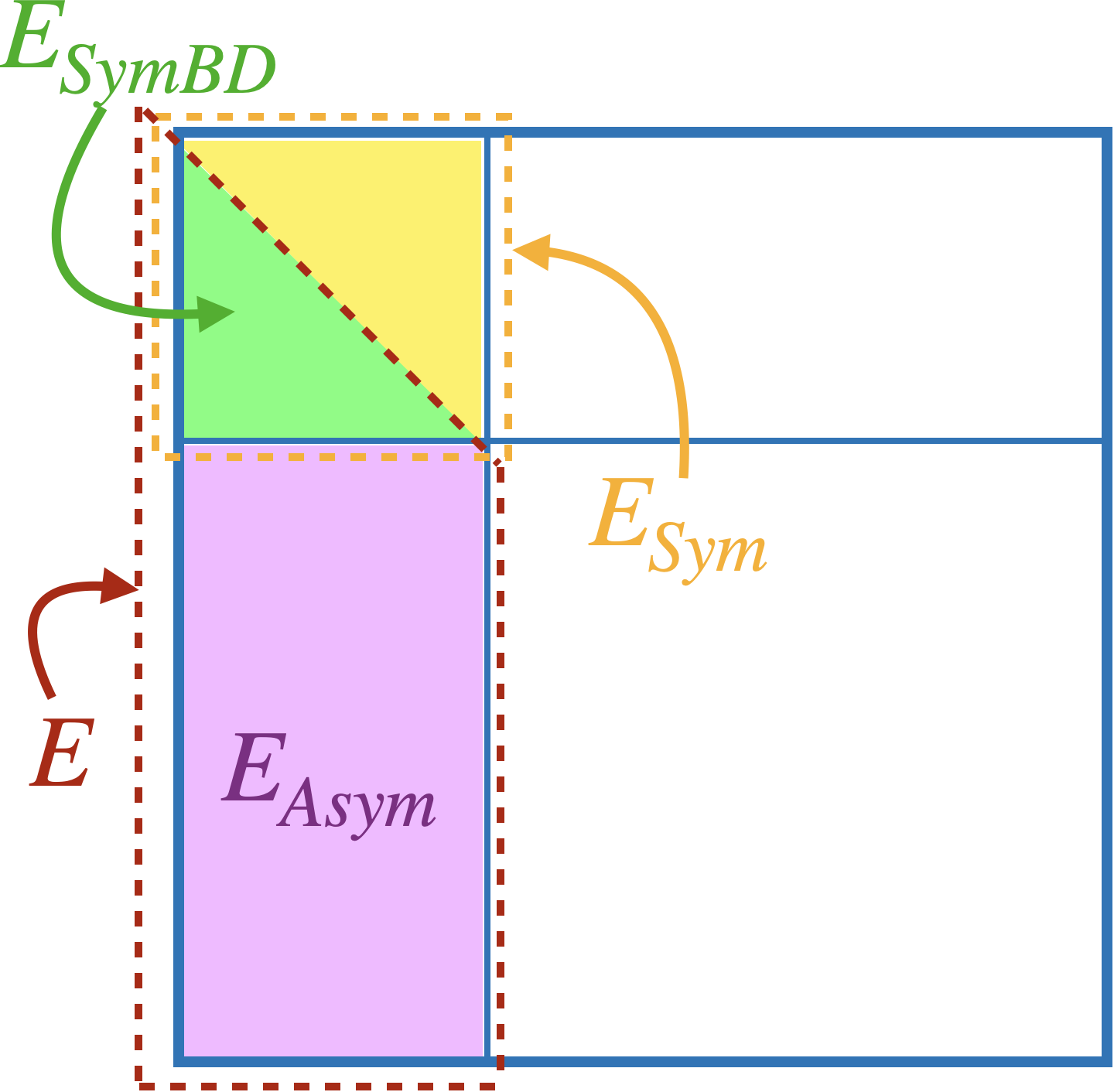} & \includegraphics[width=4cm]{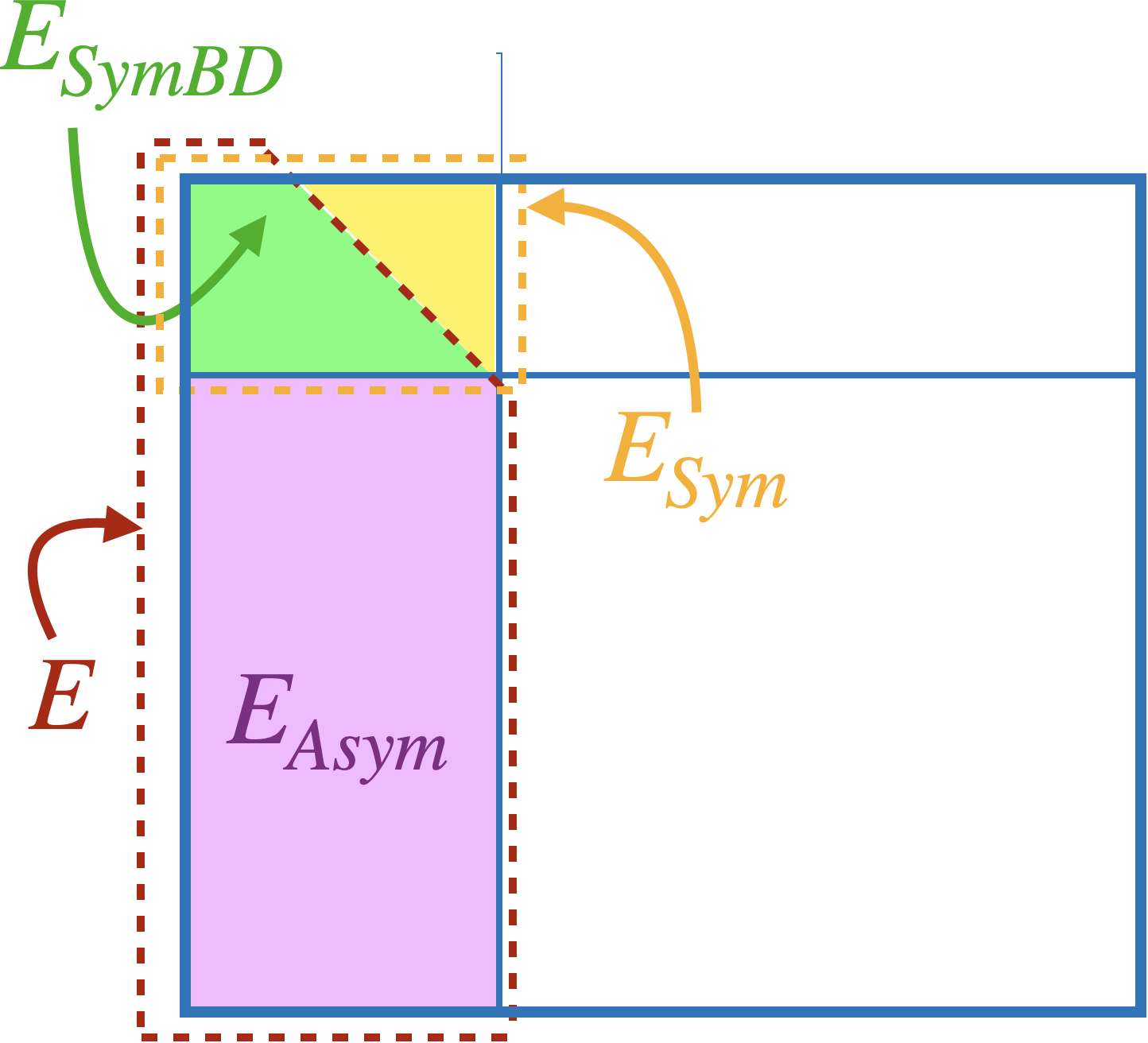}  & \includegraphics[width=4cm]{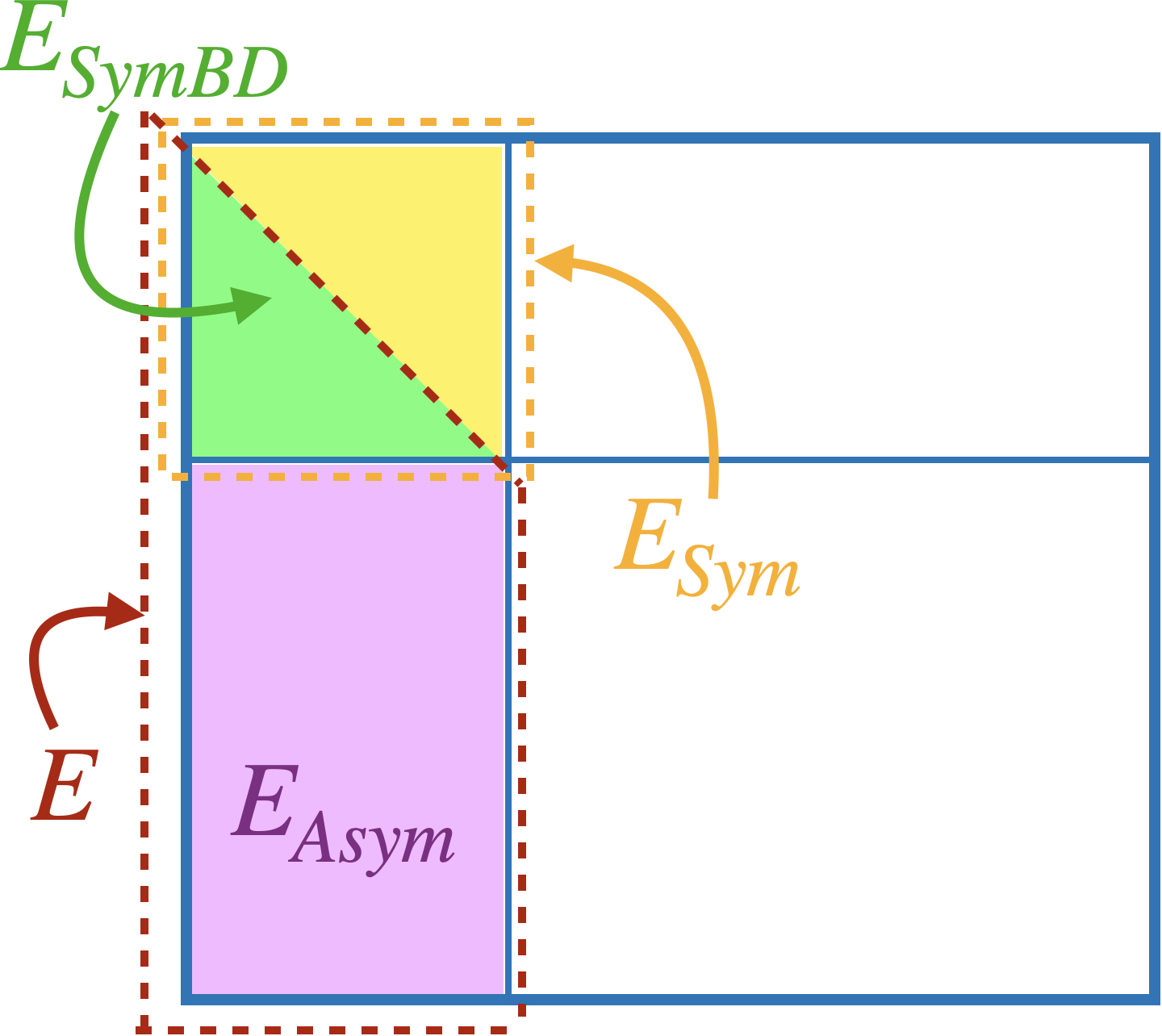} & \includegraphics[width=4cm]{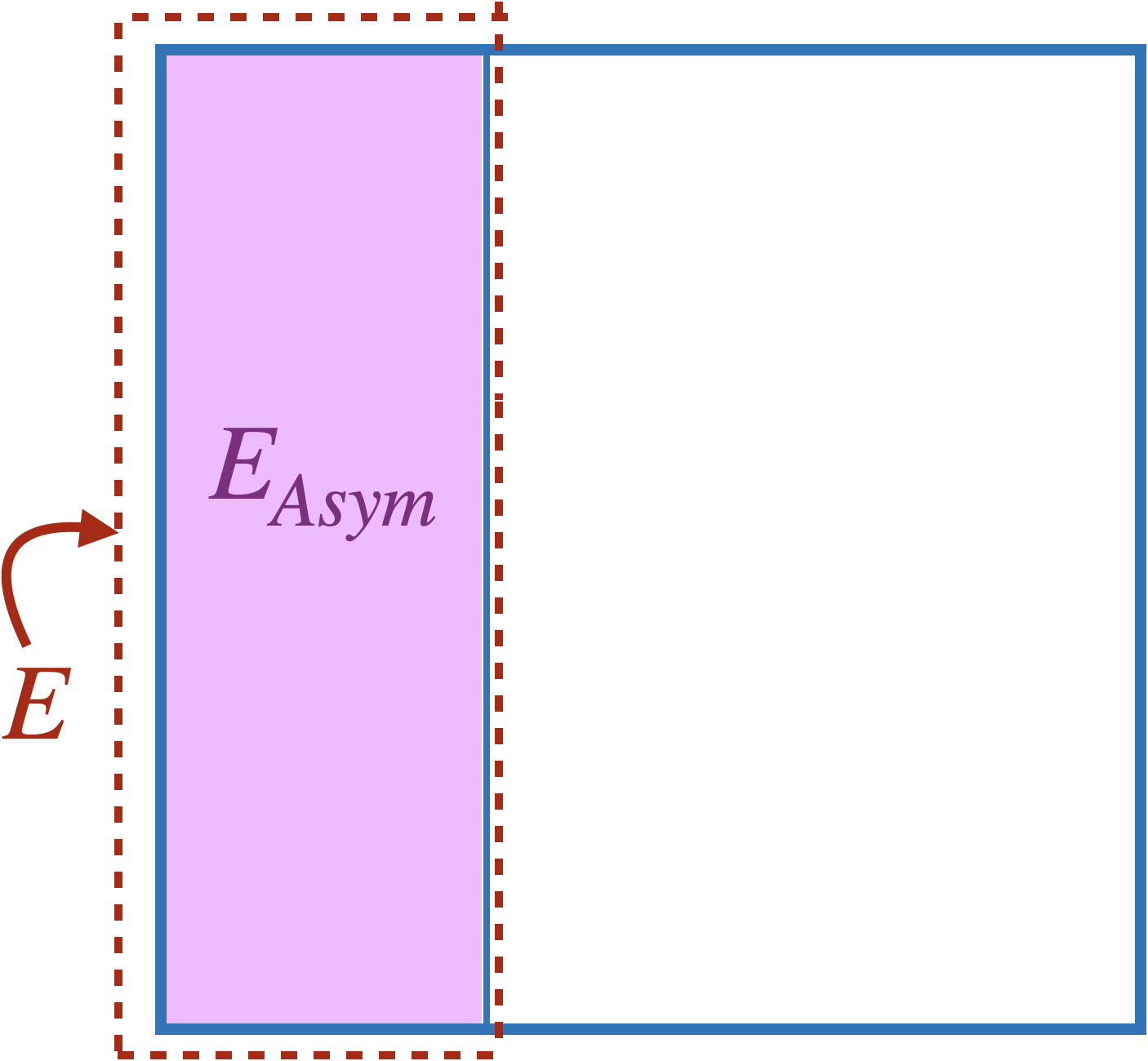}  \\
             a) $A$ & b)$A^{[n -1]}$ where $n \in S$ & c)$A^{[n -1]}$ where $n \notin S$ & d) $B$ 
        \end{tabular}
    \caption{Regions of $M$ for $M \in \{A, A^{[n -1]}, B\}$, with matrix rows permuted such that $S$ is at the top.}
    \label{fig:symmetric}
\end{figure}

We will couple the process of randomly exposing the $1$'s in $M_S$ with a random walk that counts the number of times a $1$ is exposed in $E_{Asym}$ in a row that already contains a $1$ or in $E_{SymBD}$. Observe that each consecutive $1$ is exposed in a random entry in $E$, sampled without replacement.
We condition on $L_S$, the number of $1$'s in $E$. Note that $L_S \sim \Bin(|E|, d/n)$. Let $(X_{i})_{i \in L_S}$ be the random walk that increases by one if the $i$th exposed $1$ is in $E_{SymBD}$ or if the $i$th exposed $1$ is in $E_{Asym}$ and is \em not \em the first $1$ exposed in its row. Otherwise, let $X_i = X_{i - 1}$. 

The following claims say when $X_{L_S}$ is small compared to the number of $1$'s in $M_S$, $M_S$ is not a minimal dependency.

\begin{claim}\label{claim:xrminimal}
If $X_{L_S} < \max\left(k + 1, \frac{L_S}{2}\right)$ and there are at least $2k + 1$ total $1$'s in $M_S$, then there is a row of $M_S$ (not in $S$) with a single $1$, so $M_S \notin \mathcal{M}_k$.
 \end{claim}
 \begin{proof}
 Let $P$ be the number of $1$'s in $E_{Asym}$, and let $Q$ be the number of $1$'s in $E_{SymBD}$ such that $Q + P = L_S$ and $2Q + P \geq 2k + 1$.
Let $Y:= X_{L_S} - Q$ be the number of $1$'s in $E_{Asym}$ which are not the first in their row.
Now the number of rows not in $S$ which have exactly one $1$ is at least $$ (P - Y) - Y = P - 2X_{L_S} + 2Q \geq \max(2k + 1, L_S) - 2X_{L_S} \geq \max(2k + 1, L_S) - \max (2k, L_S - 1) \geq  1.$$
 \end{proof}
 
\begin{claim}\label{claim:xrminimal4}
If $X_{L_S} < k + 1$, then $M_S \notin \mathcal{S}_{2k, k} \setminus \mathcal{T}_k^C$.
\end{claim}
\begin{proof}
We prove this by contradiction. Assume $M_S \in \mathcal{S}_{2k, k} \setminus \mathcal{T}_k^C$. Let $P$ be the number of $1$'s in $E_{Asym}$, and let $Q$ be the number of $1$'s in $E_{SymBD}$ such that $Q + P = L_S$ and $2Q + P = 2k$. Observe that if $M_S \in \mathcal{S}_{2k, k} \setminus \mathcal{T}_k^C$, then it must have exactly $k - 1$ non-zero rows. Indeed, if it had less than $k - 1$ non-zero rows, then $M_S$ would have rank at most $k - 2$, which contradicts $M_S$ being a minimal dependency. Further, if $M_S$ has more than $k-1$ non-zero rows, then by the fact that $M_S$ is a minimal dependency it must have $k$ non-zero rows (as each non-zero row must have at least two $1$'s and $M_S$ only has $2k$ $1$'s). However, if it has $k$ non-zero rows and is in $\mathcal{S}_{2k,k}$, then it must be in $\mathcal{S}_{2k,k}'$. Hence, by lemma $\ref{lemma:classification}$, $M_S\in \mathcal{T}_k^C$. We proceed in two cases:\\
\textbf{Case 1}: $Q = 0$. In this case, if each row in $M_S$ has at least two $1$'s, then we must have $$X_{L_S} = P - \text{number of occupied rows} = 2k - (k - 1) = k + 1 \geq  \max\left(k + 1, \frac{L_S}{2}\right),$$
since $L_S = P = 2k$.\\
\textbf{Case 2}: $Q > 0$. Note that in this case $M \in \{A, A^{[n - 1]}\}$. Let $Z$ be the number of occupied rows in $E_{Asym}$, such that $k - 1 - Z$ rows are occupied in $E_{SymBD}$. Observe that by the symmetry of $A$, number of occupied columns in $E_{SymBD}$ is at most $k - 1 - Z$.
Since we cannot have all-zero column (this would contradict $M_S$ being a minimal dependency), the number of columns occupied in $E_{Asym}$ that are not occupied in $E_{SymBD}$ must be at least $Z + 1$. However, by the connectivity of minimal dependencies (Lemma~\ref{lemma:connect}), the number of columns occupied in $E_{Asym}$ that are not occupied in $E_{SymBD}$ is at most $P - Z$. Note that here we also used the fact that $E_{SymBD}$ is non-empty. Thus we have $P - Z \geq Z + 1$. 
Summarizing, we have 
\begin{enumerate}
    \item $P - Z \geq Z + 1$
    \item $2k \leq P + 2Q$
    \item $Q + (P - Z) = X_{L_S} \leq k.$
\end{enumerate}
Combining (2) and (3), we achieve
$$Q + (P - Z) \leq \frac{P + 2Q}{2} \Rightarrow \frac{P - 2Z}{2} \leq 0,$$ which contradicts $(1)$.
\end{proof}

The proof of the next two claims are nearly identical to the proof of Claim~\ref{claim:xrminimal}, so we omit them.

\begin{claim}\label{claim:xrminimal2}
If $X_{L_S} < \max\left(k, \frac{L_S}{2}\right)$ and there are at least $2k - 1$ $1$'s total in $M_S$, then there is a row of $M_S$ (not in $S$) with a single $1$, so $M_S \notin \mathcal{M}_k$.
\end{claim}

\begin{claim}\label{claim:xrminimal3}
If $X_{L_S} < \max\left(k - 1, \frac{L_S}{2}\right)$ and there are at least $2k - 2$ $1$'s total in $M_S$, then there is a row of $M_S$ (not in $S$) with a single $1$, so $M_S \notin \mathcal{M}_k$.
\end{claim}

Recall from Observation~\ref{observation2} that any minimal dependency $M_S$ must have at least $2k - 2$ $1$'s total. By Lemma~\ref{lemma:classification}, any minimal dependency with $2k - 1$ $1$'s must be in $\mathcal{T}_k^+$. Further, any minimal dependency with $2k$ $1$'s must either be in $\mathcal{T}_k^C$ or in $\mathcal{S}_{2k, k} \setminus \mathcal{T}_k^C$.

Thus, any minimal dependency not in $(\mathcal{T}_k \cup \mathcal{T}_k^+ \cup \mathcal{T}_k^C)$ must have at least $2k + 1$ $1$'s or be in $(\mathcal{S}_{2k, k} \setminus \mathcal{T}_k^C)$. As a result, we have by Claim~\ref{claim:xrminimal} and Claim~\ref{claim:xrminimal4},
\begin{equation*}\Pr[M_S \in  \mathcal{M}_k \setminus (\mathcal{T}_k \cup \mathcal{T}_k^+ \cup \mathcal{T}^C)] \leq \Pr[X_{L_S} \geq \max\left(k + 1, L_S/2\right)]\end{equation*}
Note here the fact that if $M_S \in \mathcal{S}_{2k, k}$, then $\max(k + 1, L_S/2) = k + 1$.

Further, because minimal dependencies in $(\mathcal{T}_k^+ \cup \mathcal{T}_k^C)$ have at least $2k-1$ $1$'s, by Claim~\ref{claim:xrminimal2},
\begin{equation*}\Pr[M_S \in  (\mathcal{T}_k^+ \cup \mathcal{T}_k^C)] \leq \Pr[X_{L_S} \geq \max\left(k, L_S/2\right)]\end{equation*}

Finally by Claim~\ref{claim:xrminimal3},
\begin{equation*}
\Pr[M_S \in \mathcal{T}_k] \leq \Pr[X_{L_S} \geq \max\left(k - 1, L_S/2\right)].
\end{equation*}

To bound the probability that $X_{L_S}$ is large, we couple $X_i$ with a random walk $(Y_i)_{i \in L_S}$ which increases by one with probability $\frac{k(k + L_S)}{|E|}$ and otherwise stays constant. Observe that $Y_i$ stochastically dominates $X_i$, because there are at most $k + L_S$ rows --- and hence $k(k + L_S)$ locations in $E$ --- in which exposing a $1$  will increase $X_i$. Then conditioned on $L_S = \ell$, for any $j$, we have
\begin{equation*}
\begin{split}
\Pr\left[X_{\ell} \geq j\right] &\leq \Pr\left[\Bin\left(\ell, \frac{k + \ell}{n - k}\right) \geq j\right].
\end{split}
\end{equation*}


We now use Lemma~\ref{masterlemma:small}, proved in Appendix~\ref{sec:bounds}, to sum this probability over all values of $L_S$.

\begin{restatable}[Small Case Binomial Calculation]{lemma}{mastersmall}\label{masterlemma:small}
For constants $\gamma, d > 0$, for $k \leq \frac{n}{8e^4\gamma d^2}$, there exists a constant $c_{\ref{masterlemma:small}}$ such that for any $j \in \{k - 1, k, k + 1\}$ and $\gamma \geq 1/2$, we have
\begin{equation*}
    \sum_{\ell \geq 1}^{\infty}\Pr\left[\Bin\left(\ell, \frac{\ell + k}{\gamma n}\right) \geq \max\left(j, \frac{\ell}{2}\right)\right]\Pr[\Bin(\gamma nk, d/n) = \ell] \leq e^{-\gamma dk + c_{\ref{masterlemma:small}}k\log(\gamma d)}\left(\frac{k}{n}\right)^j.
\end{equation*}
\end{restatable}

We employ this lemma with $\gamma = \frac{|E|}{nk} \geq 1 - k/n$ and $j = \max\left(k+1, L_S/2\right)$, to achieve
\begin{equation*}\Pr[M_S \in \mathcal{M}_k \setminus (\mathcal{T} \cup \mathcal{T}_k^+ \cup \mathcal{T}^C)] \leq e^{-dk + (c_{\ref{masterlemma:small}}+1)k\log(d)}\left(\frac{k}{n}\right)^{k+1}.\end{equation*}

Further, letting $j = \max\left(k, L_S/2\right)$ gives:
\begin{equation}\label{smallcase:both}\Pr[M_S \in (\mathcal{T}_k^+ \cup \mathcal{T}_k^C)] \leq e^{-dk + (c_{\ref{masterlemma:small}}+1)k\log(d)}\left(\frac{k}{n}\right)^k.\end{equation}

Finally, setting $j = \max\left(k - 1, L_S/2\right)$, gives: 
\begin{equation*}\Pr[M_S \in \mathcal{T}_k] \leq e^{-dk + (c_{\ref{masterlemma:small}}+1)k\log(d)}\left(\frac{k}{n}\right)^{k - 1}.
\end{equation*}
Putting $c_{\ref{prop:small}} = c_{\ref{masterlemma:small}} + 1$ proves the lemma. Note that eq.~\ref{smallcase:both} establishes the second and third statements of the lemma simultaneously.
\end{proof}

\subsection{Small Case 2}


Our main goal of this section is to prove the following lemma.

\begin{lemma}[Small Case 2]\label{lemma:medium}
Let $M$ be any of the following random matrices for $d_0 \leq d \leq 3\log(n)$ for some universal constant $d_0$:
\begin{enumerate}
    \item $M = A$, where $A \sim \mathbb{A}(n, \frac{d}{n})$
    \item $M = A^{[n - 1]}$, where $A \sim \mathbb{A}(n, \frac{d}{n})$
    \item $M = B$, where $B \sim \mathbb{B}(n, \frac{d}{n})$
\end{enumerate}
Then
\begin{equation*}
\Pr\left[\exists x: Mx = 0, \frac{n}{8e^4d^2} \leq \supp(x) < \frac{9n}{d}\right] = o(1).
\end{equation*}
\end{lemma}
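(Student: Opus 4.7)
The plan is to reduce the existence of a large-support kernel vector to a structural property of the submatrices $M_S$ via Observation~\ref{observation1}, and then bound this property by a direct ``no singleton row'' probability estimate combined with a union bound over $k$ and $S$.

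For the reduction, suppose $Mx = 0$ with $\supp(x) = S$ and $|S| = k$. Setting $y := x_S \in \mathbb{R}^k$ gives $M_S y = 0$ with $\supp(y) = [k]$, so by Observation~\ref{observation1} every row of $M_S$ must contain either zero $1$'s or at least two $1$'s. It therefore suffices to show
\begin{equation*}
\Pr\!\Bigl[\exists\, k \in \bigl[\tfrac{n}{8e^4 d^2},\, \tfrac{9n}{d}\bigr),\ \exists\, S \subseteq [n]\text{ with }|S|=k:\ M_S \text{ has no row with exactly one } 1\Bigr] = o(1).
\end{equation*}

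For fixed $k$ and $S$ I would consider the rows of $M$ whose index lies outside $S$, of which there are at least $n - k$ in all three cases (this row-index restriction is what guarantees mutual independence of the $S$-restricted entries when $M \in \{A, A^{[n-1]}\}$). The $S$-restriction of each such row contains exactly one $1$ independently with probability $p := k(d/n)(1-d/n)^{k-1}$; setting $t := kd/n \in [\tfrac{1}{8e^4 d}, 9]$, one verifies
\begin{equation*}
\Pr[\text{no such row has exactly one } 1] \le (1-p)^{n-k} \le \exp\!\bigl(-c\, n\, t\, e^{-t}\bigr)
\end{equation*}
for a positive universal constant $c$, provided $d \ge d_0$ is large enough that $(1-k/n)(1-d/n)^{k-1} \gtrsim e^{-t}$ uniformly on the stated range.

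Summing over $S$ via $\binom{n}{k} \le (en/k)^k$ and then over $k$, the total failure probability is at most $\sum_k \exp\!\bigl(k\log(en/k) - c\, n\, t\, e^{-t}\bigr)$. Dividing the exponent by $k$ and substituting $t = kd/n$, the per-column condition reduces to $\log(ed/t) \ll d\, e^{-t}$ uniformly for $t \in [\tfrac{1}{8e^4 d}, 9]$ and $d_0 \le d \le 3\log n$. The small-$t$ endpoint gives $\log d \ll d$, and $t = 9$ gives $d \gtrsim e^9 \log d$; both hold once $d_0$ is chosen to be a sufficiently large universal constant, yielding exponential decay on each summand and an $o(1)$ total after summing over the $O(n)$ relevant values of $k$. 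The main obstacle is precisely this uniformity: the tightest regime is the upper endpoint $k \asymp 9n/d$, where each row of $M_S$ carries about $9$ ones on average, so singleton rows are $e^{-9}$-rare, and this is where $d_0$ must be large enough that $de^{-9}$ beats $\log d$ and overcomes the combinatorial factor $\binom{n}{k}$.
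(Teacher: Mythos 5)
Your proposal is correct and follows essentially the same approach as the paper: reduce via Observation~\ref{observation1} to showing every $M_S$ in the relevant size range has a singleton row, bound the failure probability by $(1-p)^{n-k-1}$ with $p = k\frac{d}{n}(1-\frac{d}{n})^{k-1}$ over the mutually independent rows outside $S$ (and outside index $n$), then union-bound over $k$ and $S$, with the per-$k$ exponent turned negative for $d \ge d_0$. The only cosmetic difference is your slightly sharper use of $(1-d/n)^{k-1} \gtrsim e^{-t}$ in place of the paper's simpler $(1-d/n)^{k-1} \ge e^{-2dk/n}$ (giving $e^{-9}$ versus $e^{-18}$ at the worst-case endpoint $t=9$), which only shifts the universal constant $d_0$.
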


\begin{proof}
We will take a union bound over all possible sets $S \subset [n]$ of size $k \in \left[\frac{n}{8e^4d^2}, \frac{9n}{d}\right]$ of the probability that $M$ has a kernel vector with support $S$. By Observation~\ref{observation1}, it suffices to show that with probability $1 - o(1)$, for all such sets $S$, there is a row in the submatrix $M_S$ with exactly one $1$.

For each of the three choices of ensemble, the rows $(M_S)^{\{i\}}$ for $i \in [n] \setminus S \setminus \{n\}$ are mutually independent, so we have 
\begin{equation*}
\begin{split}
    p_k &:= \Pr[\exists x \in \mathbb{R}^{n}, \supp(x) = S, M x = 0] \leq \left(1 - k\frac{d}{n}\left(1 - \frac{d}{n}\right)^{k - 1}\right)^{n - k - 1}\\
    &\leq \left(1 - k\frac{d}{n} e^{-2dk/n} \right)^{n - k - 1} \leq e^{- k(n-k-1)\frac{d}{n} e^{-2dk/n}} \leq e^{- \frac{kd}{2} (e^{-2dk/n})}.
\end{split}
\end{equation*}

Note that $p_k$ does not depend on the choice of $S$, just its size $k$. Taking a union bound over all $k$ and all sets $S$ of size $k$, we achieve:

\begin{align*}
\sum_{k = \frac{n}{8e^4d^2}}^{\frac{9n}{d}} \sum_{S \subset [N], |S| = k} p_k &\leq \sum_{k=\frac{n}{8e^4d^2}}^{\frac{9n}{d}} \left(\frac{en}{k}\right)^ke^{- \frac{kd}{2} (e^{-2dk/n})} 
\leq \sum_{k=\frac{n}{8e^4d^2}}^{\frac{9n}{d}} e^{-\frac{dk}{2}(e^{-2dk/n}) + k\ln(en/k)} \\
&\leq n \max_{k_1,k_2,k_3 \in [\frac{n}{8e^4d^2}, \frac{9n}{d}]} \left(e^{-\frac{d}{2}(e^{-2dk_1/n}) + \ln(en/k_2)}\right)^{k_3} 
= n  \left(e^{-\frac{d}{2}(e^{-18}) + \ln(8e^5 d^2)}\right)^{\frac{n}{8e^4d^2}}\\
&= o(1)
\end{align*}
for sufficiently large $d$. 
\end{proof}

\section{The Large Domain}\label{sec:large}

\subsection{Statement of Lemmas}
In the large domain, we bound the probability of having minimal dependencies of $k$ columns for $k \geq \frac{9n}{d}$. Large Case 1 (Section~\ref{sec:large1}) considers the case when $\frac{9n}{d} \leq k < \frac{n}{C}$ for some constant $C$. Large Case 2 (Section~\ref{sec:large2}) considers the case where $\frac{n}{C} \leq k \leq n$. As discussed in the technical overview, our main tool in these two cases are the Littlewood-Offord anti-concentration results.

\subsection{Large Case 1}\label{sec:large1}
We prove the following lemma.

\begin{lemma}[Large Case 1]\label{lemma:medium2}
There exist universal constants $C$ and $d_0$, such that the following holds. Let $M$ be any of the following random matrices for $d_0 \leq d \leq 3\log(n)$:
\begin{enumerate}
    \item $M = A$, where $A \sim \mathbb{A}(n, \frac{d}{n})$
    \item $M = A^{[n-1]}$, where $A \sim \mathbb{A}(n, \frac{d}{n})$
    \item $M = B$, where $B \sim \mathbb{B}(n, \frac{d}{n})$
\end{enumerate}
Then
\begin{equation*}
\Pr\left[\exists x: Mx = 0, \frac{9n}{d} \leq \supp(x) < \frac{n}{C}\right] = o(1).
\end{equation*}
\end{lemma}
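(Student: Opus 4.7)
The strategy is to show that for each fixed $S \subset [n]$ of size $k \in [9n/d,\,n/C]$, the probability that $M$ has a kernel vector with support exactly $S$ is small enough to absorb a union bound over the $\binom{n}{k}$ choices of $S$ and the $O(n)$ choices of $k$. A kernel vector of $M$ with support $S$ is precisely a full-support kernel vector of the submatrix $M_S$, so it suffices to rule these out.

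\textbf{Extracting independent rows.} Fix $S$. The first step is to cut down to a submatrix $N_S$ of $M_S$ whose entries are genuinely i.i.d.\ Bernoulli$(p)$. If $M = B$, take $N_S = B_S$, which has $t = n$ rows. If $M \in \{A, A^{[n-1]}\}$, take $N_S$ to consist of the rows of $M_S$ indexed by $[n] \setminus S$ (respectively $[n-1] \setminus S$); since each such row $i$ satisfies $i \notin S$, the symmetry relation $A_{ij} = A_{ji}$ does not tie any entry $A_{ij}$ with $j \in S$ to another entry of $N_S$, so $N_S$ has $t \ge n - k - 1$ rows of $k$ i.i.d.\ Bernoulli$(p)$ coordinates. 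Any kernel vector of $M_S$ is also a kernel vector of $N_S$, so it is enough to bound the probability that $N_S$ has a vector of full support $[k]$ in its kernel.

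\textbf{Row-by-row knockout via Littlewood-Offord.} Expose the rows of $N_S$ one at a time, and let $V_i$ be the kernel of $N_S^{[i]}$, so $\dim V_0 = k$. Whenever $V_{i-1}$ contains a vector of full support, fix (by a measurable selection rule) some such $v_{i-1}$, and let $\xi_i = \mathbbm{1}\{r_i \cdot v_{i-1} = 0\}$ where $r_i$ is the $i$-th row of $N_S$; set $\xi_i = 0$ whenever no such $v_{i-1}$ exists. Since $r_i$ is independent of $(V_{i-1}, v_{i-1})$ with $k$ i.i.d.\ Bernoulli$(p)$ coordinates and $kp \ge 9$ (because $k \ge 9n/d$), Lemma~\ref{lo_sparse} gives $\Pr[\xi_i = 1 \mid V_{i-1}, v_{i-1}] \le 1/\sqrt{kp} \le 1/3$. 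The key observation is that $\dim V_i$ is non-increasing and decreases by exactly $1$ whenever $\xi_i = 0$ (as $v_{i-1}$ is killed), so the dimension can drop at most $k$ times over the entire process. Hence, on the event that $V_t$ still has a full-support vector, $\sum_{i=1}^t \xi_i \ge t - k$. A standard conditional chain argument, iterated through the adaptive choices of $v_{i-1}$, then yields
\[
\Pr\!\bigl[V_t \text{ has a full-support vector}\bigr] \;\le\; \binom{t}{t-k}\!\left(\tfrac{1}{\sqrt{kp}}\right)^{t-k} \;\le\; \left(\tfrac{en}{k}\right)^{k}\!\left(\tfrac{1}{3}\right)^{t-k}.
\]

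\textbf{Union bound and main obstacle.} Summing over $\binom{n}{k} \le (en/k)^k$ choices of $S$ and over $k \in [9n/d,\,n/C]$, and using $t - k \ge n/2$ once $C$ is large enough, the total probability is at most
\[
n \cdot \max_{k} \left(\tfrac{en}{k}\right)^{2k} \cdot 3^{-n/2}.
\]
On the interval $[9n/d,\,n/C]$ the exponent $2k\log(en/k)$ is monotone in $k$ and maximized at $k = n/C$ with value $(2n/C)(1+\log C)$; choosing $C$ large enough that $(2/C)(1+\log C) < \tfrac{1}{2} \log 3$ (e.g.\ $C = 100$), together with $d_0 \ge 9C$ so that the range of $k$ is non-empty, makes the total bound $o(1)$. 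The main obstacle is the symmetric case: for $A$ and $A^{[n-1]}$ we cannot expose all $n$ rows as independent, because symmetry ties rows indexed by $S$ to the columns of $S$ and breaks the independence required by Littlewood-Offord. Restricting to rows outside $S$ salvages this but only as long as $k$ is bounded well below $n$, which is exactly why this case caps $k$ at $n/C$ and the regime $k \gtrsim n/C$ must be treated separately in Large Case 2.
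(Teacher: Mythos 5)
Your proof is correct and follows essentially the same approach as the paper: restrict to the mutually independent rows outside $S$, expose them one at a time, use the sparse Littlewood--Offord lemma (Lemma~\ref{lo_sparse}) to knock out candidate full-support kernel vectors, bound the survival probability via a binomial-type tail, and union-bound over $S$ and $k$. The only differences are cosmetic: you track the full kernel dimension together with knockout indicators $\xi_i$ rather than the paper's $R_i = \dim\mathcal{D}_i$, and you carry out the final numeric estimate inline rather than deferring to the paper's computational Lemma~\ref{large_general}.
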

\begin{proof}
We union bound over all $k \in [9n/d, n/C]$ and all sets $S$ of size $k$ of the probability that there exists $x \in \mathbb{R}^n$ with $\supp(x) = S$ and $M x = 0$.

Fix a set $S$ of size $k$. We will consider the random process where we expose the $n - k - 1$ rows $M_S^{\{j\}}$ for $j \in [n - 1] \setminus S$ one at a time. Note that these rows are all mutually independent since they do not include rows indexed by $S$. Further, they do not include $n$th row (which does not exist in the case that $M = A^{[n-1]}$). For $i \leq n - k - 1$, let $\mathcal{N}_i \subset \mathbb{R}^k$ be the kernel of the first $i$ rows exposed, and let $\mathcal{D}_i \subseteq \mathcal{N}_i$ be the span of the set of vectors in $\mathcal{N}_i$ which have no zeros. Let $R_i$ be the dimension of $\mathcal{D}_i$.

If $R_i > 0$, then we can choose an arbitrary vector $v$ in $\mathcal{D}_i$ with support $k$, and by Lemma~\ref{lo_sparse}, with probability at least $1 - \frac{1}{\sqrt{kd/n}}$, the $(i + 1)$th row exposed is not orthogonal to $v$. In this case $R_{i + 1} = R_i - 1$.

If at any point $R_i$ becomes $0$, then this means there can be no dependency involving all the columns of $M_S$. It follows that since $R_0 = k$, we have 
\begin{equation}\label{eq:unionboundthis}
\Pr[R_{n - k - 1} \neq 0] \leq \Pr\left[\Bin\left(n - k - 1, 1 - \frac{1}{\sqrt{kd/n}}\right) < k\right].
\end{equation}

We employ the following computational lemma, proved in Appendix \ref{sec:bounds}:

\begin{restatable}[Large Case 1 Binomial Calculation]{lemma}{largegeneral}\label{large_general}
There exists constants $C$, $d_0$, and $c_{\ref{large_general}}$, such that for all $d \geq d_0$, we have 
\begin{equation*}
    \sum_{k = \frac{2n}{d}}^{n/C}\binom{n}{k}\Pr\left[\Bin\left(n - k - 1, 1 - \frac{1}{\sqrt{kd/n}}\right) \leq k\right] \leq e^{-c_{\ref{large_general}}n}
\end{equation*}
\end{restatable}

Consequently, we can union bound eq.~\ref{eq:unionboundthis} over all $k$ in the desired range and all $\binom{n}{k}$ sets $S$ of size $k$:
\begin{equation*}
    \Pr[\exists x: M^\top x = 0, 9n/d \leq \supp(x) \leq n/C] \leq e^{-c_{\ref{large_general}}n}.
\end{equation*}
This concludes the proof of Lemma \ref{lemma:medium2}.
\end{proof}

\subsection{Large Case 2}\label{sec:large2}
As outlined in the technical overview, the proof strategy for Large Case 2 for the symmetric matrix $A \sim \mathbb{A}(n, p)$ varies significantly from that of the asymmetric matrix $B \sim \mathbb{B}(n, p)$. Since the proof for $B$ is simpler, we begin with that matrix.
\subsubsection{Large Case 2 for $B$}
For the asymmetric matrix $B$, it is not possible to rule out both left and right kernel vectors of $B$ with large support. Indeed, often such dense kernel vectors do exist. We can, however, rule out having dense right and left kernel vectors \em simultaneously\em. We do so in the following lemma.
\begin{lemma}[Large Case 2 for $B$]\label{lemma:large2_B} 
Let $B \sim \mathbb{B}(n, \frac{d}{n})$. Let $\mathcal{S}_t$ be the set of vectors in $\mathbb{R}^n$ with support size at least $t$. For $t \geq \frac{n}{C}$, where $C$ is the constant from Lemma~\ref{lemma:medium2}, for $d_0 \leq d \leq 3\log(n)$ for some constant $d_0$,
\begin{equation*}
\Pr\left[\exists x, y \in \mathcal{S}_t: Bx = 0 \text{ and } y^\top B = 0 \right] \leq \left(\frac{n}{t}\right)^{3/2}\frac{1}{\sqrt{d}}.
\end{equation*}

\end{lemma}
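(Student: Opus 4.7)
The plan is to follow the outline given in the Technical Overview for Large Case 2, formalizing the two ingredients: a Markov-type reduction based on Observation~\ref{obs_3}, and a single application of the sparse Littlewood--Offord inequality (Lemma~\ref{lo_sparse}).

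\medskip

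First, I would set up the Markov reduction. For each $i \in [n]$, let $\mathcal{E}_i$ be the event that $B$ has a left kernel vector $y$ with $|\supp(y)| \geq t$ \emph{and} that the column $B_i$ lies in $\Span(\{B_j\}_{j \neq i})$. On the event that both $B$ and $B^\top$ admit kernel vectors of support $\geq t$, pick any such right kernel vector $x$ with $|\supp(x)| \geq t$. By Observation~\ref{obs_3}, applied to the $n$ columns of $B$, there are at least $t$ indices $i$ with $B_i \in \Span(\{B_j\}_{j \neq i})$; and each of these $i$ also inherits the same dense left kernel vector of $B$, so $\mathcal{E}_i$ holds for at least $t$ indices. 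Hence the indicator of the joint event is bounded by $\frac{1}{t}\sum_i \mathbf{1}_{\mathcal{E}_i}$, and by linearity of expectation and the symmetry $\Pr[\mathcal{E}_i] = \Pr[\mathcal{E}_1]$,
\begin{equation*}
\Pr\left[\exists x,y \in \mathcal{S}_t : Bx = 0, \, y^\top B = 0\right] \;\leq\; \frac{n}{t}\,\Pr[\mathcal{E}_1].
\end{equation*}

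\medskip

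Next I would bound $\Pr[\mathcal{E}_1]$ by exposing $B_1$ last. Let $B_{/1}$ denote $B$ with its first column removed. Note that (i) every left kernel vector of $B$ is automatically a left kernel vector of $B_{/1}$, and (ii) the condition $B_1 \in \Span(\{B_j\}_{j \neq 1})$ is equivalent to $B_1$ being orthogonal to every left kernel vector of $B_{/1}$. Therefore $\mathcal{E}_1$ is contained in the event $\mathcal{F}_1$ that $B_{/1}$ has some left kernel vector of support $\geq t$ and that $B_1^\top v = 0$, where $v$ is any measurable selection (say, lexicographically first in an enumerated basis) of such a kernel vector of $B_{/1}$. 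Condition on $B_{/1}$: either no such $v$ exists (and the event fails deterministically), or such a $v$ is fixed with $|\supp(v)| \geq t$. In the latter case $B_1$ is independent of $B_{/1}$ and has i.i.d.\ $\Ber(d/n)$ entries, so Lemma~\ref{lo_sparse} with $m = t$ and $p = d/n$ gives
\begin{equation*}
\Pr\left[B_1^\top v = 0 \,\middle|\, B_{/1}\right] \;\leq\; \frac{1}{\sqrt{t d / n}} \;=\; \sqrt{\frac{n}{t d}},
\end{equation*}
provided $td/n \geq 9$, which holds for $d$ at least a suitable constant since $t \geq n/C$. Taking the unconditional expectation yields $\Pr[\mathcal{E}_1] \leq \sqrt{n/(td)}$.

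\medskip

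Combining the two steps gives the claimed bound
\begin{equation*}
\Pr\left[\exists x,y \in \mathcal{S}_t : Bx = 0, \, y^\top B = 0\right] \;\leq\; \frac{n}{t}\sqrt{\frac{n}{td}} \;=\; \left(\frac{n}{t}\right)^{3/2}\frac{1}{\sqrt{d}}.
\end{equation*}
I do not anticipate a real obstacle: the only subtle point is the measurable selection of $v$ from $B_{/1}$ so that the independence argument is legitimate, but since $\mathcal{E}_1 \subseteq \mathcal{F}_1$ any fixed selection rule suffices (one can, e.g., take $v$ to be the first basis vector of the left kernel of $B_{/1}$ in a canonical basis, or simply enumerate over the finitely many supports of size $\geq t$ and absorb a negligible factor). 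Everything else is a direct assembly of Observation~\ref{obs_3}, linearity of expectation, and Lemma~\ref{lo_sparse}.
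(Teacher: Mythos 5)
Your proof is correct and follows essentially the same route as the paper: define per-index events via Observation~\ref{obs_3}, reduce via Markov's inequality (with a factor $n/t$), condition on $B_{/1}$, and apply the sparse Littlewood--Offord bound (Lemma~\ref{lo_sparse}) to $B_1^\top v$. The paper folds your $\mathcal{E}_i \subseteq \mathcal{F}_i$ step directly into its event definition, but the reduction and the final computation are the same.
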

\begin{proof}
For $i \in [n]$, let $H_i$ be the space spanned by the column vectors $B_1, B_2, \cdots B_{i - 1}, B_{i + 1}, \cdots B_n$. Following Observation~\ref{obs_3} from the technical overview, $Bx = 0$ and $y^\top B = 0$ imply that for all $i \in \text{supp}(x)$,
\begin{enumerate}
        \item $B_i \in H_i$;
    \item $\exists w \in \mathcal{S}_t: w^\top B_j = 0 \:\: \forall j \in [n] \setminus i$.
\end{enumerate}

Let $\mathcal{F}_i$ denote the event that these two events occur for index $i$. Let $X_i$ be the indicator of this event, and let $X = \sum_i X_i$. It follows that
\begin{equation*}
\Pr\left[\exists x, y \in \mathcal{S}_t: Bx = 0 \text{ and } y^\top B = 0 \right] \leq \Pr[X \geq t].
\end{equation*}

By Markov's inequality,
\begin{equation}\label{markov_as}
    \Pr\left[X \geq t\right] \leq \frac{\mathbb{E}[X]}{t} = \frac{n\Pr[\mathcal{F}_n]}{t}.
\end{equation}

Now by conditioning on $H_i$ and using the sparse Littlewood-Offord Lemma (Lemma~\ref{lo_sparse}), we have \begin{equation*}
\begin{split}
\Pr[\mathcal{F}_n] &= \Pr\left[\exists w \in \mathcal{S}_t: w^\top B_j = 0 \:\: \forall j \in [n - 1] \right]\Pr\left[B_n \in H_n  | \exists w \in \mathcal{S}_t: w^\top B_j = 0 \:\: \forall j \in [n - 1] \right]\\
&\leq \Pr\left[\exists w \in \mathcal{S}_t: w^\top B_j = 0 \:\: \forall j \in [n - 1] \right]\left(\max_{w \in \mathcal{S}_t}\Pr[B_n^\top w = 0]\right)\\
&\leq \max_{x \in \mathcal{S}_t}\Pr[B_n^\top w = 0]
\leq \frac{1}{\sqrt{td/n}}.
\end{split}
\end{equation*}
Here  the first inequality follows from the fact that $B_n \in H_n$ implies that $B_n$ is orthogonal to any normal of $H_n$. We are able to remove the conditioning in this step because $B_n$ is independent from $H_n$ (and hence also the normal $w$). The final inequality follows from Lemma~\ref{lo_sparse}, where we used the fact that $t \geq \frac{n}{C} \geq \frac{9n}{d}$ for $d$ large enough.

Plugging $\Pr[\mathcal{F}_n]$ in to eq.~\ref{markov_as}, we have $\Pr[X \geq t] \leq \left(\frac{n}{t}\right)^{3/2}\frac{1}{\sqrt{d}},$
which proves the lemma.
\end{proof}

\subsubsection{Large Case 2 for $A$}

For the symmetric matrix $A \sim \mathbb{A}(n, p)$, Large Case 2 reduces the probability of having a kernel vector with large support to the probability of having several smaller structures in $A$. The following is our main technical lemma for $A$:
\begin{lemma}[Technical Lemma for Large Case 2 for $A$]\label{lemma:large2_A}
Let $A \sim \mathbb{A}(n, d/n)$. Let $A^{(i)}$ denote the submatrix of $A$ obtained by removing the $i$th row and column. Let $\textbf{a} \in \mathbb{R}^{n - 1}$ be a random vector with i.i.d. $\Ber(d/n)$ entries. For any $u , r, s, t \in [n]$ with $r < s$,
\begin{equation}\label{eq:lem}
\begin{split}
\Pr\left[\exists x: Ax = 0, \supp(x) > t\right] &\leq 
\frac{n}{t}\max_{x \in \mathbb{R}^{n - 1}: \supp(x) \geq s}\Pr\left[x^\top \textbf{a} = 0\right]\\
&\quad +\frac{n}{t}\Pr\left[\exists x: A^{(n)}x = 0, r < |\supp(x)| < s \right]\\
&\quad + \frac{n}{t} \frac{dr}{n}\\
&\quad+ \frac{n}{t}\frac{n}{u}\Pr\left[\exists x \neq 0: A^{(n)}x = e_1, |\supp(x)| < s\right]\\
&\quad+ \frac{n}{t}\max_{X \in \mathcal{T}^{n - 1}_{n - 1 - r - u, s}}\Pr\left[\textbf{a}^\top X\textbf{a} = 0\right],
\end{split}
\end{equation}
where $\mathcal{T}^{m}_{\alpha, \beta}$ denotes the set of matrices in $\mathbb{R}^{m \times m}$ with some set of $\alpha$ columns that each have support size at least $\beta$.

\end{lemma}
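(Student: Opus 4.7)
The plan is to adapt the strategy sketched in the technical overview: use Observation~\ref{obs_3} and Markov to reduce to $\Pr[\mathcal{E}_n]$, where $\mathcal{E}_n = \{A_n \in \mathrm{Span}(\{A_j : j\neq n\})\}$, and then condition on $A^{(n)}$ to bound $\Pr[\mathcal{E}_n]$ by the sum of the five probabilities (each already multiplied by $n/t$). By Observation~\ref{obs_3}, if $Ax=0$ with $\supp(x) > t$ then at least $t$ of the events $\{A_i \in \mathrm{Span}(\{A_j: j\neq i\})\}$ hold; the column-distribution of $A$ is invariant under permutations of the coordinate set, so Markov gives $\Pr[\exists x: Ax=0,\, \supp(x)>t] \leq (n/t)\Pr[\mathcal{E}_n]$. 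Writing $a = A_n^{/n}$ and splitting off the last row/column, the symmetry of $A$ and $A_{nn}=0$ show that $\mathcal{E}_n$ is equivalent to the existence of $x\in\mathbb{R}^{n-1}$ with $A^{(n)}x = a$ and $a^\top x = 0$, and $a$ is independent of $A^{(n)}$.

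I would then condition on $A^{(n)}$ and split into three cases. First, if $\ker A^{(n)}$ contains a vector $w$ with $\supp(w)\geq s$, symmetry of $A^{(n)}$ forces $w^\top a = 0$ as a necessary condition for $\mathcal{E}_n$, and Lemma~\ref{lo_sparse} applied to $v=w$ contributes the first term. Second, if $\ker A^{(n)}$ contains a vector of support in $(r,s)$, bound $\Pr[\mathcal{E}_n\mid A^{(n)}]$ by $1$ to get the second term. Third, if every kernel vector has support $\leq r$, I would construct, as a measurable function of $A^{(n)}$, an index set $J\subseteq[n-1]$ with $|J|\leq r$ whose removal from $A^{(n)}$ leaves an invertible principal submatrix $M$, and let $P \in \mathbb{R}^{(n-1)\times(n-1)}$ be $M^{-1}$ padded by zeros on $J$. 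A direct linear-algebra computation shows that, on $\{a|_J = 0\}$, the event $\mathcal{E}_n$ reduces to $a^\top P a = 0$; since $|J|\leq r$, a union bound yields $\Pr[a|_J \neq 0] \leq rd/n$, producing the third term.

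On the complementary event $\{a|_J = 0\}$, I would apply a second Markov argument to control the sparsity of $P$. Let $Y$ be the number of columns $P_j$ of $P$ with support $<s$; whenever $P_j$ is sparse, $P_j$ itself witnesses a sparse solution to $A^{(n)}x = e_j$, so by the exchangeability of the coordinates of $A^{(n)}$, $\mathbb{E}[Y] \leq n\cdot \Pr[\exists x\neq 0 : A^{(n)}x = e_1,\, \supp(x)<s]$ and Markov yields the fourth term. On $\{Y<u\}$ the matrix $P$ has at least $n-1-r-u$ columns of support $\geq s$, hence $P \in \mathcal{T}^{n-1}_{n-1-r-u,s}$, and maximizing $\Pr[a^\top X a = 0]$ over this class (estimated via Lemma~\ref{lo_quadratic}) gives the fifth term.

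The main obstacle is making the pseudoinverse construction in the third case rigorous. Two delicate points have to be handled: first, from the assumption that every kernel vector has support $\leq r$, one must exhibit a set $J$ of size $\leq r$ whose removal yields an invertible principal submatrix of the symmetric matrix $A^{(n)}$, and the choice of $J$ must be a deterministic function of $A^{(n)}$ so that the independence between $a$ and $(A^{(n)}, J, P)$ used by the Littlewood-Offord bounds is preserved. Second, the Markov step on $Y$ must be reduced to the per-coordinate probability $\Pr[\exists x\neq 0 : A^{(n)}x=e_1,\, \supp(x)<s]$ via a careful coordinate-exchangeability argument, since the indexing of the ``columns of $P$'' itself depends on $A^{(n)}$ through $J$.
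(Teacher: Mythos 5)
Your overall plan --- reduce to $\Pr[\mathcal{E}_n]$ via Observation~\ref{obs_3} and Markov, condition on $A^{(n)}$, and case on the maximal kernel support of $A^{(n)}$ --- matches the paper's, and is fine for the first two cases. The gap is in the third case: you take $J$ with $|J|\le r$ such that $M:=A^{(n)}[S,S]$ (with $S=[n-1]\setminus J$) is invertible, and set $P$ to be $M^{-1}$ padded by zeros on $J$. But a column $P_j$ ($j\in S$) does \emph{not} satisfy $A^{(n)}P_j=e_j$ in general: the $S$-rows of $A^{(n)}P_j$ give $e_j$, while the $J$-rows give $A^{(n)}[J,S](M^{-1})_{\cdot,j}$, which has no reason to vanish. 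Try $A^{(n)}=\left(\begin{smallmatrix}1&1\\1&1\end{smallmatrix}\right)$ with $J=\{2\}$, $M=(1)$: then $P_1=(1,0)^\top$ has $A^{(n)}P_1=(1,1)^\top\ne e_1$, and indeed $e_1\notin\mathrm{range}(A^{(n)})$. So neither step you need holds: $P_j$ does not witness a sparse solution of $A^{(n)}x=e_j$ (which invalidates the Markov step behind the fourth term), and on $\{a|_J=0\}$ the event $\mathcal{E}_n$ does not reduce to $\{a^\top Pa=0\}$ (which invalidates the reduction producing the third and fifth terms).

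The paper avoids this by not inverting a principal submatrix. It sets $S:=\{i: e_i\in\Span(A^{(n)})\}$, which by Lemmas~\ref{claim:nullspace} and \ref{claim:basis} has $|S|\ge n-1-r$ in Case~3, then chooses for each $i\in S$ \emph{any} preimage $w_i$ with $A^{(n)}w_i=e_i$ exactly, and lets the $i$-th column of the pseudoinverse $B$ be $w_i$ (zero for $i\notin S$). These $w_i$ need not be supported off a small index set, but that is irrelevant: whenever $\supp(a)\subseteq S$ one gets $A^{(n)}Ba=a$, so $a^\top Ba\ne 0$ yields $(Ba,-1)/(a^\top Ba)$ mapping to $e_n$, and sparse columns of $B$ genuinely witness sparse solutions to $A^{(n)}x=e_i$. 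That makes the third, fourth and fifth terms fall out. You flagged the existence of a small $J$ with invertible $A^{(n)}[S,S]$ as the main obstacle --- such $J$ does exist, since a symmetric matrix of rank $\ge n-1-r$ has an invertible $(n-1-r)\times(n-1-r)$ principal minor --- but the padded $M^{-1}$ built from it is simply not the right object; you need exact preimages of the $e_i$, not a submatrix inverse.
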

Lemma~\ref{lemma:large2_A} breaks down the probability that there is a large linear dependency consisting of more than $t$ rows into the sum of the probabilities of several other events. Two of these probabilities (lines 1 and 5 of the right hand side of eq.~\ref{eq:lem} in the lemma) we can show to be small via anti-concentration lemmas. Two of the probabilities (lines 2 and 4 of the right hand side of eq.~\ref{eq:lem} in the lemma), we will eventually show to be small using the lemmas from Small Case 1, 2, and Large Case 1.

Instantiating Lemma~\ref{lemma:large2_A} with $t = \frac{n}{C}$, $s = \frac{n}{C}$, $r = \frac{n}{8e^4d^2}$, $u = \frac{n}{2}$, where $C$ is the constant from Lemma~\ref{lemma:medium2}, we obtain the following lemma.

\begin{lemma}[Large Case 2 for $A$]\label{lemma:large_plug}
Let $A \sim \mathbb{A}(n, \frac{d}{n})$ for $ \omega(1) = d \leq 3\log(n)$.
With $C$ equal to the constant from Lemma~\ref{lemma:medium2},
\begin{equation*}
\begin{split}
\Pr[\exists x: Ax = 0, \supp(x) > n/C] &\leq C\Pr[\exists x: A^{(n)}x = 0, \frac{n}{8e^4d^2} \leq |\supp(x)| \leq n/C] \\
&\qquad + 2C\Pr[\exists x: A^{(n)}x = e_1, |\supp(x)| \leq n/C] + o(1).
\end{split}
\end{equation*}
\end{lemma}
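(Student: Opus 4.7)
The plan is to apply Lemma~\ref{lemma:large2_A} with the specific choices $t = s = n/C$, $r = n/(8e^4 d^2)$, and $u = n/2$, and then verify that three of the five terms on the right-hand side of eq.~\ref{eq:lem} contribute $o(1)$, while the remaining two match the two probabilities appearing on the right-hand side of Lemma~\ref{lemma:large_plug} (with the overall factor $n/t = C$). A preliminary sanity check, for $C \geq 3$ and $n, d$ large enough, confirms that $r + u < n - 1$ and $r < s$, so the events and matrix classes in eq.~\ref{eq:lem} are well-defined.

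I would dispose of the three vanishing terms via anti-concentration. For the first term (the linear Littlewood--Offord term), any vector of support at least $s = n/C$ satisfies $sp = d/C$, which tends to infinity since $d = \omega(1)$, so Lemma~\ref{lo_sparse} yields $\Pr[x^\top \mathbf{a} = 0] \leq 1/\sqrt{d/C}$; multiplied by $n/t = C$ this is $O(C^{3/2}/\sqrt{d}) = o(1)$. The third term evaluates to $(n/t)(dr/n) = C/(8e^4 d) = o(1)$. For the fifth term (the quadratic Littlewood--Offord term), for $C \geq 3$ the parameter $\alpha := n - 1 - r - u \geq n/3$ exceeds $\beta := s = n/C$, so any $X \in \mathcal{T}^{n-1}_{\alpha, \beta}$ has at least $s$ columns each with at least $s$ nonzero entries; applying Lemma~\ref{lo_quadratic} with $m = s$ gives $\Pr[\mathbf{a}^\top X \mathbf{a} = 0] = O((d/C)^{-1/4}) = o(1)$, and multiplication by $C$ still leaves $o(1)$.

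The remaining two terms match the statement directly. The second term is $C \cdot \Pr[\exists x : A^{(n)} x = 0,\ r < |\supp(x)| < s]$, which (absorbing the strict-vs-non-strict inequalities into the slightly larger event) is bounded by $C \cdot \Pr[\exists x : A^{(n)} x = 0,\ \tfrac{n}{8e^4 d^2} \leq |\supp(x)| \leq \tfrac{n}{C}]$. The fourth term is $(n/t)(n/u) = 2C$ times $\Pr[\exists x \neq 0 : A^{(n)} x = e_1,\ |\supp(x)| < s]$, which is the second probability on the right-hand side of Lemma~\ref{lemma:large_plug}. Summing all five contributions yields the stated inequality.

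The step that requires the most care is not conceptual but bookkeeping: verifying that the parameter choices are simultaneously compatible with every hypothesis of Lemma~\ref{lemma:large2_A} and that each Littlewood--Offord application is in its valid regime because every relevant support-size times $p$ diverges. The real substance lives in Lemma~\ref{lemma:large2_A} itself; Lemma~\ref{lemma:large_plug} is essentially a reduction that repackages its conclusion into a form whose two residual probabilities will eventually be closed off in Section~\ref{sec:char} using Corollary~\ref{cor:small} and Lemma~\ref{lemma:medium2}.
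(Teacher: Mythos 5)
Your proposal is correct and takes exactly the paper's route: instantiate Lemma~\ref{lemma:large2_A} with $t = s = n/C$, $r = n/(8e^4d^2)$, $u = n/2$, kill the first, third, and fifth terms of eq.~\ref{eq:lem} via Lemma~\ref{lo_sparse}, a direct computation ($C/(8e^4 d)$), and Lemma~\ref{lo_quadratic} respectively, and observe that the surviving second and fourth terms are precisely the two probabilities on the right-hand side with factors $C$ and $2C$. One tiny slip, with no bearing on correctness: the quantity $\alpha = n-1-r-u$ depends only on $r$ and $u$, not on $C$, so the clause ``for $C \geq 3$'' in your fifth-term argument is vacuous; what you actually need is just $\min(\alpha, s)\cdot d/n = \omega(1)$, which holds since both $\alpha$ and $s$ are $\Theta(n)$.
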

\begin{proof}
This follows immediately from plugging in these values of $t, s, r$ and $u$ into Lemma~\ref{lemma:large2_A} and applying the anti-concentration results in Lemmas~\ref{lo_sparse} and \ref{lo_quadratic} to the first and last terms in eq.~\ref{eq:lem}.
Indeed, Lemma~\ref{lo_sparse} shows that \begin{equation*}
    \frac{n}{t}\max_{x \in \mathbb{R}^{n - 1}: \supp(x) \geq s}\Pr\left[x^\top \textbf{a} = 0\right] \leq \frac{n}{t}\frac{1}{\sqrt{sd/n}} = o(1),
\end{equation*}
where we have used the fact that $sd/n = d/C \geq 9$ since $d = \omega(1)$.
Lemma~\ref{lo_quadratic} shows that
\begin{equation*}
    \frac{n}{t}\max_{X \in \mathcal{T}^{n - 1}_{n - 1 - r - u, s}}\Pr\left[\textbf{a}^\top X\textbf{a}  = 0\right] \leq O\left(\frac{1}{\sqrt[4]{\min(s, n - 1 - r - u)d/n}}\right) = o(1).
\end{equation*} The third term in eq.~\ref{eq:lem} equals $\frac{C}{8e^4d}$ which is also $o(1)$.
\end{proof}

To prove Lemma~\ref{lemma:large2_A}, we will use the following two linear-algebraic lemmas, proved in Appendix~\ref{apx:matrices}. 
\begin{restatable}{lemma}{nullspace}\label{claim:nullspace}
Let $A$ be a matrix with columns $A_i$ for $i \in [n]$. Let $H_i$ be the space spanned by the column vectors $A_1, A_2, \cdots A_{i - 1}, A_{i + 1}, \cdots A_n$. Let $S$ be the set of all $i$ such that $A_i \in H_i$. Then there exists some $y$ with $\text{supp}(y) = S$ such that $Ay = 0$.
\end{restatable}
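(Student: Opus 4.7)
The plan is to reduce the statement to the elementary linear-algebra fact that a vector space over an infinite field is never a finite union of proper subspaces. First I would observe that the containment $\text{supp}(y) \subseteq S$ is automatic for every kernel vector $y$ of $A$: if $i \notin S$ then $A_i \notin H_i$, so any relation $\sum_j y_j A_j = 0$ with $y_i \neq 0$ could be solved for $A_i$ in terms of $\{A_j\}_{j \neq i}$, contradicting $i \notin S$. Hence it suffices to produce one kernel vector whose support meets every $i \in S$.

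Next, for each $i \in S$ I would exhibit an element of $\ker(A)$ with a nonzero $i$-th coordinate. By the definition of $S$, $A_i \in H_i$, so $A_i = \sum_{j \neq i} c_j A_j$ for some scalars $c_j$; rearranging produces a vector $y^{(i)} \in \ker(A)$ with $y^{(i)}_i = 1$. Thus the subspace $V_i := \{y \in \ker(A) : y_i = 0\}$ is a \emph{proper} subspace of $V := \ker(A)$ for each $i \in S$.

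Finally I would invoke the classical avoidance lemma: a finite-dimensional vector space over an infinite field (here $\mathbb{R}$) cannot be written as the union of finitely many proper subspaces. Applied to $V$ and the finite collection $\{V_i\}_{i \in S}$, this yields a vector $y \in V \setminus \bigcup_{i \in S} V_i$. Such a $y$ has $y_i \neq 0$ for every $i \in S$ (since $y \notin V_i$) and $y_i = 0$ for every $i \notin S$ (by the first step), so $\text{supp}(y) = S$, proving the lemma.

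There is no real obstacle in this argument beyond correctly identifying that we need the field to be infinite; the main thing to be careful about is the reduction itself, namely that the ``hard'' direction is producing a kernel vector whose support is \emph{as large as} $S$, since the other inclusion is essentially by definition of $S$.
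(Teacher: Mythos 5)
Your proof is correct. Both halves are right: the containment $\supp(y) \subseteq S$ for every $y \in \ker(A)$ follows from the definition of $S$, and for each $i \in S$ the relation $A_i = \sum_{j \neq i} c_j A_j$ gives a kernel vector with $i$-th coordinate $-1$, so $V_i := \{y \in \ker(A) : y_i = 0\}$ is a proper subspace of $\ker(A)$. Invoking the avoidance lemma (a nonzero vector space over $\mathbb{R}$ is not a finite union of proper subspaces) then produces $y$ with $\supp(y) = S$; the degenerate case $S = \emptyset$ is handled by $y = 0$.

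One stylistic remark: elsewhere in this paper (e.g.\ the proof of Lemma~\ref{lem:small}) the same avoidance step is carried out probabilistically, by taking a linear combination of kernel vectors with i.i.d.\ uniform coefficients on $[0,1]$ and observing that with probability $1$ the support of the combination is the union of the supports. That is just the measure-theoretic version of the deterministic avoidance lemma you cite: each $V_i$ has Lebesgue measure zero inside $\ker(A)$, so a random point misses them all. Either phrasing is fine; yours is slightly cleaner since it makes no reference to a probability measure and works over any infinite field.
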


\begin{restatable}{lemma}{basis}\label{claim:basis}
With the terminology of the previous lemma, $e_i \in \Span(A^\top )$, if and only if $A_i \notin H_i$.
\end{restatable}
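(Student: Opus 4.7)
The plan is to prove the equivalence by invoking the standard linear-algebraic duality that $\Span(A^\top)$, the row space of $A$, is precisely the orthogonal complement of $\Ker(A)$. Under this identification, $e_i \in \Span(A^\top)$ is equivalent to the statement that $v_i = 0$ for every $v \in \Ker(A)$. The lemma then reduces to showing that this ``no kernel vector hits coordinate $i$'' condition is equivalent to $A_i \notin H_i$.

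First I would handle the direction $A_i \in H_i \Rightarrow e_i \notin \Span(A^\top)$. If $A_i \in H_i$, write $A_i = \sum_{j\neq i} c_j A_j$ for some scalars $c_j$. Rearranging, this produces a vector $v \in \Ker(A)$ defined by $v_i = -1$ and $v_j = c_j$ for $j \neq i$. If $e_i$ were in $\Span(A^\top)$, say $e_i = A^\top w$, then $\langle e_i, v \rangle = w^\top A v = 0$, contradicting $\langle e_i, v \rangle = v_i = -1$.

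For the converse, I would prove the contrapositive: if $e_i \notin \Span(A^\top)$, then by the duality above there exists $v \in \Ker(A)$ with $v_i \neq 0$. Then $\sum_{j} v_j A_j = 0$ can be solved for $A_i$ to give $A_i = -\tfrac{1}{v_i}\sum_{j\neq i} v_j A_j \in H_i$, as desired. The main (and essentially only) ingredient is the identification $\Span(A^\top) = \Ker(A)^\perp$, so there is no real obstacle; the bulk of the write-up is just carefully rearranging the single linear relation in each direction.
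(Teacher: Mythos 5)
Your proof is correct and takes the natural route. The key ingredient is the standard identification of $\Span(A^\top)$, the row space of $A$, with the orthogonal complement of the kernel of $A$, combined with the observation that $A_i \in H_i$ holds precisely when some kernel vector $v$ of $A$ has $v_i \neq 0$; this is essentially the same argument as the paper's.
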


\begin{proof}[Proof of Lemma~\ref{lemma:large2_A}]

For each $i \in [n]$, we define $H_i$ to be the space spanned by the column vectors $A_1, A_2, \cdots A_{i - 1}, A_{i + 1}, \cdots A_n$.

Then, as per Observation~\ref{obs_3}, $Ax = 0$ for some $x$ implies that for all $i \in \text{supp}(x)$, $A_i \in H_i$. Let $\mathcal{E}_i$ denote the event that $A_i \in H_i$. Let $X_i$ be the indicator of this event, and let $X = \sum_i X_i$. By Markov's inequality and the exchangeability of the columns,
\begin{equation}\label{markov}
     \Pr\left[\exists x : Ax = 0, |\supp(x)| \geq t \right] = \Pr\left[X \geq t\right] \leq \frac{\mathbb{E}[X]}{t} = \frac{n\Pr[\mathcal{E}_n]}{t}.
\end{equation}

We will break down the probability $\Pr[\mathcal{E}_n]$ into several cases, depending on the size of the support of vectors in the kernel of $A^{(n)}$. Let $S \subseteq [n - 1]$ be the set of all $i$ such that $e_i \in \Span(A^{(n)})$, such that by Lemma~\ref{claim:nullspace} and  Lemma~\ref{claim:basis},
\begin{equation*}
k:= \max(\supp(x): A^{(n)}x = 0) = n - 1 - |S|. 
\end{equation*}

\textbf{Case 1:} $A^{(n)}$ has a kernel vector $x$ with large support, that is, $k \geq s$. 

\textbf{Case 2:} $A^{(n)}$ has a kernel vector $x$ with medium support, that is $r < k < s$. 

\textbf{Case 3:} $A^{(n)}$ does not have any kernel vectors with large or medium support vectors in its kernel, that is, $k \leq r$. 

We can expand
\begin{equation}\label{cases}
\Pr[\mathcal{E}_n] = \Pr[\mathcal{E}_n| k \geq s]\Pr[k \geq s] + \Pr[\mathcal{E}_n|r < k < s]\Pr[r < k < s] + \Pr[\mathcal{E}_n| k \leq r]\Pr[k \leq r]. 
\end{equation}
For simplicity, define $\textbf{a} := A_n^{/n}$ to be the first $n - 1$ entries of the column $A_n$.

To evaluate the probability of the first case, we condition on $A^{(n)}$ and let $x$ be any vector of support at least $s$ in the kernel of $A^{(n)}$. Observe that $\mathcal{E}_n$ cannot hold if $x^\top\textbf{a}$ is non-zero. Indeed, if $x^\top \textbf{a} \neq 0$, then let $x' = (x_1, x_2,\ldots,x_{n-1},0)/(x^\top\textbf{a})$ such that $Ax' = e_n$. Then by Lemma~\ref{claim:basis}, $A_n \notin H_n$ and hence $\mathcal{E}_n$ does not occur. Since $\textbf{a}$ is independent from $x$, we have 

\begin{equation}\label{case_1}
\Pr[\mathcal{E}_n| k \geq s]\Pr[k \geq s] \leq \max_{x: \supp(x) \geq s}\Pr[x^\top\textbf{a} = 0].
\end{equation}

Combined with eq.~\ref{markov}, the contribution from this case yields the first term in the right hand side of eq.~\ref{eq:lem}.

For the second case, we bound:
\begin{equation}\label{case_2}
\Pr[\mathcal{E}_n|r < k <  s]\Pr[r < k < s] \leq \Pr[r < k < s] \leq \Pr\left[\exists x: A^{(n)}x = 0, r < |\supp(x)| < s \right].    
\end{equation}

Combined with eq.~\ref{markov}, the contribution from this case yields the second term in the right hand side of eq.~\ref{eq:lem}.

The third case will lead to the final three terms in the right hand side of eq.~\ref{eq:lem}. In this case, we will show conditions under which we can algebraically construct a vector $v$ such that $Av = e_n$. This will imply by Lemma~\ref{claim:basis} that $A_n \notin H_n$.

Recall that $S \subseteq [n - 1]$ is the set of all $i$ such that $e_i \in \Span(A^{(n)})$. For $i \in S$, let $w_i $ be any vector such that $A^{(n)}{w_i } = e_i$. We next construct a sort of ``pseudoinverse" matrix $B \in \mathbb{R}^{n - 1 \times n - 1}$ as follows: For $i \in S$, define $B_{ij}$ to be the $i$th entry of $w_i $. That is, for $i \in S$, the $i$th column of $B$ is $w_i $. Define all other entries of $B$ to be zero.

The following claim shows a condition for $\mathcal{E}_n$ not holding.
\begin{claim}\label{claim:condition}
If $\supp(\textbf{a}) \subseteq S$ and $\textbf{a}^\top B\textbf{a} \neq 0$, then $e_n \in \Span(A)$.
\end{claim}
\begin{proof}
Let ${w'} := B\textbf{a} = \sum_{i \in S}{\textbf{a}_iw_i }$ such that $A^{(n)}{w'} = \sum_{i \in S}{\textbf{a}_ie_i}$. Hence if $\supp(\textbf{a}) \subseteq S$, $A^{(n)}{w'} = \textbf{a}$. Define $w \in \mathbb{R}^n$ to be the vector with $w'$ in the first $n - 1$ entries and $-1$ in the final entry. Then the first $n - 1$ entries of $Aw$ are $0$, and the last entry is ${\bf{a}}^\top  w' = {\bf{a}}^\top  B \bf{a}$. Evidently, if ${\bf{a}}^\top  B {\bf{a}} \neq 0$, then $$\frac{Aw}{{\bf{a}}^\top  B {\bf{a}}} = e_n,$$ so $e_n \in \Span(A)$.
\end{proof} 
By definition, in the third case, we have $|S| \geq n - 1 - r$. Hence by Claim~\ref{claim:condition},
\begin{equation}\label{case_3}
\begin{split}
\Pr[\mathcal{E}_n \land k \leq r]
&\leq \Pr\left[\supp(\textbf{a}) \not\subseteq S \land |S| \geq n - 1 - r\right] + \Pr[\textbf{a}^\top B\textbf{a} = 0 \land |S| \geq n - 1 - r].
\end{split}
\end{equation}

Notice that $S$ is a function of $A^{(n)}$ and so $\textbf{a}$ is independent from $S$. It is easy to check that for any set $S$ of size at least $n - 1 - r$, 
\begin{equation}\label{case_3_a_S}
\Pr\left[\supp(\textbf{a}) \not\subseteq S\right] \leq 1 - \left(1 - \frac{d}{n}\right)^r \leq \frac{dr}{n}. 
\end{equation}

We will break up the second term in eq.~\ref{case_3} by conditioning on whether the support of $B$ has many entries or not, and using the independence of $\textbf{a}$ from $B$:

\begin{equation}\label{eq:T_term}
\Pr[\textbf{a}^\top B\textbf{a} = 0] \leq \Pr\left[B \notin \mathcal{T}^{n - 1}_{n - 1 - r - u, s}  \wedge |S| \geq n - 1 - r\right] + \max_{X \in \mathcal{T}^{n - 1}_{n - 1 - r - u, s}}\Pr\left[\textbf{a}^\top X\textbf{a} = 0\right]
\end{equation}

To further bound the first probability on the right hand side, observe that if $|S| \geq n - 1 - r$ and $B \notin \mathcal{T}^{n - 1}_{n - 1 - r - u, s}$, there must exist at least $u$ different values $i \in S$ such that $\supp(w_i ) \leq s$. So 
\begin{equation*}
\begin{split}
\Pr\left[B \notin \mathcal{T}^{n - 1}_{n - 1 - r - u, s}  \wedge |S| \geq n - 1 - r\right] &\leq \Pr\left[|\{i: \exists x \neq 0: A^{(n)}x = e_i, |\supp(x)| < s\}| \geq u \right] \\ &\leq \frac{n}{u}\Pr\left[\exists x \neq 0: A^{(n)}x = e_1, |\supp(x)| < s\right],
\end{split}
\end{equation*}
where the last inequality follows by Markov's inequality. Combining this with eqs.~\ref{eq:T_term}, \ref{case_3_a_S}, and \ref{case_3} yields
\begin{equation*}
\Pr[\mathcal{E}_n \land k \leq r] \leq \frac{dr}{n} + \max_{X \in \mathcal{T}^{n - 1}_{n - 1 - r - u, s}}\Pr\left[\textbf{a}^\top X\textbf{a} = 0\right] + \frac{n}{u}\Pr\left[\exists x \neq 0: A^{(n)}x = e_1, |\supp(x)| < s\right].
\end{equation*}

Plugging this and eqs.~\ref{case_1} and \ref{case_2} into eq.~\ref{cases} and finally eq.~\ref{markov} yields the lemma.
\end{proof}

\section{Proof of Characterization of Linear Dependencies: Theorem~\ref{char:square} and Theorem~\ref{char:bgc}}\label{sec:char}

We are now ready to put the results of the small and large domains together to prove Theorem~\ref{char:square} and Theorem~\ref{char:bgc}.

\subsection{Proof of Theorem~\ref{char:square}}
For the convenience of the reader, we restate this theorem:

\charsquare*

We will need the following two lemmas to show that the first two terms in the right hand size of Lemma~\ref{lemma:large_plug} are $o(1)$.

\begin{lemma}\label{lem:medium}
Let $A \sim \mathbb{A}(n, \frac{d}{n})$ with $ \omega(1) = d \leq 3\log(n)$ and let $C$ be as in Lemma~\ref{lemma:medium2}. Then
\begin{equation*}
\Pr\left[\exists x: Ax = 0, \frac{n}{8e^4d^2} < |\supp(x)| < \frac{n}{C} \right] = o(1),
\end{equation*}
and 
\begin{equation*}
\Pr\left[\exists x: A^{[n - 1]}x = 0, \frac{n}{8e^4d^2} < |\supp(x)| < \frac{n}{C} \right] = o(1).
\end{equation*}
\end{lemma}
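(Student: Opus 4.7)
The plan is to prove Lemma~\ref{lem:medium} by simply splitting the support-size interval $\left[\tfrac{n}{8e^4d^2}, \tfrac{n}{C}\right]$ at the threshold $\tfrac{9n}{d}$ and invoking the two lemmas already established in Sections~\ref{sec:small} and \ref{sec:large1}. The point is that Lemma~\ref{lemma:medium} (Small Case 2) and Lemma~\ref{lemma:medium2} (Large Case 1) were both formulated to apply uniformly to any of the three ensembles $A$, $A^{[n-1]}$, and $B$, and together they exactly cover the relevant support range. Since $d=\omega(1)$, the hypothesis $d\ge d_0$ required by both lemmas is eventually satisfied, and the assumption $d\le 3\log(n)$ is already in force.

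Concretely, I would first apply Lemma~\ref{lemma:medium} with $M=A$ (respectively $M=A^{[n-1]}$) to conclude
\[
\Pr\!\left[\exists\, x:\, Mx=0,\ \tfrac{n}{8e^4d^2}\le|\supp(x)|<\tfrac{9n}{d}\right] = o(1).
\]
Then I would apply Lemma~\ref{lemma:medium2} with the same $M$ to conclude
\[
\Pr\!\left[\exists\, x:\, Mx=0,\ \tfrac{9n}{d}\le|\supp(x)|<\tfrac{n}{C}\right] = o(1),
\]
where $C$ is the universal constant from Lemma~\ref{lemma:medium2}. A union bound over these two disjoint support regimes gives the desired $o(1)$ bound.

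The only mild subtlety is a bookkeeping one: I should make sure the thresholds line up. The value $\tfrac{9n}{d}$ is the shared endpoint of the two intervals (and lies in $[\tfrac{n}{8e^4d^2}, \tfrac{n}{C}]$ for $d$ sufficiently large), and the constant $C$ in the statement of Lemma~\ref{lem:medium} is the same constant produced by Lemma~\ref{lemma:medium2}, so no adjustment is needed. There is no genuine obstacle here; the work was done in the two underlying lemmas, and this step is essentially a packaging of them into a single statement that feeds cleanly into the bound in Lemma~\ref{lemma:large_plug}.
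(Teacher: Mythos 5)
Your proposal is correct and takes essentially the same approach as the paper: the paper's proof is the one-liner that the statement follows immediately from Small Case 2 (Lemma~\ref{lemma:medium}) and Large Case 1 (Lemma~\ref{lemma:medium2}), which is exactly the split-at-$\tfrac{9n}{d}$-and-union-bound argument you describe.
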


\begin{proof}
This is immediate from lemma from Small Case 2, Lemmas~\ref{lemma:medium}, and from Large Case 1, Lemma~\ref{lemma:medium2}.
\end{proof}

\begin{lemma}\label{lem:basis}
Let $A \sim \mathbb{A}(n, \frac{d}{n})$ with $ \omega(1) = d \leq 3\log(n)$ and let $C$ be as in Lemma~\ref{lemma:medium2}. Then
\begin{equation*}
\Pr\left[\exists x: Ax = e_n, |\supp(x)| < \frac{n}{C} \right] = o(1).
\end{equation*}
\end{lemma}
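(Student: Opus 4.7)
The plan is to observe that $Ax = e_n$ forces $A^{[n-1]}x = 0$, which reduces the problem to controlling sparse kernel vectors of $A^{[n-1]}$ that additionally satisfy $(A_n)^\top x = 1$. I will partition the possible supports of $x$ by size and by whether $n \in \supp(x)$. For $n/(8e^4d^2) \leq |\supp(x)| < n/C$ (medium range), Lemmas~\ref{lemma:medium} and \ref{lemma:medium2} applied to $M = A^{[n-1]}$ directly rule out $A^{[n-1]} x = 0$ with probability $1-o(1)$. For $|\supp(x)| \leq n/(8e^4d^2)$ with $n \in \supp(x)$, Corollary~\ref{cor:small} item~3 does the same. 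The substance is therefore the case $|\supp(x)| = k \leq n/(8e^4d^2)$ with $n \notin \supp(x)$.

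In this main case, writing $S = \supp(x) \subseteq [n-1]$, the equation $Ax = e_n$ reduces to the simultaneous conditions
\[
A^{(n)}_S\, x|_S = 0 \quad\text{and}\quad (A_n)_S^\top x|_S = 1.
\]
I will condition on $A^{(n)}$; thanks to the symmetry of $A$, the entries $(A_{nj})_{j\in[n-1]}$ are i.i.d.\ $\Ber(d/n)$ and independent of $A^{(n)}$. By Corollary~\ref{cor:small} item~2 applied to $A^{(n)} \sim \mathbb{A}(n-1, d/n)$, on a whp event $\mathcal{G}$ every minimal dependency of at most $n/(8e^4d^2)$ columns of $A^{(n)}$ is a tree dependency, whose kernel is spanned by a $\pm 1$-valued vector. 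The key structural claim I need on $\mathcal{G}$ is that any sparse $v \in \ker(A^{(n)})$ lies in the span of such tree-dependency kernel vectors $v_{S'}$ (extended by zeros) for tree dependencies $S' \subseteq \supp(v)$; this follows by iteratively extracting minimal sub-dependencies via Lemma~\ref{lemma:support_minimal} and invoking the characterization. Expanding $x|_S = \sum_{S'} \alpha_{S'} v_{S'}$, the constraint $(A_n)_S^\top x|_S = 1 \neq 0$ forces $(A_n)_{S'}^\top v_{S'} \neq 0$ for at least one tree dependency $S' \subseteq S$.

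It then suffices to union bound over tree dependencies $S'$. For fixed $S'$ with $|S'| = k$, independence of $(A_n)_{S'}$ from $A^{(n)}$ gives
\[
\Pr\bigl[(A_n)_{S'}^\top v_{S'} \neq 0\bigr] \leq 1 - (1-d/n)^k \leq \frac{dk}{n},
\]
since $(A_n)_{S'} = 0$ entrywise already makes the inner product vanish. Proposition~\ref{prop:small} item~4 applied to $A^{(n)}$ gives $\Pr[A^{(n)}_{S'} \in \mathcal{T}_k] \leq (e^{-d+c\log d})^k (k/n)^{k-1}$. Combining with $\binom{n-1}{k} \leq (en/k)^k$ and simplifying, the union bound telescopes to
\[
\sum_{k \geq 1} \binom{n-1}{k} (e^{-d+c\log d})^k \Bigl(\tfrac{k}{n}\Bigr)^{k-1} \cdot \tfrac{dk}{n} \;\leq\; d\sum_{k \geq 1} e^{k(1-d+c\log d)} \;=\; O\bigl(d\, e^{1-d+c\log d}\bigr) \;=\; o(1),
\]
using $d = \omega(1)$.

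The main obstacle will be verifying the structural claim on $\mathcal{G}$ that every sparse kernel vector of $A^{(n)}$ lies in the span of tree-dependency kernel vectors. Everything else is bookkeeping: the medium-support case is handled by already-proved lemmas, the small-support case with $n \in \supp(x)$ is handled by a direct corollary, and the remaining union bound is a geometric series that is $o(1)$ whenever $d = \omega(1)$.
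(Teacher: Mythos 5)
Your proof is correct and takes a genuinely different route from the paper's. The paper handles the $n \notin \supp(x)$ case via Lemma~\ref{lem:small}, whose proof is an accounting argument: it shows the union $T$ of supports of all sparse kernel vectors of $A^{(n)}$ has size at most $\frac{n}{8e^4 d^2}$ by observing that otherwise a random linear combination of kernel vectors would produce one with medium-sized support, contradicting Lemma~\ref{lem:medium}. Given such a hub set $T$, independence of $A_n$ from $A^{(n)}$ gives $\supp(A_n) \cap T = \emptyset$ with high probability, which forces $(A_n)^{\top}x|_{[n-1]} = 0 \neq 1$. You instead decompose $x|_{[n-1]}$ into a span of tree-dependency kernel vectors and union bound directly over tree dependencies $S'$, paying $\Pr[A^{(n)}_{S'} \in \mathcal{T}_{|S'|}] \cdot \Pr[(A_n)_{S'} \neq 0] \lesssim (e^{-d+c\log d})^k (k/n)^{k-1} \cdot dk/n$ per $S'$ of size $k$; this appeals to Proposition~\ref{prop:small}(4), which the paper's proof of this lemma never needs. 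Both arguments are of comparable length; yours is more structural and more in the spirit of Phenomenon~\ref{phe:1}, the paper's is a leaner collapse-and-hit step.

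The one ingredient you correctly flag as the main obstacle is the spanning claim: on $\mathcal{G}$, any $v \in \ker(A^{(n)})$ with $|\supp(v)| \leq \frac{n}{8e^4d^2}$ lies in $\Span\{v_{S'} : S' \subseteq \supp(v),\ A^{(n)}_{S'} \in \mathcal{T}_{|S'|}\}$. This is a genuine strengthening of Lemma~\ref{lemma:support_minimal}, which only produces one minimal dependency per index rather than a spanning set, and your phrase ``follows by iteratively extracting minimal sub-dependencies'' points at the right idea but does not give it. The claim is true, with a short proof: among all kernel vectors $u$ supported inside $\supp(v)$ that escape the span, take one of smallest support; by Lemma~\ref{lemma:support_minimal} its support contains a minimal dependency $S' \subseteq \supp(u) \subseteq \supp(v)$, which on $\mathcal{G}$ is a tree dependency since $|S'| \leq |\supp(v)|$, and subtracting $(u_i/(v_{S'})_i)\,v_{S'}$ for some $i \in S'$ gives a strictly shorter escaping vector, a contradiction. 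Stated as a standalone lemma this closes the argument. Everything else checks out: the reduction $Ax = e_n \Rightarrow A^{[n-1]}x = 0$, the three-way case split, the appeals to Lemmas~\ref{lemma:medium},~\ref{lemma:medium2} and Corollary~\ref{cor:small}(3), the independence of $(A_n)_{S'}$ from $A^{(n)}$, and the final geometric-series bound are all correct.
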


We reduce Lemma~\ref{lem:basis} to Lemma~\ref{lem:small}, which we are better suited to prove with our lemmas from Small Case 1, 2, and Large Case 1.

\begin{lemma}\label{lem:small}
Let $A \sim \mathbb{A}(n, \frac{d}{n})$ with $ \omega(1) = d \leq 3\log(n)$ and let $C$ be as in Lemma~\ref{lemma:medium2}. Let $K = \{x: Ax = 0, |\supp(x)| \leq \frac{n}{C}\}$. Then
\begin{equation*}
\Pr\left[\left|\bigcup_{x \in K}{\supp(x)}\right| \geq \frac{n}{8e^4d^2} \right] = o(1).
\end{equation*}
Similarly, suppose $A' = A^{[n-1]}$ is $A$ with the last row removed. Let $K' = \{x: A'x = 0, |\supp(x)| \leq \frac{n}{C}\}$. Then
\begin{equation*}
\Pr\left[n \in \bigcup_{x \in K'}{\supp(x)} \right] = o(1).
\end{equation*}
\end{lemma}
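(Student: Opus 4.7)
The plan is to prove both statements of Lemma~\ref{lem:small} by reducing, via Lemma~\ref{lem:medium} and Corollary~\ref{cor:small}, to questions about small tree dependencies, and then carrying out a short first-moment estimate.

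The second statement is almost immediate. Suppose $x \in K'$ with $n \in \supp(x)$. Since $|\supp(x)| \leq n/C$, I would split into two cases. If $|\supp(x)| \leq n/(8e^4 d^2)$, then with probability $1-o(1)$ Corollary~\ref{cor:small} statement~(3) directly forbids $A^{[n-1]} x = 0$. Otherwise $n/(8e^4 d^2) < |\supp(x)| \leq n/C$, and the second half of Lemma~\ref{lem:medium} (the one for $A^{[n-1]}$) rules this out with probability $1-o(1)$. A union bound over these two events gives the claim.

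For the first statement, I would condition on the intersection of the two high-probability events: Lemma~\ref{lem:medium} (so $A$ has no kernel vector with support in $(n/(8e^4 d^2), n/C)$) and Corollary~\ref{cor:small} statement~(2) (so every minimal dependency of $A$ with at most $n/(8e^4 d^2)$ columns lies in $\mathcal{T}_k$). On this event every $x \in K$ satisfies $|\supp(x)| \leq n/(8e^4 d^2)$, and applying Lemma~\ref{lemma:support_minimal} to the submatrix $A_{\supp(x)}$ shows that for each $i \in \supp(x)$ there is a subset $T \subseteq \supp(x)$ with $i \in T$ such that $A_T$ is a minimal dependency, hence a tree dependency of size at most $n/(8e^4 d^2)$. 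Letting $V^{\mathrm{Tree}}$ be the random set of vertices that appear in \emph{some} tree dependency of $A$ with at most $n/(8e^4 d^2)$ columns, we get $\bigcup_{x \in K} \supp(x) \subseteq V^{\mathrm{Tree}}$ on this event, so it suffices to show $\Pr[|V^{\mathrm{Tree}}| \geq n/(8e^4 d^2)] = o(1)$.

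For this I apply Markov's inequality. Summing over the size $k$ of the tree dependency and using exchangeability,
\[
\mathbb{E}[|V^{\mathrm{Tree}}|] \;\leq\; \sum_{k=2}^{n/(8e^4 d^2)} k \binom{n}{k} \max_{|S|=k}\Pr[A_S \in \mathcal{T}_k].
\]
Plugging in Proposition~\ref{prop:small} statement~(4) and the bound $\binom{n}{k}(k/n)^{k-1} \leq (en/k) e^{k-1}$ reduces each summand to $n\,(e^{1-d+c_{\ref{prop:small}}\log d})^k$. For $d$ large this is a geometric series dominated by its $k=2$ term, giving $\mathbb{E}[|V^{\mathrm{Tree}}|] = O\!\left(n\, e^{-2d + O(\log d)}\right) = o(n/d^2)$ since $d = \omega(1)$. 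Markov then yields $\Pr[|V^{\mathrm{Tree}}| \geq n/(8e^4 d^2)] = o(1)$, as required. The main conceptual point is invoking Lemma~\ref{lemma:support_minimal} to ensure that \emph{every} coordinate of $\supp(x)$ is genuinely sitting in a small minimal dependency, so that the tree-dependency characterization controls the entire support; after that the argument is a routine first-moment calculation riding on the $e^{-d}$ factor from Proposition~\ref{prop:small}.
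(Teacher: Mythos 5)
Your second-part argument matches the paper's exactly. For the first part you take a genuinely different route. The paper proves the bound on $\left|\bigcup_{x\in K}\supp(x)\right|$ \emph{deterministically}, conditioned only on the event in Lemma~\ref{lem:medium} (no medium-support kernel vector): it enumerates $K = \{x^{(1)},\dots\}$, forms generic linear combinations $y^{(i)} = \sum_{j\le i}c_j x^{(j)}$ whose supports equal $\bigcup_{j\le i}\supp(x^{(j)})$ almost surely, and observes that these supports can grow by at most $n/(8e^4d^2)$ per step; so if the final support reached $n/(8e^4d^2)$, some intermediate $y^{(i)}$ would be a kernel vector of medium support, a contradiction. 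No Markov step, no invocation of Corollary~\ref{cor:small}(2), no fresh counting. Your version instead traps $\bigcup_{x\in K}\supp(x)$ inside the set $V^{\mathrm{Tree}}$ of vertices lying in small tree dependencies (via Lemma~\ref{lemma:support_minimal} together with Corollary~\ref{cor:small}(2)), then bounds $\mathbb{E}[|V^{\mathrm{Tree}}|]$ from Proposition~\ref{prop:small}(4) and finishes with Markov. Both are correct; the paper's version is leaner (one high-probability event, no union-bound computation), while yours proves the stronger estimate $\mathbb{E}[|V^{\mathrm{Tree}}|] = O\!\left(n e^{-2d + O(\log d)}\right)$, more than the lemma requires. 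One small fix: your sum should start at $k=1$ rather than $k=2$, since $\mathcal{T}_1$ consists of all-zero columns (isolated vertices) and such columns do contribute to $\bigcup_{x\in K}\supp(x)$; the $k=1$ term is $O(ne^{-d+O(\log d)})=o(n/d^2)$, so your estimate still goes through.
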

 We first prove Lemma~\ref{lem:basis} from Lemma~\ref{lem:small}:
\begin{proof}[Proof of Lemma~\ref{lem:basis}]
First we consider vectors $x$ with $n \notin \supp(x)$.
Applying the first part of Lemma~\ref{lem:small} to $A^{(n)}$, with probability $1 - o(1)$, there exists some set $T \subseteq [n] \setminus \{n\}$ with $|T| \leq \frac{n}{8e^4d^2}$ such that $\supp(x) \subseteq T$ for all $x$ satisfying that $A^{(n)}x = 0$
and $\supp(x) \leq \frac{n}{C}$. With probability $1 - o(1)$, $\supp(A_n) \cap T = \emptyset$, so for all vectors $x$ with support in $[n - 1]$ and of size less than $\frac{n}{C}$, we do not have $Ax = e_n$. 

Next we consider vectors $x$ with $n \in \supp(x)$. If such an $x$ exists, that is, $Ax = e_n$ and $n \in \supp(x)$, then it must be the case that $A'x = 0$. By the second part of Lemma~\ref{lem:small}, the probability that such an $x$ exists is $o(1)$.
\end{proof}

\begin{proof}[Proof of Lemma~\ref{lem:small}]
Let $\mathcal{L}_2$ be the event that $$\exists x: Ax = 0, \frac{n}{8e^4d^2} < |\supp(x)| < \frac{n}{C}.$$ Recall that by Lemma~\ref{lem:medium}, this event occurs with probability $o(1)$.
To prove the first part, we will show that conditioned on $\mathcal{L}_2$ not occurring, we have $\left|\bigcup_{x \in K}{\supp(x)}\right| < \frac{n}{8e^4d^2}$.

Index the set $K$ as follows: $K = \{x^{(1)}, \cdots, x^{(|K|)}\}$. For $i = 1$ to $|K|$, let $y^{(i)} = \sum_{j = 1}^i{c_i x^{(i)}}$, where $c_i$ is chosen uniformly from the interval $[0, 1]$. It follows that with probability $1$, for all $i \leq |K|$, $$\supp(y^{(i)}) = \bigcup_{j \leq i}{\supp(x^{(i)})}.$$

If $\mathcal{L}_2$ does not occur, then $|\supp(x^{(i)})| \leq \frac{n}{8e^4d^2}$ for all $i$, so we have that $|\supp(y^{(i + 1)})| \leq |\supp(y^{(i)})| + \frac{n}{8e^4d^2}$. It follows that if $|\supp(y^{(|K|)})| \geq \frac{n}{8e^4d^2}$, then there exists some $i \leq |K|$ such that $|\supp(y^{(i)})| \in [\frac{n}{8e^4d^2}, \frac{2n}{8e^4d^2}]$. However, this would imply that $\mathcal{L}_2$ holds, which is a contradiction.

For the second part, by Lemma~\ref{lem:medium}, with probability $1 - o(1)$, $A'$ has no kernel vectors with support size $k \in [\frac{n}{8e^4d^2}, \frac{n}{C}]$. Further, the third implication of Corollary~\ref{cor:small} tells us that with probability $1 - o(1)$, $A'$ has no kernel vector with $n$ in the support and support size at most $\frac{n}{8e^4d^2}$. 
\end{proof}

We are now ready to prove Theorem~\ref{char:square}.
\begin{proof}[Proof of Theorem~\ref{char:square}]
First observe that if $\frac{3\log(n)}{n} \leq p \leq 1/2$, then the theorem is trivially true since $A$ is non-singular with probability $1 - o(1)$. Combining Lemma \ref{lemma:large_plug} with the Lemmas~\ref{lem:medium} and \ref{lem:basis} presented above, with probability $1 - o(1)$, for any $k \geq \frac{n}{8e^4d^2}$, there are no minimal dependencies of $k$ columns of $A$. Further applying Corollary~\ref{cor:small}, for $k \leq \frac{n}{8e^4d^2}$ we see that with probability $1 - o(1)$, all minimal dependencies of $k$ columns of $A$ must be in $\mathcal{T}_k$.
\end{proof}

\subsection{Proof of Theorem~\ref{char:bgc}}
For the convenience of the reader, we restate this theorem:

\charbgc*

\begin{proof}
First observe that if $\frac{3\log(n)}{n} \leq p \leq 1/2$, then the theorem is trivially true since $A$ is non-singular with probability $1 - o(1)$. Assuming $p \leq \frac{3\log(n)}{n}$, this theorem follows directly from combining Corollary~\ref{cor:small}, Lemma~\ref{lemma:medium}, Lemma~\ref{lemma:medium2}, and Lemma~\ref{lemma:large2_B}. We apply Corollary~\ref{cor:small}, Lemma~\ref{lemma:medium}, and Lemma~\ref{lemma:medium2} to both $B$ and $B^\top $ to show that with probability $1 - o(1)$, neither $B$ nor $B^\top $ has minimal dependencies of $k$ columns outside of $\mathcal{T}_k$  for $k \leq \frac{n}{C}$, where $C$ is the constant from Lemma~\ref{lemma:medium2}. Finally, Lemma~\ref{lemma:large2_B} shows that either $B$ or $B^\top $ has no minimal linear dependencies of more than $n/C$ columns.
\end{proof}

\section{Proof of Theorem~\ref{thm:ks} and Theorem~\ref{thm:ksB} on the Rank of A and B}\label{sec:ks}

We will need the following three simple linear algebraic lemmas.

\begin{lemma}[One-Sided Leaf Removal]\label{lemma:peeling}
Let $A \in \mathbb{R}^{n \times m}$, and let $A' \in \mathbb{R}^{n - 1 \times m - 1}$ be the matrix obtained be removing from $A$ a column with a single $1$ in it, and the row which indexed the location of the $1$. Then $\on{corank}(A) = \on{corank}(A').$ Furthermore, the union of the supports of vectors in the kernel of $A'$ is a subset of the union of the supports of vectors in the kernel of $A$.
\end{lemma}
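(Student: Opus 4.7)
The plan is to put $A$ into a simple block form via a row and column permutation, after which both conclusions become essentially immediate.

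First, I would let $j$ denote the column of $A$ with a single $1$, located in row $i$. Permuting row $i$ to the top and column $j$ to the left (both operations preserve rank and simply relabel kernel coordinates) puts $A$ in the block form
\[
\begin{pmatrix} 1 & \mathbf{r}^\top \\ \mathbf{0} & A' \end{pmatrix},
\]
where $\mathbf{r} \in \mathbb{R}^{m-1}$ is the remainder of row $i$ and the zero column below the leading $1$ encodes the hypothesis that column $j$ has no other nonzero entries.

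Next, to obtain the corank equality, I would subtract $r_k$ times the first column from the $k$th column for each $k \geq 2$ to eliminate the $\mathbf{r}^\top$ block, producing a block-diagonal matrix. This makes it immediate that $\on{rank}(A) = 1 + \on{rank}(A')$, and hence
\[
\on{corank}(A) = n - \on{rank}(A) = (n-1) - \on{rank}(A') = \on{corank}(A').
\]

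For the support containment, I would lift any $x' \in \mathbb{R}^{m-1}$ with $A'x' = 0$ to an $x \in \mathbb{R}^m$ by setting $x_k = x'_k$ for $k \neq j$ and choosing $x_j := -\mathbf{r}^\top x'$. The choice of $x_j$ makes the $i$th row equation $x_j + \mathbf{r}^\top x' = 0$ hold, and for every other row $l \neq i$ the column-$j$ entry vanishes, so $(Ax)_l = (A'x')_l = 0$. Thus $x \in \ker(A)$ with $\on{supp}(x) \supseteq \on{supp}(x')$ when both are viewed as subsets of $[m]$ (identifying the columns of $A'$ with $[m]\setminus\{j\}$). Taking the union over all $x' \in \ker(A')$ gives the support inclusion.

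Since both steps amount to bookkeeping after the block reduction, I do not anticipate any real obstacle; the only subtlety is remembering that "kernel" refers to the right null space $\{x : Ax = 0\}$ of the possibly rectangular matrix, so that the support is naturally a subset of the column-index set, and the identification of the $m-1$ columns of $A'$ with $[m] \setminus \{j\}$ makes the inclusion statement meaningful.
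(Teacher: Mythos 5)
Your proof is correct and essentially mirrors the paper's: the lift $x' \mapsto x$ with $x_j := -\mathbf{r}^\top x'$ is exactly the paper's kernel bijection $\phi$, and the block-form reduction is just a more explicit way of verifying $\mathrm{rank}(A) = 1 + \mathrm{rank}(A')$, which the paper obtains as a by-product of showing $\phi$ is a bijection between the two kernels. No gaps.
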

\begin{proof}
Without loss of generality, assume the last column and first row were removed. Let $K'$ be the kernel of $A'$ and $K$ be the kernel of $A$. Define $\phi: \mathbb{R}^{m - 1} \rightarrow \mathbb{R}^m$, by
$\phi(x') = (x', -y)$, where $y = \sum_{i = 1}^{m - 1}A_{1, i}x'_i$. It suffices to show that $\phi$ is a one-to-one and onto mapping from $K'$ to $K$. The one-to-one is by definition. $\phi$ maps from $K'$ to $K$ since $(A\phi(x'))_j = (A'x')_j$ for $j \neq 1$, and $(A\phi(x'))_1 = \sum_{i = 1}^{m - 1}A_{1, i}x'_i-y = 0$. The mapping is onto since for any $x \in K$, writing $x = (-y, x')$, we must have $A'x' = 0$ and $y = \sum_{i = 1}^{m - 1}A_{1, i}x'_i$, because the first row of $A$ is orthogonal to $x$.
\end{proof}

\begin{lemma}[Leaf Removal]\label{lemma:peeling2}
Let $A \in \mathbb{R}^{n \times n}$ be a the adjacency matrix of a graph $G$. Let $A' \in \mathbb{R}^{n - 2 \times n - 2}$ be the adjacency matrix of the graph $G'$ obtained from $G$ by removing any vertex of degree one and its unique neighbor. Then $\on{corank}(A) = \on{corank}(A').$ Furthermore the set of vertices in the support of the kernel of $A'$ is contained in set of vertices in the support of the kernel of $A$.
\end{lemma}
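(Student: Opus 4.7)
The statement should follow from two applications of \Cref{lemma:peeling}, once to $A$ and once to its transpose, exploiting the symmetry of $A$. Let $v$ denote the vertex of degree one and $u$ its unique neighbor. By construction, column $v$ of $A$ has exactly one nonzero entry, located in row $u$. The first step is to apply \Cref{lemma:peeling} to $A$ with this column, producing an intermediate matrix $M \in \mathbb{R}^{(n-1)\times(n-1)}$ obtained by deleting column $v$ and row $u$ from $A$. The lemma yields $\on{corank}(A) = \on{corank}(M)$ and the containment on kernel supports that we will reuse below.

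By symmetry of $A$, row $v$ of $A$ also has exactly one nonzero entry, located in column $u$. Since neither row $v$ nor column $u$ was touched in the first step, this is still true of $M$; equivalently, $M^\top$ has a column (indexed by $v$) with a single $1$, located at row $u$. Applying \Cref{lemma:peeling} to $M^\top$, and using the fact that $\on{corank}(N) = \on{corank}(N^\top)$ for any square matrix $N$, I obtain that $\on{corank}(M)$ equals the corank of the matrix obtained from $M$ by further deleting row $v$ and column $u$. But this resulting matrix is precisely the principal submatrix of $A$ indexed by $V(G)\setminus\{u,v\}$, which is exactly $A'$. Chaining the two equalities gives $\on{corank}(A) = \on{corank}(A')$.

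For the support statement, I would lift any $x' \in \ker(A')$ to $\ker(A)$ in two explicit stages mirroring the two peeling steps. First, extend $x'$ to $x'' \in \mathbb{R}^{n-1}$ (indexed by $V(G)\setminus\{v\}$) by setting the $u$-coordinate to $0$. A direct check shows $x'' \in \ker(M)$: the $v$-indexed row of $M$ has its only nonzero entry at column $u$, where $x''$ vanishes, and every other row of $M$ reproduces exactly a constraint of $A' x' = 0$. Second, apply the explicit bijection $\phi$ from the proof of \Cref{lemma:peeling} (with the roles of ``last column'' and ``first row'' replaced by $v$ and $u$) to extend $x''$ to some $x \in \ker(A)$ by adjoining a coordinate at $v$. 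Because $\phi$ leaves the original coordinates of $x''$ untouched, and because $x''$ agrees with $x'$ on $V(G)\setminus\{u,v\}$, every vertex $w$ appearing in the support of $x'$ appears in the support of $x$. Hence every vertex in the union of supports of kernel vectors of $A'$ also lies in the union of supports of kernel vectors of $A$.

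I do not anticipate any real obstacle: the proof is essentially bookkeeping around two invocations of \Cref{lemma:peeling}. The only point that warrants care is verifying that the intermediate extension $x'' \in \ker(M)$ is consistent with the row-$v$ constraint of $M$, which is immediate from the single-$1$ structure of that row together with the choice $x''_u = 0$.
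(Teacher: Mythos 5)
Your proposal is correct and follows essentially the same route as the paper: two applications of \Cref{lemma:peeling}, the second via the transpose, and an explicit lift of a kernel vector of $A'$ to one of $A$. The paper simply writes the lift $x = (x', 0, -y)$ in one step rather than factoring it through the intermediate $x'' \in \ker(M)$, but this is only a presentational difference.
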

\begin{proof}
We apply Lemma~\ref{lemma:peeling} twice. First we remove the column corresponding to the vertex and the row corresponding to its neighbor. This does not change the corank. Then we remove the row corresponding to the vertex and the column corresponding to its neighbor. This does not change the corank, since the matrix is square, so the corank of the row space is the same as the corank of the column space. 

To see the last statement, assume without loss of generality the leaf vertex was $n$ and its unique neighbor was $n - 1$. Then if $A'x' = 0$, we have $Ax = 0$, where $x = (x', 0, -y)$, where $y = \langle(A_{n - 1})_{[n-2]}, x'\rangle = \sum_{i = 1}^{n - 2}A_{2, i}x'_i$. Clearly the support of $x'$ is contained in the support of $x$.
\end{proof}

\begin{lemma}\label{lemma:support_minimal}
For $A \in \mathbb{R}^{m \times n}$, if $Ax = 0$, then for any $i \in \on{supp}(x)$, there exists some set $S$ with $i \in S$ such that $A_S \in \mathcal{M}_{|S|}$.
\end{lemma}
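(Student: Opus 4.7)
The plan is to exhibit the desired set $S$ as the support of a carefully chosen kernel vector. Specifically, among all vectors $y\in\mathbb{R}^n$ satisfying $Ay=0$ and $i\in\supp(y)$, let $y^*$ be one with minimum support size, and set $S:=\supp(y^*)$. The set $S$ is nonempty and contains $i$ by construction; I claim that $A_S$ is a minimal linear dependency.

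First, for condition (1) of \cref{def:minimal}, let $y'\in\mathbb{R}^{|S|}$ denote the restriction of $y^*$ to the coordinates in $S$. Then $A_Sy'=Ay^*=0$ and $\supp(y')=[|S|]$, so the first property holds immediately.

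For condition (2), namely $\rank(A_S)=|S|-1$, I would argue by contradiction: assume $\rank(A_S)\le |S|-2$, so $\dim\ker(A_S)\ge 2$, and pick some $u\in\ker(A_S)$ linearly independent from $y'$. The goal is to build a kernel vector $z$ of $A_S$ with $i\in\supp(z)$ but $\supp(z)\subsetneq[|S|]$, since extending $z$ by zeros off $S$ would then contradict the minimality of $|\supp(y^*)|$. I would split into two cases depending on whether $u_i=0$. If $u_i=0$, choose any $j\ne i$ with $u_j\ne 0$ and set $z:=y'-(y'_j/u_j)u$, so that $z_j=0$ while $z_i=y'_i\ne 0$. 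If $u_i\ne 0$, first replace $u$ by $v:=u-(u_i/y'_i)y'$, which is nonzero (by linear independence of $u$ and $y'$) and satisfies $v_i=0$; then apply the previous construction with $v$ in place of $u$. Either way, $z$ is a kernel vector of $A_S$ with strictly smaller support than $y'$ but still containing $i$, giving the required contradiction.

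This proof is short and elementary; the only subtle step is the case split in the construction of the smaller-support kernel vector $z$, ensuring that the index $i$ is preserved while some other coordinate is zeroed out. There is no real obstacle beyond keeping the linear-algebraic bookkeeping straight.
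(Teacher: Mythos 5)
Your proof is correct and takes essentially the same approach as the paper: both reduce to constructing, from two linearly independent kernel vectors, a kernel vector with strictly smaller support that still contains $i$. The only difference is presentational — you select the minimum-support kernel vector containing $i$ upfront and argue by contradiction, with a clean case split on $u_i=0$, whereas the paper iterates downward and uses an orthogonality-and-ratio argument to find the coordinate to cancel.
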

\begin{proof}
It suffices to show that if $A_{\supp(x)}$ is not a minimal dependency, then it is possible to find some subset $T \subsetneq \supp(x)$ such that $i \in T$, and there exists some $z$ with $\supp(z) = T$ such that $Az = 0$. By repeating this process, we will eventually find some set $S$ such that $A_S$ is a minimal dependency. 

If $A_{\supp(x)}$ is not a minimal dependency, then by definition, there exists some $y$ with $\supp(y) \subseteq \supp(x)$ such that $x^\top y = 0$ and $Ay = 0$. There must exist some index $j \neq i$ such that $\frac{y_j}{x_j} \neq \frac{y_i}{x_i}$, otherwise, we would not have $x^\top y = 0$. Let $z = x - \frac{x_j}{y_j}y$. Notice that $Az = 0$ and $j \notin \supp(z)$, so $\supp(z) \subsetneq \supp(x)$ and $i \in \supp(z)$.
\end{proof}




We are now ready to prove Theorem~\ref{thm:ks} using Theorem~\ref{char:square}. We restate Theorem~\ref{thm:ks} for the readers convenience. Recall that for a graph $G$, we use $G_{\on{KS}}$ to denote the Karp-Sipser core of $G$.

\thmksA*

\begin{proof}
Let $G$ be the graph with adjacency matrix $A$, and let $A'$ be the adjacency matrix of the graph $G'$ that remains after iteratively peeling off vertices of degree one from $G$ and their unique neighbors. Let $P_{\on{KS}} \subseteq [n]$ be the set of rows peeled during this process. By repeatedly applying Lemma~\ref{lemma:peeling2}, we have $$\on{corank}(A) = \on{corank}(A').$$

Let $I_{\on{KS}} \subseteq [n]$ be the set isolated vertices that remain. Let $G_{\on{KS}}$ be the graph that remains after removing those vertices, and $A_{\on{KS}} := A(G_{\on{KS}})$ be the associated adjacency matrix. Then $$\on{corank}(A_{\on{KS}}) = \on{corank}(A') - |I_{\on{KS}}|  =  \on{corank}(A) - |I_{\on{KS}}|.$$

It remains to show that on the event that the conclusion of Theorem~\ref{char:square} holds, which occurs with probability $1 - o(1)$, we have $\on{corank}(A_{\on{KS}}) = 0$. Let $D$ be the set of columns involved in linear dependencies of $A$:
$$D := \bigcup_{x: Ax = 0}{\on{supp}(x)} = \{ i \in [n]: \exists S \ni i: A_S \in \mathcal{M}_{|S|}\}.$$

The equivalence holds because by Lemma~\ref{lemma:support_minimal}, any column involved in a linear dependency must also be involved in a minimal linear dependency.

\begin{claim}\label{claim:contain} Conditional on the event in Theorem~\ref{char:square} holding,
$$D \subseteq P_{\on{KS}} \cup I_{\on{KS}}.$$
\end{claim}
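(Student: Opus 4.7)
The plan is to prove, by induction on $|S|$, that whenever $A_S \in \mathcal{T}_{|S|}$ one has $S \subseteq P_{\on{KS}} \cup I_{\on{KS}}$. The claim then follows immediately: for any $i \in D$ there is some $x$ with $Ax = 0$ and $i \in \on{supp}(x)$, so by Lemma~\ref{lemma:support_minimal} there exists $S \ni i$ with $A_S \in \mathcal{M}_{|S|}$, and on the event of Theorem~\ref{char:square} this forces $A_S \in \mathcal{T}_{|S|}$. The base case $|S|=1$ is immediate: $\mathcal{T}_1$ consists of a single zero column, so the unique $i \in S$ has $\deg_G(i)=0$ and hence lies in $I_{\on{KS}}$.

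The inductive step rests on two structural observations about $A_S \in \mathcal{T}_{|S|}$ and its associated tree $T$ on vertex set $S$. First, $\deg_T(i) = \deg_G(i)$ for every $i \in S$: column $A_i$ contains $\deg_G(i)$ ones, each of which lies in one of the $|S|-1$ non-zero rows of $A_S$ forming the edge-vertex incidence matrix of $T$, and the count of ones in column $i$ of this incidence matrix is exactly $\deg_T(i)$. Second, if $i_0 \in S$ is a leaf of $T$, then $\deg_G(i_0)=1$ and its unique neighbor $v_0$ in $G$ lies outside $S$; otherwise row $i_0$ of $A_S$ would be non-zero with only a single $1$ at column $v_0$, contradicting the fact that every non-zero row of a tree dependency has exactly two $1$'s.

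Given these observations, the inductive step is essentially mechanical. Pick a leaf $i_0$ of $T$ and perform one Karp-Sipser step, peeling $i_0$ together with its neighbor $v_0 \notin S$. Set $G' := G \setminus \{i_0, v_0\}$ and $S' := S \setminus \{i_0\}$. One verifies that $A(G')_{S'} \in \mathcal{T}_{|S'|}$ with associated tree $T \setminus \{i_0\}$: the row discarded by peeling $v_0$ was the unique row of $A_S$ corresponding to the edge of $T$ incident to $i_0$, while every surviving edge of $T$ has both endpoints in $S'$, so its corresponding row retains its two $1$'s. Since the Karp-Sipser core is invariant under the order of peeling, we may reduce the Karp-Sipser problem on $G$ to that on $G'$; the inductive hypothesis yields $S' \subseteq P_{\on{KS}}(G') \cup I_{\on{KS}}(G')$, which combined with $i_0 \in P_{\on{KS}}(G)$ gives $S \subseteq P_{\on{KS}} \cup I_{\on{KS}}$.

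The main obstacle is the leaf lemma $v_0 \notin S$: this is exactly what guarantees that the peeling step removes only a single column of $A_S$ while preserving the two-ones-per-row structure, cleanly reducing the tree dependency to a smaller one. If $v_0 \in S$ were allowed, peeling $v_0$ would delete a column corresponding to a vertex of $T$ of possibly large degree, disrupting the non-zero rows associated with each of $v_0$'s other incident edges in $T$, and the induction would break down.
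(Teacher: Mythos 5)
Your proof is correct and follows the same approach as the paper, which asserts (without detail) that the tree incidence structure of $A_S$ forces the Karp-Sipser process to remove all columns of $S$. You make explicit the induction and the key ``leaf lemma'' that the unique neighbor $v_0$ of a leaf lies outside $S$, which is exactly what the paper's ``Hence \ldots'' step leaves implicit.
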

\begin{proof}
Conditional on the event, we have
$$D = \{ i \in [n]: \exists S \ni i: A_S \in \mathcal{T}_{|S|}\}.$$

Consider any set $S$ for which $A_S \in \mathcal{T}_{|S|}$. By the definition of $\mathcal{T}_{|S|}$, $A_S$ is the edge-vertex incidence matrix of a tree. Hence the leaf-removal process and isolated vertex removal step will necessarily remove all the columns in $S$. This shows that $D \subseteq P_{\on{KS}} \cup I_{\on{KS}}$. 
\end{proof}

Suppose for the sake of contradiction $A_{\on{KS}}$ was singular, and had some non-zero kernel vector $x$. Then by iteratively applying the second implication of Lemma~\ref{lemma:peeling2}, we see that the vertices in the support of the kernel of $A$, namely $D$, must contain the vertices supporting $x$. This implies that $D$ contains a vertex in the Karp-Sipser core of $A$, which contradicts the previous claim. It follows that with probability $1 - o(1)$, $A_{\on{KS}}$ is non-singular.
\end{proof}

The proof of Theorem~\ref{thm:ksB} is nearly identical.

\thmksB*

\begin{proof}
Let $H'$ be the bipartite graph that remains after iteratively peeling off vertices of degree one from $H$ and their unique neighbors, and let $B'$ be the associated bi-adjacency matrix (where we have retained the same left-right partition as $H$). Let $P_{\on{KS}}$ be the set of vertices peeled during this process. By repeatedly applying Lemma~\ref{lemma:peeling}, we have $$\on{corank}(B) = \on{corank}(B').$$

Let $I_{\on{KS}} \subseteq [n]$ be the set isolated vertices in $H'$. Let $H_{\on{KS}}$ be the graph that remains after removing those vertices, and $B_{\on{KS}} := B(H_{\on{KS}})$ be the associated bi-adjacency matrix. Then 
\begin{equation}\label{eq:left_corank}
\on{corank}(B_{\on{KS}}) = \on{corank}(B') - |I_{\on{KS}} \cap L|  =  \on{corank}(B) - |I_{\on{KS}} \cap L|,   
\end{equation}
and 
\begin{equation}\label{eq:right_corank}
\on{corank}(B_{\on{KS}}^\top) = \on{corank}(B'^\top) - |I_{\on{KS}} \cap R|  =  \on{corank}(B^\top) - |I_{\on{KS}} \cap R|.
\end{equation}
It remains to show that on the event that the first conclusion of Theorem~\ref{char:bgc} holds, we have $\on{corank}(B_{\on{KS}}) = 0$ or $\on{corank}(B_{\on{KS}}^\top) = 0$. We condition on the event that the conclusion of Theorem~\ref{char:bgc} holds, which occurs with probability $1 - o(1)$. Without loss of generality, assume the first event holds, which implies that for all $S$ such that $B_S \in \mathcal{M}_{|S|},$ we have $B_S \in \mathcal{T}_{|S|}.$

Let $D$ be the set of columns involved in linear dependencies of $B$:
$$D := \bigcup_{x: Bx = 0}{\on{supp}(x)} = \{ i \in [n]: \exists S \ni i: B_S \in \mathcal{M}_{|S|}\}.$$ The following claim is proved in the same way as Claim~\ref{claim:contain}, so we omit its proof.
\begin{claim} Conditional on the first event in Theorem~\ref{char:bgc} holding,
$$D \subseteq (P_{\on{KS}} \cup I_{\on{KS}}) \cap R.$$
\end{claim}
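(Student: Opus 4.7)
The plan is to follow the argument of Claim~\ref{claim:contain}, adapting it to the bipartite setting. Under the first event of Theorem~\ref{char:bgc}, any $S \subseteq R$ with $B_S \in \mathcal{M}_{|S|}$ in fact satisfies $B_S \in \mathcal{T}_{|S|}$, so it suffices to show that every such $S$ is contained in $P_{\on{KS}} \cup I_{\on{KS}}$; combined with $D \subseteq R$ (columns of $B$ are indexed by $R$), this yields the stated inclusion.

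Fix such an $S$ and let $L' \subseteq L$ index the $|S|-1$ non-zero rows of $B_S$. By Definition~\ref{def:tree}, the restriction of $B_S$ to the rows in $L'$ is the edge-vertex incidence matrix of a tree $T$ on vertex set $S$, so each $\ell \in L'$ is adjacent in $H$ to exactly two vertices of $S$. The essential observation---where the bipartite picture differs from the symmetric one---is that every left neighbor in $H$ of a vertex $v \in S$ must lie in $L'$, since otherwise its row would be an additional non-zero row of $B_S$. Hence the degree of $v$ in $H$ equals its degree in $T$, and in particular every leaf of $T$ is also a leaf of $H$.

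From here I would induct on $|S|$. The base case $|S|=1$ is immediate: $B_S$ is the zero column, so the lone vertex of $S$ is isolated in $H$ and therefore belongs to $I_{\on{KS}}$. For $|S| \geq 2$, pick a leaf $v$ of $T$, with unique neighbor $\ell \in L'$, and peel $\{v, \ell\}$ in the first step of the Karp-Sipser process. Deleting row $\ell$ and column $v$ from $B_S$ yields a tree dependency of $|S|-1$ columns, corresponding to $T$ with the leaf edge $\ell$ removed; each remaining $\ell' \in L' \setminus \{\ell\}$ still has exactly two $(S \setminus \{v\})$-neighbors, because neither of its $S$-neighbors could have been $v$ (whose only left neighbor was $\ell$). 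The inductive hypothesis applied to the residual graph $H \setminus \{v, \ell\}$ gives $S \setminus \{v\} \subseteq P_{\on{KS}} \cup I_{\on{KS}}$, and combined with $v \in P_{\on{KS}}$ this closes the induction.

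The only mild subtlety is that the actual Karp-Sipser process need not peel $v$ first, but the confluence of the process---its final core is independent of the order of leaf removals---handles this. The main step to get right is the identification of $v$'s degree in $H$ with its degree in $T$ for leaves $v$ of the tree; once that is in place, the induction proceeds essentially verbatim as in Claim~\ref{claim:contain}.
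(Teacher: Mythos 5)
Your proof is correct and takes essentially the same route as the paper's: the paper omits the proof and defers to Claim~\ref{claim:contain}, whose one-line core argument ("the leaf-removal process and isolated vertex removal step will necessarily remove all the columns in $S$") you have unpacked into a rigorous leaf-peeling induction. The observation you flag as the key step---that every $L$-neighbor of $v\in S$ must index a nonzero row of $B_S$ and hence lie in $L'$, so $\deg_H(v)=\deg_T(v)$---is precisely what makes the paper's terse assertion valid, and you've correctly handled the order-of-peeling issue by invoking confluence of the Karp-Sipser process.
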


Suppose for the sake of contradiction $B_{\on{KS}}$ did not have full column-rank, and had some non-zero kernel vector $x$ such that $ B_{\on{KS}}x = 0$. Then by iteratively applying the second implication of Lemma~\ref{lemma:peeling}, we see that the right vertices in the support of the kernel of $B$, namely $D$, must contain the vertices supporting $x$. This implies that $D$ contains a vertex in the right side of the Karp-Sipser core of $A$, which contradicts the previous claim. It follows that $\on{corank}(B_{\on{KS}}) = 0$.

To prove the final statement of Theorem~\ref{thm:ksB}, observe that from eq.~\ref{eq:left_corank} we have 
$$\on{corank}(B) = |I_{\on{KS}} \cap L|.$$ Further, since $\on{corank}(B_{\on{KS}}^\top) \geq \on{corank}(B_{\on{KS}}) = 0$ and $\on{corank}(B) = \on{corank}(B^\top)$, it follows from eqs.~\ref{eq:left_corank} and \ref{eq:right_corank} that $|I_{\on{KS}} \cap L| \geq |I_{\on{KS}} \cap R|$.
\end{proof}

\section{Proof of Theorem~\ref{thm:core}}\label{sec:core}
In the following section, we prove our result showing that if $p = \omega(1/n)$, then with high probability, the $k$-core of of $G \sim \mathbb{G}(n, p)$ is full rank. Throughout this section, we let $d := pn$. 

As outlined in Section~\ref{sec:overview}, the proof of the Theorem~\ref{thm:core} is divided in two main domains corresponding to the size the minimal dependencies we consider.

\subsection{Small Domain}
\subsubsection{Small Case 1}
We prove the following lemma, which rules out minimal linear dependencies of $k$ columns of $\on{core}_k(A)$ for $k < \frac{n}{d^7}$. Our approach is similar to the prior Small Case 1.

\begin{lemma}[Small Case 1 for $k$-Core]\label{lemma:small}
Let  $A \sim \mathbb{A}(n, \frac{d}{n})$, where $d_0 \leq d \leq 3\log(n)$ for some universal constant $d_0$. For a set $S$ of columns and subset $R$ of rows, let $\mathcal{E}_{S, R}$ be the event that:
\begin{enumerate}
    \item $S \subset R$;
    \item $1 \leq |S| \leq \frac{n}{d^7}$;
    \item $|\supp(A^R_S)| \geq 3|S|$;
    \item $A^R_S$ contains no more than one row with a single $1$.
\end{enumerate}

Then \[\Pr\left[\exists S, R: \mathcal{E}_{S, R} ~\on{occurs}\right] = O\left(\frac{1}{\sqrt[3]{n}}\right).\]







\end{lemma}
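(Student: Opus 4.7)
The strategy adapts the random-walk argument from Proposition~\ref{prop:small}, replacing the threshold $k-1$ (which arose from minimal dependencies having at least $2k-2$ ones) with $\lceil(3k-1)/2\rceil$, dictated by condition~(3), i.e.\ $\geq 3k$ ones.

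\emph{Reduction to an $S$-only condition.} Fix $S$ with $|S|=k$ and set $c_i := |N(i)\cap S|$ for $i\in[n]$. Let $Q$ denote the number of edges of $G(A)$ induced on $S$, and let $T := \sum_{i\notin S,\, c_i\geq 2} c_i$. Since $S\subseteq R$ is forced, condition~(4) immediately gives $\nu_1^S := |\{v\in S: \deg_{G[S]}(v) = 1\}|\leq 1$. Maximizing $|\supp(A^R_S)|$ over admissible $R$---by including every row $i\notin S$ with $c_i\geq 2$ and at most $1-\nu_1^S$ rows with $c_i=1$---yields $|\supp(A^R_S)|\leq 2Q + T + 1$, so $\mathcal{E}_{S,R}$ implies the $S$-measurable inequality $2Q + T\geq 3k-1$. (Conditions (3) and (4) are jointly inconsistent when $k=1$.)

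\emph{Random walk.} Using the partition $E = E_{SymBD}\cup E_{Asym}$ from Proposition~\ref{prop:small}, expose the $L_S$ ones in $E$ uniformly one at a time, and let $X_i$ count those among the first $i$ that land either in $E_{SymBD}$ or in an already-occupied row of $E_{Asym}$. Writing $Y := X_{L_S} - Q$, a direct count of asymmetric extras gives $Y = T - M_{\geq 2}$, where $M_{\geq 2} := |\{i\notin S : c_i\geq 2\}|$. Because each such row contributes at least $2$ to $T$, we have $M_{\geq 2}\leq T/2$, so $Y\geq T/2$ and hence $X_{L_S}\geq Q + T/2 \geq (3k-1)/2$. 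As in Proposition~\ref{prop:small}, $X_{L_S}$ is stochastically dominated, given $L_S=\ell$, by $\Bin(\ell,(\ell+k)/(\gamma n))$ for $\gamma := |E|/(nk)\geq 1/2$. Extending Lemma~\ref{masterlemma:small} to the threshold $j = \lceil(3k-1)/2\rceil$ by the identical $\ell$-case split yields
\[
\Pr\!\bigl[X_{L_S}\geq (3k-1)/2\bigr]
\;\leq\; \bigl(e^{-d + c_0\log d}\bigr)^{k}\left(\frac{k}{n}\right)^{\!(3k-1)/2}
\]
for some absolute constant $c_0$.

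\emph{Union bound.} Using $\binom{n}{k}(k/n)^{(3k-1)/2}\leq e^{k}(k/n)^{(k-1)/2}$, we sum over $S$ and $k$:
\[
\Pr\!\bigl[\exists S, R:\mathcal{E}_{S,R}\bigr]
\;\leq\; \sum_{k=2}^{\lfloor n/d^7\rfloor}\binom{n}{k}\Pr\!\bigl[X_{L_S}\geq (3k-1)/2\bigr]
\;\leq\; \sum_{k\geq 2}\bigl(e^{1-d+c_0\log d}\bigr)^{k}\!\left(\frac{k}{n}\right)^{\!(k-1)/2}.
\]
For $d\geq d_0$ large enough that $e^{1-d+c_0\log d}\leq 1/2$, this sum is dominated by the $k=2$ term $O(e^{-2d}/\sqrt{n}) = O(n^{-1/2})$, which is $o(n^{-1/3})$.

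The main obstacle is extending Lemma~\ref{masterlemma:small} to the threshold $j=\lceil(3k-1)/2\rceil$; this is routine, since the original proof's case split on $\ell$ carries over verbatim, with the factor $(k/n)^j$ replaced by $(k/n)^{(3k-1)/2}$ throughout.
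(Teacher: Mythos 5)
Your reduction showing $2Q + T \geq 3k-1$ (hence $X_{L_S} \geq \lceil (3k-1)/2 \rceil$) is correct and matches the claim in the paper's own proof. The gap is in the assertion that Lemma~\ref{masterlemma:small} extends ``routinely'' to threshold $j = \lceil (3k-1)/2 \rceil$. Lemma~\ref{masterlemma:small} bounds $\sum_\ell \Pr[\Bin(\ell,\cdot) \geq \max(j, \ell/2)]\Pr[L_S = \ell]$; the $\ell/2$ part of the threshold is essential in the middle range $\ell > 2j$ (Claim~\ref{bdclaim2}), because it is what kills the contribution from $\ell$ near $\mathbb{E}[L_S] \approx dk$. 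In Proposition~\ref{prop:small} one gets $X_{L_S} \geq L_S/2$ for free because a minimal dependency forbids \emph{every} row with a single $1$; here, condition~(4) constrains only rows in $R$, so rows $i \notin S$ with $|N(i)\cap S| = 1$ can simply be omitted from $R$, and their number $M_1$ is unconstrained. Each such row adds $1$ to $L_S$ but $0$ to $X_{L_S}$, so your derivation gives $X_{L_S} \geq j$ but \emph{not} $X_{L_S} \geq L_S/2$. Consequently, for $\ell$ near $dk$ (where $\Pr[L_S = \ell]$ is not exponentially small), the coupling yields only $\Pr[X_\ell \geq j] \leq \Pr[\Bin(\ell,\cdot) \geq j]$, which is of order $(d^2 k/n)^{j}$, and the true tail $\Pr[X_{L_S}\geq j]$ carries no $e^{-dk}$ factor. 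For $d$ close to $3\log n$ the stated bound $(e^{-d+c_0\log d})^k(k/n)^j$ is therefore smaller than the truth by a polynomial-in-$n$ factor.

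The paper avoids Lemma~\ref{masterlemma:small} here precisely for this reason: it introduces the cutoff $L_{\max} := \max(\tfrac14 n^\epsilon k^{1-\epsilon}, 3dk)$, bounds $\Pr[L_S > L_{\max}]$ via Lemma~\ref{Rmax}, and applies Lemma~\ref{claim:xr} to get the uniform, $e^{-dk}$-free bound $\Pr[\Bin(\ell,(k+\ell)/n)\geq Q]\leq 2(k/(3n))^{k+1/3}$ for all $\ell \leq L_{\max}$; the union bound then closes at $O(n^{-1/3})$ rather than your claimed $O(n^{-1/2})$. Your argument could be repaired along these lines (the bound $(d^2k/n)^{3k/2}$, combined with a tail bound on $L_S$ and the restriction $k \leq n/d^7$, does suffice), but replacing the $\ell/2$-threshold analysis with a cutoff analysis is the real technical content of this lemma, not a routine substitution in Lemma~\ref{masterlemma:small}.
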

\begin{remark}
A slightly weaker version of this lemma would require that $A_S^R$ contains no rows with a single $1$. By taking $R$ to be the set of vertices in the $k$-core, this weaker version would suffice to rule out with high probability any minimal dependencies in $\on{core}_k(A)$ with $t$ columns for $t \leq \frac{n}{d^7}$. We state this lemma to be stronger because we will also want to rule out small minimal dependencies in $\on{core}_k(A)^{[n-1]}$, akin to how we studied $A^{[n-1]}$ for the prior results in this paper on the rank of $A$.
\end{remark}

\begin{proof}
We introduce some notation, pictured in Figure~\ref{fig:symmetric_core}. Fix a set $S$ of $k$ columns and consider the submatrix $A_S$ induced by these columns. Let $E_{Sym}$ be the set of entries of $A_S$ whose rows are indexed by values in $S$. Let $E_{SymBD}$ be subset of entries in $E_{Sym}$ that are below the diagonal of $A$, and hence mutually independent. Let $E_{Asym}$ be the set of entries who rows are not in $S$, and let $E = E_{Asym} \cup E_{SymBD}$ be the full set of mutually independent entries that determine $A_S$. Formally:
\begin{equation*}
\begin{split}
&E_{Sym} := \{(i, j): i, j \in S \}\\
&E_{SymBD} := \{(i, j): i, j \in S, j < i \} \\
&E_{Asym} := \{(i, j): j \in S, i \notin S \}\\
&E := E_{SymBD} \cup E_{Asym}
\end{split}
\end{equation*}
\begin{figure}
        \centering
        \includegraphics[width=5cm]{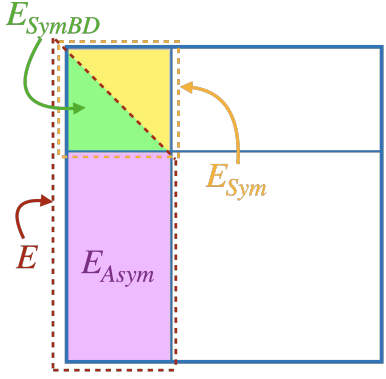}
    \caption{Regions of $A_S$ with $S = [|S|]$.}
    \label{fig:symmetric_core}
\end{figure}

For $R \subset [n]$, let $E_{Sym}[R], E_{SymBD}[R], E_{Asym}[R]$ and $E[R]$ denote the respective set additionally restricted to entries with $i \in R$. Observe that it suffices to consider $k \geq 2$, because otherwise having at least three $1$s in $A_S^R$ would imply having at least three rows of $A_S^R$ with a single $1$.

We will couple the process of randomly exposing the $1$'s in $A_S$ with a random walk that counts the number of times a $1$ is exposed in $E_{Asym}$ in a row that already contains a $1$ or in $E_{SymBD}$. Observe that each consecutive $1$ is exposed in a random entry in $E$, sampled without replacement. We condition on $L_S$, the number of $1$'s in $E$. Note that $L_S \sim \Bin(|E|, d/n)$. Let $(X_{i})_{i \in L_S}$ be the random walk that increases by one if the $i$th exposed $1$ is in $E_{SymBD}$ or if the $i$th exposed $1$ is in $E_{Asym}$ and is \em not \em the first $1$ exposed in its row. Otherwise, let $X_i = X_{i - 1}$.


The following claim says that $X_{L_S}$ must be large for $\mathcal{E}_{S, R}$ to occur for any $R$.
\begin{claim}
Suppose there exists some $R$ such that event $\mathcal{E}_{S, R}$ occurs. Then $X_{L_S} \geq \lceil{\frac{3k - 1}{2}}\rceil$.
\end{claim}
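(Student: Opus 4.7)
The plan is a direct counting argument in the spirit of Claims~\ref{claim:xrminimal}--\ref{claim:xrminimal3}. Decompose $L_S = P + Q$, where $P$ is the number of $1$'s in $E_{Asym}$ and $Q$ is the number of $1$'s in $E_{SymBD}$, and let $Y$ be the number of $1$'s in $E_{Asym}$ that are \emph{not} the first exposed in their row, so that by construction $X_{L_S} = Q + Y$. A pigeonhole step then yields a lower bound on the number $m_A$ of rows indexed by $[n]\setminus S$ containing exactly one $1$ in $A_S$: the total number of occupied $E_{Asym}$-rows equals $P - Y$, and each row of multiplicity $\geq 2$ absorbs at least one repeat into $Y$, so at most $Y$ occupied rows are multi-$1$. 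Therefore $m_A \geq (P - Y) - Y = P - 2Y$.

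Next I would use the assumption $\mathcal{E}_{S,R}$. Because $S \subseteq R$, the $k$ rows indexed by $S$ are always present in $A_S^R$ and together contribute $2Q$ ones. Let $m_S$ denote the number of rows in $S$ already containing exactly one $1$ in $A_S$; condition~(4) immediately forces $m_S \leq 1$, since these rows cannot be removed by any choice of $R$. The remaining single-$1$ rows of $A_S^R$ can only come from the $E_{Asym}$-rows included in $R \setminus S$, of which at most $1 - m_S \leq 1$ may be single-$1$ rows. To maximize $|\supp(A_S^R)|$ under these constraints one would include every row of $E_{Asym}$ of multiplicity $\geq 2$ (contributing $P - m_A$ ones) plus up to one single-$1$ row, so
\[
  3k \;\leq\; |\supp(A_S^R)| \;\leq\; 2Q + (P - m_A) + 1,
\]
which gives $m_A \leq 2Q + P - 3k + 1$.

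Chaining this with the repeats bound $m_A \geq P - 2Y$ yields $P - 2Y \leq 2Q + P - 3k + 1$, i.e.\ $2(Q + Y) \geq 3k - 1$. Hence $X_{L_S} = Q + Y \geq (3k-1)/2$, and since $X_{L_S}$ is an integer we conclude $X_{L_S} \geq \lceil (3k-1)/2 \rceil$, as claimed. The main subtlety compared with Claims~\ref{claim:xrminimal}--\ref{claim:xrminimal3} is that the rows indexed by $S$ are \emph{forced} into $A_S^R$ and may themselves supply a single-$1$ row; correctly encoding this as the constraint $m_S \leq 1$ is the only non-bookkeeping step, after which the argument reduces to combining the counting identity $X_{L_S} = Q + Y$ with the ``repeats'' bound $m_A \geq P - 2Y$.
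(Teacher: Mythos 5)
Your argument is correct and takes essentially the same counting approach as the paper's proof; the only bookkeeping difference is that you work with the unrestricted quantities $P,Q,Y$ over all of $E_{Asym}$ and $E_{SymBD}$ and sandwich $m_A$ from both sides, whereas the paper defines $P,Q,Y$ restricted to $E_{Asym}[R],E_{SymBD}[R]$ and argues by contradiction (if $X_{L_S}$ were too small, at least two single-$1$ rows of $A_S^R$ would appear, violating condition (4)). Both routes hinge on the same identity $X_{L_S}=Q+Y$, the same ``repeats'' bound for single-$1$ rows, and conditions (3)--(4), and the final ceiling step is handled identically.
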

\begin{proof}
Let $P$ be the number of $1$'s in $E_{Asym}[R]$, and let $Q$ be the number of $1$s in $E_{SymBD}[R]$. Since  $S \subseteq R$, we have $2Q + P = |\supp(A_S^R)|$. It follows by the third condition for the event that $2Q + P \geq 3k$.
Let $Y$ be the number of $1$s in $E_{Asym}[R]$, which are \em not \em the first $1$ in their row. Then $Y \leq X_{L_S} - Q$.
Now the number of rows in  $A_S^R$ which have exactly one $1$ is at least $(P - Y) - Y$,
since there are exactly $P - Y$ non-empty rows in $A_S^R$, and at most $Y$ of those rows have \textit{more} than one $1$.
Now 
\[P - 2Y \geq P - 2X_{L_S} + 2Q \geq 3k - 2X_{L_S}.\]

Hence if $X_{L_S} < \lceil{\frac{3k - 1}{2}}\rceil,$ then since $X_{L_S}$ is an integer, we have \[X_{L_S} \leq \begin{cases}\frac{3k}{2} - 1 & k \:\on{even} \\ \frac{3k - 1}{2} - 1& k \:\on{odd} \end{cases} \leq \frac{3k}{2} - 1,\] so 
$3k - 2X_{L_S} \geq 3k - (3k - 2) \geq 2,$
meaning that at least two rows in $A_S^R$ have exactly one $1$. It follows that we must have $X_{L_S} \geq \lceil{\frac{3k - 1}{2}}\rceil$ if $A_S^R$ contains at most one row with a single one.
\end{proof}


It remains to show that $X_{L_S}$ is small enough with high enough probability to take a union bound over all $\binom{n}{k}$ sets $S$. We will break up this probability as follows. 

Let $Q := \ceil*{\frac{3k-1}{2}},$ and define $L_{max} := \max\left(\frac{1}{4}n^{\epsilon}k^{1 - \epsilon}, 3dk\right)$, where $\epsilon = \frac{1}{12}$. Then (for a fixed $S$),
\begin{equation*}
\Pr[\exists R: \mathcal{E}_{S, R} ~\on{occurs}] \leq \Pr\left[L_S > L_{max}\right] + \Pr\left[X_{L_S} \geq Q | L_S < L_{max} \right].
\end{equation*}

We consider first the second term in the equation above. To bound this term, we couple $X_i$ with a random walk that $(Y_i)_{i \in L_S}$ which increases by $1$ with probability $\frac{k + L_S}{n}$ and otherwise stays constant. Observe that $Y_i$ stochastically dominates $X_i$, because there are at most $k + L_S$ rows in $E$ in which placing a $1$ will increase $X_i$.
Then conditioned on $L_S = \ell$,
\begin{equation*}
\begin{split}
\Pr\left[X_{\ell} \geq Q\right] &\leq \Pr\left[\Bin\left(\ell, \frac{k + \ell}{n}\right) \geq Q\right].
\end{split}
\end{equation*}

The following lemma, which we prove in Appendix~\ref{apx:core}, bounds this probability.

\begin{restatable}[Binomial Bound 1 for $k$-core Small Case 1]{lemma}{claimxr}\label{claim:xr}
For any $d$ larger than some constant, for any $2 \leq k \leq \frac{n}{d^7}$, and for any $\ell \leq \max\left(\frac{1}{4}n^{\epsilon}k^{1 - \epsilon}, 3dk\right)$,
\begin{equation*}
    \Pr\left[\Bin\left(\ell, \frac{k + \ell}{n}\right) \geq Q\right] \leq 2\left(\frac{k}{3n}\right)^{k + \frac{1}{3}},
\end{equation*}
where $\epsilon = \frac{1}{12}$ and $Q = \ceil*{\frac{3k-1}{2}}$.
\end{restatable}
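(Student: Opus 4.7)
The plan is to control the binomial tail $\Pr[\Bin(\ell,p)\ge Q]$ with $p=(k+\ell)/n$ and $Q=\lceil(3k-1)/2\rceil$ via the standard estimate $\binom{\ell}{Q}p^Q \le (e\ell p/Q)^Q$, and then split the argument into two regimes matching the two arguments of the $\max$ defining the allowed range of $\ell$: a \emph{linear} regime $\ell\le 3dk$ and a \emph{sublinear} regime $\ell\le \tfrac14 n^{\epsilon}k^{1-\epsilon}$ with $\epsilon=1/12$.

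In the linear regime I would bound $\ell(k+\ell)\le (3dk)(1+3d)k\le 12d^2 k^2$ (using $d\ge 1$), and combined with $Q\ge (3k-1)/2$ this gives $\Pr\le (Cd^2k/n)^Q$ for a universal constant $C$. To compare against the target $2(k/3n)^{k+1/3}$, I would factor out $(k/n)^{k+1/3}$ from both sides; the residual ratio is $C'^Q d^{2Q}(k/n)^{Q-k-1/3}$, and the hypothesis $k/n\le d^{-7}$ converts this into an overall $d$-exponent of $2Q-7(Q-k-1/3)=(29-3k)/6$ after substituting $Q=(3k-1)/2$, which is negative once $k$ is sufficiently large and so drives the ratio to zero as $d\to\infty$. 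In the sublinear regime I would use $\ell(k+\ell)\le 2\ell^2\le \tfrac18 n^{2\epsilon}k^{2-2\epsilon}$ in the nontrivial range $\ell\ge k$ (the complementary range $\ell<Q$ gives a tail probability of zero and is vacuously under the target), and after dividing by $Q\approx 3k/2$ this yields $\Pr\le \bigl(c(k/n)^{1-2\epsilon}\bigr)^Q=\bigl(c(k/n)^{5/6}\bigr)^Q$. The effective $(k/n)$-exponent $5Q/6$ exceeds $k+1/3$ by at least $k/4-1/3>0$ for every $k\ge 2$, so the surplus power of $k/n\le d^{-7}$ comfortably absorbs the constants $c$ and closes this regime.

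The main obstacle I anticipate is the linear regime for the small values of $k$ (roughly $2\le k\le 9$), where the exponent $(29-3k)/6$ is nonnegative and the crude Chernoff-style estimate $(e\ell p/Q)^Q$ is not by itself sharp enough to beat the target. For these finitely many cases I expect to refine the analysis by retaining Stirling's $1/\sqrt{2\pi Q}$ factor in $Q!$ (replacing the $e^Q$ factor in $(e\ell p/Q)^Q$ by a savings of order $Q^{-1/2}$), by using the sharper Poisson-type estimate $(\ell p)^Q/Q!$ with constants tracked explicitly against $2(k/3n)^{k+1/3}$, and by further subdividing the range of $\ell$ so that the worst case $\ell\sim 3dk$ is isolated and handled with a tighter exponential estimate that exploits that the tail is concentrated near the mean $\ell p=O(d^2 k^2/n)$. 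Once these refinements dispatch the small-$k$ linear regime, the two regimes together yield the claimed bound $2(k/3n)^{k+1/3}$ for all $\ell\le L_{\max}$ and all $2\le k\le n/d^7$.
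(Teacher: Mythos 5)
Your overall plan---split $\ell$ into the sublinear regime $\ell\le\tfrac14 n^{\epsilon}k^{1-\epsilon}$ and the linear regime $\ell\le 3dk$, then bound each by the binomial tail estimate $\Pr[\Bin(\ell,p)\ge Q]\le 2(e\ell p/Q)^Q$---is exactly the paper's (Lemma~\ref{lem:tail} applied in Lemma~\ref{lem:bin_1} and Lemma~\ref{lem:bin_2} respectively).

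In the sublinear regime one computation is off. With $Q=\lceil(3k-1)/2\rceil$, the surplus $5Q/6-(k+\tfrac13)$ equals $(k-3)/4$ when $k$ is odd and $(3k-4)/12$ when $k$ is even, so it is exactly $0$ at $k=3$ (not at least $k/4-\tfrac13$) and cannot be used to absorb the constant. The way to close this case, as the paper does, is to note that the constant coming from the tail bound is $c=e/8$, which already satisfies $(e/8)^{6/5}\le\tfrac13$; hence $\bigl(c(k/n)^{5/6}\bigr)^Q\le\bigl((k/3n)^{5/6}\bigr)^Q\le(k/3n)^{k+1/3}$ follows from the \emph{non-strict} inequality $5Q/6\ge k+\tfrac13$ alone, which does hold for every integer $k\ge 2$.

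The linear regime is the genuine gap, and the refinements you sketch cannot repair it. The Stirling correction ($\sim Q^{-1/2}$) and the Poisson form $(\ell p)^Q/Q!$ save only a bounded multiplicative factor when $k$ is $O(1)$; neither can cancel a polynomial in $d$. Concretely, for $k=2$ the tail bound gives $2(10ed^2k/n)^3$ against the target $2(k/3n)^{7/3}$; writing $(10ed^2k/n)^3=(30ed^2)^3(k/3n)^3$, the needed inequality is $(30ed^2)^3\le(3n/k)^{2/3}$, i.e.\ $n\gtrsim d^{9}k$, which is strictly stronger than $k\le n/d^7$. The paper instead factors the target out entirely, writing $2(10ed^2k/n)^Q=2(k/3n)^{k+1/3}\,g(k)$ with
\[
g(k)=\Bigl(\tfrac{k(30ed^2)^3}{3n}\Bigr)^{\lfloor k/2\rfloor-1/3}(30ed^2)^{1+\mathbf{1}_{k\text{ odd}}},
\]
notes that $\log g$ is convex in $k$ for each fixed parity so the maximum is attained at an endpoint, and checks $g<1$ there. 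At $k\approx n/d^7$ the base $k(30ed^2)^3/3n\le(30ed^2)^3/(3d^7)<1$ for $d$ large while the exponent $\lfloor k/2\rfloor-\tfrac13=\Theta(n/d^7)$ is huge; at $k=2,3$ the base is $\Theta(d^6/n)$ and the positive exponent $\lfloor k/2\rfloor-\tfrac13\ge\tfrac23$ makes $g$ small once $n$ is large relative to $d$. It is that appeal to $n$ being large---not a sharper binomial estimate---that is missing from your linear-regime argument; the condition is harmless in context because the lemma is applied under $d\le 3\log n$, but it is the ingredient you need to name.
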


Next we bound the probability that $L_S \geq L_{max}$. Recall that $L_S$ is the number of $1$'s in $E_{Asym}$ and $E_{SymBD}$. Now $|E_{Asym} \cup E_{SymBD}| \leq kn$, and each entry in $E_{Asym} \cup E_{SymBD}$ is $1$ with probability $d/n$. Hence
\begin{equation*}
\begin{split}
\Pr[L_S \geq L_{max}] \leq \Pr\left[\Bin\left(kn, \frac{d}{n}\right) \geq L_{max}\right].
\end{split}
\end{equation*}

The following lemma, which we prove in Appendix~\ref{apx:core}, bounds this binomial tail probability.

\begin{restatable}[Binomial Bound 2 for $k$-core Small Case 1]{lemma}{rmax}\label{Rmax}
For sufficiently large $n$ and $d$, if $d \leq 3\log(n)$,
\begin{equation*}
\Pr\left[\Bin\left(kn, \frac{d}{n}\right) \geq  L_{max} \right] \leq 2\left(\frac{k}{en}\right)^ke^{-2\log(n)},
\end{equation*}
where $L_{max} = \max(3dk, \frac{1}{4}n^{1/12}k^{11/12})$.
\end{restatable}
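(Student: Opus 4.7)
The plan is to apply a standard Chernoff-type binomial tail bound
\[\Pr\bigl[\Bin(N,p)\geq t\bigr]\leq \Bigl(\tfrac{eNp}{t}\Bigr)^{t}\quad\text{whenever}\quad t\geq Np,\]
with $N=kn$, $p=d/n$ (so $Np=dk$), and $t=L_{\max}$. Since $L_{\max}\geq 3dk$ the hypothesis $t\geq Np$ is satisfied, and I would then split into two subcases depending on which of the two expressions realizes the maximum in the definition of $L_{\max}$.

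In the first subcase, $L_{\max}=3dk$, so the tail bound becomes $(e/3)^{3dk}=e^{-3(\log 3-1)dk}$. The defining inequality $3dk\geq \tfrac14 n^{1/12}k^{11/12}$ of this subcase upgrades to $dk\geq \tfrac{1}{12}n^{1/12}k^{11/12}$, so the exponent is at least $\Omega(n^{1/12}k^{11/12})$. Taking logs of the desired inequality, it suffices to check
\[3(\log 3 - 1)\,dk\geq k\log(en/k)+2\log n-\log 2;\]
the right side is at most $O(k\log n+\log n)$, while the left is polynomial in $n$, so this holds for $n$ and $d$ sufficiently large.

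In the second subcase, $L_{\max}=\tfrac14 n^{1/12}k^{11/12}$ and $d< n^{1/12}/(12k^{1/12})$, whence $eNp/L_{\max}=4edk^{1/12}/n^{1/12}< e/3<1$. The tail bound then reduces to $(e/3)^{n^{1/12}k^{11/12}/4}=e^{-(\log 3-1)n^{1/12}k^{11/12}/4}$, and the required comparison $(\log 3-1)n^{1/12}k^{11/12}/4\geq k\log(en/k)+2\log n-\log 2$ is immediate for large $n$, since the left side is polynomial in $n$ and $k$ while the right side is at most $O(k\log n+\log n)$.

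The only delicate bookkeeping is in subcase~1 when $k$ is close to its upper bound $n/d^7$: there $\log(en/k)=1+7\log d$, so the right side of the target inequality behaves like $k(1+7\log d)$. One must check that $dk$ beats this, and the slack comes precisely from the additional lower bound $dk\geq \tfrac{1}{12}n^{1/12}k^{11/12}$ available in subcase~1: it yields an effective factor of $(n/k)^{1/12}\geq d^{7/12}$ on the left side, which dominates $\log d$ once $d$ exceeds an absolute constant. Modulo this constant-chase and the choice of thresholds for $d_0$ and $n$, the lemma follows directly from Chernoff.
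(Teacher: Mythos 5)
Your high-level approach --- a Chernoff-type tail bound followed by a case split on which term realizes $L_{\max}$ --- parallels the paper's (which also applies Lemma~\ref{lem:tail} and then splits on whether $n^{\gamma\epsilon}$ is above or below a threshold in $d$), and your case boundary $n/k\lessgtr(12d)^{12}$ is the natural choice. But the written argument has a genuine flaw: in subcase 2 you claim the comparison is ``immediate for large $n$, since the left side is polynomial in $n$ and $k$ while the right side is at most $O(k\log n+\log n)$.'' That inference is false. Take $n/k$ equal to a fixed constant just above $(12d)^{12}$; then both sides are $\Theta(k)$ and, after dividing by $k$, the comparison becomes $\tfrac{\log 3-1}{4}(n/k)^{1/12}\approx 3(\log 3-1)d$ versus $\log(en/k)\approx 1+12\log(12d)$, which fails for moderate $d$ (say $d\le 200$) no matter how large $n$ is. What is actually needed there is $d$ larger than an absolute constant, not $n$ large; the same defect appears in the ``right side is $O(k\log n+\log n)$, left is polynomial in $n$'' step of subcase 1.

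Your closing sentence also misattributes the source of the slack. You invoke $dk\ge\frac{1}{12}n^{1/12}k^{11/12}$ to claim the left side gains ``an effective factor of $(n/k)^{1/12}\ge d^{7/12}$,'' but since $(n/k)^{1/12}\le 12d$ throughout subcase 1, passing to this lower bound on $dk$ only loses compared to using $dk$ itself. The real content of the subcase-1 condition $3dk\ge\frac14 n^{1/12}k^{11/12}$ is the \emph{upper} bound $n/k\le(12d)^{12}$, which controls the \emph{right}-hand side: it forces $\log(en/k)\le 1+12\log(12d)=O(\log d)$ and $k\ge n/(12d)^{12}$ so that $2\log n/k=o(1)$. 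The comparison then collapses to $3(\log 3-1)d\ge O(\log d)$, which holds for $d$ beyond an absolute constant --- exactly the regime the lemma asserts. A symmetric observation (that $(n/k)^{1/12}>12d$, so the left exponent is at least $3(\log 3-1)dk$ while $\log(en/k)$ needs $n$ and $d$ large together) repairs subcase 2. With these two corrections the proof is sound and, modulo the choice of threshold, essentially the same computation the paper carries out in the two cases of its proof.
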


We finish the proof of this lemma by union bounding over all $k$, and all sets $S$ of $k$ rows. Plugging in the result of Lemma~\ref{Rmax}, we have
\begin{equation*}
\sum_{k = 1}^{n/d^7}\binom{n}{k}\Pr[L_S > L_{max}] \leq \sum_{k = 1}^{n/d^7}\binom{n}{k}4\left(\frac{k}{en}\right)^ke^{-2\log(n)} \leq \sum_{k = 1}^{n/d^7} \frac{4}{n^2} \leq \frac{1}{n}.
\end{equation*}

Hence with probability at least $\frac{1}{n}$, for all sets of $k \leq n/d^7$ rows, there are at most $L_{max} = \max(3dk, \frac{1}{4}n^{1/12}k^{11/12})$ $1$'s among the rows. Assuming $L_S \leq L_{max}$, we plug in the result of Lemma~\ref{claim:xr} to obtain:
\begin{equation*}
\begin{split}
\sum_{k = 1}^{n/d^7}\binom{n}{k}\Pr[X_{L_S} \geq Q \land L_S \leq L_{max}] &\leq \sum_{k = 1}^{n/d^7}\binom{n}{k}4\left(\frac{k}{3n}\right)^{k + \frac{1}{3}}
\leq  4\sum_{k = 1}^{n/d^7}\left(\frac{ne}{k}\right)^k\left(\frac{k}{3n}\right)^{k + \frac{1}{3}} \\
&\leq 4\sum_{k = 1}^{n/d^7}\left(\frac{e}{3}\right)^k\left(\frac{k}{3n}\right)^{1/3} = O\left(\frac{1}{\sqrt[3]{n}}\right).
\end{split}
\end{equation*}

It follows that 
\begin{equation*}
    \Pr[\exists S, R: \mathcal{E}_{S, R} ~\on{occurs}] \leq \sum_{S \subset [n]: |S| \leq \frac{n}{d^7}}\Pr\left[L_S > L_{max}\right] + \Pr\left[X_{L_S} \geq Q | L_S < L_{max} \right] \leq \frac{1}{n} + O\left(\frac{1}{\sqrt[3]{n}}\right).
\end{equation*}
\end{proof}

\subsubsection{Small Case 2}
\begin{lemma}[Small Case 2 for $k$-core]\label{lemma:medium_core}
Let $A \sim \mathbb{A}(n, d/n)$. For a set of rows $R$ and a set of columns $S$, let $\mathcal{E}_{S, R}$ be the event that 
\begin{enumerate}
    \item $\frac{n}{d^7} \leq |S| \leq \frac{2n}{d}$;
    \item $|R| \geq \left(1 - \frac{1}{d^7}\right)n$;
    \item $A_S^R$ contains no more than one row with a single $1$.
\end{enumerate}

There exists a constant $d_0$ such that for all $d_0 < d \leq 3 \log n$,
\[\Pr\left[\exists S, R: \mathcal{E}_{S, R} ~\on{occurs}\right] = e^{-\Theta(n)}.\]

\end{lemma}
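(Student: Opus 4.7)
The plan is to union-bound over $S \subseteq [n]$ with $|S| = s \in [n/d^7, 2n/d]$, showing for each such $S$ that $A_S$ has too many rows with exactly one $1$ to hide in the complement $R^c$. The first step is a combinatorial reformulation: since any valid $R$ satisfies $|R^c| \leq n/d^7$, the existence of $R$ for which $\mathcal{E}_{S,R}$ occurs is equivalent to $A_S$ having at most $n/d^7 + 1$ rows whose restriction to columns $S$ contains a single $1$. Indeed, if the count $k$ of such rows satisfies $k \leq n/d^7 + 1$, place $k - 1$ of them in $R^c$ and pad with arbitrary rows; conversely, if $\mathcal{E}_{S,R}$ holds then at least $k - 1$ single-$1$ rows must lie in $R^c$, forcing $k \leq |R^c| + 1 \leq n/d^7 + 1$.

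Next, to exploit independence, I would restrict attention to rows in $[n] \setminus S$. Let $X_S^*$ count the indices $i \in [n] \setminus S$ whose restriction of row $i$ to columns $S$ has exactly one $1$. Since the entries $\{A_{i,j}\}_{i \notin S,\, j \in S}$ are mutually independent $\Ber(d/n)$, $X_S^* \sim \Bin(n - s, p_1)$ with $p_1 = s(d/n)(1 - d/n)^{s-1}$, and occurrence of the event forces $X_S^* \leq n/d^7 + 1$. For $s \leq 2n/d$, the bound $\log(1 - x) \geq -2x$ on $[0, 1/2]$ gives $(1 - d/n)^{s-1} \geq e^{-4}$, hence $p_1 \geq sd e^{-4}/n$ and $\mathbb{E}[X_S^*] \geq sd/(3 e^4)$. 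In particular, for $s \geq n/d^7$ and $d$ above a sufficiently large universal constant $d_0$, $\mathbb{E}[X_S^*] \geq n/(3 e^4 d^6) \geq 4(n/d^7 + 1)$.

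A multiplicative Chernoff bound then yields $\Pr[X_S^* \leq n/d^7 + 1] \leq \exp(-c_1 s d)$ for a universal $c_1 > 0$. Using $\binom{n}{s} \leq \exp(s(1 + 7 \log d))$ throughout the range and summing,
\begin{align*}
\sum_{s = \lceil n/d^7 \rceil}^{\lfloor 2n/d \rfloor} \binom{n}{s} \exp(-c_1 s d) \leq \sum_s \exp\bigl(s(1 + 7 \log d - c_1 d)\bigr).
\end{align*}
Choosing $d_0$ so that $c_1 d \geq 2(1 + 7 \log d)$ for all $d \geq d_0$, each term is at most $\exp(-c_1 s d / 2)$, and summing over $s \geq n/d^7$ yields a bound of $\exp(-\Omega(n/d^6))$.

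The main obstacle is matching the stated $e^{-\Theta(n)}$ rate uniformly over $d_0 \leq d \leq 3 \log n$: the argument above degrades to $\exp(-\Omega(n/\log^6 n))$ at the top of the range. For $d$ bounded the target rate is immediate, and in all cases the bound is safely $o(1)$, which is all that is needed for Theorem~\ref{thm:core}. Recovering the literal $e^{-\Theta(n)}$ uniformly may require a sharper (KL-form) deviation inequality, or a separate accounting of the $O(n/d)$ rows in $S$ to boost the expectation---but the bottleneck is genuinely the smallest $s = n/d^7$, where the pool of independent Bernoulli rows can only produce $\Theta(n/d^6)$ expected single-$1$ rows.
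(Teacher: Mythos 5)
Your argument follows the paper's proof essentially step for step: union bound over $S$, restrict attention to the $n-|S|$ mutually independent rows, lower-bound the number of single-$1$ rows by a Chernoff estimate, and sum over $|S|$; your initial combinatorial reformulation of the $\exists R$ quantifier is just a slightly more explicit statement of the reduction the paper also uses. Your concern about the rate is well taken and applies equally to the paper's own computation, which also only yields $e^{-\Omega(n/d^{6})}$ at the top of the range $d\le 3\log n$ rather than a uniform $e^{-\Theta(n)}$; as you observe, the resulting $o(1)$ bound is all that Theorem~\ref{thm:core} requires.
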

\begin{proof}
We union bound over all $\frac{n}{d^7} \leq k \leq \frac{2n}{d}$, and all $\binom{n}{k}$ sets $S$ of $k$ columns. We will lower bound the number of length $k$ rows in $A_S$ which have exactly one $1$. We consider only the $n - k$ mutually independent rows. Let $X_i$ be the event that the $i$th row, $A_S^{\{i\}}$, has one $1$ for $i \in [n] \setminus S$. Then
\begin{equation*}
\Pr[X_i] = k\frac{d}{n}\left(1 - \frac{d}{n}\right)^{k - 1}.
\end{equation*}
Let $c := \frac{n}{dk},$ such that $\frac{1}{2} \leq  c < d^6$. Now because $1 - x \geq e^{-\frac{x}{\sqrt{1 - x}}}$ for $0 \leq x \leq 1$, we have \[\left(1 - \frac{d}{n}\right)^{k - 1} \geq \left(1 - \frac{d}{n}\right)^{k} = \left(1 - \frac{d}{n}\right)^{\frac{n}{cd}} \geq \left(e^{-\frac{\frac{d}{n}}{\sqrt{1 - \frac{d}{n}}}}\right)^{\frac{n}{cd}}= e^{-\frac{1}{c\sqrt{1 - \frac{d}{n}}}} \geq \frac{1}{20},\]

so \begin{equation*}
\Pr[X_i] \geq\frac{e^{-\frac{1}{c\sqrt{1 - \frac{d}{n}}}}}{c} \geq \frac{1}{20c},
\end{equation*}

If $A_S^R$ has at most one row with a single one, then necessarily $\sum{X_i} \leq \frac{n}{d^7} + 1 \leq \frac{n}{80d^6}$, assuming sufficiently large $n$. By a Chernoff bound (see Lemma~\ref{chernoff} for reference), for $d$ larger than a large enough constant, for a fixed $S$,
\begin{equation*}
\begin{split}
\Pr[\exists R: \mathcal{E}_{S, R} ~\on{occurs}] &\leq \Pr\left[\sum{X_i} < \frac{n}{80d^6}\right]
\leq \Pr\left[\Bin\left(n - k, \frac{1}{20c}\right) < \frac{n}{80d^6}\right]\\
&\leq \Pr\left[\Bin\left(\frac{n}{2}, \frac{1}{20c}\right) < \frac{n}{80d^6}\right]
\leq e^{-\frac{n}{2}\frac{1}{40c}\left(1 - \frac{c}{2d^6}\right)^2}
\leq e^{-\frac{n}{320c}},  
\end{split}
\end{equation*}
where we have used in the final inequality the fact that $c \leq d^6$.

Union bounding over all $\binom{n}{k}$ choices of $S$, and all choices of $k$, we have
\begin{equation*}
\begin{split}
\Pr[\exists S, R: \mathcal{E}_{S, R} ~\on{occurs}] &\leq
\sum_{k = \frac{n}{d^7}}^{\frac{2n}{d}}\binom{n}{k} e^{-\frac{n}{320c}} \leq n\max_{c \in [\frac{1}{2}, d^6]}\left(\frac{ne}{\frac{n}{cd}}\right)^{\frac{n}{cd}} e^{-\frac{n}{320c}}\\
&= n\max_{c \in [\frac{1}{2}, d^6]}\left(ecd\right)^{\frac{n}{cd}} e^{-\frac{n}{320c}}
= \max_{c \in [\frac{1}{2}, d^6]}{e^{n\left(\frac{\log(n)}{n} + \frac{\log(ecd)}{cd} - \frac{1}{320c}\right)}}.
\end{split}
\end{equation*}

Since $c$ grows at most polynomially in $d$, for $d$ a large enough constant, the exponent becomes negative, this probability is $e^{-\Theta(n)}$.
\end{proof}

\subsection{Large Domain}
\subsubsection{Large Case 1}
\begin{lemma}[Large Case 1 for $k$-core]\label{lemma:large}
Let $A \sim \mathbb{A}(n, d/n)$. For a set of rows $R$ and a set of columns $S$, let $\mathcal{E}_{S, R}$ be the event that 
\begin{enumerate}
    \item $\frac{2n}{d} \leq |S| \leq \frac{n}{C}$;
    \item $|R| \geq \left(1 - \frac{1}{d^7}\right)n$;
    \item $A^Rx = 0$ for some $x$ with $\supp(x) = S$.
\end{enumerate}
There exists a constant $d_0$ and $C$ such that for all $d_0 \leq d \leq 3\log(n)$,
\[\Pr\left[\exists S, R: \mathcal{E}_{S, R} ~\on{occurs}\right] = e^{-\Theta(n)}.\]

\end{lemma}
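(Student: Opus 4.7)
The plan is to adapt the proof of Large Case 1 for $A$ (Lemma~\ref{lemma:medium2}) to the submatrix setting, followed by an extra union bound over the choices of $R$. First, I would fix $R \subseteq [n]$ with $|R| \geq (1 - 1/d^7)n$ and $S \subseteq [n]$ with $|S| = k \in [2n/d, n/C]$. The rows $(A^R_S)^{\{i\}}$ for $i \in R \setminus S$ are mutually independent random vectors in $\{0,1\}^k$ with i.i.d.\ $\Ber(d/n)$ entries, since $i \notin S$ avoids all symmetry-induced correlations. Expose them one at a time and let $D_i$ denote the dimension of the span of fully-supported (on $S$) vectors in the kernel of the first $i$ exposed rows, exactly as in the proof of Lemma~\ref{lemma:medium2}. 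Starting from $D_0 = k$, Lemma~\ref{lo_sparse} (applicable since $kd/n \geq 2$ in this range) shows each new row reduces $D_i$ by at least $1$ with probability at least $1 - 1/\sqrt{kd/n}$. Writing $m := |R \setminus S| \geq (1 - 1/d^7)n - k$, this yields
\[
\Pr[\mathcal{E}_{S,R}] \leq \Pr\bigl[\Bin(m, 1 - 1/\sqrt{kd/n}) < k\bigr].
\]

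Second, I would establish a slight variant of Lemma~\ref{large_general} with $m = (1 - 1/d^7)n - k$ in place of $n - k - 1$. For $k \in [2n/d, n/C]$ we have $1 - 1/\sqrt{kd/n} \geq 1 - 1/\sqrt{2}$, so the binomial mean satisfies $mp = \Omega(n)$ for $C$ large enough, and the Chernoff-type bound that underlies Lemma~\ref{large_general} still produces a universal constant $c > 0$ with
\[
\sum_{k = 2n/d}^{n/C} \binom{n}{k} \Pr\bigl[\Bin(m, 1 - 1/\sqrt{kd/n}) < k\bigr] \leq e^{-cn}
\]
for all $d$ above a sufficiently large constant $d_0$.

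Third, I would union bound over the choice of $R$: the number of valid row sets is at most
\[
\sum_{j \leq n/d^7} \binom{n}{j} \;\leq\; \exp\!\bigl((n/d^7)(1 + 7 \log d)\bigr),
\]
and enlarging $d_0$ if necessary so that $(1 + 7\log d_0)/d_0^7 < c/2$ keeps this factor negligible relative to $e^{-cn}$. Combining the three steps gives the desired $e^{-\Theta(n)}$ bound.

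The main obstacle is step two: verifying that reopening the internal Chernoff argument of Lemma~\ref{large_general} with the $(1 - 1/d^7)$ deficit in $m$ still delivers a bound of the form $e^{-cn}$ with a constant $c$ large enough to absorb the union-bound factor from the choices of $R$. Because $m$ is reduced only by a $1/d^7$ fraction, the binomial mean $mp$ loses only a $(1 - 1/d^7)$ factor and the Chernoff exponent degrades proportionally; choosing $d_0$ large completes the argument.
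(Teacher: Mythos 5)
Your proposal follows essentially the same route as the paper: fix $S$ and $R$, expose the mutually independent rows indexed by $R\setminus S$, use Lemma~\ref{lo_sparse} to argue the kernel-span dimension drops with constant probability per row, apply a Chernoff bound to get an $e^{-\Theta(n)}$ failure probability for each pair $(S,R)$, and then union bound over $S$ and $R$, absorbing the $\binom{n}{\lfloor n/d^7\rfloor}$-size factor from $R$ by choosing $d_0$ large (the paper does this with an explicit exponent $\epsilon>0.001$ rather than invoking a variant of Lemma~\ref{large_general}, but the computation is the same). The plan is correct and matches the paper's proof.
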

\begin{proof}
We union bound over all $k$ satisfying $\frac{2n}{d} \leq k \leq \frac{n}{C}$, and all $\binom{n}{k}$ sets $S$ of $k$ columns, and over all sets $R$ of size $\left(1 - \frac{1}{d^7}\right)n$. Fix a set of $k$ columns $S$ and a subset $R$ of rows. We will consider only the at least $n - k - \frac{n}{d}$ mutually independent rows of $A_S^R$, that is, the matrix $A_{S}^{R \setminus S}$.

Consider the following process, where we expose the independent rows of $A_{S}^{R \setminus S}$ one at a time for $i = 1$ to $|R \setminus S|$. Let $\mathcal{A}_i \in \mathbb{R}^k$ be the nullspace of the first $i$ rows vectors exposed, and let $D_i$ be the dimension of the smallest subspace of $\mathcal{A}_i$ which contains all vectors in $\mathcal{A}_i$ which have no zeros.

By Lemma~\ref{lo_sparse}, if $D_i > 0$, then we can choose an arbitrary vector $v$ in $\mathcal{A}_i$ with support $k$, and with probability at least $1 - \frac{1}{\sqrt{kd/n}}$, the $(i + 1)$th row exposed is not orthogonal to $v$. In this case $D_{i + 1} = D_i - 1$. If $D_i$ ever becomes $0$, then $A_{S}^{R \setminus S}$ has no kernel vectors with no zeros, so there is no kernel vector of $A^R$ with support $S$. Since $D_0 = k$, a Chernoff bound (see Lemma~\ref{chernoff}) yields
\begin{equation*}
\begin{split}
\Pr[D_{|R \setminus S|} \neq 0] &\leq \Pr\left[\Bin\left(n - k - \frac{n}{d}, 1 - \frac{1}{\sqrt{kd/n}}\right) < k\right] 
\leq \Pr\left[\Bin\left(\frac{n}{2}, 1 - \frac{1}{\sqrt{kd/n}}\right) < k\right]\\
&\leq e^{-\frac{n}{4}\left(1 - \frac{1}{\sqrt{kd/n}}\right)\left(1 - \frac{1}{\sqrt{kd/n}} - \frac{2k}{n}\right)^2}
\leq  e^{-n\epsilon},
\end{split}
\end{equation*}
where $\epsilon > 0.001$. To achieve this value of $\epsilon$, we impose that $C \geq 12$ such we can plug in  $2k/n \leq \frac{1}{6}$ in the last inequality. Finally union bounding over all $R$ and $S$, the probability that $\mathcal{E}_{S, R}$ occurse for some $R$ and $S$ is at most
\begin{equation*}
\sum_{k = \frac{2n}{d}}^{\frac{n}{C}}\binom{n}{k}\binom{n}{\frac{n}{d^7}}e^{-\epsilon n} \leq n\left(eC\right)^{\frac{n}{C}}\left(ed^7\right)^{\frac{n}{d^7}}e^{-\epsilon n} \leq e^{n\left(\frac{\log(n)}{n} + \frac{\log(eC))}{C} + \frac{\log(ed^7))}{d^7} - \epsilon\right)},
\end{equation*}
which for constants $C$ and $d$ large enough, is $e^{-\Theta(n)}$. This concludes the lemma.
\end{proof}

\subsubsection{Large Case 2}
We will use the following lemmas from Section~\ref{sec:large2}, which we restate here.

\nullspace*
\basis*

We will use the following notation: For an adjacency matrix $M$, let $C_k(M)$ be the set of vertices in the $k$-core of the graph with adjacency matrix $M$. Further, let $M(k)$ denote adjacency matrix of the $k$-core of $M$. For a matrix $M$ and set $S$, we let $M[S] := M_S^S$ denote the restriction of $M$ to the columns \textit{and} rows in $S$. Thus $M(k) = M[C_k(M)]$. Recall that we define $M^{(i)}$ to be $M$ with both the $i$th row and $i$th column removed.

The following lemma, which follows from \cite{pittel},  states that the $k$-core of $A$ is large.
\begin{lemma}[c.f. Theorem 2 in \cite{pittel}]\label{lemma:core_size}
Let $A \sim \mathbb{A}(n, d/n)$. For any constant $k \geq 3$, and for $d \geq d_0(k)$, with probability at least $1 - O(\exp(-n^{0.1}))$, \begin{equation*}
 |C_k(A)| \geq  n\left(1 - d^ke^{-d/2}/(k - 1)! - n^{-1/6}\right).
\end{equation*}

If $d = \omega(1)$ and $d \leq 3\log(n)$, then with probability at least $1 - O(\exp(-n^{0.1}))$, \[|C_k(A)| \geq  n\left(1 - e^{-d/20}\right).\]
\end{lemma}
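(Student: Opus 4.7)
The plan is to derive the lemma as a direct consequence of Theorem 2 of \cite{pittel}, which furnishes sharp concentration for the $k$-core size in $\mathbb{G}(n,d/n)$. That theorem states that with probability at least $1-\exp(-n^{\Omega(1)})$, $|C_k(A)| = n\psi_k(d) + O(n^{5/6})$, where $\psi_k(d) := \Pr[\mathrm{Po}(\lambda_d) \geq k]$ and $\lambda_d$ is the largest root of $\lambda = d\Pr[\mathrm{Po}(\lambda) \geq k-1]$. If Pittel's original concentration rate is weaker than $\exp(-n^{0.1})$, a Doob martingale exposing edges of $\mathbb{G}(n,d/n)$ one at a time, together with the fact that $|C_k|$ is $2$-Lipschitz in single-edge changes, upgrades the rate via Azuma's inequality to the required $\exp(-n^{0.1})$ once one centers around the mean (which matches $n\psi_k(d)$ up to $o(n)$).

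For the first inequality, I would argue $\psi_k(d) \geq 1 - d^k e^{-d/2}/(k-1)!$ in two steps. First, for $d \geq d_0(k)$, the fixed point $\lambda_d$ lies in $[d/2, d]$: at $\lambda = d/2$ the map value $d\Pr[\mathrm{Po}(d/2) \geq k-1]$ exceeds $d/2$ since $\Pr[\mathrm{Po}(d/2) \geq k-1] \to 1$ as $d \to \infty$, forcing the largest fixed point above $d/2$; the upper bound $\lambda_d \leq d$ is immediate from the defining equation. Second, the standard Poisson lower-tail bound $\Pr[\mathrm{Po}(\lambda) \leq k-1] \leq k\lambda^{k-1}e^{-\lambda}/(k-1)!$ (valid for $\lambda \geq k$ by monotonicity of the Poisson pmf up to the mode) combined with $\lambda_d \in [d/2, d]$ gives $1 - \psi_k(d) \leq kd^{k-1}e^{-d/2}/(k-1)! \leq d^k e^{-d/2}/(k-1)!$ for $d \geq k$. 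The $n^{-1/6}$ slack absorbs the $O(n^{5/6})$ concentration window divided by $n$. The second inequality then follows from the first since, for $k \geq 3$ constant and $d = \omega(1)$, we have $d^k e^{-d/2}/(k-1)! \leq \tfrac{1}{2}e^{-d/20}$ whenever $k\log d \leq 9d/20$ (true for all sufficiently large $d$), while $n^{-1/6} \leq \tfrac{1}{2}e^{-d/20}$ when $d \leq 3\log n$, because $e^{-d/20} \geq n^{-3/20}$ and $1/6 > 3/20$; adding these halves yields the bound.

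The main anticipated obstacle is calibrating Pittel's concentration exponent to the required $\exp(-n^{0.1})$. I expect this to be resolvable either by checking that \cite{pittel} already yields a sub-exponential bound at least this strong, or by the Azuma argument sketched above. A secondary issue is verifying $\lambda_d \geq d/2$ uniformly for $d \geq d_0(k)$ rather than asymptotically, which is a short monotonicity argument on the map $\lambda \mapsto d\Pr[\mathrm{Po}(\lambda) \geq k-1]$ once $k$ is fixed and $d_0(k)$ is chosen large enough. Neither obstacle is substantive; the real content of the lemma is the explicit form of Pittel's formula together with a clean Poisson-tail estimate.
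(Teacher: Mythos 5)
The paper does not give a proof of this lemma; it is stated as a direct consequence of Theorem 2 of Pittel--Spencer--Wormald, and the content the paper is implicitly supplying is exactly the Poisson‐tail arithmetic you carry out. Your main route (invoke PSW for the concentration around $n\psi_k(d)$, then lower bound $\psi_k(d)$) is therefore the intended derivation, and your estimates are correct: the argument that the largest fixed point $\lambda_d$ of $\lambda \mapsto d\Pr[\mathrm{Po}(\lambda)\geq k-1]$ lies in $[d/2,d]$ for $d\geq d_0(k)$ is sound, the bound $\Pr[\mathrm{Po}(\lambda)\leq k-1]\leq k\lambda^{k-1}e^{-\lambda}/(k-1)!$ for $\lambda\geq k-1$ (terms increasing up to the mode) is correct, and the passage from the first inequality to the second by splitting $\tfrac12 e^{-d/20}$ against $d^ke^{-d/2}/(k-1)!$ and against $n^{-1/6}$ using $d\leq 3\log n$ and $1/6>3/20$ is fine.

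However, your ``fallback'' for upgrading PSW's concentration rate via an edge-exposure Doob martingale is wrong: the $k$-core size is emphatically \emph{not} $2$-Lipschitz (or $O(1)$-Lipschitz) under single-edge changes. Deleting one edge from a connected $k$-regular graph on $n$ vertices triggers a peeling cascade that empties the entire $k$-core, so a single edge flip can change $|C_k|$ from $n$ to $0$. This non-Lipschitzness is exactly why concentration of the $k$-core size is a genuine theorem---PSW go through the differential-equations method and carefully control the peeling process rather than appealing to Azuma. If PSW's Theorem 2 did not already supply a tail of the stated strength (it does; the DE method yields $\exp(-n^{\Omega(1)})$ failure probability), your proposed patch would not repair it. You should drop the Lipschitz/Azuma contingency and rely directly on the concentration statement from \cite{pittel}.
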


Our main tool to prove Theorem~\ref{thm:core} is the following lemma, which rules out large dependencies in $A(k)$.

\begin{lemma}\label{lemma2core}
Let $A \sim \mathbb{A}(n, d/n)$ for $d \leq 3
\log(n)$ and $d = \omega(1)$. For any $u, r < s, t \in [n]$,
\begin{equation}\label{eq:lem_core}
\begin{split}
\Pr\left[\exists x: A(k)x = 0, \supp(x) > t\right] &\leq 
\frac{n}{t - n/d}\max_{x \in \mathbb{R}^{n}: \supp(x) \geq s}\Pr\left[x^\top A_n = 0\right]\\
&\quad +\frac{n}{t - n/d}\Pr\left[\exists x: A^{(n)}(k)x = 0, r < |\supp(x)| < s \right]\\
&\quad + \frac{dr}{t - n/d}\\
&\quad+ \frac{n}{t - n/d}\frac{n}{u}\Pr\left[\exists x \neq 0: A^{(n)}(k)x = e_1, |\supp(x)| < s\right]\\
&\quad+ \frac{n}{t - n/d}\max_{X \in \mathcal{T}^{n}_{n - n/d^2 - r - u, s}}\Pr\left[A_n^{\top}XA_n = 0\right] \\
&\quad+ o\left(\frac{n}{t - n/d}\right),
\end{split}
\end{equation}
where $\mathcal{T}^{m}_{p, q}$ denotes the set of matrices in $\mathbb{R}^{m \times m}$ with some set of $p$ columns that each have at least $q$ non-zero entries.

\end{lemma}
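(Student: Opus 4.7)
The strategy mirrors Lemma~\ref{lemma:large2_A}, the new ingredient being a notion of ``normality'' that lets us replace $A(k)$ by a bordered extension of $A^{(n)}(k)$. Call an index $i \in [n]$ \emph{normal} if $i \in C_k(A)$ and $C_k(A^{(i)}) = C_k(A)\setminus\{i\}$; under this condition, writing $C := C_k(A^{(i)})$ and $\mathbf{a} := A_i^{/i}$, the matrix $A(k)$ is the bordered matrix whose principal block is $A^{(i)}(k)$ (indexed by $C$) and whose boundary row/column is $\mathbf{a}_C$. Let $N$ denote the number of non-normal indices. The first step is to show that $\mathbb{E}[N] = o(n/d)$, splitting into (a) $i\notin C_k(A)$, bounded via Lemma~\ref{lemma:core_size}, and (b) $i\in C_k(A)$ that is adjacent in the core to some vertex of core-degree exactly $k$, bounded by a direct first-moment computation (such vertices are $o(n/d)$ for $d = \omega(1)$). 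By Markov, $\Pr[N > n/d] = o(1)$, which will contribute at most the $o(n/(t-n/d))$ slack term.

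Set $\mathcal{E}_i := \{A(k)_i \in \Span(A(k)_{-i})\}$. By Observation~\ref{obs_3}, whenever $A(k)x=0$ with $|\supp(x)| > t$, at least $t$ indices in $C_k(A)$ satisfy $\mathcal{E}_i$; on $\{N \leq n/d\}$, at least $t - n/d$ of these are normal. Applying Markov to $X := \sum_{i\text{ normal}} \mathbbm{1}[\mathcal{E}_i]$ and using symmetry gives
\[
\Pr\bigl[\exists x : A(k)x = 0,\ |\supp(x)| > t\bigr] \;\leq\; \frac{n\,\Pr[\mathcal{E}_n \cap\{n\text{ normal}\}]}{t - n/d} + o\!\left(\tfrac{n}{t - n/d}\right).
\]
To bound $\Pr[\mathcal{E}_n \cap\{n\text{ normal}\}]$, condition on $A^{(n)}$: this determines $C$, $A^{(n)}(k)$, and $S := \{i\in C : e_i \in \Span(A^{(n)}(k)^\top)\}$, while $\mathbf{a} = A_n^{/n}$ remains i.i.d.\ $\Ber(d/n)$ and independent of $A^{(n)}$. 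Using the bordered form and Lemma~\ref{claim:basis}, $\mathcal{E}_n$ fails exactly when the system $A^{(n)}(k)y' + y_n \mathbf{a}_C = 0,\ \mathbf{a}_C^\top y' = 1$ admits a solution.

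Split on $k^* := \max\{|\supp(x)|: A^{(n)}(k)x = 0\} = |C| - |S|$ (Lemmas~\ref{claim:nullspace}--\ref{claim:basis}), exactly mirroring the three cases of Lemma~\ref{lemma:large2_A}. In Case~1 ($k^* \geq s$) pick any kernel vector $x$ with $|\supp(x)| \geq s$; if $\mathbf{a}_C^\top x \neq 0$, then setting $y' = x/(\mathbf{a}_C^\top x)$, $y_n = 0$ solves the system and $\mathcal{E}_n$ fails, so Lemma~\ref{lo_sparse} applied to the independent $\mathbf{a}$ produces the first term. Case~2 ($r < k^* < s$) contributes the second term directly. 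In Case~3 ($k^* \leq r$, so $|S| \geq |C| - r$), build the pseudoinverse $B$ whose $i$-th column is $w_i$ satisfying $A^{(n)}(k)w_i = e_i$ for $i\in S$ (and zero otherwise); the direct analogue of Claim~\ref{claim:condition} shows that $\mathcal{E}_n$ fails as soon as $\supp(\mathbf{a}_C)\subseteq S$ and $\mathbf{a}_C^\top B \mathbf{a}_C \neq 0$. The Bernoulli tail $\Pr[\supp(\mathbf{a}_C)\not\subseteq S \mid |S| \geq |C| - r] \leq dr/n$ produces the third term; Markov on $|\{i\in S: |\supp(w_i)| < s\}|$ produces the fourth; and the quadratic Littlewood--Offord bound (Lemma~\ref{lo_quadratic}) on $\mathbf{a}^\top B \mathbf{a}$, valid on the event $B \in \mathcal{T}^n_{n-n/d^2-r-u,\, s}$, yields the fifth. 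The $n/d^2$ slack in the first subscript of $\mathcal{T}^n$ absorbs the gap $n - |C|$, which is at most $n/d^2$ (in fact much smaller) with high probability by Lemma~\ref{lemma:core_size}.

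The main obstacle is the bookkeeping around normality. We need the definition to be chosen so that $A(k)$ cleanly decomposes as a bordered extension of $A^{(n)}(k)$; we need the concentration $N = o(n/d)$; and, crucially, we need to preserve the independence of $\mathbf{a}$ from $A^{(n)}$ so that the conditional Littlewood--Offord lemmas can be applied verbatim once $A^{(n)}(k)$ has been revealed. Once these are established, each of the five cases is a direct port of the corresponding step in the proof of Lemma~\ref{lemma:large2_A}, with index set $[n-1]$ replaced by $C \subseteq [n-1]$ and additive corrections of order $n - |C|$ absorbed into the $n/d^2$ slack.
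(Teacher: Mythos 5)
Your proposal follows the same global architecture as the paper's proof: define normality of an index (your definition agrees exactly with the paper's set $T$), show most indices are normal, reduce via Markov to bounding $\Pr[\mathcal{E}_n]$ conditionally on $A^{(n)}$, then split into the three regimes on the max-support kernel vector of $A^{(n)}(k)$ and handle Case 3 with the pseudoinverse / quadratic Littlewood--Offord argument. The case analysis, the bordered-matrix observation, the $dr/n$ Bernoulli tail, the Markov bound on $|\{i : |\supp(w_i)| < s\}|$, and the $n/d^2$ padding of $\mathcal{T}^n$ all match the paper line for line.

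The one place you genuinely deviate is the step showing that non-normal indices are rare, and that step is under-justified. You split non-normal $i$ into (a) $i\notin C_k(A)$ (fine, via Lemma~\ref{lemma:core_size}) and (b) $i\in C_k(A)$ adjacent in the core to a core-degree-$k$ vertex. The structural implication in (b) is correct, but the assertion that such $i$ number $o(n/d)$ ``by a direct first-moment computation'' needs a bound on the number of vertices of core-degree exactly $k$, which is a degree-distribution statistic for the $k$-core that the paper never establishes (Lemma~\ref{lemma:core_size} controls only the \emph{size} of the core, not its degrees). It is true that Pittel--Spencer--Wormald-type analysis would yield exponential decay in $d$ for this count, but that is a nontrivial external import. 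The paper sidesteps this entirely in Claim~\ref{claim:T} by instead lower-bounding $\Pr[C_k(A)=C_k(A^{(i)})\cup i]$ directly: condition on $A^{(i)}$ (which fixes $C_k(A^{(i)})$), and compute the probability that the fresh column $A_i$ places at least $k$ of its neighbors inside $C_k(A^{(i)})$ and none outside; this is a one-line Binomial/Bernoulli bound once Lemma~\ref{lemma:core_size} gives $c_i\ge n(1-e^{-d/20})$. The net effect is the same (both give $\mathbb{E}[N]=o(n/d)$ hence $\Pr[N>n/d]=o(1)$), but the paper's route is self-contained in the toolkit already developed, whereas yours requires an additional core-degree-distribution fact that you should either prove or cite explicitly.
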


\begin{proof}

For $i \in [n]$, let $S_i := C_k(A^{(i)}) \cup i$, and let $\mathcal{H}_i$ be the space spanned by the column vectors \[A_1^{S_i}, A_2^{S_i}, \cdots A_{i - 1}^{S_i}, A_{i + 1}^{S_i}, \cdots, A_n^{S_i}.\]

Let $T := \{i: C_k(A) = S_i\}.$ Then $A(k)x = 0$ for some $x$ implies that for all $i \in \supp(x) \cap T,$ we have $A_i^{S_i} = A_i^{C_k(A)} \in \mathcal{H}_i,$ since \[x_i A_i^{C_k(A)} = -\sum_{j} x_j A_j^{C_k(A)}.\]

We first claim that the set $T$ is large, such that if $\supp(x)$ is large, we must have many $i$ for which $A_i^{S_i} \in H_i$. Let $c_i := |C_k(A^{(i)})|$.

\begin{claim}\label{claim:T}
With probability $1 - o(1)$, $|T| \geq n\left(1 - \frac{1}{d}\right)$.
\end{claim}
\begin{proof}
Fix $i$, and consider the probability that $C_k(A) = C_k(A^{(i)}) \cup i$.

Notice that $C_k(A^{(i)})$ is independent of $A_i$, and hence
\begin{equation*}
\begin{split}
\Pr[C_k(A) = C_k(A^{(i)}) \cup i] &\geq \Pr[\supp(A_i) \geq k \land A_{ij} = 0 \: \forall j \notin C_k(A^{(i)})] \\
& = \mathbb{E}_{c_i}\left[\left(1 - \frac{d}{n}\right)^{n - 1 - c_i}\Pr\left[\Bin\left(c_i, \frac{d}{n}\right) \geq k\right]\right]\\
& \geq \Pr\left[c_i \geq n\left(1 - \frac{1}{2d^3}\right)\right]e^{-\frac{1}{d^2}}\Pr\left[\Bin\left(n(1 - 1/2d^3), \frac{d}{n}\right) \geq k\right]\\
& \geq \Pr\left[c_i \geq n\left(1 - \frac{1}{2d^3}\right)\right]e^{-\frac{1}{d^2}}\left(1 - e^{-\frac{n\left(1 - \frac{1}{2d^3}\right)n\left(\frac{d}{n} - \frac{k}{n\left(1 - \frac{1}{2d^3}\right)}\right)^2}{2d}}\right)\\
&\geq \Pr\left[c_i \geq n\left(1 - \frac{1}{2d^3}\right)\right]e^{-\frac{1}{d^2}}(1 - e^{-d/3})\\
&\geq \Pr\left[c_i \geq n\left(1 - \frac{1}{2d^3}\right)\right]\left(1 - \frac{1}{d^{3/2}}\right),
\end{split}
\end{equation*}
where we used a Chernoff bound (see Lemma~\ref{chernoff}) to bound the binomial, and the last two inequalities hold for $d$ large enough relative to $k$.
By Lemma~\ref{lemma:core_size}, with probability $1 - O(\exp(-n^{0.1}))$, we have $c_i \geq n\left(1 - e^{-d/20}\right)$, so we have 
\begin{equation*}
\Pr[C_k(A) = C_k(A^{(i)}) \cup i] \geq 1 - \frac{1}{d^{3/2}} - O(\exp(-n^{0.1})).
\end{equation*}

Now by Markov's inequality, 
\begin{equation*}
\begin{split}
\Pr\left[|[n] \setminus T| \geq \frac{n}{d}\right] &\leq \frac{d}{n}\sum_i\left(1 - \Pr[C_k(A) = C_k(A^{(i)}) \cup i]\right) \\
&\leq d\left(1 - \left(1 - d^{-3/2} - O(\exp(-n^{0.1})\right)\right) = o(1).
\end{split}
\end{equation*}
\end{proof}

Let $\mathcal{E}_i$ denote the event that $A_i^{S_i} \in \mathcal{H}_i$. Let $X_i$ be the indicator of this event, and let $X = \sum_i X_i$. If $|T| \geq n\left(1 - \frac{1}{d}\right)$, at most $n/d$ vertices can be in the $k$-core but not in $T$. Thus by Markov's inequality, we have
\begin{equation}\label{markov_core}
\begin{split}
\Pr\left[\exists x : A(k)x = 0, |\supp(x)| \geq t \right] &\leq \Pr\left[|T| < n\left(1 - \frac{1}{d}\right)\right] + \Pr\left[X \geq t - n/d\right]\\
&\leq o(1) + \frac{\mathbb{E}[X]}{t - n/d}
= o(1) + \frac{\sum_i\Pr[\mathcal{E}_i]}{t - n/d}
= o(1) + \frac{n}{t - n/d}\Pr[\mathcal{E}_n]. 
\end{split}
\end{equation}

We will break down the probability $\Pr[\mathcal{E}_n]$ into several cases, depending on the size of the support of vectors in the kernel of $A^{(n)}(k)$. Let $S \subset C_k(A^{(n)})$ be the set of all $j$ such that $e_j \in \Span(A^{(n)}(k))$. By Claim~\ref{claim:nullspace} and  Claim~\ref{claim:basis},
\begin{equation*}
\ell:= \max(\supp(x): A^{(n)}(k)v = 0) = c_n - |S|.  
\end{equation*}

\textbf{Case 1:} $A^{(n)}(k)$ has a vector $x$ with large support, that is, $\ell \geq s$. 

\textbf{Case 2:} $A^{(n)}(k)$ has a vector $x$ with medium support, that is $r < \ell < s$. 

\textbf{Case 3:} $A^{(n)}(k)$ does not have any vectors with large or medium support vectors in its kernel, that is, $\ell \leq r$. 

We can expand
\begin{equation}\label{cases_core}
\Pr[\mathcal{E}_n] = \Pr[\mathcal{E}_n| \ell \geq s]\Pr[\ell \geq s] + \Pr[\mathcal{E}_n|r < \ell < s]\Pr[r < \ell < s] + \Pr[\mathcal{E}_n| \ell \leq r]\Pr[\ell \leq r].   
\end{equation}

Define $\textbf{a} := A_i^{C_k(A^{(n)})}$ to be the restriction of $A_i$ to the indices in the $k$-core of $A^{(n)}$.

To evaluate the probability of the first case, we condition on $A^{(n)}$, and let $x$ be any vector of support at least $s$ in the kernel of $A^{(n)}(k)$. Observe that $\mathcal{E}_n$ cannot hold if $x^\top \textbf{a}$ is non-zero. Since $\textbf{a}$ is independent from $x$, we have 
\begin{equation}\label{case_1_core}
\begin{split}
\Pr[\mathcal{E}_n| \ell \geq s]\Pr[\ell \geq s] &\leq \max_{x: \supp(x) \geq s}\Pr[x^\top \textbf{a} = 0] \leq  \max_{x \in \mathbb{R}^{n}: \supp(x) \geq s}\Pr\left[x^\top A_n = 0\right].
\end{split}
\end{equation}
Here the second inequality comes from considering $x$ over a larger domain. Combined with eq.~\ref{markov_core}, the contribution from this case yields the first term in the right hand side of eq.~\ref{eq:lem_core}.

For the second case, we bound
\begin{equation}\label{case_2_core}
\Pr[\mathcal{E}_n|r < \ell <  s]\Pr[r < \ell < s] \leq \Pr[r < \ell < s] \leq \Pr\left[\exists x: A^{(n)}(k)x = 0, r < |\supp(x)| < s \right].   
\end{equation}

Combined with eq.~\ref{markov_core}, the contribution from this case yields the second term in the right hand side of eq.~\ref{eq:lem_core}.

In this third case, we will show conditions under which we can algebraically construct a vector $v$ such that $A[C_k(A^{(n)}) \cup n]v = e_n$. This will imply by Claim~\ref{claim:basis} that under those conditions, $A_n^{C_k(A^{(n)}) \cup n} \notin \mathcal{H}_n$.

To define these conditions, recall that $S \subset C_k(A^{(n)})$ is the set of all $j$ such that $e_j \in \Span(A[C_k(A^{(n)})])$. For $j \in S$, let $w_j$ be any vector such that $A^{(n)}(k)w_j = e_j$. We next construct a sort of ``pseudoinverse" matrix $B \in \mathbb{R}^{C_k(A^{(n)}) \times C_k(A^{(n)})}$ as follows: For $j \in S$, define the column $B_j$ to to be $w_j$. Define all other entries columns of $B$ to be zero. The following claim shows a condition for $\mathcal{E}_n$ not holding.
\begin{claim}
If $\supp(\textbf{a}) \subset S$ and $\textbf{a}^\top B\textbf{a} \neq 0$, then $e_n \in \Span(A[C_k(A^{(n)}) \cup n])$.
\end{claim}
\begin{proof}
Let $w' := B\textbf{a} = \sum_{j \in S}{\textbf{a}_jw_j}$ such that $A^{(n)}(k)w' = \sum_{j \in S}{\textbf{a}_je_j}$. Hence if $\supp(\textbf{a}) \subset S$, $A^{(n)}(k)w' = \textbf{a}.$ Define $w \in \mathbb{R}^{C_k(A^{(n)}) \cup n}$ to be the vector equal to $w'$ on the coordinates in $C_k(A^{(n)})$ and $-1$ on coordinate $n$. Then 

\begin{equation*}
(A[C_k(A^{(n)}) \cup n]w)_j = 0 \qquad  \forall j \in C_k(A^{(n)})
\end{equation*}
and 

\begin{equation*}
(A[C_k(A^{(n)}) \cup n]w)_n = \textbf{a}^\top B\textbf{a}.
\end{equation*}

Evidently, if $\textbf{a}^\top B\textbf{a} \neq 0$, then \[\frac{A[C_k(A^{(n)}) \cup n]w}{\textbf{a}^\top B\textbf{a}} = e_n,\] so $e_n \in \Span(A[C_k(A^{(n)}) \cup n])$.
\end{proof}

By definition, in the third case, we have $|S| \geq c_n - r$. Hence
\begin{equation}\label{case_3_core}
\begin{split}
\Pr[\mathcal{E}_n \land \ell \leq r]
&\leq \Pr\left[\supp(\textbf{a}) \not\subset S \land |S| \geq c_n - r\right] + \Pr[\textbf{a}^\top B\textbf{a} = 0 \land |S| \geq c_n - r].
\end{split}
\end{equation}


Notice that $S$ is a function of $A^{(n)}$, so $\textbf{a}$ is independent from $S$. Thus for any set $S$, \[\Pr\left[\supp(\textbf{a}) \not\subset S\right] \leq 1 - \left(1 - \frac{d}{n}\right)^{n - |S|} \leq \frac{d(n - |S|)}{n}.\] 
By Lemma~\ref{lemma:core_size}, with probability $1 - o(1)$, we have $c_n \geq n\left(1 - \frac{1}{d^2}\right)$, thus
\begin{equation}\label{case_3_a_S_core}
\Pr\left[\supp(\textbf{a}) \not\subset S \wedge |S| \geq c_n - r\right] \leq o(1) + \frac{d(n - n(1 -\frac{1}{d^2}) + r)}{n} = o(1) + \frac{1}{d} + \frac{dr}{n} = o(1) + \frac{dr}{n}.   
\end{equation}

Combined with eq.~\ref{markov_core}, the contribution from this equation yields the third term in the right hand side of eq.~\ref{eq:lem_core}.

We will break up the second term in eq.~\ref{case_3_core} by conditioning on whether the support of $B$ has many entries or not, and then by using the independence of $\textbf{a}$ from $B$:

\begin{equation}\label{eq:T_term_core}
\Pr[\textbf{a}^\top B\textbf{a} = 0 \land |S| \geq c_n - r] \leq \Pr\left[B \notin \mathcal{T}^{c_n}_{c_n - r - u, s}  \wedge |S| \geq c_n - r\right] + \max_{X \in \mathcal{T}^{c_n}_{c_n - r - u, s}}\Pr\left[\textbf{a}^\top X\textbf{a} = 0\right].
\end{equation}

For the second probability on the right hand side, notice that for any positive integers $a \leq b$, if $\textbf{x}$ and $\textbf{y}$ are random vectors from some product distributions $P^{\otimes a}$ and $P^{\otimes b}$ respectively, then
\begin{equation*}
\max_{X \in \mathcal{T}^{a}_{a - r - u, s}}\Pr\left[\textbf{x}^\top X\textbf{x} = 0\right] \leq \max_{X \in \mathcal{T}^{b}_{b - r - u, s}}\Pr\left[\textbf{y}^\top X\textbf{y} = 0\right].
\end{equation*}

Hence since $c_n \geq n\left(1 - \frac{1}{d^2}\right)$ with probability $1 - o(1)$, we have
\begin{equation}\label{c_n_quad}
\max_{X \in \mathcal{T}^{c_n}_{c_n - r - u, s}}\Pr\left[\textbf{a}^\top X\textbf{a} = 0\right] \leq o(1) + \max_{X \in \mathcal{T}^{n}_{n - n/d^2 - r - u, s}}\Pr\left[A_n^\top XA_n = 0\right].
\end{equation}

To bound the first probability on the right hand side of eq.~\ref{eq:T_term_core}, observe that if $|S| \geq c_n - r$ and $B \notin \mathcal{T}^{c_n}_{c_n - r - u, s}$, there must exist at least $u$ different $j \in S$ such that $\supp(w_j) \leq s$. So
\begin{equation*}
\begin{split}
\Pr\left[B \notin \mathcal{T}^{c_n}_{c_n - r - u, s}  \wedge |S| \geq c_n - r\right] &\leq \Pr\left[|\{j: \exists x \neq 0: A^{(n)}(k)x = e_j, |\supp(x)| < s\}| \geq u \right] \\ &\leq \frac{n}{u}\Pr\left[\exists x \neq 0: A^{(n)}(k)x = e_1, |\supp(x)| < s\right],
\end{split}
\end{equation*}
where the last inequality follows by Markov's inequality. Plugging this and eq.~\ref{c_n_quad} into eq.~\ref{eq:T_term_core} yields
\begin{equation*}
\begin{split}
\Pr[\textbf{a}^\top B\textbf{a} = 0 \land |S| \geq c_n - r] &\leq \frac{n}{u}\Pr\left[\exists x \neq 0: A^{(n)}(k)x = e_1, |\supp(x)| < s\right]\\
& \qquad + \max_{X \in \mathcal{T}^{n}_{n - n/d^2 - r - u, s}}\Pr\left[A_n^\top XA_n = 0\right] + o(1).   
\end{split}
\end{equation*}

Combining this with \ref{case_3_a_S_core} and \ref{case_3_core} yields

\begin{equation*}
\Pr[\mathcal{E}_n \land k \leq r] \leq o(1) + \frac{dr}{n} + \max_{X \in \mathcal{T}^{n - 1}_{n - n/d^2 - r - u, s}}\Pr\left[A_n^\top XA_n = 0\right] + \frac{n}{u}\Pr\left[\exists x \neq 0: A^{(n)}(k)x = e_1, |\supp(x)| < s\right].
\end{equation*}

Plugging this and eqs.~\ref{case_1_core} and \ref{case_2_core} into eq.~\ref{cases_core} and finally eq.~\ref{markov_core} yields the lemma.
\end{proof}

\subsection{Proof of Theorem~\ref{thm:core}}
We are now ready to prove our main theorem, which we restate here.
\thmcore*

\begin{proof}[Proof of Theorem~\ref{thm:core}]
The proof follows by instantiating Lemma~\ref{lemma2core} with the following values:
$t = \frac{n}{C}$, $s = \frac{n}{C}$, $r = \frac{n}{8e^4d^2}$, $u = \frac{n}{2}$. Here $C$ is the constant from Lemma~\ref{lemma:large}.

With the following values, it is immediate from the sparse Littlewood-Offord (Lemma~\ref{lo_sparse}) and quadratic Littlewood-Offord theorems (Lemma~\ref{lo_quadratic}) that the first and last terms in Lemma~\ref{lemma2core} respectively are $o(1)$. The third term is $O(1/d) = o(1)$. The two lemmas stated immediately after this proof (Lemmas \ref{lem:mediumcore} and \ref{lem:basiscore}) will show that the second and fourth terms are $o(1)$.

Assuming these two lemmas, it follows that with probability $1 - o(1)$, there are no kernel vectors with support size at least $n/C$ in $A(k)$. To rule out any minimal dependencies of less than $n/C$ columns of $A(k)$, we use Lemmas~\ref{lemma:small}, \ref{lemma:medium_core}, and \ref{lemma:large}: If there were a minimal dependency among $\ell$ columns $S$ of $A(k)$, then it would mean that the matrix $A_S^R$, with $R = C_k(A)$ would:
\begin{enumerate}
    \item Contain $\geq 3\ell$ $1$'s, because $A(k)$ is a $k$-core for $k \geq 3$ and
    \item Be a minimal dependency.
\end{enumerate}
Thus the event $\mathcal{E}_{S, R}$ of Lemma~\ref{lemma:small} occurs for some $S, R$, which occurs with probability $o(1)$. 

If $ \frac{n}{d^7}< \ell < n/C$, then applying Lemmas \ref{lemma:medium_core}, and \ref{lemma:large} with $T = C_k(A)$ means that, if $|C_k(A)| \geq n(1 - 1/d^7)$, with probability at least $1 - e^{-\Theta(n)}$, there is no dependency of these sizes in $A(k)$. By Lemma~\ref{lemma:core_size}, $|C_k(A)| \geq n(1 - 1/d^7)$ with probability $1 - o(1)$.

This rules out all dependencies in $A(k)$ with probably $1 - o(1)$, proving the theorem.
\end{proof}

It remains to prove lemmas \ref{lem:mediumcore} and \ref{lem:basiscore}.
\begin{lemma}\label{lem:mediumcore}
Let $A \sim \mathbb{A}(n, d/n)$ for $d = \omega(1)$, and let $C$ be the constant in Lemma~\ref{lemma:large}. For any constant $k \geq 3$,
\begin{equation*}
\Pr\left[\exists x: A(k)x = 0, \frac{n}{8e^4d^2} < |\supp(x)| < \frac{n}{C} \right] = o(1).
\end{equation*}
\end{lemma}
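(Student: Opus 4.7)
The plan is to deduce this lemma essentially as a direct corollary of the previously established Lemmas \ref{lemma:medium_core} and \ref{lemma:large}, together with the lower bound on the $k$-core size from Lemma \ref{lemma:core_size}. The idea is to split the target range of support sizes into two subranges matching the statements of those two lemmas, and to verify that any kernel vector of $A(k)$ with support in the target range would witness the event ruled out there.

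First I will observe that since $d = \omega(1)$, eventually $d^5 > 8e^4$, so $n/(8e^4 d^2) > n/d^7$. Consequently the target interval $(n/(8e^4 d^2), n/C)$ is contained in $(n/d^7, 2n/d] \cup (2n/d, n/C)$, which are precisely the support ranges addressed by Lemmas \ref{lemma:medium_core} and \ref{lemma:large}. I will also invoke Lemma \ref{lemma:core_size} to say that, with probability $1 - O(\exp(-n^{0.1}))$, the set $R := C_k(A)$ satisfies $|R| \geq n(1 - e^{-d/20}) \geq n(1 - 1/d^7)$ for all sufficiently large $d$, so condition (2) of both lemmas is satisfied.

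The main step will then be as follows. Suppose $A(k)x = 0$ with $S := \supp(x)$ in the target range. Setting $R = C_k(A)$ and writing $x_S$ for the restriction of $x$ to $S$, we have $A_S^R x_S = 0$, and $x_S$ has no zero entries. By Observation \ref{observation1}, $A_S^R$ has no row with exactly one $1$, which trivially implies it has at most one such row; simultaneously this means $A_S^R$ admits a full-support kernel vector. Thus condition (3) of Lemma \ref{lemma:medium_core} and condition (3) of Lemma \ref{lemma:large} are each met (for the appropriate subrange of $|S|$). Applying Lemma \ref{lemma:medium_core} when $|S| \in [n/d^7, 2n/d]$ and Lemma \ref{lemma:large} when $|S| \in (2n/d, n/C)$, the event that such a pair $(S, R)$ exists has probability $e^{-\Theta(n)}$ in either case. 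A union bound over these two subranges, combined with the $o(1)$ failure probability for the lower bound on $|C_k(A)|$, will yield the lemma.

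I do not expect any real obstacle: the substantive probabilistic work has already been carried out in Lemmas \ref{lemma:medium_core}, \ref{lemma:large}, and \ref{lemma:core_size}. The only minor point requiring care is that the lower end of the target range, $n/(8e^4 d^2)$, sits strictly above the boundary $n/d^7$ between the two applicable lemmas, so no separate argument is needed to cover the interval $(n/(8e^4 d^2), n/d^7]$ — this interval is empty once $d^5 > 8e^4$, which holds eventually because $d = \omega(1)$.
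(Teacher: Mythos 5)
Your proposal matches the paper's own proof: both reduce the lemma to Lemmas~\ref{lemma:medium_core} and \ref{lemma:large} applied with $R = C_k(A)$, using Lemma~\ref{lemma:core_size} to guarantee $|C_k(A)| \geq n(1 - 1/d^7)$ with probability $1 - o(1)$, and then union-bounding over the two subranges of support sizes. Your write-up is slightly more explicit (verifying the event conditions and noting that the interval $(n/(8e^4 d^2), n/d^7]$ is eventually empty), but the argument is the same.
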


\begin{proof}
This is immediate from the medium and large case Lemmas~\ref{lemma:medium_core} and \ref{lemma:large}, since the $k$-core has size at least $n\left(1 - \frac{1}{d^7}\right)$ with probability $1 - o(1)$. Hence if there was a dependency of $\ell$ columns $S$ in the $k$-core for $\frac{n}{8e^4d^2}< \ell < \frac{n}{C}$, it would mean there would have to be a set $R = C_k(A)$ of size at least $n\left(1 - \frac{1}{d^7}\right)$ with $S \subset R$ such that matrix $A_S^R$ is a minimal dependency. This implied that the even $\mathcal{E}_{S, R}$ of Lemma~\ref{lemma:medium_core} or \ref{lemma:large} occurs.
\end{proof}

\begin{lemma}\label{lem:basiscore}
Let $A \sim \mathbb{A}(n, d/n)$ for $d = \omega(1)$, and let $C$ be the constant in Lemma~\ref{lemma:large}. Then for any constant $k \geq 3$,
\begin{equation*}
\Pr\left[\exists x: A(k)x = e_1, |\supp(x)| < \frac{n}{C} \right] = o(1)
\end{equation*}
\end{lemma}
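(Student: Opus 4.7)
The plan is to mirror the proof of Lemma~\ref{lem:basis}: reduce the hypothesis $A(k)x = e_1$ with $|\supp(x)| < n/C$ to the existence of a minimal linear dependency in an augmented matrix, and then rule out such a dependency using the small, medium, and large case lemmas for the $k$-core (Lemmas~\ref{lemma:small}, \ref{lemma:medium_core}, and~\ref{lemma:large}).

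Let $I_k = C_k(A)$ and $N_k = |I_k|$, and suppose there exists $x$ with $A(k)x = e_1$ and $|\supp(x)| < n/C$. I would form the augmented matrix $\tilde M := [A(k) \mid e_1] \in \{0,1\}^{N_k \times (N_k+1)}$, and observe that $(x, -1)$ lies in its kernel with the $(N_k+1)$st coordinate in its support. By Lemma~\ref{lemma:support_minimal}, there exists $S \subseteq \supp(x)$ such that $\tilde M_{S \cup \{N_k+1\}}$ is a minimal dependency. Since each column $A(k)_i$ is a $\{0,1\}$-vector with at least $k \geq 3$ ones, it cannot be a scalar multiple of $e_1$, so $|S| \geq 2$.

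The key structural observation is that, by Observation~\ref{observation1}, no row of $\tilde M_{S \cup \{N_k+1\}}$ contains exactly one $1$. Since the $e_1$-column only contributes to row $1$, every row $r \in I_k$ with $r \neq 1$ must have $0$ or at least $2$ ones in $A^{I_k}_S$. Hence $A^{I_k}_S$ contains at most one row with a single $1$ (with row $1$ being the only possible exception, in case $|S \cap N(1)| = 1$). Moreover, $A^{I_k}_S$ has at least $3|S|$ ones, since $A(k)$ has minimum degree $\geq k \geq 3$. I would then split into three cases by the size of $|S|$ (not $|\supp(x)|$, since the minimal sub-dependency may be smaller):
\begin{enumerate}
\item \textbf{Small case:} $2 \leq |S| \leq n/d^7$. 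All four conditions of Lemma~\ref{lemma:small} with $R := I_k$ are satisfied, so this case contributes probability $O(n^{-1/3})$.
\item \textbf{Medium case:} $n/d^7 < |S| \leq 2n/d$. By Lemma~\ref{lemma:core_size}, $|I_k| \geq (1 - 1/d^7)n$ with probability $1 - o(1)$ since $d = \omega(1)$, so with $R := I_k$ all three conditions of Lemma~\ref{lemma:medium_core} hold, contributing probability $e^{-\Theta(n)}$.
\item \textbf{Large case:} $2n/d < |S| \leq n/C$. The fully supported kernel vector of $\tilde M_{S \cup \{N_k+1\}}$ yields $v \in \mathbb{R}^{|S|}$ of full support with $A(k)_S v = c\, e_1$ for some $c \neq 0$. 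Restricting to rows $R := I_k \setminus \{1\}$ (which still satisfies $|R| \geq (1 - 1/d^7)n$ with probability $1-o(1)$) gives $A^R_S v = 0$ where $v$ is extended by zeros outside $S$, so Lemma~\ref{lemma:large} bounds this case by $e^{-\Theta(n)}$.
\end{enumerate}

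The main obstacle is verifying that the minimal-dependency property combined with the $k \geq 3$ minimum-degree property of the $k$-core forces the ``at most one row with a single $1$'' condition required by Lemmas~\ref{lemma:small} and~\ref{lemma:medium_core}, and in particular identifying row $1$ as the only possible exception due to the $e_1$ column in the augmentation. The upper endpoint $n/C$ on $|\supp(x)|$ carries over to $|S|$ since $S \subseteq \supp(x)$, and the three size ranges above cover the whole interval $[2, n/C]$, which finishes the proof.
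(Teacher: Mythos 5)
Your proof is correct and follows essentially the same route as the paper: reduce the hypothesis $A(k)x = e_1$ to a structural statement that triggers the small-, medium-, and large-case core lemmas (Lemmas~\ref{lemma:small}, \ref{lemma:medium_core}, \ref{lemma:large}). The one genuine difference is in how the reduction is set up. The paper passes to the matrix $A'$ obtained by deleting the first row of $A(k)$, phrased through the auxiliary Lemma~\ref{lem:smallcore}, whereas you work directly with the column-augmented matrix $\tilde M = [A(k)\mid e_1]$ and extract a minimal sub-dependency via Lemma~\ref{lemma:support_minimal}. These are two ways of encoding the same constraint, but your version has the advantage of making the role of the $e_1$ column fully explicit: it cleanly explains why row $1$ of $A^{I_k}_S$ is the only row permitted to have a single $1$, and it transparently accommodates the possibility $1 \in \supp(x)$ (the paper's one-line reduction asserts parenthetically that the first coordinate of $x$ is zero, which is not obviously forced, and its proof of Lemma~\ref{lem:smallcore} slightly conflates the row-deleted matrix with $A_R^R$). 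You also correctly verify the ancillary hypotheses: $|S| \geq 2$ since a degree-$\geq 3$ column cannot be a multiple of $e_1$; $|\supp(A^R_S)| \geq 3|S|$ from the $k$-core minimum-degree bound; and $|R| \geq (1 - d^{-7})n$ with probability $1-o(1)$ via Lemma~\ref{lemma:core_size}, since $d = \omega(1)$. Both approaches land on the same probability bounds and the final $o(1)$.
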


We reduce Lemma~\ref{lem:basiscore} to Lemma~\ref{lem:smallcore}, which will be easier to prove using Lemmas~\ref{lemma:medium_core} and \ref{lemma:large}.

\begin{lemma}\label{lem:smallcore}
Let $A \sim \mathbb{A}(n, d/n)$ for $d = \omega(1)$, and let $C$ be the constant in Lemma~\ref{lemma:large}. Let $A'$ be $A(k)$ with the first row removed.
Then for any constant $k \geq 3$,
\begin{equation*}
\Pr\left[\exists x: A'x = 0, |\supp(x)| \leq \frac{n}{C}\right] = o(1).
\end{equation*}
\end{lemma}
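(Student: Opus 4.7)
The plan is to partition by $|\supp(x)|$ into three regimes matching the existing small/medium/large $k$-core lemmas: \emph{small} $|\supp(x)| \le n/d^7$, \emph{medium} $n/d^7 \le |\supp(x)| \le 2n/d$, and \emph{large} $2n/d \le |\supp(x)| \le n/C$. In each regime I would exhibit a column set $S$ and a row set $R$ forcing the event $\mathcal{E}_{S,R}$ of Lemma~\ref{lemma:small}, Lemma~\ref{lemma:medium_core}, or Lemma~\ref{lemma:large} to occur; a union bound over the three regimes then yields the conclusion.

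For the medium and large regimes, take $S = \supp(x)$ and $R = C_k(A)\setminus\{1\}$. By Lemma~\ref{lemma:core_size} and $d = \omega(1)$, with probability $1-o(1)$ we have $|R| \ge (1-1/d^7)n$, supplying the row-count hypothesis. Since $A'_S$ has $x|_S$ as a kernel vector with no zero entries, Observation~\ref{observation1} applied to $A_S^R = A'_S$ shows that $A_S^R$ has no row with a single $1$ (satisfying condition (3) of Lemma~\ref{lemma:medium_core}) and directly supplies the kernel-vector hypothesis of Lemma~\ref{lemma:large}. Both lemmas then provide failure probability $e^{-\Theta(n)}$.

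The small regime is the only step that requires care. A naive choice of $R = C_k(A)\setminus\{1\}$ violates the $|\supp(A_S^R)| \ge 3|S|$ hypothesis of Lemma~\ref{lemma:small}, because columns of $A'_S$ indexed by neighbors of vertex $1$ are only guaranteed $\ge 2$ ones after row $1$ is deleted. Instead, extract a minimal sub-dependency $S \subseteq \supp(x)$ using Lemma~\ref{lemma:support_minimal} so that $A'_S$ is a minimal dependency in the sense of Definition~\ref{def:minimal}, and then \emph{restore} row $1$ by taking $R = C_k(A)$. Each column of $A_S^R$ now contains $\ge 3$ ones because every $k$-core vertex has degree $\ge k \ge 3$, so $|\supp(A_S^R)| \ge 3|S|$. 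The rows of $A_S^R$ other than row $1$ are exactly those of $A'_S$, which as a minimal dependency has no row of support $1$ (Observation~\ref{observation1}); row $1$ itself contributes at most one further single-$1$ row, matching the one-row slack in condition (4) of $\mathcal{E}_{S,R}$. Lemma~\ref{lemma:small} then gives failure probability $O(n^{-1/3}) = o(1)$ for the small regime.

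The main obstacle, as isolated above, is recognizing that the slack in condition (4) of Lemma~\ref{lemma:small} (``no more than one row with a single $1$'' rather than ``no such row'') was included precisely to accommodate this row-restoration trick: we must pass from $R = C_k(A)\setminus\{1\}$ up to $R = C_k(A)$ to recover the $3|S|$-support hypothesis from the $k$-core degree bound, and the cost of doing so is exactly one extra potentially-single-$1$ row. No new probabilistic estimates are required; the argument is structural once the correct pair $(S,R)$ is chosen in each regime, and a final union bound over the failure events in the three regimes yields the desired $o(1)$ bound.
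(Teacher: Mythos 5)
Your proposal is correct and matches the paper's argument in structure and in the key insight: in the small regime you restore row~$1$ (taking $R = C_k(A)$) to recover the $3|S|$-support condition from the $k$-core degree bound, and you correctly note that the resulting single extra potentially-single-$1$ row is exactly what the ``at most one'' slack in condition~(4) of Lemma~\ref{lemma:small} was designed to absorb. The only cosmetic difference is that the paper folds the medium regime into the same minimal-dependency/$R = C_k(A)$ treatment as the small regime, while you handle it directly via $S = \supp(x)$ and $R = C_k(A)\setminus\{1\}$ using Observation~\ref{observation1}; both are valid since Lemma~\ref{lemma:medium_core} does not need the $3|S|$ condition.
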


\begin{proof}[Proof of Lemma~\ref{lem:basiscore} assuming Lemma~\ref{lem:smallcore}]
If such a vector $x$ satisfied $A(k)x = e_1$, then necessarily $A'x' = 0$, where $x'$ is $x$ with the first (zero) coordinate removed, and $A'$ is $A(k)$ with the first row removed. This occurs with probability $o(1)$ by Lemma~\ref{lem:smallcore}.
\end{proof}

\begin{proof}[Proof of Lemma~\ref{lem:smallcore}]
By Lemma~\ref{lemma:large}, plugging in $R = C_k(A) \setminus 1$, with probability $1 - o(1)$, there are no vectors $x$ such that $A^Rx = 0$ where $\frac{2n}{d} \leq |\supp(x)| < n/C$. Since $A' = A_R^R$, this implies that there there are no vectors $x$ such that $A'x = 0$ where $\frac{2n}{d} \leq |\supp(x)| < n/C$.

Now suppose there was some set $S$ of size $k \leq \frac{2n}{d}$ such that $A'_S$ is a minimal dependency. Let $R = C_k(A)$, such that $A_S^{R \setminus \{1\}}$ is a minimal dependency. It follows that:
\begin{enumerate}
    \item $S \subset R$;
    \item $\supp(A_S^R) \geq 3|S|$ by definition of the $k$-core, since $k \geq 3$.
    \item $A_S^{R \setminus \{1\}}$ contains no rows with a single $1$, and hence  $A_S^{R}$ contains at most one row with a single $1$. 
\end{enumerate}
This implies that the event $\mathcal{E}_{S, R}$ of either Lemma~\ref{lemma:small} or Lemmas~\ref{lemma:medium_core} occurs. Such an event occurs with probability $o(1)$. This concludes the lemma.
\end{proof}

\section{Application to Gradient Codes}\label{sec:gc}

In the following section, we will shift our focus to the problem of Gradient Coding. The purpose of this section is to present further applications of our structural characterization beyond analyzing the rank of sparse random matrices.

\subsection{Introduction to Gradient Coding}

\begin{center}
\begin{table}\caption{Comparison of Related Work. We have normalized the decoding error by $1/n$.}\label{comp_table}
\begin{tabular}{ |c|c|c| }
 \hline
Assignment Matrix Design & Expected Decoding Error & Adversarial Decoding Error\\  \hline \hline 
 Expander Code (Cor. 23 \cite{ExpanderCode}) &  -  & $< \frac{4p}{d(1 - p)}$ \\ \hline
 Pairwise Balanced (\cite{PB}) & $\geq \frac{p}{d(1 - p)}$ & - \\
\hline
BIBD ( Const. 1 \cite{bibd}) & - & \begin{minipage}{1in} \begin{center} $O(\frac{1}{\sqrt{m}})$ \\ \footnotesize $ d = \Omega(\sqrt{m})$ \end{center} \end{minipage} \\
 \hline
 BRC (\cite{limits}) &
\begin{minipage}{1.5in}\begin{center}
$p^{\Theta(d)}$ \\ \footnotesize $d = \Omega\left(-\frac{\log\log(m)}{\log(p)}\right)$\end{center}\end{minipage}& - \\
\hline
rBGC (\cite{Charles})  & $< \frac{1}{(1 - p)d}$ & - \\
\hline
FRC of \cite{Tandon} (and \cite{ErasureHead}) & $p^{d}$ & $p$ \\ 
\hline
Graph-based Assignment \cite{GW}  & $p^{d - o(d)}$ & $\frac{(1 + o(1))p}{2(1 - p)}$  \\
\hline
{\bf{This work (Stacked ABC)}} & $p^{d - o(d)}$ & $\Theta(p/d)$ \\
\hline
\end{tabular}
\end{table}
\end{center}

Gradient codes are data replication schemes used in distributed computing to provide robustness against \em stragglers\em, machines that are slow or unresponsive \cite{Tandon, ExpanderCode}. Specifically, such replication schemes can be used to approximately compute the sum $\sum_i f(X_i)$ of the evaluations of a function $f: \mathcal{X} \rightarrow \mathbb{R}$ over a set of data points $X_1, \ldots, X_n \in \mathcal{X}$. Typically in machine learning, the function $f$ represents the gradient of a loss function. 

To distribute the computation of $\sum_i f(X_i)$ while maintaining robustness against stragglers, the data points $\{X_i\}$ are distributed redundantly among $m$ machines according to an assignment matrix $A_0 \in \mathbb{R}^{n \times m}$. Each machine $j$ is tasked with computing $g_j := \sum_i (A_0)_{ij}f(X_i)$, a weighted sum of $f$ over the data it stores. A central server then collects these values $g_j$ for all machines $j$ which are not stragglers, and linearly combines the $g_j$ using weights $w_j$ to best approximate $\sum_i f_i(X_i)$.

Formally, we define the \em decoding error \em of an assignment $A_0$ along with a set of stragglers $S \subset [m]$ to be 
the squared distance between the all-$1$'s vector, $\mathbbm{1}$, and the span of the columns $\{(A_0)_j\}_{j \notin S}$:
\begin{equation*}
    \textnormal{err}(A_0, S) := \min_{w: w_j = 0 \: \forall \: j \in S}|A_0w - \mathbbm{1}|_2^2.
\end{equation*}

The decoding error immediately gives us a bound on the approximation error of $\sum_i f(X_i)$. Let $f^*$ be the vector in $\mathbb{R}^n$, where $f^*_i := f(X_i)$. Then for any $w \in \mathbb{R}^m$,
\begin{equation*}
    \sum_j w_j g_j - \sum_i f(X_i) = f^{*T}A_0w - f^{*T}\mathbbm{1} = f^{*T}(A_0w - \mathbbm{1}).
\end{equation*}
Hence for any $S$, there exists a vector $w \in \mathbb{R}^{[m] \setminus S}$ such that 
\begin{equation*}
    \left(\sum_{j \in [m] \setminus S} w_j g_j - \sum_i f(X_i)\right)^2 \leq \textnormal{err}(A_0, S)|f^*|_2^2.
\end{equation*}

The gradient coding literature considers two models of decoding error. One in which the set of stragglers $S$ is chosen adversarially and one where the set of stragglers $S$ is chosen uniformly at random. In the case where $S$ is an adversarially chosen $p$ fraction of the machines, we consider the \em adversarial decoding error \em of $A_0$:
$$\max_{S \in \binom{[m]}{pm}}\left( \textnormal{err}(A_0, S)\right).$$
If $S$ is a random $p$ fraction of the machines, we consider the \em expected decoding error \em of $A_0$:
$$\mathbb{E}_{S \sim \binom{[m]}{pm}}{ \textnormal{err}(A_0, S)}.$$

A good assignment matrix $A_0$ minimizes the decoding error while keeping the maximum amount of computation per machine --- measured by the maximum column sparsity of $A_0$ --- small. For an $n\times n$ assignment matrix with at most $d$ non-zero entries in each column, optimal lower bounds on both the adversarial and random decoding error are known. In particular, Raviv et al. (\cite{ExpanderCode}) showed that the adversarial decoding error must be at least $\Omega(np/d)$. On the other hand, it is known that the random decoding error is at least $np^d$, and further, this optimal error is achieved by a design known as the Fractional Repetition Code (FRC) \cite{Tandon}. However, despite its optimality under random stragglers, the FRC of \cite{Tandon} only achieves an adversarial decoding error of $np$. Several works  \cite{GW, sbm} have aimed to design assignment matrices that have small adversarial \em and \em expected decoding error, but to our knowledge, no existing assignments have achieved adversarial error that decays in $d$ (at any rate) while simultaneously achieving random error that decays faster than like $1/d$. This leads to the following natural question asked in \cite{GW}:
\begin{question}
Does there exist an assignment matrix $A_0$ which simultaneously has near-optimal random decoding error $np^{d-o(d)}$ and near-optimal adversarial decoding error $\Theta(np/d)$?
\end{question}
We answer this question in the affirmative. In the rest of this section, we construct a novel gradient code called the \em Augmented Biregular Code \em (ABC), and using our structural characterization of linear dependencies, prove that it achieves an expected decoding error of $np^{d - o(d)}$ and adversarial decoding error of $\Theta(np/d)$.  In Table~\ref{comp_table}, we compare the adversarial and expected decoding errors of our work and existing work on gradient coding.

\subsection{The ABC and Stacked-ABC Distributions} 

We use the following process to generate a random matrix $A_0 \in \{0, 1\}^{\gamma n \times n}$ from the distribution $\textnormal{ABC}(n, \gamma, d)$:

\begin{definition}[ABC Distribution]\label{def:ABC}
Define $A_0 \sim \textnormal{ABC}(n, \gamma, d)$ to be sampled in the following way. Sample a random permutation $\rho \sim \mathcal{S}_{\gamma d n}$. Create $\gamma n$ \em row-nodes \em and $n$ \em column-nodes \em and associate to each row-node $d$ half-edges and to each column node $\gamma d$ half-edges. Create a multi-graph $G$ by pairing the $i$ right half-edge to the $\rho(i)$th left half-edge. Given this bipartite graph, take $A_0 \in \{0, 1\}^{\gamma n \times n}$ to be the matrix where $(A_0)_{ij} = 1$ if and only if there is at least one edge from row-node $i$ to column-node $j$.
\end{definition}

We define a second distribution  $\on{ABC}_{\gamma-\on{stacked}}(n,d)$ that results from stacking together several copies of the same transposed ABC matrix $A_0 \sim \textnormal{ABC}(n, \gamma, d)$ to form a square $n \times n$ matrix, pictured in Figure~\ref{fig:stacked}.

\begin{definition}[Stacked ABC Distribution]
For $\gamma,d,n\in\mathbb{Z}^+$ such that $\gamma \mid d$ and $\gamma \mid n$, we define the \em $\gamma$-stacked \em Augmented Biregular Code $B$ to be an $n\times n$ matrix formed by sampling $A_0\sim \textnormal{ABC}(n/\gamma,\gamma,d/\gamma)$ and stacking $\gamma$ identical copies of $A_0^\top$. We will denote the distribution of such matrices as $\on{ABC}_{\gamma-\on{stacked}}(n,d)$.
\end{definition}
\begin{figure}
    \centering
    \includegraphics[width=4cm]{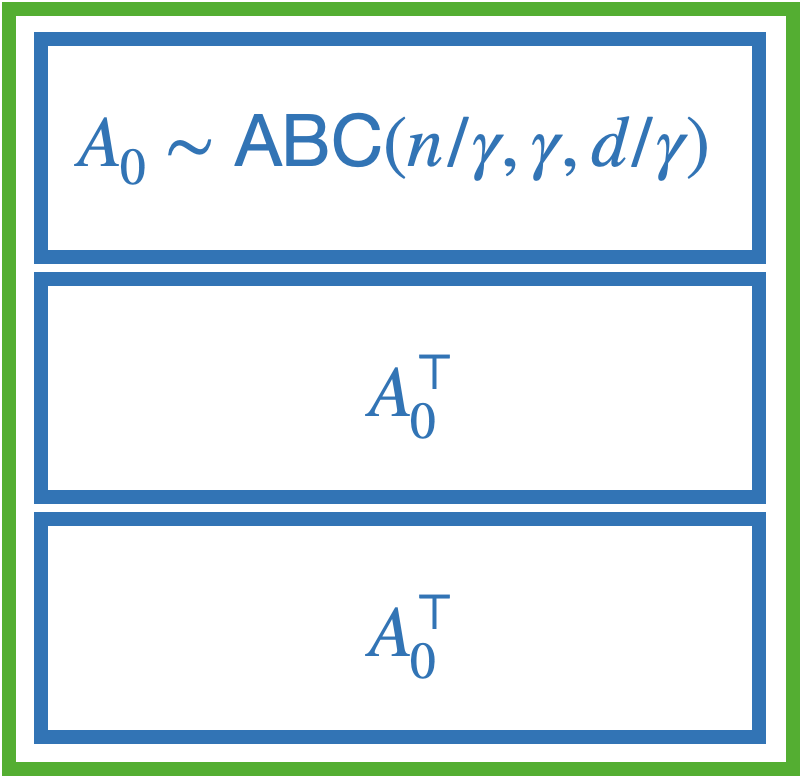}
    \caption{The stacked ABC}
    \label{fig:stacked}
\end{figure}

Using this new distribution of assignment matrices, we prove the following theorem on the decoding error.

\begin{restatable}[]{theorem}{abcstacked}\label{abc_stacked} 
Let $c,\gamma_0,d_0$ be the universal constants from Corollary $\ref{abc_random}$. Choose any $\gamma,d\in\mathbb{Z}^+$ such that $\gamma \geq \gamma_0$, $\gamma \mid d$ and $\frac{d}{\gamma}\geq d_0$. For any $n$ divisible by $\gamma$, let $B \sim \textnormal{ABC}_{\gamma-\on{stacked}}(n, d)$. Then with constant probability over the choice of $B$:
\begin{equation*}
\frac{1}{n}\mathbb{E}_{S \sim \binom{[n]}{pn}}{ \textnormal{err}(B, S)} \leq p^{d - c\log(d)}+o(1),
\end{equation*}
and
\begin{equation*}
    \frac{1}{n}\max_{S \in \binom{[n]}{pn}}\left( \textnormal{err}(B, S)\right) \leq  \left(\frac{8\gamma^3p}{d}\right)+o(1).
\end{equation*}
\end{restatable}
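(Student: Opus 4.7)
The plan is to reduce both bounds to statements about the underlying ABC matrix $A_0 \sim \textnormal{ABC}(n/\gamma, \gamma, d/\gamma)$ via the stacking identity. Since $B$ consists of $\gamma$ vertically stacked identical copies of $A_0^\top$, and $\mathbbm{1}_n$ likewise decomposes as $\gamma$ copies of $\mathbbm{1}_{n/\gamma}$, for every $w \in \mathbb{R}^n$ we have $\|Bw - \mathbbm{1}_n\|_2^2 = \gamma\|A_0^\top w - \mathbbm{1}_{n/\gamma}\|_2^2$. The straggler constraint $w_j = 0$ on $S$ transfers verbatim, yielding the identity $\textnormal{err}(B, S) = \gamma \cdot \textnormal{err}(A_0^\top, S)$ for every $S$. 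The expected error bound then follows by applying Corollary~\ref{abc_random} to $A_0$, which governs the random-straggler decoding error of an ABC-distributed matrix with column sparsity $d$: it should give $\frac{1}{n/\gamma}\mathbb{E}_S[\textnormal{err}(A_0^\top, S)] \leq p^{d-c\log d} + o(1)$, and after combining with the reduction the prefactor $\gamma/n$ cancels cleanly against $1/(n/\gamma)$ to produce the first claimed inequality.

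For the adversarial bound, I would construct an explicit weight vector witnessing the claim. Beginning with the baseline $w_j = 1/d$ on $[n]\setminus S$, the residual $\mathbbm{1}_{n/\gamma} - A_0^\top w$ has $i$th coordinate $t_i/d$, where $t_i$ counts straggler machines holding data point $i$; this gives the trivial bound $\sum_i (t_i/d)^2 \leq pn/\gamma$, which falls a factor of $d$ short. The needed improvement would come from invoking the structural characterization of minimal dependencies (in the spirit of Theorem~\ref{char:bgc}, adapted to the biregular random bipartite graph underlying $A_0$): with constant probability, the bad data points (those with $t_i \geq 1$) admit local corrections along short alternating tree paths into non-straggler columns, each of $\ell_2$ cost $O((\gamma/d)^2)$. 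Summing across the at most $pnd/\gamma$ bad data points then yields total error $O(\gamma^3 pn/d)$, matching the $8\gamma^3 p/d$ target after normalization.

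The main obstacle is the adversarial step. Unlike the expected bound, which can be extracted essentially as a black box from Corollary~\ref{abc_random}, the adversarial bound must hold uniformly over all exponentially many straggler sets $S$ of size $pn$. This rules out anti-concentration arguments alone and forces the correction scheme to be structurally robust: the local paths must be nearly disjoint so that corrections do not destructively interfere, and the tree structure guaranteeing their existence must hold uniformly across all large column subsets. The small-$k$ characterization developed earlier in the paper (which rules out anomalous minimal dependencies via union bounds over subsets) is exactly the right tool for establishing such uniformity, though pinning down the constant $8\gamma^3$ will require careful accounting of both the expansion rate of the biregular random graph and the overlap between correction paths.
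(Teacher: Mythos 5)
Your treatment of the expected-error bound is correct and matches the paper: the identity $\textnormal{err}(B,S)=\gamma\cdot\textnormal{err}(A_0^\top,S)$ from the stacking structure, followed by a black-box application of Corollary~\ref{abc_random}, is exactly the argument in the paper's Lemma~\ref{stacked_prop1}.

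The adversarial-error bound is where you diverge, and where the proposal has a real gap. The paper does not attempt any combinatorial correction scheme or any appeal to the minimal-dependency characterization. Instead it is a pure spectral argument: condition (via Lemma~\ref{stacked_lemma2}, with constant probability) on the configuration model producing a simple biregular bipartite graph; invoke a second-eigenvalue bound for such graphs (Lemma~\ref{stacked_lemma3}, giving $\sigma_2(A_0)\leq\sqrt{d/\gamma-1}+\sqrt{d-1}+o(1)$); and feed this into the generic expander-decoding bound of Lemma~\ref{stacked_lemma1} (Proposition 4.1 of~\cite{GW}), which says the adversarial error of a row-regular assignment matrix is controlled by $(\sigma_2/D)^2$ uniformly in $S$. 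The uniformity over all $\binom{n}{pn}$ straggler sets --- the very obstacle you correctly identify --- is handled for free because $\sigma_2$ is a single deterministic quantity.

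Your proposed route has two concrete problems. First, the characterization theorems in this paper (Theorem~\ref{char:abc} and Lemma~\ref{lemma:abc_small}) are probabilistic statements over a \emph{random} $p$-fraction of removed rows; they do not survive a union bound over all exponentially many adversarial straggler sets $S$, and the paper never asks them to. Invoking them to ``establish uniformity'' is not something the stated lemmas can deliver, and making them deliver it would require a fundamentally stronger statement than what is proved. Second, the arithmetic in the correction-cost estimate does not reach the target even at a heuristic level: $(\gamma/d)^2$ per bad point times $pnd/\gamma$ bad points gives $\gamma pn/d$, not $\gamma^3 pn/d$, and there is no accounting for the interference between overlapping correction paths that you flag but do not resolve. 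The paper's spectral argument is shorter, matches the claimed constant, and avoids the uniformity problem entirely; you should use it rather than pursue the tree-correction route.
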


The main technical parts of Theorem~\ref{abc_stacked} follow from the following theorem, which characterizes the minimal dependencies among the columns of the matrix $A_0$ with a random $p$ fraction of rows removed. Formally, we define the ensemble of matrices $A \sim \textnormal{ABC}_p(n, \gamma, d)$ to be given by setting $A = (A_0)^{[n\gamma (1 - p)]}$, where $A_0 \sim \textnormal{ABC}(n, \gamma, d)$.

\begin{restatable}[Characterization ABC]{theorem}{charabc}\label{char:abc}
There exist universal constants $c$ and $\gamma_0$ such that for any $p < 1/2$, $\gamma > \gamma_0$, and $d \geq d_0(\gamma)$, for $A \sim\textnormal{ABC}_p(n, \gamma, d)$, with probability $1 - o(1)$:
\begin{enumerate}
    \item All minimal dependencies of $k$ columns of $A$ are in $\mathcal{T}_k \cup \mathcal{T}_k^{+} \cup \mathcal{T}_k^C$.
    \item The number of columns involved in linear dependencies of $A$ is at most $np^{\gamma d - c\log(\gamma d)}$, ie.,
    $$\left|\bigcup_{x : Ax = 0} \supp(x) \right| \leq np^{\gamma d + c\log_p(\gamma d)}.$$
\end{enumerate}
\end{restatable}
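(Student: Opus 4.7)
I would adapt the four-domain decomposition of Theorems~\ref{char:square} and~\ref{char:bgc}---Small Case 1, Small Case 2, Large Case 1, Large Case 2---to the configuration-model ensemble $\textnormal{ABC}_p(n,\gamma,d)$. The principal new difficulty is that the entries of $A$ are coupled through a uniformly random half-edge matching $\rho$ together with the deterministic deletion of $p\gamma n$ rows, so they are not mutually independent Bernoullis. However, $A$ is biregular with row-degree $d$ and (pre-deletion) column-degree $\gamma d$, and conditioned on the half-edge pairings outside any small window the remaining pairings are still uniform; this exchangeability is all that the arguments of Sections~\ref{sec:small}--\ref{sec:large} actually use, and the graph-theoretic facts (Observations~\ref{observation1},~\ref{observation2} and Lemma~\ref{lemma:classification}) apply to any $\{0,1\}$-matrix.

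First I would prove the Small Case 1 analog of Proposition~\ref{prop:small}: for $k\le n/(\gamma d)^{O(1)}$ and any set $S$ of $k$ columns, the probability that $A_S$ is a minimal dependency outside $\mathcal{T}_k\cup\mathcal{T}_k^{+}\cup\mathcal{T}_k^{C}$ is at most $\bigl(e^{-\gamma d + c\log(\gamma d)}\bigr)^{k}(k/n)^{k+1}$, with parallel bounds for each of $\mathcal{T}_k$, $\mathcal{T}_k^{+}$, and $\mathcal{T}_k^{C}$ individually. The random walk $X_i$ counting collisions as the $1$s of $A_S$ are revealed transfers verbatim, because, conditional on which half-edges land in $S$, each such half-edge is paired with a uniformly random remaining row-slot. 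A union bound over $S$ then yields the analog of Corollary~\ref{cor:small}.

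Second, for the middle and upper ranges of $k$, I would reprove Lemmas~\ref{lemma:medium} and~\ref{lemma:medium2} in the configuration model. Since each row has exactly $d$ ones, the probability that a given row of $A_S$ contains a unique $1$ is, up to lower-order corrections, the same as in the Bernoulli$(d/n)$ ensemble, so the single-$1$-row count remains large by a Chernoff bound. Lemma~\ref{lo_sparse} then applies row by row, with the required independence obtained by exposing rows sequentially and conditioning on previously exposed pairings. For Large Case 2, since $A$ is tall with dimensions $\gamma(1-p)n\times n$ and $\gamma\ge\gamma_0$, the argument of Lemma~\ref{lemma:large2_B}---conditioning on all but one column and applying Lemma~\ref{lo_sparse} to the remaining column---directly rules out kernel vectors of support $\Omega(n)$ with probability $1-o(1)$.

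The main obstacle is the lack of independence intrinsic to the configuration model; I would handle this either by a switching argument that preserves uniformity of the matching on the complement of a window, or by a standard coupling to a Bernoulli ensemble of density $\gamma d/n$, which is tight in total variation to $o(1)$ at the sparsity scales involved. Finally, for the second conclusion, I would combine Part~1 with Lemma~\ref{lemma:support_minimal}: every column in any linear dependency lies inside some minimal dependency of type $\mathcal{T}_k$, $\mathcal{T}_k^{+}$, or $\mathcal{T}_k^{C}$. Such a structure on $k$ columns forces each participating column's $\gamma d$ half-edges to land either in a row-node shared with another participating column or in a deleted row. For a fixed column this event is dominated by the $k=1$ term ``all $\gamma d$ half-edges in deleted rows,'' contributing $p^{\gamma d}$; the terms $k\ge 2$ introduce only polynomial factors in $\gamma d$, which are absorbed by the $-c\log(\gamma d)$ slack in the exponent. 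Summing over columns and applying Markov's inequality yields the claimed bound $np^{\gamma d - c\log(\gamma d)}$.
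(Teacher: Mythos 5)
Your proposal captures the right structure for Part~1 on the small-support side: the random-walk/collision-counting argument does transfer to the configuration model because, conditional on the number of half-edges landing in $S$, the occupied positions are still uniform and exchangeable. That part aligns with the paper's Lemma~\ref{lemma:abc_small}.

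However, there are two genuine gaps.

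\textbf{The Bernoulli coupling does not exist.} You invoke ``a standard coupling to a Bernoulli ensemble of density $\gamma d/n$, which is tight in total variation to $o(1)$.'' This is false for fixed $\gamma,d$: a single column of the ABC has exactly $\gamma d$ half-edges, whereas a Bernoulli$(\gamma d/n)$ column has a Binomial number of ones, so the total variation distance between a single ABC column and its Bernoulli analogue is already $1-O(1/\sqrt{\gamma d})$, a constant, and the distance for the whole matrix is $1-o(1)$. The configuration model and $\mathbb{B}(n,\gamma d/n)$ are not contiguous, let alone close in TV. Because of this, Lemma~\ref{lo_sparse} (which requires i.i.d.\ Bernoulli entries) cannot be applied to a fresh column of the ABC. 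The paper instead develops a new anti-concentration lemma (Lemma~\ref{lo_reg2}) tailored to a vector with \emph{exactly} $d$ ones in uniformly random positions, giving a $1/2 + d^2/N$ bound. That lemma, combined with a row-exposure argument in the configuration model (Lemma~\ref{abc_large}), handles all supports from $n/(18e\gamma^2 d^2)$ up to $n$ in a single case, without Bernoulli coupling and without needing a separate ``Large Case~2.'' The fact that $A$ is tall, with $\gamma(1-p)n$ rows and $\gamma\geq\gamma_0$, is what allows the row-exposure process to ``knock out'' all candidate kernel vectors even at $k=n$; your four-case decomposition mirrors the square ensemble and does not exploit this.

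\textbf{Markov's inequality does not give $1-o_n(1)$ in Part~2.} If $\mathbb{E}\bigl|\bigcup_{x:Ax=0}\supp(x)\bigr| \approx n\,p^{\gamma d - c'\log(\gamma d)}$ and you invoke Markov to bound $\Pr\bigl[\,|D| > n\,p^{\gamma d - c\log(\gamma d)}\bigr]$ for $c>c'$, you obtain a bound of order $p^{(c-c')\log(\gamma d)}$, which is a fixed constant less than $1$ (since $p,\gamma,d$ are fixed as $n\to\infty$), not $o(1)$. The theorem demands $1-o(1)$ as $n\to\infty$. The paper obtains this via the Azuma-type concentration inequality for functions of random injections (Lemma~\ref{lem:injection-concentration}), after establishing a $\log(n)$-Lipschitz property of the counting function $f(\pi)$ on the event that all minimal dependencies are small tree-like structures. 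That Lipschitz argument, and the explicit extension of $f$ off that event, are a nontrivial part of the proof that your Markov sketch does not replace.

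Incidentally, your heuristic for the expected size of $D$ (the $k=1$ all-deleted-rows term dominating, with polynomial corrections for $k\geq 2$) is correct in spirit and matches how the paper bounds $\mathbb{E}[f]$ using the small-case probabilities; the deficiency is only in the passage from expectation to a high-probability bound.
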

The following corollary allows us to deduce the expected decoding error of $A_0$, leading directly to Theorem~\ref{abc_stacked}.
\begin{corollary}[ABC Distance]\label{abc_random}
Let $A^\top \sim \textnormal{ABC}_p(n, \gamma, d)$. For any $p < 1/2$, there exists constants $c$, $\gamma_0$ and $d_0$ such that for $\gamma \geq \gamma_0$ and $d \geq d_0$, with probability $1 - o(1)$, 
\begin{equation*}
 \frac{1}{n}\min_{w}|A w - \mathbbm{1}|_2^2 \leq p^{\gamma d + c\log_p(\gamma d)}.
\end{equation*}
\end{corollary}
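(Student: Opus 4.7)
The plan is to deduce the corollary directly from the second conclusion of Theorem~\ref{char:abc}, by reinterpreting $\min_{w}\lVert Aw - \mathbbm{1}\rVert_2^2$ as a squared projection length onto $\ker(A^\top)$ and using that every left-null vector of $A$ is supported on a small ``dependency set'' $D$.

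First I would observe the standard reformulation
\[
\min_{w}\lVert Aw - \mathbbm{1}\rVert_2^2 = \lVert \pi \mathbbm{1}\rVert_2^2,
\]
where $\pi$ is the orthogonal projection of $\mathbb{R}^n$ onto $(\mathrm{col}(A))^\perp = \ker(A^\top)$. The point of this reformulation is that while the column span of $A$ is complicated, its orthogonal complement is exactly the kernel of the matrix $A^\top \sim \textnormal{ABC}_p(n,\gamma,d)$, to which Theorem~\ref{char:abc} applies directly.

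Next I would apply Theorem~\ref{char:abc} to $A^\top$. Its second conclusion states that, with probability $1-o(1)$, the set
\[
D \; := \; \bigcup_{x:\, A^\top x = 0} \mathrm{supp}(x) \; \subseteq \; [n]
\]
has size at most $n p^{\gamma d + c\log_p(\gamma d)}$. By definition of $D$, every vector in $\ker(A^\top)$ is supported on $D$, and hence so is the projected vector $u := \pi\mathbbm{1}$.

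The final step is a two-line Cauchy--Schwarz computation. Since $u$ is the orthogonal projection of $\mathbbm{1}$ onto a subspace containing $u$, we have $\langle u,\mathbbm{1}\rangle = \lVert u\rVert_2^2$. On the other hand, since $u$ is supported on $D$,
\[
\langle u,\mathbbm{1}\rangle = \sum_{i\in D} u_i \; \leq \; \sqrt{|D|}\,\lVert u\rVert_2,
\]
giving $\lVert u\rVert_2^2 \leq |D|$. Dividing by $n$ yields
\[
\tfrac{1}{n}\min_{w}\lVert Aw - \mathbbm{1}\rVert_2^2 \; \leq \; \tfrac{|D|}{n} \; \leq \; p^{\gamma d + c\log_p(\gamma d)},
\]
as required.

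There is no real obstacle beyond invoking Theorem~\ref{char:abc}: the entire content of the corollary is packaged in that theorem's bound on the dependency set $D$, and the remaining linear-algebraic step (projection + Cauchy--Schwarz on a vector with small support) is routine. The only point that requires minor care is matching the form of the exponent ($-c\log$ versus $+c\log_p$), which is absorbed into the corollary's $p$-dependent constant $c$.
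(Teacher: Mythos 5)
Your proof is correct. The key observations check out: the reformulation $\min_w\lVert Aw-\mathbbm{1}\rVert_2^2=\lVert\pi\mathbbm{1}\rVert_2^2$ with $\pi$ the projection onto $\ker(A^\top)=(\mathrm{col}(A))^\perp$ is standard; by the definition of $D$ every vector in $\ker(A^\top)$ (in particular $u=\pi\mathbbm{1}$) is supported on $D$; and the chain $\lVert u\rVert_2^2=\langle u,\mathbbm{1}\rangle=\sum_{i\in D}u_i\le\sqrt{|D|}\,\lVert u\rVert_2$ gives $\lVert u\rVert_2^2\le|D|$, after which Theorem~\ref{char:abc} finishes.

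Your route is genuinely different from the paper's, though it lands on the same bound $|D|/n$. The paper argues on the primal side: it invokes Lemma~\ref{claim:basis} (together, implicitly, with Lemma~\ref{claim:nullspace}) to conclude that for every $i\notin D$ one has $e_i\in\mathrm{span}(A)$, hence $\mathbbm{1}_{[n]\setminus D}\in\mathrm{span}(A)$, and reads off $\min_w\lVert Aw-\mathbbm{1}\rVert_2^2\le\lVert\mathbbm{1}-\mathbbm{1}_{[n]\setminus D}\rVert_2^2=|D|$. You argue on the dual side: project $\mathbbm{1}$ onto $\ker(A^\top)$ and bound the projection's norm via Cauchy--Schwarz (equivalently, $\ker(A^\top)$ sits inside the coordinate subspace $\mathbb{R}^D$, so the projected norm is at most $\lVert\mathbbm{1}_D\rVert_2=\sqrt{|D|}$). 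Your version is arguably slightly cleaner because it sidesteps the machinery of Lemmas~\ref{claim:nullspace} and~\ref{claim:basis} entirely and uses only the defining property of $D$ plus basic Hilbert-space facts; the paper's version has the mild advantage of exhibiting an explicit vector $\mathbbm{1}_{[n]\setminus D}$ in $\mathrm{span}(A)$, which makes the decoding strategy concrete. Both are correct and essentially two readings of the same orthogonality.
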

\begin{proof}
Let $D = \bigcup_{x : A^\top x = 0} \supp(x)$ be the set of columns involved in linear dependencies of $A^\top$. Then by Lemma~{\ref{claim:basis}}, for any $i \in [n] \setminus D$, we have $e_i \in \on{span}(A)$. It follows that $\mathbbm{1}_{[n] \setminus D}$, the vector with $0$ in entries indexed by $D$, and $1$ elsewhere is in the span of $A$. Thus $\frac{1}{n}\min_{w}|A w - \mathbbm{1}|_2^2 \leq \frac{|D|}{n}$. Plugging in Theorem~\ref{char:abc} which give a high probability bound on the size of $D$, yields the corollary.  
\end{proof}
\begin{remark}
Corollary~\ref{abc_random} it tight in the sense that with probably $1 - o(1)$, $\frac{1}{n}\min_{w}|A w - \mathbbm{1}|_2^2 \geq (1 + o(1))p^{\gamma d}$. It is easy to check this by counting the number of all-zero rows in $A$.
\end{remark}

\subsection{Proof Overview}
As with the main results of this paper, we split the proof of Theorem~\ref{char:abc} into two domains: small and large. The goal of the small case is to prove that all small minimal dependencies of columns of $A$ must be contained in $\mathcal{T}_k \cup \mathcal{T}_k^{+} \cup \mathcal{T}_k^C$ with high probability. In particular, we prove the following lemma:

\begin{restatable}{lemma}{abcsmall}\label{lemma:abc_small}
Let $A \sim \textnormal{ABC}_p(n, \gamma, d)$ for $\gamma \geq 1$ and $p < \frac{1}{2}$. Let $S\subset[n]$ be any set of size $k\in[1,\frac{n}{18e\gamma^2 d^2}]$. There exists universal constants $c_{\ref{lemma:abc_small}}$ and $d_0$ such that if $d>d_0$, then:
\begin{enumerate}
    \item $\Pr[A_S \in \mathcal{M}_k \setminus \left(\mathcal{T}_k \cup \mathcal{T}_k^+ \cup \mathcal{T}_k^C \right)] = O\left(e^{-k}\left(\frac{k}{n}\right)^{k+1/2}\right)$.
    \item $\Pr[A_S \in \mathcal{T}_k]\leq \left(p^{\gamma d+ c_{\ref{lemma:abc_small}}\log_p(\gamma d)}\right)^k\left(\frac{k}{n}\right)^{k-1}$.
    \item  $\Pr[A_S \in\mathcal{T}_k^+]\leq \left(p^{\gamma d+c_{\ref{lemma:abc_small}}\log_p(\gamma d)}\right)^k\left(\frac{k}{n}\right)^{k}$.
    \item  $\Pr[A_S \in \mathcal{T}_k^C]\leq \left(p^{\gamma d+c_{\ref{lemma:abc_small}}\log_p(\gamma d)}\right)^k\left(\frac{k}{n}\right)^{k}$.
\end{enumerate}
\end{restatable}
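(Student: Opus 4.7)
My plan is to closely follow the proof of Proposition~\ref{prop:small}, adapting its random-walk and union-bound arguments to the configuration-model structure of the ABC distribution. For a fixed $S \subset [n]$ of size $k$, I would expose the $\gamma d k$ half-edges attached to the column-nodes in $S$ one at a time, matching each uniformly at random to a free row half-edge. Each matched half-edge either lands in an unkept row (contributing no $1$ to $A_S$, with probability $\approx p$) or in a kept row, where it may produce either a fresh $1$, a multi-edge within the same column (no effect on $A_S$), or a cross-column collision (a new $1$ in an already-occupied row of $A_S$). Let $L$ denote the number of $1$s in $A_S$ and $X$ the number of cross-column collisions. Observations~\ref{observation1} and \ref{observation2}, analogs of Claims~\ref{claim:xrminimal}--\ref{claim:xrminimal4}, and the classification Lemma~\ref{lemma:classification} together imply that a minimal dependency $A_S$ outside $\mathcal{T}_k \cup \mathcal{T}_k^+ \cup \mathcal{T}_k^C$ must satisfy $L \geq 2k+1$ and $X \geq k+1$, while $A_S \in \mathcal{T}_k$, $\mathcal{T}_k^+$, or $\mathcal{T}_k^C$ forces $L = 2k-2, 2k-1, 2k$ respectively, together with the corresponding combinatorial structure on the nonzero entries.

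For items (2), (3), and (4), I would bound each probability by explicitly enumerating the candidate structures: Cayley-type counts contribute factors such as $k^{k-2}$ for $\mathcal{T}_k$ and analogous expressions for $\mathcal{T}_k^+$ and $\mathcal{T}_k^C$; placing the row-edges among the $n\gamma(1-p)$ kept rows contributes $(n\gamma)^{k-1}$ or $(n\gamma)^{k}$; and the configuration-model probability that each column's $\gamma d$ half-edges realize the required kept-row degrees (and land in unkept rows otherwise) contributes a factor of the form $(\gamma d)^{O(k)}\, p^{\gamma d k - O(k)}$. Collecting these pieces gives bounds of the form $p^{\gamma d k}\, (\gamma d)^{O(k)}\, (k/n)^{k-1}$ or $(k/n)^{k}$, and the polynomial-in-$\gamma d$ factor is absorbed into the exponent of $p$ via the $c\log_p(\gamma d)$ correction, since $p^{-c k \log_p(\gamma d)} = (\gamma d)^{ck}$.

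For item (1), I would bound $\Pr[X \geq k+1]$ by a random-walk argument parallel to Lemma~\ref{masterlemma:small}. In the configuration model, the probability that a given half-edge exposure triggers a cross-column collision is at most of order $L_{\text{cur}}/(n\gamma)$, where $L_{\text{cur}}$ is the current number of $1$s in $A_S$, so $X$ is stochastically dominated by an appropriate binomial. Applying the hypergeometric analogue of Lemma~\ref{masterlemma:small} (valid since $\gamma d k \ll n\gamma d$ in the regime $k \leq n/(18e\gamma^2 d^2)$) yields $\Pr[X \geq k+1] = O(e^{-k}(k/n)^{k+1/2})$. Note that the bound here does not require the $p^{\gamma d}$ factor, since we do not constrain $L$ to be small in this case.

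The main technical obstacle will be carefully tracking the $(\gamma d)^{O(k)}$ polynomial corrections in items (2)--(4) so that they can be cleanly absorbed into the $c\log_p(\gamma d)$ correction in the exponent of $p$; choosing $c_{\ref{lemma:abc_small}}$ sufficiently large will suffice for this bookkeeping. Beyond this, the arguments essentially parallel those of Proposition~\ref{prop:small}, modulo the replacement of i.i.d.\ Bernoulli probabilities with their configuration-model counterparts, which agree up to lower-order corrections in the sparse regime under consideration.
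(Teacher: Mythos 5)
Your proposal splits into two genuinely different components. For items (2)--(4) you propose explicit combinatorial enumeration (Cayley counting of trees/forests/trees-plus-edge, placing rows among the kept row-nodes, and estimating configuration-model pairing probabilities), whereas the paper runs the same random-walk plus hypergeometric argument as for item (1), simply with a different threshold $j\in\{k-1,k,k+1\}$ in Claim~\ref{claim:abc1}. Enumeration is a legitimate alternative, but it is noticeably heavier than you suggest: the counts for $\mathcal{T}_k^+$ (two-forests with a specified $3$-hyperedge subject to an even-distance constraint) and $\mathcal{T}_k^C$ (trees with an odd-distance chord) depend on the tree shape, and the configuration-model pairing probability requires tracking both column- and row-side degree choices and ruling out multi-edges. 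The paper's unified random-walk treatment avoids all of this bookkeeping at no cost; I would recommend adopting it rather than enumerating.

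For item (1) there is a genuine gap. You claim ``applying the hypergeometric analogue of Lemma~\ref{masterlemma:small} \ldots yields $\Pr[X \geq k+1] = O\left(e^{-k}(k/n)^{k+1/2}\right)$.'' This is false as stated. The unconditional probability $\Pr[X\ge k+1]$ is not that small: $L_S$ typically sits near $\gamma dk(1-p) \gg 2k$, and conditioned on that, $X$ has mean $\Theta(k)$, so $\Pr[X\ge k+1\mid L_S]$ decays only like $e^{-\Theta(k)}$, which is enormously larger than $(k/n)^{k+1/2}$. What actually makes the paper's bound work is that a minimal dependency forces $X \ge L_S/2$ (\cref{claim:X}), so the relevant quantity is $\sum_\ell \Pr\left[X \ge \max\left(k+1,\,\ell/2\right)\,\middle|\,L_S=\ell\right]\Pr[L_S=\ell]$, and the $\ell/2$ lower bound is what kills the contribution from typical large $\ell$. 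Lemma~\ref{masterlemma:small} and its ABC analogue \cref{claim:abc1} do encode this $\max(j,\ell/2)$, but your summary discards it; if you retain it, the argument goes through. A smaller inaccuracy in the same vein: a minimal dependency outside $\mathcal{T}_k\cup\mathcal{T}_k^+\cup\mathcal{T}_k^C$ need not have $L\ge 2k+1$; it can have $L=2k$ with only $k-1$ nonzero rows (the $\mathcal{S}_{2k,k}\setminus\mathcal{S}_{2k,k}'$ case, handled separately in the paper), although this case happens to also give $X\ge k+1$, so it does not break the final bound.
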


As for the previous results, the goal of the large domain will be to show that with high probability, $A$ contains no large set of columns that are linearly dependent.

\begin{restatable}[ABC Large Case]{lemma}{abclarge}\label{abc_large}
Let $A \sim \textnormal{ABC}_p(n, \gamma, d)$ for some constant $\gamma \geq 16$. Then there exists a constant $d_0(\gamma)$ such that for $d \geq d_0$, 
\begin{equation*}
    \Pr\left[\exists x: Ax = 0, |\supp(x)| \geq \frac{n}{18e\gamma^2 d^2}\right] \leq o(1).
\end{equation*}
\end{restatable}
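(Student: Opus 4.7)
The plan is to prove Lemma~\ref{abc_large} by combining the strategies of Small Case 2 (Lemma~\ref{lemma:medium}) and Large Case 1 (Lemma~\ref{lemma:medium2}), specialized to the ABC ensemble. Because $A$ has $\gamma n (1-p) \geq 8n$ rows (using $\gamma \geq 16$ and $p \leq 1/2$), we never work with a nearly-square matrix, and so a Large Case 2 style argument is not needed: tallness alone lets the Littlewood--Offord approach rule out kernel vectors with support sizes all the way up to $k = n$. The proof proceeds by a union bound over $S \subseteq [n]$ of size $k \in [\tfrac{n}{18 e \gamma^2 d^2}, n]$, bounding the probability that $A_S$ has a kernel vector with support exactly $S$, and splitting into a moderate and a large range of $k$ depending on which anti-concentration tool is available.

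For moderate $k \in [\tfrac{n}{18 e \gamma^2 d^2}, \tfrac{9n}{d}]$, we invoke Observation~\ref{observation1}: it suffices to exhibit a row of $A_S$ with exactly one $1$. For a fixed $S$, each row of $A_0$ has $d$ ones distributed (approximately uniformly) over the $n$ columns, and so contains exactly one $1$ among columns $S$ with probability $\Omega(dk/n)$. Over the $\gamma n(1-p) \geq \gamma n/2$ rows of $A$, the probability that no such row exists is therefore $\exp(-\Omega(\gamma d k))$. Combined with the union-bound cost $\binom{n}{k} \leq (en/k)^k$ and summed over $k$, this yields $o(1)$ provided $\gamma d \gg \log(\gamma d)$, which holds for $d$ larger than a constant depending on $\gamma$.

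For large $k \in [\tfrac{9n}{d}, n]$ we adapt the Large Case 1 strategy. Fix $S$ and reveal the rows of $A_S$ sequentially; let $R_i$ be the dimension of the subspace of the kernel of the first $i$ rows spanned by full-support vectors, so $R_0 = k$. Whenever $R_i > 0$, pick a full-support vector $v$ in this subspace; by the sparse Littlewood--Offord bound (Lemma~\ref{lo_sparse}, with $mp = kd/n \geq 9$), the next row is orthogonal to $v$ with probability at most $O(1/\sqrt{kd/n})$, and otherwise $R_{i+1} = R_i - 1$. Since all $\gamma n(1-p) \geq 8n$ rows are eventually exposed, a Chernoff estimate on $\Bin\!\left(8n,\, 1 - O(1/\sqrt{kd/n})\right)$ shows $\Pr[R_{\gamma n(1-p)} > 0] \leq \exp(-\Omega(n))$ uniformly in $k$, which comfortably beats the $2^n$ union bound over all choices of $S$ once $\gamma \geq 16$ and $d$ is sufficiently large.

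The principal obstacle is that the rows of $A$ are not mutually independent under the configuration-model distribution, so Lemma~\ref{lo_sparse} cannot be applied directly. We address this via deferred decisions on the half-edge pairings: conditional on the first $i$ revealed rows, the $d$ half-edges of row $i+1$ are matched to a uniformly random $d$-set of the remaining column half-edges. Since only $(1-p)\gamma dn$ of the $\gamma dn$ column half-edges are ever consumed, a simple concentration argument ensures that throughout the process each column retains on the order of $p \gamma d$ unused half-edges, so the conditional distribution of any new row is within a constant factor of a uniformly random $d$-subset of $[n]$ on a high-probability event. This approximation is enough to transfer both the single-one-row probability used in the moderate range and the sparse Littlewood--Offord bound used in the large range from the independent-row model to the configuration model with only constant-factor losses, completing the argument.
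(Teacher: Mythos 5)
Your high-level plan — expose rows sequentially, show the dimension of the full-support kernel decreases via anti-concentration, and finish with a Chernoff bound against a $\binom{n}{k}$ union bound — matches the paper's strategy. But there is a genuine gap in the central step, and it is not merely a matter of care.

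You invoke the sparse Littlewood--Offord bound (Lemma~\ref{lo_sparse}) and claim the new row is orthogonal to a full-support vector $v$ with probability at most $O(1/\sqrt{kd/n})$. That lemma is for vectors with i.i.d.~$\mathrm{Ber}(p)$ entries. A row of the ABC matrix is (conditionally, up to a small error) a \emph{uniformly random $d$-subset}: exactly $d$ ones, not a binomially random number of ones. For fixed-weight vectors the anti-concentration behavior is qualitatively different and Lemma~\ref{lo_sparse} does not transfer with constant-factor losses. To see this concretely: if $v$ has a majority value $a \neq 0$ appearing in nearly all coordinates and the remaining coordinates equal $-a$, then for a random $d$-subset $x$ the sum $x\cdot v$ concentrates on a handful of values with mass bounded away from zero; the best you can say in general is $\Pr[x\cdot v = 0] \leq 1/2 + O(d^2/N)$, not $O(1/\sqrt{d})$. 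This is precisely why the paper proves a new anti-concentration lemma (Lemma~\ref{lo_reg2}) tailored to fixed-weight random vectors; that lemma is the technical heart of the large case, and your proposal substitutes a tool that does not apply. Your ``deferred decisions'' paragraph also leaves as a hand-wave the conditioning on previously paired half-edges (the paper proves a separate concentration lemma, Lemma~\ref{lemma:controlling_large}, about unpaired half-edge counts throughout the process) and does not address multi-edges — a row-node pairing two half-edges to the same column-node in $S$, which breaks the identity relating the row of $A$ to a dot product of $v$ with a $d$-subset (handled in the paper by Claim~\ref{claim:double_match}).

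That said, the gap is reparable without changing your architecture: a decrease probability of $\tfrac{1}{2} - o(1)$ (which the correct anti-concentration bound gives when $k \gtrsim n/d$) is still enough, since $\Pr[\Bin(8n, 1/2) < n] \leq e^{-\Omega(n)}$ comfortably beats the $\binom{n}{k}$ union bound. And your moderate-range argument via single-one rows is a legitimate (slightly more elementary) alternative to the paper's unified treatment, which instead shows the decrease probability $\mu_k \gtrsim dk/n$ for small $k$ and handles both regimes through one Chernoff computation. The difference in decomposition is fine; the incorrect citation and the claimed $1-O(1/\sqrt{d})$ decrease rate are not.
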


Our main tool in proving this Lemma is an anti-concentration Lemma that is based on the sparse Littlewood-Offord Theorem from \cite{cv_sparse}. While typically such anti-concentration lemmas concern the dot product of a deterministic vector and a random vector with independent entries, we derive a weaker result which concerns the dot product of a deterministic vector with a vector in which there are a fixed number of non-zero entries whose positions are random. 

\begin{restatable}[Anti-concentration for Sparse Regular Vectors]{lemma}{acslice}\label{lo_reg2}
Let $v\in \mathbb{R}^N$ be an arbitrary vector whose most common entry is $a$. Then for any $d \leq \sqrt{\frac{N}{2}}$, if $x \in \{0, 1\}^N$ is sampled uniformly from the set of vectors with exactly $d$ 1s, for any $w\in \mathbb{R}\backslash\{da\}$, we have: $$\Pr\left[x\cdot v=w\right]\leq 1/2+\frac{d^2}{N}.$$
\end{restatable}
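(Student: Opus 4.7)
The approach is to reduce the problem to a bound on a hypergeometric point mass by analyzing the structure of the random sum $x\cdot v$ through the ``non-$a$'' coordinates of $\supp(x)$.

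First I would translate so that $a=0$: replacing $v$ by $v - a\mathbbm{1}$ shifts $x\cdot v$ by $ad$ on the event $|x|_1 = d$, so it suffices to prove the bound assuming $a=0$ and $w\neq 0$. Let $I := \{i : v_i = 0\}$, $m := |I|$, and $M := N - m$. Since $0$ is the most common entry, $m$ is at least the multiplicity of every other value of $v$, and hence $m \geq M$, i.e., $M \leq N/2$. The crucial observation is that $x\cdot v = \sum_{i \in T} v_i$, where $T := \supp(x) \cap I^c$ has a hypergeometric size $|T|\sim\on{Hyp}(N,M,d)$ and, conditional on $|T|$, is uniform among subsets of $I^c$ of the realized size.

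I would then split into two regimes based on $M$. If $M \leq 2d$, then since $\{x\cdot v = w\}\subseteq\{|T|\geq 1\}$ (using $w\neq 0$), Markov's inequality gives
\[
\Pr[x\cdot v = w] \leq \Pr[|T|\geq 1] \leq \mathbb{E}|T| = \frac{dM}{N} \leq \frac{2d^2}{N} \leq \frac{1}{2} + \frac{d^2}{N},
\]
where the last step uses $d^2/N \leq 1/2$ from the hypothesis $d\leq\sqrt{N/2}$. The substantive case is $M > 2d$.

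In that case, the extremal configuration is when $v|_{I^c}$ is constant, say equal to $b\neq 0$. Here $\sum_T v_i = |T|\cdot b$, so $\{x\cdot v = w\}$ reduces to $\{|T|=k^*\}$ for the unique integer $k^*\in\{1,\ldots,d\}$ satisfying $k^* b = w$ (and the probability is $0$ if no such $k^*$ exists). Thus the desired bound reduces to the hypergeometric PMF bound
\[
\binom{M}{k^*}\binom{N-M}{d-k^*}\Big/\binom{N}{d} \leq \frac{1}{2} + \frac{d^2}{N}\qquad\text{for all } k^*\geq 1.
\]
This I would prove by comparison with the binomial PMF $\binom{d}{k^*}(M/N)^{k^*}((N-M)/N)^{d-k^*}$ with $p := M/N \leq 1/2$: the binomial PMF at any $k^*\geq 1$ is at most $1/2$, with equality only when $(d,p)\in\{(1,\tfrac12),(2,\tfrac12)\}$; and the hypergeometric PMF exceeds the binomial one by a multiplicative factor at most $\prod_{i=0}^{d-1}\frac{N}{N-i} \leq \exp(d^2/(2N)) \leq 1+d^2/N$ under the hypothesis $d\leq\sqrt{N/2}$. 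Combining these yields the claimed bound on the hypergeometric PMF.

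For general $v$ where $v|_{I^c}$ takes more than one nonzero value, the two-value case is extremal, and I would handle this by a merging/majorization argument: collapsing two distinct nonzero values of $v$ into a single repeated value at the same positions can only increase $\sup_w \Pr[x\cdot v = w]$ for $w\neq da$, because diversifying the nonzero values spreads the distribution of $\sum_T v$ more broadly. Alternatively, one can bound each slice $\Pr[|T|=k,\sum_T v = w]$ by the corresponding slice in the two-value configuration with matching $(m,M,d)$ parameters, then sum. The main obstacle will be making this reduction precise while preserving the tight $d^2/N$ slack: the naive bound $\sum_k \Pr[|T|=k]\cdot\Pr[\sum_T v=w\mid|T|=k]\leq \Pr[|T|\geq 1]$ is too weak, and one must track how the conditional anti-concentration (from a within-$I^c$ swap between positions with distinct $v$-values) compensates for summing over multiple $k$.
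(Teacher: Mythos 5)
Your proposal departs substantially from the paper's argument (the paper draws $d$ i.i.d.~uniform indices, conditions on all being distinct to absorb a $d^2/N$ correction, and then proves $\Pr[\sum_{i=1}^d v_{j_i}=w]\le 1/2$ by a clean induction on $d$), but it contains a fatal error at the very first reduction, and a second gap you yourself flag.

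\textbf{The $M\le N/2$ claim is false.} You write that since $a=0$ is the most common entry, the number $m$ of zero coordinates satisfies ``$m$ is at least the multiplicity of every other value of $v$, and hence $m\ge M$.'' The first clause is true, but the conclusion does not follow: $M=N-m$ is the \emph{total} number of nonzero coordinates, which may be spread across many distinct values each of multiplicity $\le m$. For example $v=(0,0,0,1,2,3,4,5,6,7)\in\mathbb R^{10}$ has $a=0$ most common with $m=3$ but $M=7>N/2$. Everything downstream depends on $p:=M/N\le 1/2$ — in particular the assertion that $\binom{d}{k}p^k(1-p)^{d-k}\le 1/2$ for $k\ge 1$, which fails badly when $p$ is close to $1$ — so the substantive case as written does not go through on a positive-measure set of inputs, and no case in your analysis covers $2d<M$ together with $M>N/2$.

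\textbf{The merging step is unproven (and subtler than it looks).} Even granting $M\le N/2$, the reduction to the two-value configuration ($v|_{I^c}$ constant) is asserted via a majorization heuristic, and you explicitly call it ``the main obstacle.'' Beyond the difficulty of making the monotonicity rigorous while keeping the $d^2/N$ slack, there is a structural wrinkle: collapsing all nonzero values into one value of multiplicity $M$ can make that merged value the (tied or strict) new most common entry, so the extremal $v'$ no longer satisfies the lemma's hypothesis, and one has to argue separately that the excluded point $w=db$ for $v'$ is still controlled. That is doable but needs to be stated and checked, not assumed.

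By contrast, the paper sidesteps both issues entirely: the inductive argument never needs $M\le N/2$ or any reduction to a canonical $v$; it uses only that $a$ is (weakly) the most frequent single value, which is exactly what makes the base case $\Pr[X_1=w]\le 1/2$ and the inductive splitting on $X_d=a$ work. If you want to salvage your hypergeometric route, you would at minimum need a correct a priori bound in the $M>N/2$ regime (e.g.\ exploiting that there must then be at least two distinct nonzero values, each of multiplicity $\le m<N/2$, to get anti-concentration directly from a conditional swap between two such positions) and a genuine proof of the merging monotonicity — at which point the paper's induction is considerably shorter.
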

\begin{remark}
As $d \rightarrow \infty$, the anti-concentration probability approaches the smaller value of $1/e$, though $1/2$ is tight for $d = 2$.
\end{remark}

\subsection{Notation and Further Details on Construction of $A \sim \textnormal{ABC}_p(n, \gamma, d)$}
\begin{figure}
    \centering
        \centering
        \begin{tabular}{ccc}
                 \includegraphics[width=3cm]{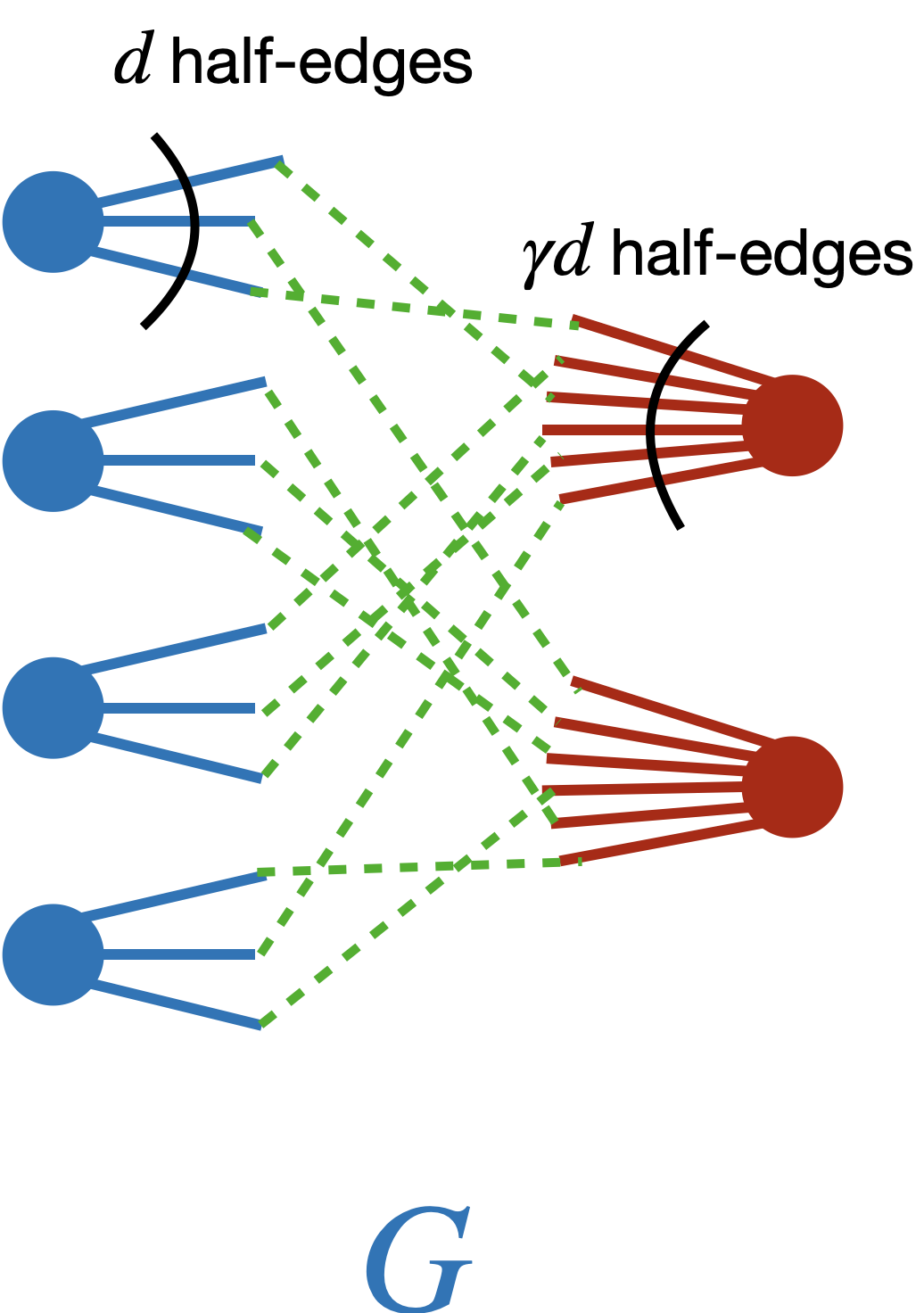} & \includegraphics[width=4cm]{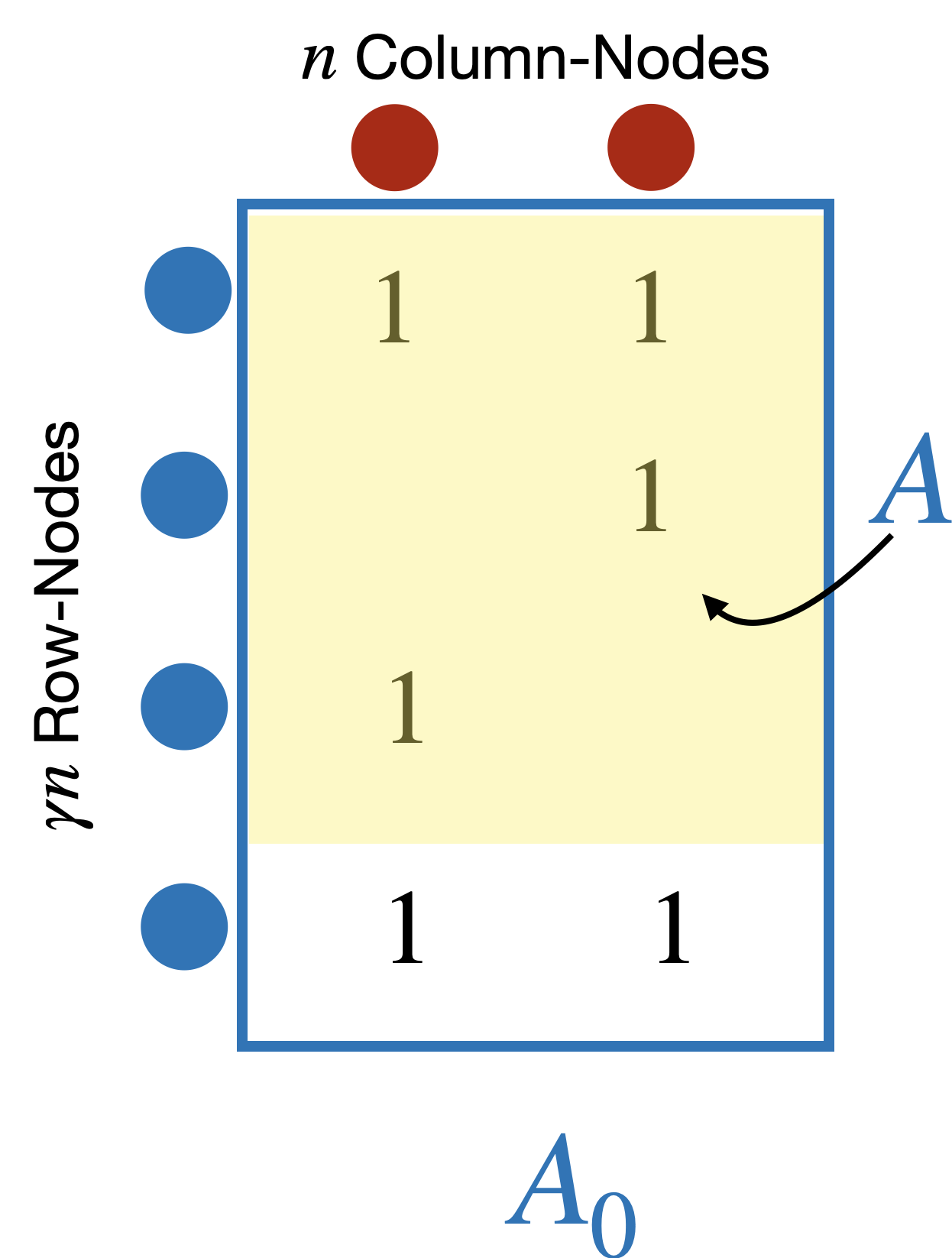}  & \includegraphics[width=6cm]{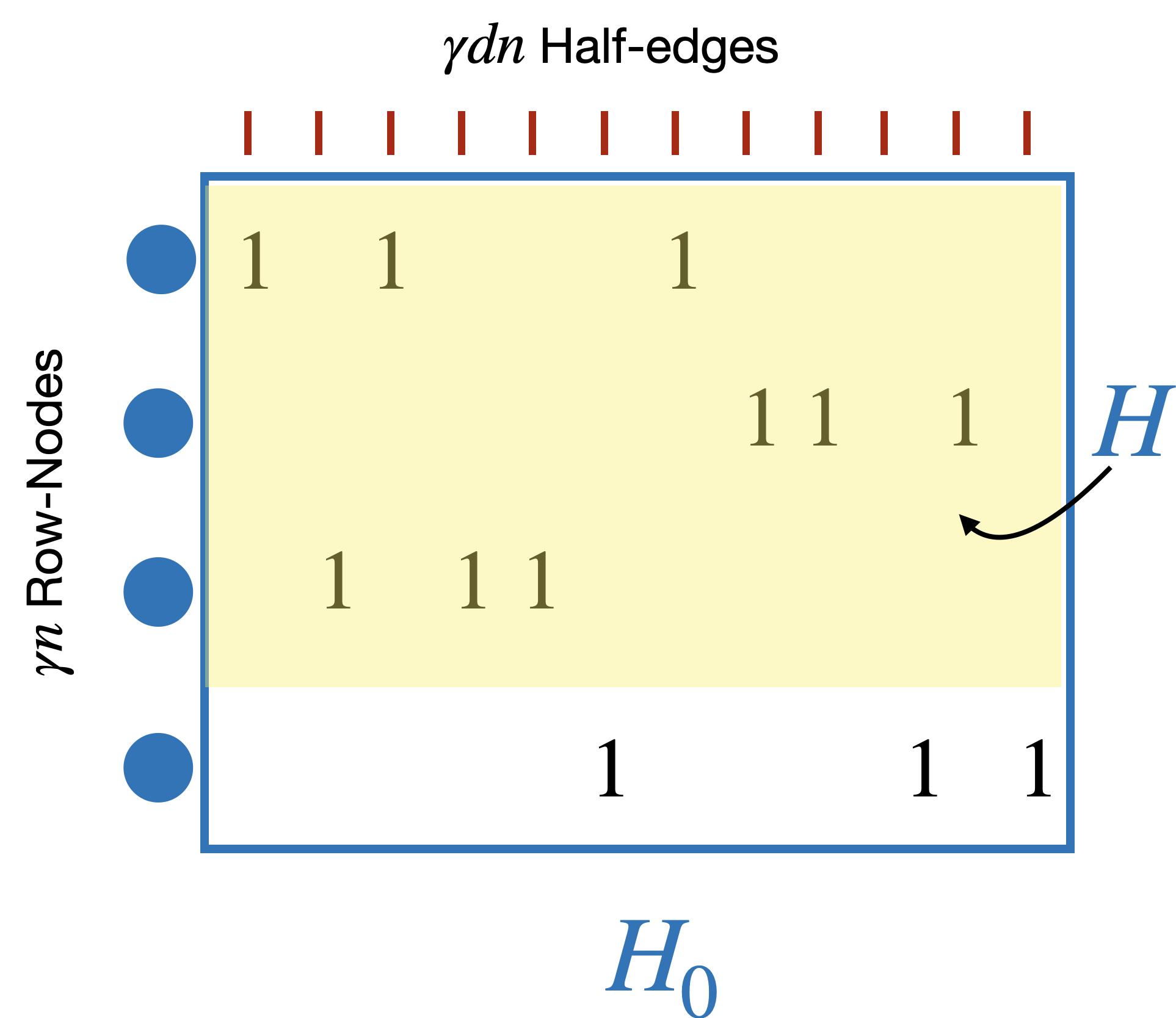} 
        \end{tabular}
    \caption{The Construction of $A_0$ and $H_0$. In the pictured example, $n = 2$, $\gamma = 2$, $d = 3$.}
    \label{fig:occupancy}
\end{figure}
In our proofs, we will study $A$ via the half-edge pairing process used to construction $A_0$ in Definition~\ref{def:ABC}. To do this, we will sometimes explicitly consider the permutation $\rho \in \mathcal{S}_{\gamma d n}$ which defines the random pairing of half-edges. From $\rho$, we can can construct an additional matrix $H_0 = H_0(\rho)$, which we call the \em column half-edge occupancy matrix \em. Let $P \in \{0, 1\}^{\gamma d n \times \gamma d n}$ be the permutation matrix of $\rho^{-1}$. From $P$, construct $H_0 \in \{0, 1\}^{\gamma n \times \gamma d n}$ by summing adjacent rows of $P$ corresponding to half-edges of the same row nodes. Symbolically,
$$(H_0)_{ij} = \sum_{k=(i-1)d + 1}^{id} P_{kj}.$$

Note that $H_0$ has $\gamma n$ rows; row $i$ represents row-node $i$ in the configuration model. $H_0$ has $\gamma d n$ columns: columns $(j-1)\gamma d + 1$ through $j\gamma d$ represent the $\gamma d$ half edges of column-node $j$, for $1 \leq j \leq n$. Each row contains exactly $d$ $1$'s, corresponding to that row-node's half edges.

We may write $A_0 = A_0(\rho)$ in terms of the random matrix $H_0$. Because $(A_0)_{ij} = 1$ if and only if there is at least one edge from vertex $i$ to $j$, we have $$(A_0)_{ij} = \mathbbm{1}(\exists k, d(j-1) < k \leq dj, H_{ik} = 1 ).$$

Recall that when generating a matrix $A \sim \textnormal{ABC}_p(n, \gamma, d)$, we sample $A_0 \sim \textnormal{ABC}(n, \gamma, d)$ and take $A := (A_0)^{[\gamma n(1 - p)]}$ to be the matrix containing the first $\gamma n(1 - p)$ rows of $A_0$. We define $H := H_0^{[\gamma n(1 - p)]}$ to be the first $\gamma n(1-p)$ rows of the row half-edge occupancy matrix associated with $A_0$. For a set $S \subset [n]$ with $|S| = k$, let $H(S) \in \mathbb{R}^{\gamma n(1-p) \times \gamma d k}$ be the matrix $H$ restricted to the $\gamma d k$ columns corresponding to half-edges of the column-nodes in $S$.

\subsubsection{Further Notation}
We use $\text{HypGeo}(N, K, n)$ to denote the hypergeometric distribution, the random variable which equals the number of successes in $n$ draws, without replacement, from a finite population of size $N$ that contains exactly $K$ objects which are considered successes.

\subsection{Small Domain}
As noted above, the goal of this section will be to prove the following lemma:

\abcsmall*

\begin{proof}[Proof of Lemma~\ref{lemma:abc_small}]
We condition on $L_S$, the number of $1$'s in $H(S)$. We observe that $L_S\sim \text{HypGeo}(\gamma d n, \gamma d(1-p)n, \gamma dk)$. Note that $L_S$ is greater than or equal to the number of entries which are $1$ in the submatrix $A_S$, with equality if and only if there does not exist a column-node in $S$ which pairs multiple half-edges to the same row-node in $[\gamma n (1 - p)]$. 

Let $\mathcal{E}_S$ denote the event that there are no rows in $H(S)$ with exactly $1$ one. It follows from Observation~\ref{observation1} that
$$\Pr[A_S\in \mathcal{S}_{\ell,k}]\leq \Pr[\mathcal{E}_S |L_S=\ell]\cdot\Pr[L_S=\ell].$$

Our general strategy to bound $\Pr[\mathcal{E}_S|L_S = \ell]$ relies on the following claim. 

\begin{claim}\label{claim:X}
Let $X$ be the number 1s in $H(S)$ which are \em not \em the first (leftmost) 1 in their row. If $\mathcal{E}_S$ occurs, then $X \geq L_S/2$. 
\end{claim}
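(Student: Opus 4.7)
The plan is to prove this by a simple double-counting argument on the rows of $H(S)$. For each row $i$ of $H(S)$, let $r_i$ denote the number of $1$'s in row $i$, so that $L_S = \sum_i r_i$. Let $N$ denote the number of non-empty rows of $H(S)$, i.e.\ the number of indices $i$ with $r_i \geq 1$.

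The key observation is that the number of $1$'s which \emph{are} the leftmost $1$ in their row is exactly $N$, since each non-empty row contributes exactly one such $1$. Consequently,
\begin{equation*}
X = L_S - N.
\end{equation*}
Next, I would use the hypothesis that $\mathcal{E}_S$ holds, which says no row of $H(S)$ has exactly one $1$. Thus every non-empty row contributes at least two $1$'s to $L_S$, giving $L_S \geq 2N$, i.e.\ $N \leq L_S/2$. Plugging this into the identity above yields $X \geq L_S/2$, as claimed.

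The main obstacle here is essentially nonexistent: the claim is an immediate consequence of the definition of ``first $1$ in a row'' together with the constraint that each occupied row has multiplicity at least two. The only place one needs to be slightly careful is the identification of $N$ as the number of ``first'' $1$'s, which is just a matter of noting that each non-empty row contributes exactly one leftmost $1$.
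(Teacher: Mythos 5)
Your proof is correct and takes essentially the same approach as the paper: you identify the "first-in-row" ones with the non-empty rows and use that each such row must contain at least two $1$'s under $\mathcal{E}_S$; the paper phrases this as a contrapositive pigeonhole argument, but the underlying counting is identical.
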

\begin{proof}
If $X < L_S/2$, then by the pigeonhole principle, there must be at least one row in $H(S)$ with a leftmost 1 but no 1s to the right of it, ie. this row has exactly one 1.
\end{proof}

We will bound the probability that $X$ is large by considering a random walk which this quantity as half-edges are paired one at time. We formalize this random walk as follows.

Conditioned on $L_S$, the matrix $H(S) \in \{0, 1\}^{\gamma n(1-p)\times \gamma d k}$ is distributed like a uniformly random matrix on the set of all matrices in $\{0, 1\}^{\gamma n(1-p)\times \gamma d k}$ with exactly $L_S$ $1$'s and at most one $1$ per column. We construct $H(S)$ via the following random process $M^0, M^1, \ldots , M^{L_S} = H(S)$ on matrices in $\{0, 1\}^{\gamma d k \times \gamma n(1-p)}$, in which the $L_S$ half-edges represented in $H(S)$ are paired one at a time. Formally, at each step $i$, we construct $M^i$ from $M^{i - 1}$ by placing a $1$ in a uniformly random location in $M^{i - 1}$ without a $1$. For $i \in 0, 1, \ldots L_S$, let $X_i$ equal the number of 1s in $M^i$ which are \em not \em the first (leftmost) 1 in their row, such that $X_{L_S} = X$.

When placing the $i$-th $1$, there are at most $i-1$ non-zero rows so far. In particular, since $i \leq L_S$, there are at most $L_S$ non-zero rows throughout this process. Because $H(S)$ has $\gamma(1-p)n$ rows, we thus have the bound $\Pr[X_i - X_{i - 1} = 1] \leq \frac{L_S}{\gamma(1-p)n}$.
 It follows that the random variable $X$ is first-order stochastically dominated by the random variable $\sum_{i=1}^{L_S} \textnormal{Bernoulli}(\frac{2L_S}{n})$. So in particular, we have:
\begin{align*}
    \Pr\left[X\geq \lfloor L_S/2\rfloor \right] &\leq \Pr\left[\Bin\left(L_S, \frac{2L_S}{n}\right) \geq L_S/2 \right],
\end{align*}
which implies
\begin{align*}
    \Pr[A_S\in \mathcal{S}_{\ell, k}]&\leq \Pr\left[\Bin\left(\ell, \frac{2\ell}{n}\right) \geq \ell/2 \right]\cdot\Pr[ L_S=\ell].
\end{align*}
We will use the following claim proved in Appendix~\ref{apx:abc}:
\begin{restatable}{claim}{claimabc}\label{claim:abc1}
Let $p<1/2$. There exists constants $c_{\ref{claim:abc1}}$, and $q_0$ such that for all $q \geq q_0$, and $K\leq \min\left(\frac{3}{2}pN, \frac{N}{18eq^2}\right)$, the following two bounds hold.

For $\ell\in \{2K-2,2K-1,2K\}$, and $j \geq \ell/2$, we have:
\begin{align*}
    \Pr\left[\Bin\left(\ell, \frac{2\ell}{N}\right)\geq j\right] \Pr[\textnormal{HypGeo}(q N,q(1-p)N,q K)=\ell]\leq \left(p^{q - c_{\ref{claim:abc1}}\log_p(q)}\right)^K \left(\frac{K}{N}\right)^{j}.
\end{align*}
Further,
\begin{align*}
    \sum_{\ell=2K+1}^{qK}\Pr\left[\Bin\left(\ell, \frac{2\ell}{N}\right)\geq \frac{\ell}{2}\right]\cdot\Pr[\textnormal{HypGeo}(qN,q(1-p)N,qK)=\ell]\leq e^{-k}\left(\frac{K}{N}\right)^{K+1/2}.
\end{align*}


\end{restatable}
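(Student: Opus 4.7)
The claim is a purely computational binomial--hypergeometric tail estimate, which I would prove by establishing the two inequalities separately, since the ranges of $\ell$ they cover are disjoint.

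For the first inequality, with $\ell \in \{2K-2, 2K-1, 2K\}$ and $j \geq \ell/2$, my plan is twofold. First, I bound the binomial tail by the first-moment inequality $\Pr[\Bin(\ell, 2\ell/N) \geq j] \leq \binom{\ell}{j}(2\ell/N)^j$; since $j \geq \ell/2$ gives $\binom{\ell}{j} \leq (2e)^j$ and $\ell \leq 2K$, this simplifies to $(8eK/N)^j$. Second, I bound the hypergeometric point mass by writing its pmf as the ratio of three binomial coefficients and applying Stirling's approximation to each. After the $N^{qK}$ factors cancel, the dominant terms are $p^{(q-2)K}(1-p)^{2K}$ together with a combinatorial prefactor bounded by $C(p)^K q^{2K}$ for a constant $C(p)$ depending only on $p$. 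Multiplying the two estimates yields a bound of the form $p^{qK}\cdot q^{2K}\cdot C'(p)^K\cdot (K/N)^j$; matching this against $(p^{q - c_{\ref{claim:abc1}}\log_p q})^K (K/N)^j = p^{qK}\, q^{-c_{\ref{claim:abc1}}K}(K/N)^j$ (via the identity $p^{-c\log_p q} = q^{-c}$) determines the sign and magnitude of $c_{\ref{claim:abc1}}$: choosing it appropriately and taking $q \geq q_0$ large enough lets the polynomial factor $q^{2K}C'(p)^K$ be absorbed.

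For the second inequality, my plan is to split the summation range $[2K+1, qK]$ at $\ell^* := (1-p)qK/2$, half the hypergeometric mean. On the lower subrange $[2K+1, \ell^*]$, the binomial factor $(4e\ell/N)^{\ell/2}$ is decreasing in $\ell$---one verifies this from the derivative, using $qK \leq N/(18eq) < N/(4e^2)$---so its maximum is at $\ell = 2K+1$, giving at most $(12eK/N)^{K+1/2}$. The hypergeometric probabilities sum over this subrange to at most $\exp(-qK \cdot D((1-p)/2 \,\|\, 1-p))$ by a Chernoff left-tail estimate, where the rate $D((1-p)/2 \,\|\, 1-p)$ is a positive constant depending only on $p$. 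The product is therefore $(12eK/N)^{K+1/2}\, e^{-\Omega(qK)}$, which is at most $\tfrac{1}{2}e^{-K}(K/N)^{K+1/2}$ for $q \geq q_0$. On the upper subrange $(\ell^*, qK]$, we have $4e\ell/N \leq 1/(9q)$ (from $\ell \leq qK \leq N/(18eq)$), so the binomial factor is at most $(1/(9q))^{(1-p)qK/4}$; summing the hypergeometric probabilities trivially against $1$ and comparing exponents shows this is also at most $\tfrac{1}{2}e^{-K}(K/N)^{K+1/2}$ for $q \geq q_0$.

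The main technical obstacle lies in absorbing the various polynomial-in-$q$ factors---coming from Stirling's approximation in the first inequality and from the binomial--hypergeometric peak comparison in the second---into the exponential decay ($p^{qK}$ or $e^{-\Omega(qK)}$), so that $q_0$ and $c_{\ref{claim:abc1}}$ can be chosen as absolute (or $p$-dependent) constants. The split point $\ell^* = (1-p)qK/2$ is chosen precisely so that both the hypergeometric left-tail rate and the binomial decay are simultaneously strong enough to dominate the accumulated polynomial errors, which is what pins down the required lower bound $q_0$.
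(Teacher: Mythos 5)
Your plan for the first inequality is essentially the same as the paper's: bound the binomial tail by a union bound $\bigl(\binom{\ell}{j}(2\ell/N)^j \leq (8eK/N)^j\bigr)$ and the hypergeometric point mass by an explicit estimate, then absorb the polynomial-in-$q$ slack into the constant $c$. The paper uses a hand-built telescoping bound (its Lemma on $\textnormal{HypGeo}$) rather than Stirling; your Stirling route works, but only if you apply Stirling consistently to all three binomial coefficients---in particular, you must \emph{not} use the one-sided lower bound $\binom{qN}{qK}\geq (N/K)^{qK}$ in the denominator, since that alone loses a factor $e^{qK}$, which cannot be absorbed into $q^{-cK}$. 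With a matched two-sided Stirling estimate the $e^{qK}$ terms cancel and you recover $(1-p)^\ell p^{qK-\ell}\binom{qK}{\ell}$ up to polynomial error, which is what the paper's lemma gives directly.

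Your plan for the second inequality is a genuinely different route: you split at $\ell^*=(1-p)qK/2$, use a Chernoff left-tail bound on the hypergeometric over the lower subrange (times the peak binomial factor), and a pointwise bound on the upper subrange (times total mass $1$). The paper instead bounds each summand, shows the combined summand is decreasing in $\ell$ past $\ell=2K+1$, and multiplies the $\ell=2K+1$ value by the number of terms $qK$. Your lower-subrange analysis is correct, and in fact the hypergeometric Chernoff bound you want is exactly the paper's Lemma on hypergeometric tails (their Lemma in the ABC section). However, your upper-subrange step has a gap as written: after bounding the binomial factor by $(1/(9q))^{(1-p)qK/4}$ you are comparing a quantity that no longer depends on $K/N$ against the target $\tfrac{1}{2}e^{-K}(K/N)^{K+1/2}$, which \emph{does} depend on $K/N$. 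Since the constraint only says $K/N\leq 1/(18eq^2)$, the ratio $K/N$ can be arbitrarily smaller (e.g.\ $K=1$, $N$ huge), making the target arbitrarily tiny while your bound stays fixed for fixed $q$, so the comparison fails. The fix is to keep $(K/N)$ explicit: at $\ell=\ell^*$ you have $(4e\ell^*/N)^{\ell^*/2}=(2e(1-p)q)^{(1-p)qK/4}(K/N)^{(1-p)qK/4}$, the exponent $(1-p)qK/4$ exceeds $K+1/2$ once $q$ is large, so you can extract $(K/N)^{K+1/2}$ and bound the remaining $(K/N)^{(1-p)qK/4-K-1/2}$ by $(18eq^2)^{-((1-p)qK/4-K-1/2)}$, at which point a direct comparison with $(2e(1-p)q)^{(1-p)qK/4}e^K$ goes through for $q\geq q_0$. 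With that repair, your split argument is a valid alternative to the paper's unimodality argument.
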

Since $L_S \sim \textnormal{HypGeo}(\gamma d n, \gamma d(1-p)n, \gamma d k)$, it must be the case that $L_S \geq \gamma d k - p\gamma dn.$ Hence if $L_S \in \{2k-2,2k-1,2k\}$, then necessarily $k \leq \frac{p\gamma d n}{\gamma d - 2} \leq \frac{3}{2}pn,$
for $\gamma d \geq 6$, which is guaranteed if $d \geq 6$. Thus we can apply the first bound in Claim~\ref{claim:abc1} with $K = k$, $N = n$ and $q = \gamma d$ to study $\Pr[A_S\in \mathcal{S}_{\ell, k}] \leq \Pr\left[X \geq \ell/2]\cdot\Pr[ L_S=\ell\right]$, yielding:
\begin{align*}
    \Pr[A_S\in \mathcal{S}_{2k-2,k} ]&=\left(p^{\gamma d-c_{\ref{claim:abc1}}\log_p(\gamma d)}\right)^k \left(\frac{k}{n}\right)^{k-1}.\\
    \Pr[A_S\in \mathcal{S}_{2k-1,k} ]&=\left(p^{\gamma d- c_{\ref{claim:abc1}}\log_p(\gamma d)}\right)^k \left(\frac{k}{n}\right)^{k}.\\
\Pr[A_S\in \mathcal{S}_{2k,k}']\leq \Pr[A_S\in \mathcal{S}_{2k,k} ]&=\left(p^{\gamma d-c_{\ref{claim:abc1}}\log_p(\gamma d)}\right)^k \left(\frac{k}{n}\right)^{k}.
\end{align*}

We use the equivalences of $\mathcal{S}_{2k - 2, k} = \mathcal{T}_k$, $\mathcal{S}_{2k - 1, k} = \mathcal{T}_k^+$, and $\mathcal{S}_{2k, k}' = \mathcal{T}_k^C$ from Lemma~\ref{lemma:classification} to yield statements 2, 3, and 4 in this lemma.

Next we obtain a bound on the event that $A_S \in \mathcal{M}_k\backslash (\mathcal{T}_{k}\cup \mathcal{T}_{k}^+\cup \mathcal{T}_{k}^C)$. Observe that 
\begin{equation*}
A_S \in \mathcal{M}_k\backslash (\mathcal{T}_{k}\cup \mathcal{T}_{k}^+\cup \mathcal{T}_{k}^C) \Rightarrow A_S \in \left(\mathcal{S}_{2k,k}\setminus \mathcal{S}_{2k,k}'\right) \cup \bigcup_{\ell \geq 2k + 1}\mathcal{S}_{\ell, k}.
\end{equation*}

If $A_S\in \mathcal{S}_{2k,k}\setminus \mathcal{S}_{2k,k}'$, there must be a column with at least three $1$'s. So by a similar argument to Claim~\ref{claim:X}, it must be the case that $X \geq k + 1$. Hence
\begin{align*}
    \Pr[A_S\in \mathcal{S}_{2k,k}\setminus \mathcal{S}_{2k,k}']&\leq \Pr\left[\Bin\left(2k, \frac{4k}{n}\right) \geq k+1\right]\cdot\Pr[L_S=2k].
\end{align*}
Another application of the Claim~\ref{claim:abc1} gives us: 
\begin{align*} 
    \Pr[A_S\in \mathcal{S}_{2k,k}\setminus \mathcal{S}_{2k, k}']&\leq \left(p^{\gamma d+c_{\ref{claim:abc1}}\log_p(\gamma d)}\right)^k \left(\frac{k}{n}\right)^{k+1}.
\end{align*}

It follows that
\begin{align*}
    \Pr[A_S\in \mathcal{M}_k\backslash (\mathcal{T}_{k}\cup \mathcal{T}_{k}^+\cup \mathcal{T}_{k}^C)]&\leq \left(p^{\gamma d+c_{\ref{claim:abc1}}\log_p(\gamma d)}\right) \left(\frac{k}{n}\right)^{k+1}+\sum_{\ell=2k+1}^{\gamma d k}\Pr[A_S\in \mathcal{S}_{\ell, k}]\\
    &\leq \left(p^{\gamma d+c_{\ref{claim:abc1}}\log_p(\gamma d)}\right) \left(\frac{k}{n}\right)^{k+1} +\sum_{\ell=2k+1}^{\gamma d k}\Pr\left[\Bin\left(\ell, \frac{2\ell}{n}\right) \geq \frac{\ell}{2}\right]\cdot\Pr[L_S = \ell].
\end{align*}
To bound the final term, we use the second bound in Lemma~\ref{claim:abc1}.


This gives us the desired result for sufficiently large $\gamma$ and $d$:
\begin{align*}
    \Pr[A_S\in \mathcal{M}_k\backslash (\mathcal{T}_{k}\cup \mathcal{T}_{k}^+\cup \mathcal{T}_{k}^C)]&\leq \left(p^{\gamma d+c_{\ref{claim:abc1}}\log_p(\gamma d)}\right)^k \left(\frac{k}{n}\right)^{k+1}+ e^{-k}\left(\frac{k}{n}\right)^{k+1/2} = O\left(e^{-k}\left(\frac{k}{n}\right)^{k+1/2}\right).
\end{align*}
This yields the first statement in the lemma.
\end{proof}

\subsection{Large Case}
The main goal of the large case is to prove the following lemma. 

\abclarge*

We begin by proving the following anti-concentration Lemma.

\acslice*
\begin{proof}
For $i \in \{1,...,d\}$, let $j_i \sim \text{Uniform}([N])$, and let $X_i := v_{j_i}$.
Then
\begin{equation}\label{eq:anti}
    \Pr[x\cdot v=w] = \Pr\left[\sum_{i=1}^d X_i =w\ | \text{All $j_i$ are unique}\right] \leq \frac{\Pr\left[\sum_{i=1}^d X_i =w\right]}{\Pr\left[\text{All $j_i$ are unique}\right]}.
\end{equation}

\begin{claim}
\begin{equation*}
\Pr\left[\text{All $j_i$ are unique}\right] \geq 1 - \frac{d^2}{N}.
\end{equation*}
\end{claim}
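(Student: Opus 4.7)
The plan is to apply a straightforward union bound over pairs, which is the standard birthday-paradox style estimate. For each ordered pair of distinct indices $i \neq k$ in $\{1,\ldots,d\}$, the two samples $j_i$ and $j_k$ are independent and each uniform on $[N]$, so $\Pr[j_i = j_k] = 1/N$. The event that all the $j_i$ are distinct is the complement of the event that at least one collision $j_i = j_k$ occurs for some pair $i < k$.

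The main step is then a union bound:
\begin{equation*}
\Pr\bigl[\exists\, i < k:\; j_i = j_k\bigr] \;\leq\; \binom{d}{2} \cdot \frac{1}{N} \;=\; \frac{d(d-1)}{2N} \;\leq\; \frac{d^2}{N}.
\end{equation*}
Taking complements gives the desired bound $\Pr[\text{all $j_i$ are unique}] \geq 1 - d^2/N$.

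There is no real obstacle here; the inequality is in fact loose by a factor of $2$ (one could write $1 - d^2/(2N)$), but the stated form is sufficient for the subsequent application in~\eqref{eq:anti}, where the denominator $\Pr[\text{all $j_i$ are unique}]$ needs only to be bounded away from zero under the hypothesis $d \leq \sqrt{N/2}$ (which guarantees $d^2/N \leq 1/2$, so the denominator is at least $1/2$). I would present this as a single short paragraph inside the proof of Lemma~\ref{lo_reg2}.
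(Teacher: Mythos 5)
Your proof is correct and matches the paper's argument exactly: both use a union bound over the $\binom{d}{2}$ pairs, each of which collides with probability $1/N$ by independence and uniformity, then take complements. Nothing to add.
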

\begin{proof}
By a union bound, 
\begin{align*}
    \Pr\left[\exists i \neq \ell: j_i = j_{\ell} \right] &\leq \binom{d}{2}\cdot \Pr\left[j_1 = j_2\right]=\binom{d}{2}\left(\frac{1}{N}\right) \leq \frac{d^2}{N}.
\end{align*}
\end{proof}

Next we show by induction on $d$ that $\Pr\left[\sum_{i=1}^d X_i = w\right] \leq 1/2$ for all $w\in \mathbb{R}\backslash\{da\}$. For $d=1$, we note that the chosen element $w$ cannot be the most common element $a$. Thus, $w$ is at worst equally as common as the most common element $a$. This implies that the number of times $w$ appears in $v$ is at most $\frac{N}{2}$, hence $\Pr[X_1 = w]\leq \frac{1}{2}$. 

Now assume that $\Pr\left[\sum_{i=1}^{d-1} X_i = u\right]\leq 1/2$ holds for all $u\not=(d-1)a$. For $w\not=da$, we write:
\begin{align*}
    \Pr\left[\sum_{i=1}^{d} X_i = w\right] &=\sum_{x\in \text{Supp($X_d$)}} \Pr\left[X_d = x\right]\cdot \Pr\left[\sum_{i=1}^{d-1} X_i = w-x\right]\\
    &=\Pr\left[X_d=a\right]\cdot \Pr\left[\sum_{i=1}^{d-1}X_i=w-a\right] +\sum_{x\in \text{Supp($X_d$)$\backslash\{a\}$}} \Pr\left[X_d = x\right]\cdot \Pr\left[\sum_{i=1}^{d-1} X_i = w-x\right]
\end{align*}
Let $p_x := \Pr\left[\sum_{i=1}^{d-1}X_i=w-x\right]$ such that by the induction hypothesis $p_a \leq 1/2$ as $w-a=(d-1)a$ if and only if $w=da$. Thus, we conclude:
\begin{align*}
    \Pr\left[\sum_{i=1}^{d} X_i = w\right] &\leq \max_{p_a \leq 1/2, \sum_{x}p_x \leq 1}\left(\Pr\left[X_d=a\right]\cdot p_a + \sum_{x\in \text{Supp($X_d$)$\backslash\{a\}$}} \Pr\left[X_d = x\right]\cdot p_x\right) \\
    &\leq \max_{\sum_{x \neq a}p_x \leq 1/2}\left(\Pr\left[X_d=a\right]\cdot \left(1/2\right) + \sum_{x\in \text{Supp($X_d$)$\backslash\{a\}$}} \Pr\left[X_d = x\right]\cdot p_x\right) \\
    &\leq \max_{\sum_{x \neq a}p_x \leq 1/2}\left(\Pr\left[X_d=a\right]\cdot (1/2) + \left(\sum_{x\in \text{Supp($X_d$)$\backslash\{a\}$}} P(X_d = x)\right)\cdot \left(\sum_{x\in \text{Supp($X_d$)$\backslash\{a\}$}}p_x\right)\right)\\
    &= \Pr\left[X_d=a\right]\cdot (1/2)+(1-\Pr[X_d=a])\cdot(1/2) = 1/2.
\end{align*}
Here the second inequality follows from the fact that the optimum over the $p_x$ is achieved by putting the maximum possible mass on $p_a$, that is, $p_a = 1/2$.

Returning to eq.~\ref{eq:anti}, we have for all $w \neq ad$ and $d \leq \sqrt{\frac{N}{2}}$, 
\begin{align*}
    \Pr[x\cdot v=w] &\leq \frac{\Pr\left[\sum_{i=1}^d X_i =w\right]}{\Pr\left[\text{All $j_i$ are unique}\right]} \leq \frac{1/2}{1 - \frac{d^2}{N}} \leq 1/2 + \frac{d^2}{N}, 
\end{align*}
where the final inequality follows from the fact that $\frac{1}{1 - x} \leq 1 + 2x$ for $0 \leq x \leq 1/2$.
\end{proof}

To prove Lemma~\ref{abc_large}, we take a similar approach to the large case for the BGC. For a fixed set $S$ of size $k$, we consider the stochastic process $A_S^{[1]}, A_S^{[2]}, \ldots$ in which we add the rows of $A_S$ one by one. We define the space: $$D(A_S^{[j]})=\Span\left(\{v\in (\mathbb{R}\backslash\{0\})^k: A_S^{[j]}v =0 \}\right),$$
and let $R_j := \text{Rank}(D(A_S^{[j]})).$ If $R_{\gamma n(1-p)} =0$, then $A_S$ is not a minimal dependency. 

Each time we add a new column, $R_j$ either stays constant or decreases by at least $1$ (note that $R_j$ can decrease by more than $1$: if a new row $j$ has exactly one $1$, then $R_j = R_{j+1} = \ldots = 0$ no matter what $R_{j-1}$ was). We will use the following lemma to show for $j \leq \gamma n/2$, each column we add is close to random. Then, using Lemma~\ref{lo_reg2}, we show that with decent probability, $R_j$ decreases. We can then apply a Chernoff Bound to show that $R_{\gamma n/2} = 0$ with high probability. 

More formally, we consider the process of constructing $H \in \{0,1\}^{\gamma n(1-p) \times \gamma d n}$ (the column half-edge occupancy matrix) one row at a time by pairing the $d$ half-edges from each row-node at each step to a random set of $d$ unpaired column-half-edges. 

Define $e(S, j) := \gamma d k - |\supp(H^{[j]}(S))|$, that is the number of unpaired half-edges among the $k$ column-nodes in $S$ after the first $j$ row-nodes have randomly paired their half-edges. Observe that $e(S, j) \sim \textnormal{HypGeo}(\gamma d n, \gamma d k, \gamma d n - dj)$. The following lemma uses standard concentration bounds to show that for the first $\gamma n/2$ columns, there are many unpaired half-edges out of row-nodes in $S$. 

\begin{lemma}\label{lemma:controlling_large}
Let $A \sim \textnormal{ABC}_p(n, \gamma, d)$. Let $\Omega$ be the event in which for any set $S$ with $|S| \geq \frac{n}{18e\gamma^2 d^2}$ and for any $j\in[1,\frac{\gamma}{2}n]$, we have 
 $\frac{e(S,j)}{\gamma dn-dj}\geq \frac{k}{2n}$.
For $\gamma \geq 2$, there exists a constant $d_0(\gamma)$ such that for $d \geq d_0$, we have $\Pr[\Omega] \geq 1 - \gamma ne^{-n/d^3}$.
\end{lemma}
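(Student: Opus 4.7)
The plan is to fix a set $S$ of size $k$ and a time step $j$, apply a standard concentration inequality for the hypergeometric distribution to $e(S,j)$, and then take a union bound over all valid $S$ and $j$. The key observation is that $e(S,j)\sim \textnormal{HypGeo}(\gamma d n, \gamma d k, \gamma d n - dj)$, which has mean $\mu := k(\gamma dn - dj)/n$, so the desired event $\frac{e(S,j)}{\gamma dn - dj}\geq \frac{k}{2n}$ is precisely $e(S,j)\geq \mu/2$. The standard Chernoff bound for the hypergeometric (Hoeffding's inequality for sampling without replacement) then gives $\Pr[e(S,j) < \mu/2]\leq \exp(-\mu/8)$.

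For $j\leq \gamma n/2$, we have $\gamma dn - dj \geq \gamma dn/2$, hence $\mu\geq \gamma dk/2$. Using the lower bound $k\geq n/(18e\gamma^2 d^2)$ together with $\binom{n}{k}\leq (en/k)^k \leq (18e^2\gamma^2 d^2)^k$, a union bound over the $\binom{n}{k}$ choices of $S$ of each size $k$ and over the at most $\gamma n/2$ choices of $j$ yields
\begin{equation*}
\Pr[\neg\Omega] \;\leq\; \frac{\gamma n}{2}\sum_{k\geq n/(18e\gamma^2 d^2)}\exp\bigl(k\log(18e^2\gamma^2 d^2) - \gamma dk/16\bigr).
\end{equation*}
For $d$ sufficiently large relative to $\gamma$, the concentration term dominates — specifically, $\gamma d/16 \geq 2\log(18e^2\gamma^2 d^2)$ — so each summand is at most $\exp(-\gamma dk/32)$ and the geometric sum is dominated by the $k=\lceil n/(18e\gamma^2 d^2)\rceil$ term. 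This produces a bound of the form $\gamma n \exp(-cn/(\gamma d))$ for an absolute constant $c>0$. Choosing $d_0(\gamma)$ large enough that $d^2\geq c^{-1}\gamma$ for $d\geq d_0$ then absorbs the $\gamma d$ denominator into $d^3$ and gives the claimed $\gamma n e^{-n/d^3}$.

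I do not expect any conceptual obstacle: the argument is a routine combination of a hypergeometric tail bound with a counting union bound. The only thing that requires care is verifying that the $e^{-\gamma dk/16}$ concentration factor simultaneously dominates (i) the combinatorial factor $(en/k)^k$, which is at most $(18e^2\gamma^2 d^2)^k$ on the allowed range of $k$, and (ii) the $\gamma n/2$ factor coming from the union bound over $j$. The slack between $n/(\gamma d)$ in the Chernoff exponent and $n/d^3$ in the stated bound is precisely what gives room to absorb these polynomial factors into the constant $d_0(\gamma)$.
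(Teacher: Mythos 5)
Your proposal is correct and follows essentially the same path as the paper: a hypergeometric tail bound for $e(S,j)$ (the paper states it in the $D_{KL}$ form via Lemma~\ref{HGtail} and checks $D_{KL}(\eta_k/2\,\|\,\eta_k)\geq\eta_k/12$, whereas you invoke the equivalent multiplicative form $\Pr[X\leq\mu/2]\leq e^{-\mu/8}$), followed by a union bound over the $\binom{n}{k}$ sets $S$ of each admissible size and over $j\in[1,\gamma n/2]$, with $d_0(\gamma)$ chosen large enough that the Chernoff exponent $\Theta(\gamma dk)$ beats both the entropy factor $(en/k)^k$ and the $\gamma n$ union-bound factor. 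The only cosmetic difference is that the paper first enlarges the range of $k$ to $[n/d^3,n)$ so that $(en/k)^k\leq(ed^3)^k$ and the final exponent reads $n/d^3$ immediately, while you keep $k\geq n/(18e\gamma^2d^2)$ and absorb the resulting $\gamma d$ into $d^3$ at the end by choosing $d_0(\gamma)$; both are valid.
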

\begin{proof}
For $d$ larger than some constant $d_0(\gamma)$, we have $\frac{n}{d^3} \leq \frac{n}{18e\gamma^2 d^2}$, hence it suffices to prove the result for all $k\in[n/d^3,n)$. Fix $k$ and let us define $\eta_k$ to be the ratio $k/n$. Choose a subset $S$ of size $k$. Furthermore, fix $j\leq \frac{\gamma}{2}n$. We proceed by applying the following tail bound on Hypergeometric distributions to $e(S,j)$ which is clear from \cite{hgtail}.

\begin{lemma}[Hypergeometric Tail Bound~\cite{hgtail}]\label{HGtail}
Let $X \sim \textnormal{HypGeo}(A, B, c)$. Then for all $t < \frac{B}{A}$,
\begin{equation*}
\Pr\left[X \leq tc \right] \leq e^{-cD_{KL}(t||B/A)}.
\end{equation*}
Similarly, letting $Y \sim \textnormal{HypGeo}(A, A - B, c)$, for $t > \frac{B}{A}$, we have
\begin{equation*}
\Pr\left[X \geq tc \right] = \Pr[Y \leq (1 - t)c] \leq e^{-cD_{KL}(1 - t||1 - B/A)} = e^{-cD_{KL}(t||B/A)}.
\end{equation*}
\end{lemma}
Using this bound, we have
\begin{align*}
    \Pr[e(S,j)\leq ({\eta_k-t})(\gamma d n-dj)]&\leq e^{-D_{KL}\left(\eta_k-t||\eta_k\right)(\gamma d n-dj)}\leq e^{-D_{KL}\left(\eta_k-t||\eta_k\right)\frac{\gamma dn}{2}}
\end{align*}
for all $t\in(0,\eta_k)$. The following claims expands this KL-divergence for $t = \eta_k/2$.
\begin{claim}
\begin{equation*}
\begin{split}
D_{KL}\left(\eta_k-\eta_k/2||\eta_k\right) \geq \frac{\eta_k}{12}.
\end{split}
\end{equation*}
\end{claim}
\begin{proof}
\begin{equation*}
\begin{split}
D_{KL}\left(\eta_k/2||\eta_k\right)  &= \frac{\eta_k}{2}\ln(1/2) + \left(1 - \frac{\eta_k}{2}\right)\ln\left(\frac{1 - \eta_k/2}{1 - \eta_k}\right) \\
&\geq \frac{\eta_k}{2}\ln(1/2) + \left(1 - \frac{\eta_k}{2}\right)\left(\frac{\eta_k}{2} + \frac{3\eta_k^2}{8}\right) \\
&= \frac{\eta_k}{2}\left(\ln(1/2) + \left(1 - \frac{\eta_k}{2}\right)\left(1 + \frac{3\eta_k}{4}\right)\right) \geq \frac{\eta_k}{12}.
\end{split}
\end{equation*}
Here the first inequality follows by taylor expanding $
\ln(1 - x)$. The final inequality follows by noting that the quadratic $\ln(1/2) + \left(1 - \frac{\eta_k}{2}\right)\left(1 + \frac{3\eta_k}{4}\right)$ is minimized over $\eta_k \in [0, 1]$ at $\eta_k \in \{0, 1\}$. 
\end{proof}

Union bounding over all $\binom{n}{k}$ sets $S$ of size $k$, we have 
\begin{align*}
     \Pr\left[\exists S, k:=|S| \in [n/d^3, n]: e(S,j)\leq \frac{\eta_k}{2}(\gamma dn-dj)\right]
     &\leq \sum_{k=n/d^3}^n \binom{n}{k}e^{-\frac{\eta_k\gamma dn}{24}}
     \leq \sum_{k=n/d^3}^n \left(\frac{en}{k}\right)^ke^{-\frac{k\gamma d}{24}} \\
     &\leq \sum_{k=n/d^3}^n e^{-k\left(\frac{\gamma d}{24}- \ln(d^3) - 1\right)} 
    \leq \sum_{k=n/d^3}^n e^{-k}
    \leq 2e^{-n/d^3}.
\end{align*}

We take a union bound over $j \in [1, \gamma n/2]$ to achieve
\begin{align*}
     \Pr\left[\exists S, k := |S| \in[n/d^3,n] : e(S,j)\leq \frac{\eta_k}{2}(\gamma dn-dj)\right] \leq \gamma ne^{-n/d^3}.
\end{align*}
\end{proof}

Conditioned on $\Omega$, we can use the anti-concentration bound in Lemma~\ref{lo_reg2} to show that for $j \leq \gamma n/2$, $R_j$ often decreases.

\begin{lemma}\label{lemma:decrease}
For $j \leq \gamma n/2$, conditioned on $\Omega$, if $R_j \geq 1$, then with probability at least $$\mu_k := \frac{1}{2}\left(1 - e^{-\frac{dk}{2n}}\right) - \frac{3\gamma d^3}{k},$$ we have $R_{j} \leq R_{j-1} - 1.$
\end{lemma}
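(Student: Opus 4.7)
The plan is to condition on all the randomness through time $j-1$ (together with the event $\Omega$) and analyze only the fresh randomness of pairing row-node $j$'s $d$ half-edges to $d$ uniformly chosen unpaired column-half-edges without replacement. Interpreting the hypothesis as $R_{j-1}\geq 1$, I would fix some $v^*\in D(A_S^{[j-1]})$ with no zero entries (measurable with respect to the revealed rows). If the new row satisfies $\langle A_S^{\{j\}},v^*\rangle\neq 0$, then $v^*$ is forced out of the kernel of $A_S^{[j]}$: since $D(A_S^{[j]})\subseteq D(A_S^{[j-1]})\cap\{v:\langle A_S^{\{j\}},v\rangle=0\}$, we have $v^*\notin D(A_S^{[j]})$, so $R_j\leq R_{j-1}-1$. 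Thus it suffices to bound $\Pr[\langle A_S^{\{j\}},v^*\rangle=0]\leq 1-\mu_k$.

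Next I would lift the analysis to the half-edge level. Set $N:=\gamma dn-d(j-1)$ and $N_S:=e(S,j-1)$; by $\Omega$ and $j\leq\gamma n/2$ we have $N_S\geq(k/2n)N\geq\gamma dk/4$. Let $Y\in\{0,1\}^{N_S}$ record which of the $N_S$ $S$-half-edges are among the $d$ picks, and let $\hat v^*\in\mathbb{R}^{N_S}$ assign each half-edge $h$ the value $v^*_{c(h)}$, where $c(h)\in S$ is its column-node. Let $\mathcal{G}_S$ be the event that no two picks land in the same column-node of $S$; under $\mathcal{G}_S$ we have $\langle A_S^{\{j\}},v^*\rangle=\langle Y,\hat v^*\rangle$. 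A union bound over the $\binom{d}{2}$ pairs of picks, using $\sum_{c\in S}m_c^2\leq \gamma d\,N_S$, $N_S\leq\gamma dk$, and $N\geq\gamma dn/2$, yields
\[
\Pr[\mathcal{G}_S^c]\ \leq\ \binom{d}{2}\frac{\gamma d\,N_S}{N(N-1)}\ \leq\ \frac{4d^2 k}{n^2}.
\]

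Then I would apply the anti-concentration result. By symmetry, conditional on $|Y|_1=d'$, $Y$ is uniform over $\binom{[N_S]}{d'}$. Since $v^*$ has full support, every entry of $\hat v^*$ is nonzero, and in particular the most common entry $a$ is nonzero; for $d'\geq 1$, and in the regime $d\leq \sqrt{N_S/2}$ (which holds since $N_S\geq\gamma dk/4$ and $k\geq n/(18e\gamma^2 d^2)$ for $n$ suitably large), Lemma~\ref{lo_reg2} gives $\Pr[\langle Y,\hat v^*\rangle=0\mid |Y|_1=d']\leq 1/2+d^2/N_S$. The probability that no picks land in $S$ is $\binom{N-N_S}{d}/\binom{N}{d}\leq (1-N_S/N)^d\leq e^{-dk/(2n)}$. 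Combining by the law of total probability,
\[
\Pr[\langle Y,\hat v^*\rangle=0]\ \leq\ \tfrac{1}{2}+\tfrac{1}{2}e^{-dk/(2n)}+\tfrac{d^2}{N_S}.
\]

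Assembling everything with $d^2/N_S\leq 4d/(\gamma k)$,
\[
\Pr[\langle A_S^{\{j\}},v^*\rangle=0]\ \leq\ \Pr[\mathcal{G}_S^c]+\Pr[\langle Y,\hat v^*\rangle=0]\ \leq\ \tfrac{1}{2}+\tfrac{1}{2}e^{-dk/(2n)}+\tfrac{4d^2k}{n^2}+\tfrac{4d}{\gamma k}.
\]
For $k\leq n$ and $\gamma d$ larger than an absolute constant, a short calculation gives $4d^2 k/n^2\leq\gamma d^3/k$ (requiring $k\leq n\sqrt{\gamma d}/2$) and $4d/(\gamma k)\leq 2\gamma d^3/k$ (requiring $\gamma d\geq\sqrt{2}$), so the total error is at most $3\gamma d^3/k$, yielding $\Pr[\langle A_S^{\{j\}},v^*\rangle=0]\leq 1-\mu_k$ as required. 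The central obstacle is the dependency introduced by without-replacement half-edge pairing: multiple picks can collide in the same column-node of $S$, so $A_S^{\{j\}}$ is not a uniformly random sparse $\{0,1\}$-vector indexed by $S$. Descending to the level of individual half-edges and absorbing the collision event into $\mathcal{G}_S$ is the key maneuver that lets us invoke Lemma~\ref{lo_reg2}.
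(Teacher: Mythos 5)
Your proof is correct and follows essentially the same approach as the paper: fix a full-support vector $v^*$ in the current candidate kernel, reduce to showing $\langle A_S^{\{j\}},v^*\rangle\neq 0$ with probability at least $\mu_k$, handle the half-edge collision event separately, and apply Lemma~\ref{lo_reg2} together with a bound on the probability that no picks land in $S$. The only meaningful difference is a technical clean-up: the paper applies Lemma~\ref{lo_reg2} to $h_j(S)\in\{0,1\}^{\gamma dk}$ and claims, via an implicit symmetry argument over which $S$-half-edges were already paired, that $h_j(S)$ is uniform over supports of size $s$ in $\{0,1\}^{\gamma dk}$; you instead condition directly on which $S$-half-edges are unpaired and apply the anti-concentration lemma to the restriction $Y\in\{0,1\}^{N_S}$, which makes the conditional uniformity claim immediate (and also fixes an off-by-one in the paper's use of $e(S,j)$ versus $e(S,j-1)$). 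You absorb the collision bound $O(d^2k/n^2)$ and the slightly looser anti-concentration term $4d/(\gamma k)$ into the same $3\gamma d^3/k$ error, so the final constants match.
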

\begin{proof}
Suppose $R_{j - 1} \geq 1$, and let $v$ be any vector in $(\mathbb{R}\backslash\{0\})^k$ such that $A_S^{[j - 1]}v=0$. Let $v' \in \mathbb{R}^{\gamma dk}$ be the vector which repeats each coordinate of $v$ $\gamma d$ times: that is, $v'_i = v_{\lceil \frac{i}{\gamma d}\rceil}$.

Let $h_j \in \{0, 1\}^{\gamma dn}$ be $j$th row of $H$, which has exactly $d$ $1$'s indicating the half-edges matched to the $d$ half-edges from the row-node $j$. Let $h_j(S) \in \{0, 1\}^{\gamma d k}$ be the restriction of $h_j$ to the entries corresponding to half-edges from column-nodes in $S$.

\begin{claim}\label{claim:success}
If at most one half-edge is matched from column-node $j$ to a single row-node in $S$ and $h_j(S) \cdot v' \neq 0$, then $R_{j} \leq R_{j-1} - 1$.
\end{claim}
\begin{proof}
If at most one half-edge is matched from column-node $j$ to a single row-node in $S$, then $$(A_S)^j v =  \sum_{i \in S}A_{ji}v_i = \sum_{i \in S}\mathbbm{1}(\exists \ell, \gamma d(i-1) < \ell \leq \gamma di, H_{j \ell} = 1 )v_i = \sum_{i \in S}v_i\sum_{\ell = \gamma d(i - 1) + 1}^{\gamma di} H_{j\ell} = h_j(S)\cdot v' \neq 0.$$  Hence $v \notin D(A_S^{[j]})$, so the rank of $D(A_S^{[j]})$ is strictly smaller than that of $D(A_S^{[j-1]})$.
\end{proof}

\begin{claim}\label{claim:using_lo}
\begin{equation*}
\Pr[h_j(S) \cdot v' = 0 | \Omega] \leq  \frac{1}{2}\left(1 + e^{-\frac{dk}{2n}}\right) +  \frac{d}{\gamma k}.
\end{equation*}
\end{claim}
\begin{proof}

We condition on the number of $1$'s in $h_j(S)$, which we denote $s$. Observe that conditioned on $s$, the vector $h_j(S)$ is a uniformly random vector from the set of all vectors in $\{0, 1\}^{\gamma d k}$ with $s$ 1s. This holds even when we conditioned on $\Omega$, because this event says nothing about \em which \em half-edges among the nodes in $S$ have been paired. By Lemma~\ref{lo_reg2}, since $v'$ contains no zeros (and hence its most common element is not zero), we have 
\begin{equation}\label{eq:sneq0}
\Pr[h_j(S) \cdot v' = 0 | s \geq 1, \Omega] \leq 1/2 + \frac{d}{\gamma k}.
\end{equation}

It remains to consider the probability that $s = 0$, since in this case, we always have $h_j(S) \cdot v' = 0$. We know that $s$ is distributed like a hypergeometric random variable $\text{HypGeo}(\gamma d n - dj, d, e(S, j))$. Indeed, there are $d$ half-edges that are paired with the addition of the $(j + 1)$-th column, there are $e(S, j)$ unpaired half-edges among the nodes in $S$, and there are $\gamma d n - dj$ total unpaired half-edges among the row nodes. Since we have conditioned on $\Omega$, we know that $\frac{e(S,j)}{\gamma d n-dj}\geq \frac{k}{2n}$. Hence we have
\begin{equation}\label{eq:s0}
\Pr[s = 0 | \Omega] \leq \left(1 - \frac{k}{2n}\right)^{d} \leq e^{-\frac{dk}{2n}}.
\end{equation}

Combining eqs. \ref{eq:sneq0} and \ref{eq:s0}, it follows that 
\begin{equation*}
\begin{split}
\Pr[h_j(S) \cdot v' = 0 | \Omega] &= \Pr[s = 0| \Omega] + \left(1 - \Pr[s = 0| \Omega]\right)\Pr[h_j(S) \cdot v' = 0 | s \geq 1, \Omega]\\
&\leq \Pr[s = 0| \Omega] + \left(1 - \Pr[s = 0| \Omega]\right)\left(1/2 + \frac{d}{\gamma k}\right)\\
&\leq \Pr[s = 0| \Omega] + \left(1 - \Pr[s = 0| \Omega]\right)\left(1/2\right) + \frac{d}{\gamma k} \\
&= \frac{1}{2}\left(1 + \Pr[s = 0| \Omega]\right) +  \frac{d}{\gamma k}
\leq \frac{1}{2}\left(1 + e^{-\frac{dk}{2n}}\right) +  \frac{d}{\gamma k}.
\end{split}
\end{equation*}

\end{proof}
\begin{claim}\label{claim:double_match}
The probability that more than one half-edge is matched from a column-node $j$ to a single row-node in $S$ is at most $\frac{2\gamma d^3}{k}$. 
\end{claim}
\begin{proof}
$\binom{d}{2}\left(\frac{\gamma d}{k/4}\right) \leq \frac{2\gamma d^3}{k}.$
\end{proof}
It follows from the previous two claims that the probability that $h_j(S) \cdot v' \neq 0$ and at most one half-edge is match from a row-node $j$ to a single column-node in $S$ is at least 
\begin{equation*}
\begin{split}
1 - \Pr[\textnormal{Event in Claim~\ref{claim:double_match} occurs}] - \Pr[h_j(S)\cdot v' = 0 | \Omega ]&\geq
1 - \frac{2\gamma d^3}{k} - \frac{d}{\gamma k} - \frac{1}{2}\left(1 + e^{-\frac{dk}{2n}}\right) \\
&= \frac{1}{2}\left(1 - e^{-\frac{dk}{2n}}\right) - \frac{2\gamma d^3}{k} - \frac{d}{\gamma k} \\
&\geq \frac{1}{2}\left(1 - e^{-\frac{dk}{2n}}\right) - \frac{3\gamma d^3}{k}.
\end{split}
\end{equation*}

Using Claim~\ref{claim:success}, this proves the lemma.
\end{proof}

We are now ready to prove Lemma~\ref{abc_large}. 
\begin{proof}[Proof of Lemma~\ref{abc_large}]
Recall that our goal is to show that with high probability, for all $k \geq \frac{n}{18e\gamma^2 d^2}$, for all sets $S$ of size $k$, we have $R_{\gamma n/2} = 0$. Throughout the rest of the proof, we assume that we have conditioned on $\Omega$, since $\Pr[\Omega] = 1 - o(1)$.

For a fixed set $S$ of size $k$, by Lemma~\ref{lemma:decrease}, conditioned on each term being positive, the random process $R_0, R_1, R_2, \cdots, R_{\gamma n/2}$ is stochastically dominated by the random process $Y_0 = R_0, Y_1, Y_2, \ldots ,Y_{\gamma n/2}$, where $Y_{i + 1} = Y_i - \Ber(\mu_k)$ and $$\mu_k = \frac{1}{2}\left(1 - e^{-\frac{dk}{2n}}\right) - \frac{3\gamma d^3}{k}.$$

Hence $\Pr[R_{\gamma n/2} > 0] \leq \Pr[Y_{\gamma n/2} > 0]$. By a Chernoff Bound, since $Y_0 = k$, for any $\mu \leq \mu_k$, 
\begin{equation*}
\begin{split}
\Pr[Y_{\gamma n/2} > 0] &= \Pr[Y_{0} - Y_{\gamma n/2} < k] = \Pr[\Ber(k, \mu_k) < k] \leq e^{-\gamma n\mu/2}\left(\frac{e\gamma n\mu}{2k}\right)^k.
\end{split}
\end{equation*}

Define $\eta_k := k/n$. Taking a union bound over all sets $S$ of size $k$, conditioned on $\Omega$, the probability that at least one set $S$ has $R{\gamma n/2} > 0$ is at most
\begin{equation}\label{large_union}
\begin{split}
\binom{n}{k}e^{-\gamma n\mu/2}\left(\frac{e\gamma n\mu}{2k}\right)^k &\leq \left(\frac{en}{k}\right)^ke^{-\gamma n\mu/2}\left(\frac{e\gamma n\mu}{2k}\right)^k 
= e^{-\gamma n\mu/2}\left(\frac{e^2\gamma n^2\mu}{2k^2}\right)^k \\
&= e^{-\gamma n\mu/2+ k\log\left(\frac{e^2\gamma n^2\mu}{k^2}\right)}
= e^{k\left(-\frac{\gamma\mu}{2\eta_k} + \log\left(\frac{e^2\gamma\mu}{2\eta_k^2}\right)\right)}.
\end{split}
\end{equation}

We consider the two cases $\frac{n}{18e\gamma^2 d^2} \leq k \leq \frac{5n}{d}$ and $k \geq \frac{5n}{d}$.

In the first case, since $1 - e^{-x} \geq x/2$ for $0 < x < 1$, for $n$ large enough, we have $$\mu_k \geq \frac{1}{2}\left(\frac{d\eta_k}{4}\right) - \frac{3\gamma d^3}{k} \geq 
\frac{d\eta_k}{10}.$$

Hence for $\mu = \frac{d\eta_k}{10}$, we have
\begin{equation*}
-\frac{\gamma \mu}{2\eta_k} + \log\left(\frac{e^2\gamma \mu}{2\eta_k^2}\right) \leq -\frac{\gamma d}{20} + \log\left(\frac{e^2\gamma d}{20\eta_k^2}\right) \leq -\frac{\gamma d}{20} + \log\left(\frac{18^2e^3\gamma^5d^5}{20}\right) \leq  -0.25
\end{equation*}
for $d$ larger than some constant $d_0$. 

In the second case, we have $\mu_k \geq 0.45$, and hence for $\mu = 0.45$ and $\gamma \geq 16$, we have

\begin{equation*}
-\frac{\gamma \mu_k}{2\eta_k} + \log\left(\frac{e^2\gamma \mu_k}{2\eta_k^2}\right) \leq \max_{\eta \leq 1} -\frac{3.6}{\eta} + \log\left(\frac{3.6e^2}{\eta^2}\right)  \leq -0.25.
\end{equation*}

Returning to eq.~\ref{large_union}, conditioned on $\Omega$, the probability that there is a set $S$ of size $k$ for which $R_{\gamma n/2} > 0$ is at most $e^{-k/4}$. Summing over all $k \geq \frac{n}{18e\gamma^2 d^2}$ yields a probability of failure among any $k$ of at most $$\frac{e^{-\frac{n}{18e\gamma^2 d^2}}}{1 - e^{-1/4}} \leq 5e^{-\frac{n}{18e\gamma^2 d^2}}.$$
Unioning with the probability that $\Omega$ doesn't occur from Lemma~\ref{lemma:controlling_large} yields Lemma~\ref{abc_large}.
\end{proof}

\subsection{Proof of Theorem~\ref{char:abc}}
We are now ready to prove Theorem~\ref{char:abc}. 

We will use the following lemma on the concentrations of functions of a random permutation from \cite{core_fkss}, Lemma 3.3.\footnote{In an earlier version of this paper predating \cite{core_fkss}, we instead used a longer second moment argument. For simplicity, we have replaced this argument with a shorter argument which uses this lemma.}
\begin{lemma}\label{lem:injection-concentration}
Let $m\in \mathbb{N}$, let $S$ be any finite set of size $|S| \ge m$, let $\mathcal F$ be the set of functions $[m] \to S$ and let $\mathcal{I}\subseteq \mathcal F$ be the set of injections
$[m] \to S$. Consider a function $f:\mathcal F\to\mathbb{R}$
satisfying the property that if $\pi,\pi'$ agree except that $\pi(i)\ne \pi'(i)$, then $|f(\pi)-f(\pi')|\le c_{i}$.
Let $\pi\in\mathcal{I}$ be a uniformly random injection. Then for $t\ge 0$,
\[
\mb{P}\left[|f(\pi)-\mb{E} f(\pi)|\ge t\right]\le 2\exp\left(-\frac{t^{2}}{8\sum_{i=1}^{m}c_{i}^{2}}\right).
\]
\end{lemma}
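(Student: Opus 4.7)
The plan is to prove this concentration inequality via a Doob martingale combined with an image-transposition coupling, yielding exactly the stated constants. As a first step, observe that by a symmetry argument we may assume without loss of generality that $c_1 \ge c_2 \ge \cdots \ge c_m$: for any permutation $\tau$ of $[m]$, setting $\tilde f(\pi') := f(\pi' \circ \tau^{-1})$ produces a function whose bounded-differences constant at coordinate $i$ is $c_{\tau(i)}$, and $\tilde f(\pi)$ has the same distribution as $f(\pi)$ because $\pi \circ \tau^{-1}$ is again a uniformly random injection. Choosing $\tau$ so that the resulting $c_{\tau(i)}$ are sorted in decreasing order completes this reduction.

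Next I would consider the Doob martingale $M_i := \mathbb{E}[f(\pi) \mid \pi(1), \ldots, \pi(i)]$ for $i = 0, 1, \ldots, m$, so that $M_0 = \mathbb{E} f(\pi)$ and $M_m = f(\pi)$. The crux is to show $|M_i - M_{i-1}| \le 2 c_i$ almost surely. Fix a history $\sigma = (\pi(1), \ldots, \pi(i-1))$ and let $T := S \setminus \{\sigma(1), \ldots, \sigma(i-1)\}$; it suffices to prove that $|\mathbb{E}[f(\pi) \mid \sigma, \pi(i) = a] - \mathbb{E}[f(\pi) \mid \sigma, \pi(i) = b]| \le 2 c_i$ for any $a, b \in T$. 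For this, use the image-transposition coupling: sample $\pi$ from the distribution conditional on $(\sigma, \pi(i) = a)$, and define $\pi' := (a\,b) \circ \pi$, swapping the values $a$ and $b$ wherever they appear in the image. Since the transposition $(a\,b)$ is an involution on $S$ fixing $S \setminus \{a, b\}$, the map $\pi \mapsto \pi'$ is a measure-preserving bijection between the two conditional distributions. Moreover, $\pi$ and $\pi'$ agree at every coordinate except $i$ (where $\pi(i) = a$ but $\pi'(i) = b$) and at most one coordinate $J > i$, namely the unique $J \in [i+1, m]$ with $\pi(J) = b$, if such $J$ exists. Two applications of the bounded-differences hypothesis, passing through an intermediate function that need not be an injection (this is precisely where the assumption that $f$ is defined on all of $\mathcal F$ is used), yield $|f(\pi) - f(\pi')| \le c_i + c_J \le c_i + c_{i+1} \le 2 c_i$, with the last two inequalities coming from the sorting step.

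The conclusion then follows directly from Azuma's inequality applied to $(M_i)_{i=0}^m$ with increments bounded by $2 c_i$:
\[
\mathbb{P}\bigl[|f(\pi) - \mathbb{E} f(\pi)| \ge t\bigr] = \mathbb{P}\bigl[|M_m - M_0| \ge t\bigr] \le 2 \exp\!\left(-\frac{t^2}{2 \sum_{i=1}^m (2 c_i)^2}\right) = 2 \exp\!\left(-\frac{t^2}{8 \sum_{i=1}^m c_i^2}\right).
\]
The main subtle point to verify carefully will be that the image-transposition coupling is distributionally correct — i.e., that $(a\,b) \circ \pi$ is uniformly distributed on injections extending $(\sigma, b)$ when $\pi$ is uniform on those extending $(\sigma, a)$ — which is a direct consequence of $(a\,b)$ acting as a bijection between the two sides. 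The essential appeal to the hypothesis that $f$ is defined on non-injective functions is likewise what allows the two-step bounded-differences comparison to avoid any extra factors beyond the clean $2 c_i$ bound, producing the constant $8$ in the denominator.
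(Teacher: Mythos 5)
The paper does not supply its own proof of this lemma; it cites it verbatim from \cite{core_fkss}, Lemma 3.3. Your argument is correct and is the standard route: relabel coordinates so that $c_1 \ge \cdots \ge c_m$ (permissible because precomposition by a fixed bijection of $[m]$ preserves both the distribution of a uniform injection and the multiset of Lipschitz constants), expose $\pi(1),\dots,\pi(m)$ one at a time to form the Doob martingale, bound each increment by $2c_i$ via the image-swap coupling $\pi \mapsto (a\,b)\circ\pi$ — noting that the intermediate comparison function fails to be injective, which is exactly where the hypothesis that $f$ is defined on all of $\mathcal F$ is used, and that the second affected index $J$ satisfies $J>i$, so the sorted ordering gives $c_i + c_J \le 2c_i$ — and finish with Azuma, yielding the $8$ in the denominator. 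No gaps.
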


We restate Theorem~\ref{char:abc} for the reader's convenience.

\charabc*

\begin{proof}[Proof of Theorem~\ref{char:abc}]
Let $\Omega$ be the event the there are no minimal dependencies among $k$ columns of $A$ that are not in $$\bigcup_{k \leq \log(n)}\mathcal{T}_k \cup \mathcal{T}_k^+ \cup \mathcal{T}_k^C .$$ By combining the result of the small case, Lemma~\ref{lemma:abc_small}, with the result of the large case, Lemma~\ref{abc_large}, we observe that $\Pr[\Omega] = 1 - o(1)$. This proves the first statement in the theorem. Next we bound the size of the set of columns involved in linear dependencies. We will do this by studying the expected number of linearly dependent columns and showing concentration via Lemma~\ref{lem:injection-concentration}. We begin with some set-up.

We extend the mapping in Definition~\ref{def:ABC} from permutations $\rho \in \mathcal{S}_{\gamma d n}$ to matrices in $\{0, 1\}^{\gamma n \times n}$ to be a more general mapping from $\mathcal{F}$, the set of functions $[\gamma n d] \rightarrow [\gamma n d]$ to matrices in $\{0, 1\}^{\gamma n \times n}$. Create $\gamma n$ row-nodes with $d$ half-edges each and $n$ column-nodes with $\gamma d$ half-edges as per Definition~\ref{def:ABC}. Given a function $\pi \in \mathcal{F}$, for $i \in [\gamma d n]$, attach the $i$th right half-edge to the $\pi(i)$th left half-edge. Note that some half-edges on the right may have multiple half-edges connected to it on the right. Let $A_0 \in \{0, 1\}^{\gamma n \times n}$ be the matrix where $(A_0)_{ij} = 1$ if and only if there is at least one half-edge from row-node $i$ connected to a half-edge from column-node $j$. As before, let $A$ be the submatrix containing the first $(1-p)\gamma n$ rows of $A_0$.

Let $\mathcal{S}(\pi) := \{S: A_S \in \mathcal{T}_k \cup \mathcal{T}_k^+ \cup \mathcal{T}_k^C, |S| \leq \log(n)\}$, and let $\mathcal{D}(\pi) := \cup_{S \in \mathcal{S}(\pi)} S$. For $\pi \in \Omega$, define $f(\pi) := |\mathcal{D}(\pi)|$. The following shows that $f$ satisfies a certain Lipshitz property on $\Omega.$ 

\begin{claim}
For any $\pi, \pi' \in \Omega$, we have $|f(\pi) - f(\pi')|\leq \log(n)\on{dist}(\pi, \pi')$.
\end{claim}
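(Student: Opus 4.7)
The plan is to reduce to a single-coordinate change and then bound the incremental effect on $\mathcal{D}$ by a combinatorial analysis of which small minimal dependencies can possibly be created or destroyed.

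First, by telescoping, it suffices to prove the bound $|f(\pi) - f(\pi')| \leq \log(n)$ in the case $\on{dist}(\pi, \pi') = 1$: if $\pi$ and $\pi'$ differ at $t$ coordinates, interpolate through a sequence $\pi = \pi_0, \pi_1, \ldots, \pi_t = \pi'$ in which consecutive functions differ at exactly one coordinate, and apply the triangle inequality. Some care is needed since intermediate $\pi_j$ need not lie in $\Omega$; one can handle this either by extending $f$ to all of $\mathcal{F}$ (defining $\mathcal{S}(\pi)$ by the same formula for every $\pi$) and checking that the same single-coordinate argument applies uniformly, or by observing that Lemma~\ref{lem:injection-concentration} ultimately only requires the bound for pairs of injections where $\Omega$ holds.

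Next, I would describe precisely the effect of a single-coordinate change. If $\pi$ and $\pi'$ disagree only at coordinate $i$, then under the extended construction of $A_0$ from functions, only the right half-edge $i$ (belonging to row-node $r := \lceil i/d \rceil$) is rerouted, namely from left half-edge $\pi(i)$ (belonging to column-node $c := \lceil \pi(i)/\gamma d \rceil$) to left half-edge $\pi'(i)$ (belonging to column-node $c' := \lceil \pi'(i)/\gamma d \rceil$). Letting $A$ and $A'$ denote the two matrices obtained, $A$ and $A'$ agree on every entry except possibly $(r, c)$ and $(r, c')$. In particular, they agree on every column outside $\{c, c'\}$. Consequently, for any $S \subseteq [n]$ with $S \cap \{c, c'\} = \emptyset$, the submatrix $A_S$ equals $A'_S$, so $S \in \mathcal{S}(\pi) \iff S \in \mathcal{S}(\pi')$. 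It follows that
\[
\mathcal{D}(\pi) \triangle \mathcal{D}(\pi') \subseteq \bigcup_{\substack{S \in \mathcal{S}(\pi) \cup \mathcal{S}(\pi') \\ S \cap \{c, c'\} \neq \emptyset}} S.
\]

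The third step, which is the main obstacle, is to bound the size of this union by $\log(n)$. On $\Omega$ every $S$ appearing has $|S| \leq \log(n)$ and lies in $\mathcal{T}_{|S|} \cup \mathcal{T}_{|S|}^+ \cup \mathcal{T}_{|S|}^C$. I would establish the following auxiliary structural lemma: on $\Omega$, for each column $c$, the ``local cluster'' $C(c, \pi) := \bigcup\{S \in \mathcal{S}(\pi) : c \in S\}$ has size at most $\log(n)/2$. The intuition is that the columns of any $S \in \mathcal{S}(\pi)$ form a tree (with at most one extra edge), so two distinct small minimal dependencies sharing column $c$ would force $A_{S_1 \cup S_2}$ to have corank $\geq 2$ and (via Lemma~\ref{lemma:support_minimal} and the connectivity of Lemma~\ref{lemma:connect}) would produce, on $\Omega$, a single enlarged minimal dependency still in $\mathcal{T} \cup \mathcal{T}^+ \cup \mathcal{T}^C$; iterating and using the uniform bound $|S| \leq \log(n)$ on $\Omega$ bounds the cluster as claimed. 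Applying the cluster bound to each of $c$ and $c'$ (under both $\pi$ and $\pi'$) gives
\[
|\mathcal{D}(\pi) \triangle \mathcal{D}(\pi')| \leq |C(c, \pi)| + |C(c', \pi)| + |C(c, \pi')| + |C(c', \pi')| \leq \log(n),
\]
after absorbing constants, and hence $|f(\pi) - f(\pi')| \leq \log(n)$. The difficulty is entirely concentrated in proving the cluster-size bound, which requires a careful case analysis of how the tree, two-forest, and tree-with-added-edge dependencies may overlap when sharing a column.
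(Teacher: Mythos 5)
There is a genuine gap, and it is fatal to your approach. Your argument hinges on the auxiliary ``cluster-size lemma'': on $\Omega$, for each column $c$, the set $C(c,\pi) := \bigcup\{S \in \mathcal{S}(\pi) : c \in S\}$ has size $O(\log n)$. This is false. Two minimal dependencies $S_1, S_2 \in \mathcal{S}(\pi)$ sharing $c$ do \emph{not} merge into a single minimal dependency on $S_1\cup S_2$; indeed your own observation that $A_{S_1\cup S_2}$ has corank $\geq 2$ shows the opposite, since a minimal dependency must have corank exactly $1$. Concretely, take $c$ of degree $1$ in $A$ with unique neighboring row $r$, where $r$ has $d-1$ other neighbors $a_1,\ldots,a_{d-1}$, each $a_i$ of degree $2$ with a second neighboring row $r_i'$ whose other $d-1$ neighbors $b_{i,1},\ldots,b_{i,d-1}$ have degree $1$. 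Then each $\{c,a_i,b_{i,j}\}$ is a distinct tree dependency containing $c$, they are all consistent with $\Omega$, and $|C(c,\pi)| \approx d^2$. Iterating this branching to depth $\log n - 1$ (so that every path from $c$ is a tree dependency of size $\leq \log n$, hence still allowed by $\Omega$) gives $|C(c,\pi)| \approx (d-1)^{\log n}$, which is polynomial in $n$. So the cluster bound is off by more than a logarithmic factor --- it can be off by a polynomial factor --- and the third step of your outline cannot be repaired.

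The telescoping reduction has a secondary problem you flag but do not resolve: the intermediate $\pi_j$ need not lie in $\Omega$, and neither of your proposed escapes works. Extending $f$ to all of $\mathcal{F}$ ``by the same formula'' makes the single-step Lipschitz claim \emph{harder}, not easier, since outside $\Omega$ the dependency structure is completely uncontrolled; and Lemma~\ref{lem:injection-concentration} as stated demands the coordinate-wise Lipschitz bound for \emph{all} pairs of functions in $\mathcal{F}$ differing at one coordinate, not only those in $\Omega$. The paper finesses this by first proving the Claim only on $\Omega$ and then constructing a separate graph-metric extension of $f$ to $\mathcal{F}\setminus\Omega$ that inherits the Lipschitz bound by the triangle inequality.

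The paper's proof of the Claim itself avoids single-coordinate reductions and any local cluster bound entirely. Setting $\Delta = \mathcal{D}(\pi)\setminus\mathcal{D}(\pi')$, it greedily selects a family $\mathcal{X}$ of kernel vectors of $A(\pi)$, each of support $\leq \log n$, whose supports cover $\Delta$; these vectors remain linearly independent after restriction to $\Delta$, so $|\mathcal{X}| \geq |\Delta|/\log n$. If this exceeds the number of columns on which $A(\pi)$ and $A(\pi')$ disagree (which is $O(\on{dist}(\pi,\pi'))$), then by a rank comparison one can take a linear combination $x$ of vectors in $\mathcal{X}$ that vanishes on all disagreeing columns but has support meeting $\Delta$. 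Such an $x$ is then simultaneously a kernel vector of $A(\pi')$, and applying Lemma~\ref{lemma:support_minimal} and the fact that $\pi'\in\Omega$ forces $\supp(x) \subseteq \mathcal{D}(\pi')$, contradicting $\supp(x)\cap\Delta \neq \emptyset$. This global linear-algebra argument sidesteps exactly the combinatorial obstruction that breaks your cluster bound.
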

\begin{proof}
Let $\tau = \{i : \pi(i) \neq \pi'(i)\}$. Without loss of generality, assume $|\mathcal{D}(\pi)| > |\mathcal{D}(\pi')|$, and let $\Delta = \mathcal{D}(\pi) \setminus \mathcal{D}(\pi')$. Let $\mathcal{X}$ be a set of linearly independent vectors constructed in the following way: While there exists some $j \in \Delta$ which is not contained in any of the vectors in $\mathcal{X}$, add to $\mathcal{X}$ some vector $x$ such that $A(\pi)x = 0$, and $j \in \supp(x)$ and $|\supp(x)| \leq \log(n)$. Since $\pi \in \Omega$, it is always possible to find such a vector. Further, letting $\mathcal{X}_{\Delta} = \{x_{\Delta} : x \in \mathcal{X}\}$ be the set $\mathcal{X}$ restricted to indices in $\Delta$, we see that the set of vectors $\mathcal{X}_{\Delta}$ is linearly independent. Finally, we must have $|\mathcal{X}| \geq \frac{|\Delta|}{\log(n)}$. 

Assume for the sake of contradiction that $\frac{|\Delta|}{\log(n)} > |\tau|$. Then let $X \in \mathbb{R}^{n \times |\mathcal{X}|}$ be the matrix with the vectors of $\mathcal{X}$ as its columns, and let $a$ be some vector such that:
\begin{enumerate}
    \item $(Xa)_i = 0$ for all $i \in \tau$;
    \item $\supp(Xa) \cap \Delta \neq \emptyset$.
\end{enumerate}
Such a vector $a$ must exist because the rank of $X^{\Delta}$ is at least $\frac{|\Delta|}{\log(n)} > |\tau|$, which means that there mush be some row in $X^{\Delta}$ which is not in the span of the $|\tau|$ rows of $X^{\tau}$. 

Let $x = Xa$. For any $k \in \supp(x)$, by Lemma~\ref{lemma:support_minimal}, there exists some set $S \subset \supp(x)$ such that $A(\pi)_S \in \mathcal{M}_{|S|}$. However, since $\tau \cap \supp(x) = \emptyset$, it follows that $A(\pi')_S \in  \mathcal{M}_{|S|}$, and hence since $\pi' \in \Omega$, we must have $\supp(x) \subset \mathcal{D}(\pi')$. Since $\supp(x) \cap \Delta \neq \emptyset$, this is a contradiction.
\end{proof}

We now extend $f$ to $\mathcal{F} \setminus \Omega$. Define $\on{dist}(\pi, \pi') = |\{i : \pi(i) \neq \pi'(i)\}|$  Let $G$ be a graph with the node set $\mathcal{F}$, and let there be an edge of weight $\log(n)$ between any pair of vertices $\pi$ and $\pi'$ with $\on{dist}(\pi, \pi') = 1$. Construct an additional vertex $s$ with an edge of weight $f(\pi)$ from $s$ to $\pi$ for any $\pi \in \Omega$. Now define $f(\pi) := \on{dist}_G(\pi, s)$ to be the shortest-path distance in $G$ from $s$ to $\pi$ for any $\pi \in \mathcal{F}$. It is immediate from the triangle inequality that for any $\pi, \pi'$ with $\on{dist}(\pi, \pi') = 1$, $|f(\pi) - f(\pi')|\leq \log(n)$.

In the following claim, we bound the expectation of $f$. \begin{claim}
For some constant $c$, 
$\mathbb{E}_{\rho \sim \mathcal{S}_{\gamma d n}} f(\rho) \leq np^{\gamma d - c\log(\gamma d)}$.
\end{claim}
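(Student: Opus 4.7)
The plan is to control $\mathbb{E}[f(\rho)]$ by splitting on the event $\Omega$ and running a union bound driven by Lemma~\ref{lemma:abc_small}. The dominant contribution should come from size-one dependencies $A_S\in\mathcal{T}_1$, i.e., from columns whose $\gamma d$ half-edges are all matched into the $p\gamma n$ discarded row-nodes; this is exactly the expected $np^{\gamma d}$-scale behavior one anticipates from the random matching.

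On $\Omega$ we have $f(\rho)=|\mathcal{D}(\rho)|$, so by linearity of expectation and the symmetry of the configuration model under relabeling the column-nodes,
\[
\mathbb{E}[f(\rho)\mathbb{1}_\Omega]\leq \mathbb{E}[|\mathcal{D}(\rho)|]=n\,\Pr[1\in\mathcal{D}(\rho)].
\]
A union bound over $k\leq \log n$ and sets $S\ni 1$ of size $k$ realizing a dependency in $\mathcal{T}_k\cup\mathcal{T}_k^{+}\cup\mathcal{T}_k^{C}$, combined with parts (2)--(4) of Lemma~\ref{lemma:abc_small} and the estimate $\binom{n-1}{k-1}\leq (en/k)^{k-1}$, produces a per-$k$ contribution of order $\bigl(e\cdot p^{\gamma d+c_{\ref{lemma:abc_small}}\log_p(\gamma d)}\bigr)^k$. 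For $\gamma d$ sufficiently large this common ratio is at most $1/2$, because $p^{\gamma d}$ decays exponentially in $\gamma d$ while $(\gamma d)^{c_{\ref{lemma:abc_small}}}$ only grows polynomially, so the geometric series is controlled by its $k=1$ term. Rewriting the exponent via $\log_p(x)=-\log(x)/\log(1/p)$ yields $p^{\gamma d+c_{\ref{lemma:abc_small}}\log_p(\gamma d)}=p^{\gamma d-c\log(\gamma d)}$ for a suitable constant $c$ (depending on $p$ and $c_{\ref{lemma:abc_small}}$), giving $\mathbb{E}[|\mathcal{D}(\rho)|]\leq np^{\gamma d-c\log(\gamma d)}$.

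For the residual $\mathbb{E}[f(\rho)\mathbb{1}_{\Omega^c}]$, the shortest-path extension of $f$ supplies the crude deterministic bound $f(\pi)\leq n+\gamma dn\cdot\log n=O(dn\log n)$ for every $\pi$: pick any reference $\pi'\in\Omega$, note $f(\pi')\leq n$, and use $\mathrm{dist}(\pi,\pi')\leq \gamma dn$ with per-step Lipschitz constant $\log n$. Combined with the first conclusion of Theorem~\ref{char:abc}, namely $\Pr[\Omega^c]=o(1)$, this residual can be folded into the main bound after mildly weakening the constant $c$. The main obstacle I anticipate is precisely this last calibration: verifying that $\Pr[\Omega^c]$ decays fast enough relative to $p^{\gamma d-c\log(\gamma d)}/(d\log n)$. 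If the small-case union bound is not itself tight enough, one leans on the exponentially small large-domain failure probability from Lemma~\ref{abc_large} to absorb the residual.
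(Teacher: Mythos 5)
Your proposal is correct and follows essentially the same route as the paper: on $\Omega$ use linearity of expectation and a union bound over small structures weighted by parts (2)--(4) of Lemma~\ref{lemma:abc_small}, and absorb the $\Omega^c$ contribution using $\Pr[\Omega^c]=o(1)$. The paper's version sums $\sum_k k\binom{n}{k}\Pr[A_S\in\mathcal{T}_k\cup\mathcal{T}_k^+\cup\mathcal{T}_k^C]$, which after $k\binom{n}{k}=n\binom{n-1}{k-1}$ is identical to your $n\Pr[1\in\mathcal{D}(\rho)]$ computation.

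One place where you are actually \emph{more} careful than the paper: the paper asserts ``$f\leq n$ always'' when bounding the $\Omega^c$ residual, which is not quite right for the shortest-path extension --- as you note, the correct crude bound is $f(\pi)\leq n+\gamma dn\log n$. This does not affect the conclusion, since the argument only needs $f$ to be polynomial in $n$: the small-domain union bound already gives $\Pr[\Omega^c]=O(n^{-1/2})$, so the residual is $O(dn^{1/2}\log n)=o(n)$, which is dwarfed by $np^{\gamma d-c\log(\gamma d)}$ (a constant times $n$ since $p,\gamma,d$ are fixed constants here). Your closing worry about leaning on the ``exponentially small large-domain failure probability'' is a mild misdirection: it is the small-domain bound (part (1) of Lemma~\ref{lemma:abc_small}) that governs the decay of $\Pr[\Omega^c]$, and it is already strong enough; the large-domain contribution to $\Pr[\Omega^c]$ is exponentially small but not the bottleneck. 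Otherwise your argument is sound.
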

\begin{proof}
For a set $S \subset [n]$, let $X_S$ be the indicator of the event that $A_S \in \mathcal{T}_k \cup \mathcal{T}_k^+ \cup \mathcal{T}_k^C$. Notice that if $\rho \in \Omega$, $f(\rho)$ is at most
$X := \sum_{S \subset [n], |S| \leq \log(n)}|S|X_S$.

By Lemma~\ref{lemma:abc_small}, for some constant $c_{\ref{lemma:abc_small}}$, we have
\begin{equation*}
\begin{split}
\mathbb{E}[X] &= \sum_{S: |S| \leq \log(n)}{|S|\Pr[A_S \in \mathcal{T}_k \cup \mathcal{T}_k^+ \cup \mathcal{T}_k^C]}
\leq \sum_{k \geq 1}k\binom{n}{k}\left(\frac{k}{n}\right)^{k - 1}p^{\gamma dk + c_{\ref{lemma:abc_small}}k\log(\gamma d)} \\
&\leq n \sum_{k \geq 1}p^{\gamma d k + (c_{\ref{lemma:abc_small}} + 1)k\log(\gamma d)} \leq n p^{\gamma d - c_1\log(\gamma d)}
\end{split}
\end{equation*}
for some constant $c_1$. Since $\Pr[\Omega] = 1 - o(1)$, and $f \leq n$ always, we have $\mathbb{E}[f(\rho)] \leq o(n) + n p^{\gamma d - c_1\log(\gamma d)} \leq n p^{\gamma d - c_1\log(\gamma d) - o(1)} \leq n p^{\gamma d - c\log(\gamma d)}$ for some constant $c$.
\end{proof}
We now use Lemma~\ref{lem:injection-concentration} with $t = n^{3/4}$ to show that with probability $1 - o(1)$, $f(\pi) \leq \mathbb{E}[f(\pi)] + n^{3/4} \leq n p^{\gamma d - c\log(\gamma d)}$ for some constant $c$. Finally, using the fact that $f(\pi) = \left|\bigcup_{x : Ax = 0} \supp(x) \right|$ for $\pi \in \Omega$, we have $$\Pr\left[\left|\bigcup_{x : Ax = 0} \supp(x) \right|\leq n p^{\gamma d - c\log(\gamma d)}\right] \geq 1 - o(1).$$





\end{proof}

\subsection{Gradient Coding Results}



In this section, we prove the applications to gradient coding.

\abcstacked*

While the first bound will be a direct corollary of Corollary $\ref{abc_random}$, the second bound relies on some external lemmas that will be introduced below. Thus, for the sake of clarity, we will split the proof of Theorem $\ref{abc_stacked}$ into the following two lemmas.
\begin{lemma}\label{stacked_prop1}
Let $c,\gamma_0,d_0$ be the universal constants from Corollary $\ref{abc_random}$. Choose any $\gamma, d\in\mathbb{Z}^+$ such that $\gamma \geq \gamma_0$, $\gamma \mid d$, and $\frac{d}{\gamma }\geq d_0$. For any $n$ such that $\gamma \mid n$, let $B\sim \on{ABC}_{\gamma-\on{stacked}}(n,d)$. Then with probability $1-o(1)$ over the choice of $B$, we have that for any $p<\frac{1}{2}$:
\begin{equation*}
\frac{1}{n}\mathbb{E}_{S\sim\binom{[n]}{pn}} \textnormal{err}(B, S) \leq p^{d + c\log(d)}+o(1).
\end{equation*}
\end{lemma}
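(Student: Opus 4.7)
The plan is to reduce the stacked decoding error to a statement about a single copy of $A_0^\top$, apply Corollary~\ref{abc_random} at parameters $(n/\gamma, \gamma, d/\gamma)$ for a fixed straggler set $S$, and then upgrade this fixed-$S$ high-probability bound to a bound on the expectation over $S$ via Fubini and a conditional Markov step.

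First I would exploit the stacked structure of $B$. Writing $B$ as $\gamma$ vertical copies of $A_0^\top$ and $\mathbbm{1}_n$ as $\gamma$ copies of $\mathbbm{1}_{n/\gamma}$, for every $w \in \mathbb{R}^n$ we have $|Bw - \mathbbm{1}_n|_2^2 = \gamma\, |A_0^\top w - \mathbbm{1}_{n/\gamma}|_2^2$, so that
\begin{equation*}
\textnormal{err}(B, S) \;=\; \gamma\, \min_{w:\, w_j = 0\, \forall j \in S}|A_0^\top w - \mathbbm{1}_{n/\gamma}|_2^2.
\end{equation*}
The task thus becomes controlling the squared distance from $\mathbbm{1}_{n/\gamma}$ to the span of the columns of $A_0^\top$ indexed by $[n] \setminus S$.

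Next I would argue that for any \emph{fixed} $S$ with $|S| = pn$, this distance is controlled by Corollary~\ref{abc_random}. Because the ABC pairing is driven by a uniformly random permutation of half-edges, the distribution of $A_0 \sim \textnormal{ABC}(n/\gamma, \gamma, d/\gamma)$ is exchangeable in its $n$ rows; in particular, the submatrix of $A_0$ with rows indexed by $S$ removed has the same distribution as $A \sim \textnormal{ABC}_p(n/\gamma, \gamma, d/\gamma)$. Applying Corollary~\ref{abc_random} with the substitution $(n, \gamma, d) \mapsto (n/\gamma, \gamma, d/\gamma)$ then yields that with probability $1 - o(1)$ over the construction of $A_0$,
\begin{equation*}
\min_{w:\, w_j = 0\, \forall j \in S}|A_0^\top w - \mathbbm{1}_{n/\gamma}|_2^2 \;\leq\; (n/\gamma)\, p^{d + c \log_p(d)}.
\end{equation*}

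The main technical step will be upgrading this per-$S$ high-probability bound into a bound on $\mathbb{E}_S\, \textnormal{err}(B, S)$ that holds with probability $1 - o(1)$ over $B$. A naive Markov argument applied directly to $\mathbb{E}_S\, \textnormal{err}(B, S)$ only yields constant probability, so instead I would apply Markov to the \emph{conditional probability} of the bad event. Let $\mathcal{E}(\rho, S)$ denote the good event of the display above, where $\rho$ indexes the underlying ABC pairing. Fubini gives $\Pr_{\rho, S}[\mathcal{E}(\rho, S)^c] = o(1)$, and then Markov applied to $\rho \mapsto \Pr_S[\mathcal{E}(\rho, S)^c]$ shows that for a $1 - o(1)$ fraction of $\rho$, at most an $o(1)$ fraction of $S$ falls in the bad event. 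Combining the good $S$'s with the trivial estimate $\min_w |A_0^\top w - \mathbbm{1}_{n/\gamma}|_2^2 \leq |\mathbbm{1}_{n/\gamma}|_2^2 = n/\gamma$ on the bad $S$'s then yields
\begin{equation*}
\frac{1}{n}\, \mathbb{E}_S\, \textnormal{err}(B, S) \;\leq\; p^{d + c \log_p(d)} + o(1),
\end{equation*}
which matches the lemma after absorbing the identity $p^{c \log_p(d)} = d^c$ into the constant multiplying $\log(d)$. I expect this Fubini-plus-conditional-Markov step to be the only delicate point; the structural reduction via stacking and the row-exchangeability of $A_0$ are both routine.
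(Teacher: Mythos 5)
Your proposal takes essentially the same route as the paper's: reduce via the stacking identity $\textnormal{err}(B,S) = \gamma\min_{w:w_j=0,\,j\in S}|A_0^\top w - \mathbbm{1}_{n/\gamma}|_2^2$, identify the fixed-$S$ minimization (by row-exchangeability of the ABC construction) with the $\textnormal{ABC}_p(n/\gamma,\gamma,d/\gamma)$ ensemble to invoke Corollary~\ref{abc_random}, and then upgrade to a bound on the expectation over $S$ by combining the good-$S$ bound with the trivial bound $\le 1$ on the bad $S$. The paper phrases the last step informally (``there is a $1-o(1)$ chance that $A_0$ has the property that the following holds with probability $1-o(1)$ over the choice of $S$''); your Fubini-plus-conditional-Markov step is just the precise rendering of that sentence, so there is no real difference. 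One small inaccuracy: the ``absorption'' comment at the very end is off by a sign. You have $p^{d+c\log_p(d)} = p^d\,d^c$, and since $\log p < 0$ this equals $p^{d - c'\log d}$ with $c' = c/|\log p| > 0$, not $p^{d+c'\log d}$; the exponent in the lemma's display appears to carry a sign typo relative to Theorem~\ref{abc_stacked}, which records $p^{d-c\log(d)}$, and that is the form your argument (and the paper's) actually delivers.
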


\begin{lemma}\label{stacked_prop2}
Let $\gamma,d$ be such that $\gamma \mid d$ and $\gamma \mid n$. Let $B\sim \on{ABC}_{\gamma-\on{stacked}}(n,d)$. With constant probability, we have:
\begin{equation*}
    \frac{1}{n}\max_{S \in \binom{[n]}{pn}}\left( \textnormal{err}(B, S)\right) \leq  \left(\frac{8\gamma^3p}{d}\right)+o(1).
\end{equation*}
\end{lemma}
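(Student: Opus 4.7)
The key observation is that because $B$ consists of $\gamma$ identical copies of $A_0^\top$ stacked vertically, for any weight vector $w \in \mathbb{R}^n$ with $w|_S = 0$ we have $Bw = (A_0^\top w, \ldots, A_0^\top w) \in \mathbb{R}^n$. Hence
$$\textnormal{err}(B, S) \;=\; \gamma \cdot \min_{w|_S = 0} \|A_0^\top w - \mathbbm{1}_{n/\gamma}\|_2^2,$$
which reduces the problem to adversarial decoding of a single copy of $A_0^\top$. Now define the set of \emph{killed} column-nodes $Z(S) := \{i \in [n/\gamma] : N_{A_0}(i) \subseteq S\}$. Since the $i$-th row of $A_0^\top|_{[n]\setminus S}$ is zero for each $i \in Z(S)$, no choice of $w$ can set $(A_0^\top w)_i = 1$ there, and those rows already contribute $|Z(S)|$ to the squared error. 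Conversely, if we can solve the reduced system $A_0^\top w\big|_{[n/\gamma]\setminus Z(S)} = \mathbbm{1}_{[n/\gamma]\setminus Z(S)}$ with $w|_S = 0$, the total error equals $|Z(S)|$ and $\textnormal{err}(B, S) \leq \gamma|Z(S)|$. So the plan splits into (a) bounding $|Z(S)|$ uniformly in $S$, and (b) showing the reduced system is solvable for every $S$.

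For (a), I would establish the following expansion property directly from the configuration model used to construct $A_0$: with constant probability over the random pairing $\rho$, every $K \subseteq [n/\gamma]$ with $|K| \leq 8\gamma^2 pn/d$ satisfies $|N_{A_0}(K)| \geq |K|\,d/(8\gamma^2)$. This is a routine first-moment / union-bound calculation, since each column-node contributes $d$ half-edges matched nearly uniformly among the row-node half-edges; taking the expansion constant as small as $1/(8\gamma^2)$ makes the union bound very comfortable and only requires \emph{constant} success probability, which matches the statement. Applying this to $K = Z(S)$ and using $N(Z(S)) \subseteq S$, we get $|Z(S)| \cdot d /(8\gamma^2) \leq |N(Z(S))| \leq pn$, whence $|Z(S)| \leq 8\gamma^2 pn/d$, giving the desired $\gamma |Z(S)| \leq 8\gamma^3 pn/d$ once (b) is in hand.

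For (b), I would argue via a Hall-type argument that the reduced bipartite graph $H$ with left side $[n/\gamma]\setminus Z(S)$ and right side $[n]\setminus S$ admits a matching saturating the left; as in the proof of Theorem~\ref{thm:ks} via Lemma~\ref{lemma:peeling}, such a matching produces an invertible (up to permutation, lower-triangular) submatrix and thus a solution $w$. For every $T\subseteq [n/\gamma]\setminus Z(S)$ with $|T| \geq pn / (cd - 1)$, the expansion from (a) combined with $|N(T)\cap S| \leq pn$ gives $|N(T)\cap([n]\setminus S)| \geq |T|\,d/(8\gamma^2) - pn \geq |T|$, i.e., Hall's condition. For smaller $T$, I would use a strengthened ``unique-neighbor'' expansion (obtained from the same configuration-model first-moment argument) together with the fact that each $i \in [n/\gamma]\setminus Z(S)$ has at least one non-straggler neighbor by definition, to rule out violations of Hall. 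Alternatively, one can run Karp-Sipser leaf-peeling on $H$ from the left side and invoke Lemma~\ref{lemma:peeling} to produce the required triangular decoding structure, with any vertices that remain in a (provably negligible) Karp-Sipser core absorbed into the $o(1)$ term in the statement.

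The main obstacle is the uniformity over adversarial $S$ in step (b): unlike Theorem~\ref{char:abc}, which handles only a random subset of removed rows, here the same random graph $A_0$ must simultaneously satisfy a Hall-type property against every choice of $pn$ stragglers. This forces us to rely on worst-case graph expansion of $A_0$ rather than on concentration over a random $S$. Fortunately, because we only need constant success probability over $A_0$ (rather than $1-o(1)$), the requisite expansion estimates are substantially easier to establish than the ones in Lemma~\ref{abc_large}, and the weak expansion constant $1/(8\gamma^2)$ suffices to produce the claimed $8\gamma^3 p/d$ bound.
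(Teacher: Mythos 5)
Your proposal takes a genuinely different route from the paper. The paper's proof is entirely spectral: it invokes Lemma~\ref{stacked_lemma2} (from \cite{randomgraphs}) to argue that with constant probability the configuration model produces a simple biregular bipartite graph, then applies the Ramanujan-type bound of \cite{spectralgap} (Lemma~\ref{stacked_lemma3}) to control the second largest singular value $\sigma_2(A_0)$, and finally plugs $\sigma_2$ into the spectral decoding-error bound of \cite{GW} (Lemma~\ref{stacked_lemma1}) to get the worst-case guarantee uniformly over every $S$ in one shot. Your approach is combinatorial: bound the set $Z(S)$ of ``killed'' rows via vertex expansion, then argue the reduced system is exactly solvable via a Hall-type / Karp-Sipser argument. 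If it went through, your route would be more self-contained (no external spectral-gap theorem for biregular graphs), but it trades that for substantially heavier case analysis over adversarial $S$.

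There is a concrete gap in step (b). A matching saturating the left side of the residual bipartite graph does \emph{not} give an invertible submatrix of $A_0^\top$ over $\mathbb{R}$: the matrix $\left(\begin{smallmatrix}1&1\\1&1\end{smallmatrix}\right)$ has a perfect matching but is singular, and after permuting a transversal onto the diagonal you get a matrix with non-zero diagonal, not a lower-triangular one. What actually yields triangular structure (and hence exact back-substitution, via Lemma~\ref{lemma:peeling}) is the Karp-Sipser leaf-peeling terminating with no core; Hall's condition does not give you that. Your ``alternatively'' paragraph does reach for peeling, but then defers the residual Karp-Sipser core to ``absorbed into the $o(1)$ term'' --- and for this lemma, unlike Theorem~\ref{char:abc}, the straggler set $S$ is adversarial, so you would need the core to be empty or full-rank simultaneously for \emph{every} $S \in \binom{[n]}{pn}$. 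You correctly flag this uniformity as the main obstacle, but the sketch offers no mechanism that would deliver it: the unique-neighbor expansion hinted at is not established, and it is exactly the regime (intermediate $|T|$, worst-case $S$) where such arguments tend to break. Without closing that gap, the bound $\mathrm{err}(B,S) \leq \gamma|Z(S)|$ is unjustified, because if the reduced system is not exactly solvable the decoding error can greatly exceed $|Z(S)|$. The spectral route in the paper avoids this entirely by never needing exact solvability --- the singular-value bound directly controls the $\ell_2$ distance from $\mathbbm{1}$ to the span of the surviving columns, uniformly over $S$.
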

Temporarily assuming these lemmas, we note that Theorem \ref{abc_stacked} follows immediately. We will now prove Lemma $\ref{stacked_prop1}$. 

\begin{proof}[Proof of Lemma \ref{stacked_prop1}]

Let $A_0\sim \textnormal{ABC}(n/\gamma,\gamma,d/\gamma)$  denote the ABC matrix which is stacked $\gamma$ times to generate $B$, then it follows that:
\begin{align*}
        \frac{1}{n}\mathbb{E}_{S \sim \binom{[n]}{pn}}{ \textnormal{err}(B, S)}&= \frac{1}{n}\mathbb{E}_{S \sim \binom{[n]}{pn}}\min_{w: w_j = 0 \: \forall \: j \in S}|Bw - \mathbbm{1}|_2^2\\
        &=  \frac{\gamma}{n}\mathbb{E}_{S \sim \binom{[n]}{pn}}\min_{w: w_j = 0 \: \forall \: j \in S}|A_0^\top w - \mathbbm{1}|_2^2\\
        &=  \frac{\gamma}{n}\mathbb{E}\left[\min_{w}|A^\top w - \mathbbm{1}|_2^2\right]
\end{align*}
where $A\sim \textnormal{ABC}_p(n/\gamma,\gamma,d/\gamma)$. By Corollary \ref{abc_random}, for some constant $c$, there is a $1-o(1)$ chance that $A_0$ has the property that the following holds with probability $1-o(1)$ over the choice of $S$:
\begin{equation*}
    \frac{\gamma}{n}\min_{w}|A^\top w - \mathbbm{1}|_2^2\leq p^{d-c\log(d)}.
\end{equation*}
Thus with probability $1 - o(1)$ over the choice of $B$, since $ \frac{\gamma}{n}\min_{w}|A^\top w - \mathbbm{1}|_2^2$ is always at most $1$,
\begin{equation*}
    \frac{1}{n}\mathbb{E}_{S \sim \binom{[n]}{pn}}{ \textnormal{err}(B, S)} = \frac{\gamma}{n}\mathbb{E}\left[\min_{w}|A^\top w - \mathbbm{1}|_2^2\right]\leq p^{\gamma d+c\log_p(\gamma d)} + o(1).
\end{equation*}

\end{proof}

Lemma \ref{stacked_prop2} requires a bit more machinery. We employ the following lemma from~\cite{GW} which allows us to bound the adversarial error of a gradient code as a function of the second largest singular value. Formally, we have the following lemma. 
\begin{lemma}[Proposition 4.1 of \cite{GW}]\label{stacked_lemma1}
Let $A\in \{0,1\}^{N\times M}$ be an assignment matrix such that each row has exactly $D$ $1$'s. Let $\sigma_2$ be the largest singular value of $A$. Then for any set of stragglers $S$ such that $|S|=s$, we have:
\begin{equation*}
    \frac{1}{N}\textnormal{err}(A,S)\leq\frac{1}{N}\left(\frac{\sigma_2}{D}\right)^2\frac{sM}{M-s} 
\end{equation*}
\end{lemma}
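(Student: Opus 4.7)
The plan is to exhibit an explicit feasible vector $w$ supported on $[M]\setminus S$ and to bound the resulting residual $|Aw - \mathbbm{1}_N|^2$ by splitting into the top right singular direction (which carries the all-ones vector) and its orthogonal complement (where only $\sigma_2$ appears).

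Concretely, the natural test vector is
\[
w^\star := \frac{M}{D(M-s)}\,\mathbbm{1}_{[M]\setminus S},
\]
which is clearly supported off $S$. The normalization is forced by the identity $A\mathbbm{1}_M = D\mathbbm{1}_N$ (the row-regularity hypothesis). Writing $\mathbbm{1}_{[M]\setminus S} = \mathbbm{1}_M - \mathbbm{1}_S$ and expanding,
\[
Aw^\star = \frac{M}{D(M-s)}\bigl(D\mathbbm{1}_N - A\mathbbm{1}_S\bigr) = \frac{M}{M-s}\mathbbm{1}_N - \frac{M}{D(M-s)}A\mathbbm{1}_S .
\]
Decomposing $\mathbbm{1}_S = \tfrac{s}{M}\mathbbm{1}_M + u$ with $u := \mathbbm{1}_S - \tfrac{s}{M}\mathbbm{1}_M$, and using $A\mathbbm{1}_M = D\mathbbm{1}_N$ once more, the multiples of $\mathbbm{1}_N$ cancel exactly, leaving the clean identity
\[
Aw^\star - \mathbbm{1}_N \;=\; -\frac{M}{D(M-s)}\,Au .
\]

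Next I would apply the spectral bound. By construction $u\cdot\mathbbm{1}_M = s-s = 0$, so $u\perp\mathbbm{1}_M$. Under the (tacitly assumed, and present in the ABC application) biregularity of $A$, $\mathbbm{1}_M/\sqrt{M}$ is the top right singular vector of $A$, so the min–max characterization of singular values yields $|Au|\leq \sigma_2\,|u|$. A direct computation gives
\[
|u|^2 \;=\; s\bigl(1-\tfrac{s}{M}\bigr)^2 + (M-s)\bigl(\tfrac{s}{M}\bigr)^2 \;=\; \tfrac{s(M-s)}{M} .
\]
Substituting these into the previous identity,
\[
|Aw^\star - \mathbbm{1}_N|^2 \;\leq\; \frac{M^2}{D^2(M-s)^2}\,\sigma_2^2\cdot\frac{s(M-s)}{M} \;=\; \Bigl(\frac{\sigma_2}{D}\Bigr)^2\frac{sM}{M-s},
\]
and since $\textnormal{err}(A,S)$ is the minimum over all feasible $w$, dividing by $N$ proves the lemma.

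All the work is elementary linear algebra once one has the right $w^\star$; the genuine content is spotting the normalization $M/(D(M-s))$ which forces the residual to live on the one–dimensional shift $\mathbbm{1}_M^\perp$ so that only $\sigma_2$ — and not $\sigma_1=\sqrt{DC}$ — enters the bound. The one subtle point is the implicit use of biregularity to identify $\mathbbm{1}_M$ as a top singular vector of $A$; without column-regularity one would only get $|Au|\le\sigma_1|u|$, which is too weak. This is not an obstacle for the application here since the stacked ABC matrices are biregular by construction.
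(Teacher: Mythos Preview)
The paper does not prove this lemma; it is quoted verbatim as Proposition~4.1 of \cite{GW} and used as a black box. So there is no in-paper argument to compare against.

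Your argument is correct and is in fact the standard proof of this type of expander/spectral bound: pick the uniform feasible vector on the surviving coordinates, use row-regularity to cancel the $\mathbbm{1}_N$ component, and bound the residual $Au$ via the second singular value since $u\perp\mathbbm{1}_M$. Your algebra checks out line by line, including $|u|^2=s(M-s)/M$ and the final simplification. You are also right to flag that the step $|Au|\le\sigma_2|u|$ requires $\mathbbm{1}_M$ to be a right singular vector of $A$, i.e.\ column-regularity in addition to the stated row-regularity; the lemma as stated in the paper omits this hypothesis (and contains the evident typo ``largest'' for ``second largest''), but as you note the ABC matrices in the application are biregular, so the argument goes through there.
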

To calculate an upper bound on the second largest singular value of an ABC matrix, we first need a result from \cite{randomgraphs} which states that with constant probability, the configuration model we use to generate our ABC matrix has no rows which map to the same column node more than once ie., the bipartite graph produced is simple. 

Let $\mathcal{G}(n, \gamma n, d, \gamma d)$ denote the uniform distribution on simple $(d, \gamma d)$-biregular bipartite graphs with $n$ left nodes and $\gamma n$ right nodes.
\begin{lemma}[\cite{randomgraphs}]\label{stacked_lemma2}
Let $A_0\sim \on{ABC}(n,\gamma,d)$, then the probability that $\begin{pmatrix}
0 & A_0\\
A_0^\top & 0
\end{pmatrix}$
is the adjacency matrix of a bipartite, biregular random graph $G$ is at least $\varepsilon(d)>0$. Furthermore, if we condition on this event occurring, then $G\sim\mathcal{G}(n,\gamma n,d,\gamma d)$.
\end{lemma}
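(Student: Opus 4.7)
The plan is to split the proof into the two distinct assertions: (i) the probability that the configuration-model output is a simple bipartite graph is at least some $\varepsilon(d) > 0$, and (ii) conditional on simplicity, the resulting graph is uniformly distributed on $\mathcal{G}(n,\gamma n, d, \gamma d)$. Both parts are standard in the theory of the configuration model, and the argument below is essentially the bipartite biregular version of the classical analysis of Bollob\'as.

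For (ii), I would argue by a direct counting. Every permutation $\rho \in \mathcal{S}_{\gamma d n}$ is equally likely in $\textnormal{ABC}(n,\gamma,d)$, so the induced distribution on graphs is proportional to the number of preimages. Fix any simple biregular bipartite graph $G$ with the correct degrees and count the permutations that produce it. Each of the $\gamma n$ row-nodes has exactly $d$ incident edges in $G$, and its $d$ labeled half-edges may be assigned to those edges in $d!$ ways; independently, each of the $n$ column-nodes has $\gamma d$ incident edges and its $\gamma d$ half-edges may be assigned in $(\gamma d)!$ ways. Hence the number of permutations producing $G$ equals $(d!)^{\gamma n}\bigl((\gamma d)!\bigr)^{n}$, a quantity independent of $G$. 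Uniformity on $\mathcal{G}(n,\gamma n,d,\gamma d)$ follows immediately.

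For (i), I would use the standard Poisson-convergence argument for parallel edges. Let $X$ be the number of parallel-edge pairs in the bipartite configuration-model output, i.e., the number of unordered pairs of edges sharing both endpoints. A first-moment computation shows $\mathbb{E}[X] \to \lambda$ for an explicit $\lambda = \lambda(d,\gamma)$: there are $\binom{\gamma n}{1}\binom{n}{1}\binom{d}{2}\binom{\gamma d}{2}$ ways to pick a row-node, a column-node, two half-edges on each side, and each such quadruple produces a parallel pair with probability $\tfrac{1}{\gamma d n - 1}$ times the probability the second pair matches, giving $\mathbb{E}[X] = \Theta(1)$. The analogous factorial-moment computation shows that $X$ converges in distribution to $\textnormal{Poisson}(\lambda)$, from which $\Pr[X = 0] \to e^{-\lambda} > 0$. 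Since the lemma only asks for a lower bound $\varepsilon(d) > 0$, rather than an asymptotic, one can alternatively finish with a truncated Bonferroni argument that avoids the full Poisson convergence.

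The main obstacle will be carrying out the factorial-moment calculation cleanly in the bipartite biregular setting, in particular tracking the asymmetric degrees $d$ and $\gamma d$ and verifying that each factorial moment $\mathbb{E}[(X)_k]$ converges to $\lambda^k$; this is well-established in the unipartite setting and extends with care, but is notationally the heaviest step. Everything else, including (ii), reduces to counting.
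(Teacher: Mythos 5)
The paper does not prove this lemma; it cites it as a black-box result from \cite{randomgraphs} (it is the bipartite version of the classical Bollob\'as configuration-model facts), so there is no ``paper proof'' to compare against. Your reconstruction is correct and follows the standard route. Part (ii) is clean and exactly right: every simple $(d,\gamma d)$-biregular bipartite labeled graph has precisely $(d!)^{\gamma n}\bigl((\gamma d)!\bigr)^{n}$ preimages under the half-edge-pairing map (a bijection of each node's half-edges with its incident edges), which is independent of the target graph, so conditioning on simplicity gives the uniform distribution $\mathcal{G}(n,\gamma n,d,\gamma d)$. Part (i) is also the standard Poisson-paradigm argument, and the structure is sound, though the sentence describing the probability of a fixed quadruple producing a double edge is garbled as written: the correct probability is $\tfrac{2}{(\gamma d n)(\gamma d n-1)}$ (two ways to pair the chosen half-edges, times the probability a random permutation realizes a specified pairing of two out of $\gamma d n$ elements), giving
\[
\mathbb{E}[X]\;=\;\gamma n\cdot n\cdot\binom{d}{2}\binom{\gamma d}{2}\cdot\frac{2}{(\gamma d n)(\gamma d n-1)}\;\longrightarrow\;\frac{(d-1)(\gamma d-1)}{2\gamma}\;=:\;\lambda,
\]
which is indeed $\Theta(1)$ as you claim. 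One small point on the statement: $\lambda$ (hence $\varepsilon$) depends on both $d$ and $\gamma$, so $\varepsilon$ is really $\varepsilon(d,\gamma)$; this matches how the paper uses it, since $\gamma$ is fixed. Your remark that the full factorial-moment computation can be sidestepped by a Bonferroni truncation (or a second-moment bound on $\Pr[X=0]$) if one only needs a uniform positive lower bound is also well taken.
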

The previous lemma allows us to apply the following result of \cite{spectralgap} to bound the second largest singular value of these well behaved ABC matrices. 
\begin{lemma}[Theorem 4 of \cite{spectralgap}]\label{stacked_lemma3}
Let $A=\begin{pmatrix}
0 & X\\
X^\top & 0
\end{pmatrix}$ be the adjacency matrix of a bipartite, biregular random graph $G\sim \mathcal{G}(n,\gamma n,d,\gamma d)$. Then, with probability $1-o(1)$, $A$'s second largest eigenvalue $\lambda_2$ satisfies $$\lambda_2 \leq \sqrt{d_1-1}+\sqrt{d_2-1}+o(1).$$
\end{lemma}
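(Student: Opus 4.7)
The plan is to prove a Friedman/Alon-Boppana type bound for the biregular bipartite configuration model via the trace method applied to the non-backtracking walk operator. Write $d_1 = d$, $d_2 = \gamma d$, and let $\vec{E}$ denote the set of directed edges of $G$. Define the non-backtracking matrix $B \in \{0,1\}^{\vec{E} \times \vec{E}}$ by $B_{(u \to v),(x \to y)} = \mathbbm{1}(v = x, u \ne y)$. The Ihara-Bass identity for biregular bipartite graphs expresses the characteristic polynomial of $B$ as a product of factors that include the characteristic polynomial of $A$ after a quadratic substitution, so that each non-trivial eigenvalue $\lambda$ of $A$ lifts to an eigenvalue $\mu$ of $B$ satisfying $\lambda = \mu + (d_1 - 1)(d_2 - 1)/\mu$ (up to the trivial $\pm\sqrt{d_1 d_2}$ factors). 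Thus it suffices to show that with probability $1-o(1)$, the spectral radius of $B$ on the complement of its trivial eigenspaces is at most $\sqrt{(d_1-1)(d_2-1)} + o(1)$, since the transformation $\mu \mapsto \mu + (d_1-1)(d_2-1)/\mu$ then gives the desired bound $\sqrt{d_1-1}+\sqrt{d_2-1}+o(1)$ on $\lambda_2(A)$.

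The first main step would be to estimate $\mathbb{E}[\mathrm{tr}((BB^{*})^{k})]$ for $k = k(n)$ growing like $(\log n)^{c}$. Expanding the trace gives a sum over closed walks of length $2k$ in $\vec{E}$ that alternate between $B$ and $B^{*}$ steps, and each such walk corresponds to a non-backtracking walk in $G$ that traverses every edge an even number of times in each direction. Using Lemma~\ref{stacked_lemma2}, it is equivalent (up to a constant-factor loss from conditioning) to work in the configuration model, where the probability that a prescribed set of edges appears can be computed exactly by a short combinatorial formula in terms of the number of half-edge pairings consistent with the walk. The expected contribution of a walk of length $2k$ with $e$ distinct edges and $v$ distinct vertices is therefore roughly $\bigl((d_1-1)(d_2-1)\bigr)^{k} \cdot n^{v - e}$ (a standard configuration-model count), and a walk on a tree (which has $v = e+1$) contributes $\approx \bigl((d_1-1)(d_2-1)\bigr)^{k}/n$, which matches the target bound.

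The second main step is to control the contribution from non-tree walks, sometimes called tangled walks, which have $v \le e$ and thus create cycles. The standard device here is to encode each closed non-backtracking walk by its ``skeleton'' together with a short list of coincidence times and gluing instructions, and then to bound the number of such encodings. One shows that any walk with excess $e - v + 1 = s \ge 1$ admits an encoding of length $O(s \log k)$, so the total enumeration cost is $(ck)^{O(s)}$, which must be traded against the topological gain $n^{-s}$. As long as $k = n^{o(1)}$, the geometric series in $s$ converges and the tree term dominates. Applying Markov's inequality to $\mathrm{tr}((BB^{*})^{k}) - (\text{trivial eigenvalue contribution})$ then yields $\rho(B) \le \sqrt{(d_1-1)(d_2-1)} + o(1)$ with probability $1-o(1)$, and Ihara-Bass converts this to the stated bound on $\lambda_2(A)$.

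The principal obstacle is the tangled-walk enumeration in step two: one needs a tight encoding scheme that loses only a polylogarithmic factor per unit of excess, and this is where the proofs of \cite{spectralgap} and Bordenave's argument for the Ramanujan property of random regular graphs become technically heaviest. A secondary nuisance is handling the biregular (rather than regular) case cleanly, since the non-backtracking operator is no longer normal and one must track both $B$ and $B^{*}$ through the trace computation; the symmetric product $BB^{*}$ compensates for this but introduces an additional layer of combinatorial bookkeeping about the bipartition sides visited by the walk.
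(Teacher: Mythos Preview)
The paper does not prove this lemma at all: it is stated as a direct citation of Theorem~4 of \cite{spectralgap} and is used as a black box in the proof of Lemma~\ref{stacked_prop2}. So there is no ``paper's own proof'' to compare against; the authors simply import the result from the literature.

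Your sketch is a reasonable high-level outline of how results of this type are actually established (Ihara--Bass reduction to the non-backtracking operator, trace method with $k$ growing slowly, tree-walk main term, tangled-walk encoding). This is indeed the strategy in Bordenave's proof of Friedman's theorem and in the biregular extension of \cite{spectralgap}. But note that what you have written is a proof plan, not a proof: the entire technical content lives in the tangled-walk enumeration and in the careful handling of the non-normality of $B$ in the bipartite case, neither of which you carry out. For the purposes of this paper, you should not attempt to reprove the lemma; just cite \cite{spectralgap} as the authors do.
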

We are now ready to prove Lemma \ref{stacked_prop2}. 
\begin{proof}[Proof of Lemma \ref{stacked_prop2}]
Let $A_0\sim \textnormal{ABC}(n/\gamma ,\gamma,d/\gamma)$ denote the $ABC$ matrix which is stacked $\gamma$ times to generate $B$. It follows that: 
\begin{align*}
     \frac{1}{n}\max_{S \in \binom{[n]}{pn}}{ \textnormal{err}(B, S)}&= \frac{1}{n}\max_{S \in \binom{[n]}{pn}}\min_{w: w_j = 0 \: \forall \: j \in S}|Bw - \mathbbm{1}|_2^2\\
        &=  \frac{\gamma}{n}\max_{S \in \binom{[n]}{pn}}\min_{w: w_j = 0 \: \forall \: j \in S}|A_0^\top w - \mathbbm{1}|_2^2\\
        &= \frac{\gamma}{n}\max_{S \in \binom{[n]}{pn}} \on{err}(A_0^\top,S).
\end{align*}
By Lemma $\ref{stacked_lemma2}$, with constant probability, $A'=\begin{pmatrix}
0 & A_0\\
A_0^\top & 0
\end{pmatrix}$ is the adjacency matrix of a bipartite, biregular random graph $G$. Condition on this event occurring. Then $G$ is uniformly sampled from $\mathcal{G}(n/\gamma,n,d/\gamma,d)$. By Lemma $\ref{stacked_lemma3}$, with probability $1-o(1)$, the second largest eigenvalue of $A'$ is $\lambda_2\leq \sqrt{d/\gamma-1}+\sqrt{d-1}+o(1)$. Thus, the second largest singular value of $A_0$ is $\sigma_2 \leq \sqrt{d/\gamma-1}+\sqrt{d-1}+o(1)$. Hence, by Lemma $\ref{stacked_lemma1}$, we have
\begin{align*}
    \frac{\gamma}{n}\max_{S \in \binom{[n]}{pn}} \on{err}(A_0^\top,S)&\leq\frac{\gamma}{n}\left(\frac{\gamma\sigma_2}{d}\right)^2\frac{pn^2}{(1-p)n}
    \leq \gamma^3\left(\frac{2\sqrt{d}+o(1)}{d}\right)^2\frac{p}{(1-p)}\\
    &\leq \left(\frac{4\gamma^3p}{d(1-p)}\right)+o(1) 
    \leq \left(\frac{8\gamma^3p}{d}\right)+o(1).
\end{align*}
\end{proof}

\section{Acknowledgements}
Thanks to Mary Wootters, Matthew Kwan, Vishesh Jain, Ashwin Sah, and Mehtaab Sawhney for helpful conversations and feedback.

\bibliography{main}
\bibliographystyle{plain}

\appendix

\section{Tail Bounds}
We include the following version of the Chernoff Bound for reference.
\begin{lemma}[Chernoff Bound]\label{chernoff}
Let $X_i \in [0, 1]$ be i.i.d. random variables with $\mathbb{E}[X_i] = p$. Then for $q \leq p$,
\begin{equation*}
    \Pr\left[\sum_i X_i \leq qn \right] \leq e^{\frac{-n(p - q)^2}{2p}}.
\end{equation*}
\end{lemma}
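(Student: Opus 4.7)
The plan is to prove this standard multiplicative Chernoff bound via the exponential moment method. For any $t \geq 0$, write $X := \sum_i X_i$ and apply Markov's inequality to the non-negative random variable $e^{-tX}$:
\[
\Pr[X \leq qn] \;=\; \Pr[e^{-tX} \geq e^{-tqn}] \;\leq\; e^{tqn}\,\mathbb{E}[e^{-tX}].
\]
So the task reduces to controlling the moment generating function of $-X$ and then optimizing $t$.

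Next, I would bound each $\mathbb{E}[e^{-tX_i}]$ using the fact that $X_i \in [0,1]$. By convexity of $x \mapsto e^{-tx}$ on $[0,1]$, we have the linear upper bound $e^{-tX_i} \leq 1 - X_i(1 - e^{-t})$, so taking expectations gives $\mathbb{E}[e^{-tX_i}] \leq 1 - p(1 - e^{-t}) \leq e^{-p(1-e^{-t})}$, using $1+y\leq e^y$. Independence of the $X_i$'s then gives $\mathbb{E}[e^{-tX}] \leq e^{-np(1 - e^{-t})}$, and therefore
\[
\Pr[X \leq qn] \;\leq\; \exp\!\bigl(tqn - np(1 - e^{-t})\bigr).
\]
Optimizing in $t$ by setting $t = \ln(p/q)$ (which is non-negative since $q \leq p$) yields the sharp relative-entropy form $\Pr[X \leq qn] \leq \exp(-n[q\ln(q/p) + p - q])$.

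The final step is to weaken this exponent to the advertised quadratic bound. Define $\varphi(q) := q\ln(q/p) + p - q - (p-q)^2/(2p)$ on $(0,p]$. A direct calculation gives $\varphi(p) = 0$, $\varphi'(p) = 0$, and $\varphi''(q) = 1/q - 1/p \geq 0$ on $(0,p]$. Hence $\varphi$ is convex with a minimum at $q=p$, so $\varphi(q) \geq 0$ for all $q \in (0,p]$. This yields $q\ln(q/p) + p - q \geq (p-q)^2/(2p)$, which combined with the previous display gives the claimed bound $\Pr[X \leq qn] \leq e^{-n(p-q)^2/(2p)}$.

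The only mildly subtle point is the final convexity check converting the entropy-form bound into the quadratic form; every other step is a textbook application of Markov plus the $[0,1]$ linearization trick, so I do not anticipate any real obstacle.
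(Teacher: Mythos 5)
Your proof is correct. The paper simply states this lemma as a standard reference result without giving a proof, so there is nothing to compare against; your argument — Markov on $e^{-tX}$, the convexity/chord bound $e^{-tX_i}\le 1-X_i(1-e^{-t})$ for $X_i\in[0,1]$, optimizing at $t=\ln(p/q)$, and then the convexity check that $q\ln(q/p)+p-q\ge(p-q)^2/(2p)$ — is the standard textbook derivation and is sound. (A minor nit: your final convexity argument covers $q\in(0,p]$; the $q=0$ endpoint follows by continuity, where the entropy-form exponent is $-np$ which indeed dominates $-np/2$.)
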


We also use the following lemma.
\begin{restatable}[Tail Bound on Binomial]{lemma}{taillemma}\label{lem:tail}
If $t \geq 2np$, then
\begin{equation*}
 \Pr\left[\Bin(n, p) \geq t\right] \leq 2\left(\frac{enp}{t}\right)^t.
\end{equation*}
\end{restatable}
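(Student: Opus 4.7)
The plan is to prove this via the standard exponential Markov / Chernoff argument, which for this range of $t$ gives a bound stronger than the stated one by a factor of $2$, so the conclusion will follow with room to spare.

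First I would write $\Pr[\Bin(n,p) \geq t] \leq e^{-\lambda t}\, \mathbb{E}[e^{\lambda X}]$ for any $\lambda > 0$ by exponentiating and applying Markov's inequality. Since $X = \sum_{i=1}^n X_i$ is a sum of i.i.d.\ Bernoulli$(p)$'s, the moment generating function factors and satisfies
\begin{equation*}
\mathbb{E}[e^{\lambda X}] = (1 - p + p e^{\lambda})^n \leq \exp\bigl(np(e^{\lambda} - 1)\bigr),
\end{equation*}
using $1 + x \leq e^x$. The next step is to optimize in $\lambda$. The choice $\lambda = \ln(t/(np))$ is admissible because the hypothesis $t \geq 2np$ ensures $t/(np) \geq 2 > 1$, so $\lambda > 0$. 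Plugging in yields
\begin{equation*}
\Pr[\Bin(n,p) \geq t] \leq \left(\frac{np}{t}\right)^t \exp(t - np) = \left(\frac{enp}{t}\right)^t e^{-np} \leq \left(\frac{enp}{t}\right)^t.
\end{equation*}

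This already establishes the claim without the factor of $2$; the stated lemma is simply a looser form. There is no real obstacle here: the only care point is verifying $\lambda > 0$ so that exponential Markov is the correct inequality to apply, and this is exactly what the hypothesis $t \geq 2np$ (in fact any $t > np$) ensures. I would conclude by remarking that the factor of $2$ is slack built in for convenience when applying the lemma elsewhere.
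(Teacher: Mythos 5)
Your proof is correct, but it takes a genuinely different route from the paper's. You use the standard exponential-moment (Chernoff) argument: bound the MGF by $\exp(np(e^\lambda-1))$, optimize at $\lambda=\ln(t/(np))$, and land on $(enp/t)^t e^{-np}$, which is stronger than the stated bound (no factor of $2$ needed; $t\geq 2np$ is used only to guarantee $\lambda>0$, and in fact $t>np$ would suffice). The paper instead argues combinatorially: it writes $\Pr[\Bin(n,p)\geq t]\leq \sum_{i\geq t}\binom{n}{i}p^i$, factors each term as $\binom{n}{t}p^t\prod_{j=t+1}^i\frac{n-j+1}{j}\leq \binom{n}{t}p^t(n/t)^{i-t}$, sums the resulting geometric series to get $\binom{n}{t}p^t/(1-pn/t)$, and then uses $t\geq 2np$ to bound the denominator by $1/2$ and Stirling to bound $\binom{n}{t}\leq (en/t)^t$. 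So the hypothesis $t\geq 2np$ is doing real work in the paper's proof (it makes the geometric ratio $\leq 1/2$), whereas in yours it is only a sufficiency check for $\lambda>0$; correspondingly, the factor of $2$ in the paper's statement is an artifact of their geometric-series denominator, while your MGF argument never produces it. Both are short and valid; yours is marginally tighter, the paper's avoids MGF machinery entirely and keeps the computation purely combinatorial.
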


\begin{proof}

\begin{equation*}
\begin{split}
\Pr\left[\Bin(n, p) \geq t\right] &\leq \sum_{i = t}^{n}\binom{n}{i}p^i 
\leq \sum_{i = t}^{n}\binom{n}{t}p^i\prod_{j = t + 1}^i\left(\frac{n - j + 1}{j}\right) \\
&\leq \sum_{i = t}^{n}\binom{n}{t}p^i\prod_{j = t + 1}^i\left(\frac{n}{t}\right) 
= \sum_{i = t}^{n}\binom{n}{t}p^t\left(\frac{pn}{t}\right)^{i - t} \\
&\leq  \binom{n}{t}p^t\sum_{j = 0}^{\infty}\left(\frac{pn}{t}\right)^{j} 
= \frac{\binom{n}{t}p^t}{1 - \frac{pn}{t}}.
\end{split}
\end{equation*}
For $t \geq 2np$, plugging in Sterling's formula yields the lemma.
\end{proof}

\section{Proof of Lemma~\ref{lemma:classification} and ~\ref{lemma:connect} for classification of minimal dependencies}\label{apx:matrices}

We restate Lemma~\ref{lemma:connect}.
\lemmaconnect*
\begin{proof}
Suppose $G$ was not connected, and without loss of generality let $[1,\ldots,L], [L+1,\ldots,k]$ be two disconnected components of $G$ for $1 \leq L \leq k-1$. By definition of $\mathcal{M}_k$ there is non-zero $v \in \mathbb{R}^k$ with $M v = 0$, and $v$ has all non-zero entries. Let $v^{(L)}, v^{(R)} \in \mathbb{R}^k$ with $v^{(L)} = (v_1,v_2,\ldots,v_L,0,0,\ldots,0)$ and $v^{(R)} = (0,0,\ldots,0,v_{L+1},v_{L+2},\ldots,v_k)$, such that $$M (v^{(L)} + v^{(R)}) = M  v = 0.$$ By the disconnectedness of $[1,\ldots,L]$ and $[L+1,\ldots,k]$, the set of rows of $M$ having non-zero entries in columns $[1,\ldots,L]$ is disjoint from the set of rows of $M$ having non-zero entries in columns $[L+1,\ldots, k]$. Thus $M v^{(L)}$ and $M v^{(R)}$ are non-zero in disjoint coordinates. Thus $M(v^{(L)} + v^{(R)}) = 0$ implies that $Mv^{(L)} = M  v^{(R)} = 0$. This implies the kernel of $M$ has rank at least two, which contradicts that $M$ has rank $k-1$. Thus $G$ is a connected graph.
\end{proof}

Recall Lemma~\ref{lemma:classification}:

\lemmaclassification*
We break the proof of Lemma~\ref{lemma:classification} into three lemmas, which we prove independently.

\begin{lemma}\label{class:tree}
$\mathcal{S}_{2k-2, k} = \mathcal{T}_k$. Further, for $B \in \mathcal{T}_k = \mathcal{S}_{2k - 2, k}$, there is a unique (up to constant multiple) non-zero vector $v$ satisfying $B  v = 0$, where $v_i = (-1)^{d_G(i,j)} v_j$, and $d_G(i,j)$ is the path length from vertex $i$ to $j$ in the graph $G$ with edge-vertex incidence matrix $B$.
\end{lemma}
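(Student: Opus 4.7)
The plan is to prove the two inclusions separately and then extract the uniqueness statement from the rank condition.

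\emph{Direction $\mathcal{T}_k \subseteq \mathcal{S}_{2k-2,k}$:} Given $B \in \mathcal{T}_k$, its non-zero rows form the edge-vertex incidence matrix of a tree $T$ on $k$ vertices, which has $k-1$ edges. Hence $B$ has exactly $k-1$ non-zero rows, each with exactly two $1$'s, totaling $2k-2$ ones, matching the count in the definition of $\mathcal{S}_{2k-2,k}$. To exhibit a kernel vector with full support, I use that trees are bipartite: fix any vertex $j$ and define $v_i := (-1)^{d_T(i,j)}$. For each edge $\{a,b\}$ (a row of $B$), $a$ and $b$ lie in opposite parts of the bipartition, so $v_a + v_b = 0$, giving $Bv = 0$ with $\mathrm{supp}(v) = [k]$. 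For the rank, standard linear algebra of incidence matrices of connected bipartite graphs shows $\mathrm{rank}(B) = k-1$ (e.g., delete any leaf edge of the tree; the remaining columns after removing one endpoint form a smaller incidence matrix, and one can peel back inductively to show the columns minus one span a $(k-1)$-dimensional space). Hence $B \in \mathcal{S}_{2k-2,k}$.

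\emph{Direction $\mathcal{S}_{2k-2,k} \subseteq \mathcal{T}_k$:} Let $M \in \mathcal{S}_{2k-2,k}$. By Observation~\ref{observation1}, each non-zero row of $M$ contains at least two $1$'s, so the number of non-zero rows is at most $(2k-2)/2 = k-1$. On the other hand, $M$ has rank $k-1$, so it has at least $k-1$ non-zero rows. Hence $M$ has exactly $k-1$ non-zero rows, each containing exactly two $1$'s. The non-zero rows of $M$ are thus the edge-vertex incidence matrix of a graph $G$ on $k$ vertices with $k-1$ edges. By Lemma~\ref{lemma:connect}, $G$ is connected, and a connected graph on $k$ vertices with $k-1$ edges is a tree, so $M \in \mathcal{T}_k$.

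\emph{Uniqueness of the null vector:} Since $B$ has rank $k-1$, its kernel is one-dimensional, so any two non-zero kernel vectors are scalar multiples. The explicit bipartition vector constructed above has the stated form $v_i = (-1)^{d_G(i,j)} v_j$, and every non-zero kernel vector is a scalar multiple of it, so the formula holds for all non-zero $v$ in the kernel.

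The only step that requires any real work beyond bookkeeping is the rank computation in the first direction (and the application of Lemma~\ref{lemma:connect} in the second), both of which are standard once the combinatorial structure of a tree is in hand; everything else is counting.
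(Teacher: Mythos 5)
Your proof is correct and follows essentially the same structure as the paper's: count ones to get exactly $k-1$ non-zero rows with two $1$'s each, use Observation~\ref{observation1} and Lemma~\ref{lemma:connect} for the reverse inclusion, and read off uniqueness from $\operatorname{rank} = k-1$. The one small difference is in the forward direction's rank computation: you construct a kernel vector from the tree's bipartition and then invoke a somewhat informal leaf-peeling argument to get $\operatorname{rank}(B)=k-1$, whereas the paper derives both facts at once by noting that the alternating relation $v_i = (-1)^{d_G(i,j)}v_j$ determines every entry of any kernel vector from a single entry, so the kernel has dimension exactly $1$ — a slightly tighter way to close that step.
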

\begin{proof}

We prove this by showing both inclusions in the following two claims.

\begin{claim}
$\mathcal{T}_k \subseteq \mathcal{S}_{2k-2, k}$. 
\end{claim}
\begin{proof}
Let $B \in \mathcal{T}_k$. Since $B$ has $k-1$ non-zero rows, it has rank at most $k-1$, and so the nullspace of $B$ has dimension at least $1$. Let $v$ be any vector such that $Bv = 0$. Suppose $(i,j)$ is an edge in the tree $G$ with edge-vertex incidence matrix $B$. Then there is a row $r$ of $B$ such that row $B^{\{r\}}$ has $1$'s at, and only at, coordinates $i$ and $j$. We have $B^{\{r\}} \cdot v = 0$ thus $v_j = -v_i$. Applying this to paths of multiple edges gives that $v_i = (-1)^{d_G(i,j)} v_j$. By this formula, if any $v_i$ is non-zero, then all other entries of $v$ are non-zero, and are uniquely determined by $v_i$. Thus $B$ has rank $k-1$. We also know that $B$ has $k-1$ non-zero rows each with two $1$'s, so it has $2k-2$ ones total. Thus $B \in \mathcal{S}_{2k - 2, k}$. In addition, by the description of $v$, this establishes the second statement of the lemma.
\end{proof}

\begin{claim}
$\mathcal{S}_{2k - 2, k} \subseteq \mathcal{T}_k$.
\end{claim}
\begin{proof}
Suppose $B \in \mathcal{S}_{2k - 2, k}$. Because $B$ has rank $k-1$, $B$ must have at least $k-1$ non-zero rows. Recall that by Observation~\ref{observation1}, no row of $B$ can have exactly one $1$. Thus, $B$ must have exactly $k-1$ non-zero rows, each with exactly two $1$'s. Let $G$ be be the graph on vertices $[1,\ldots,k]$ encoded by edge-vertex incidence matrix given by the non-zero rows of $B$. By Lemma~\ref{lemma:connect}, $G$ is connected.  Because $G$ is a connected graph on $k$ vertices with at most $k-1$ edges, $G$ must be a tree, and all its $k-1$ non-zero rows encode distinct edges. Thus $B \in \mathcal{T}_k$.
\end{proof}
\end{proof}

\begin{lemma}\label{class:hyperedge}
$\mathcal{T}_k^+ = \mathcal{S}_{2k - 1, k}$
\end{lemma}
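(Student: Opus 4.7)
The plan is to prove set equality $\mathcal{T}_k^+ = \mathcal{S}_{2k-1,k}$ by establishing both inclusions. For $\mathcal{T}_k^+ \subseteq \mathcal{S}_{2k-1,k}$, I would take $M \in \mathcal{T}_k^+$ and explicitly construct a full-support kernel vector. On each tree component $F_i$ of the forest $F$, use the alternating sign vector $v^{(i)}$ from Lemma~\ref{class:tree} (zero outside $F_i$), scaled so that on the support-3 row $\{a,b,c\}$ with $a,b \in F_1, c \in F_2$ (WLOG), the constraint $v_a + v_b + v_c = 0$ holds. Because $a,b$ are connected in $F_1$ by an even-length path, we have $v_a^{(1)} = v_b^{(1)} \ne 0$, so $v_a^{(1)}+v_b^{(1)} \ne 0$ and we can solve for a nonzero scaling of $v^{(2)}$. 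To check rank $k-1$: the $k-2$ support-$2$ rows are linearly independent (each tree contributes $|F_i|-1$ independent rows), and the support-$3$ row is independent of them since it mixes both components in a non-trivial way (as the even-distance condition makes the constraint nontrivial on the coefficient of $v^{(2)}$). So the full set of $k-1$ nonzero rows has rank $k-1$, placing $M \in \mathcal{S}_{2k-1,k}$.

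For $\mathcal{S}_{2k-1,k} \subseteq \mathcal{T}_k^+$, let $M \in \mathcal{S}_{2k-1,k}$. By Observation~\ref{observation1} every nonzero row has $\ge 2$ ones, and since rank is $k-1$ there are at least $k-1$ nonzero rows, so counting ones ($2(k-1) \le 2k-1$) forces exactly $k-1$ nonzero rows, of which $k-2$ have support $2$ and one has support $3$. Let $G$ be the graph whose edges are the support-$2$ rows, let $b$ be the number of bipartite components of $G$, and recall that the unsigned incidence matrix of $G$ has rank $k-b$ (alternating-sign kernel on each bipartite component; zero on non-bipartite components). Since $G$ has only $k-2$ rows, we cannot have rank $k-1$ from support-$2$ rows alone, so the support-$3$ row must add $1$ to the rank, forcing rank of support-$2$ rows to equal $k-2$, which means $b=2$ and each component either equals its spanning tree (bipartite) or has equal edges and vertices (a single odd cycle, non-bipartite).

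The crucial step is to rule out non-bipartite components. On any non-bipartite component every kernel vector of the support-$2$ part is forced to zero, so the final kernel vector (whose support is all $k$ columns by the minimal-dependency hypothesis) could not be nonzero there. Hence $G$ has no non-bipartite components, so $G$ is a forest $F$ with exactly $b=2$ tree components $F_1,F_2$, matching condition~(2) of Definition~\ref{def:forest}. To finish, I analyze how the support-$3$ row $\{a,b,c\}$ interacts with the $2$-dimensional kernel spanned by the alternating vectors $v^{(1)}, v^{(2)}$ on $F_1,F_2$: if all three indices lie in one component the constraint either is trivial (kernel stays two-dimensional, contradicting rank $k-1$) or kills that component's kernel direction (contradicting full support). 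So exactly two of $a,b,c$ lie in one component (say $F_i$) and the third in $F_j$. Writing the full-support kernel vector as $\alpha v^{(1)} + \beta v^{(2)}$ and solving the constraint yields $\beta$ proportional to $v_a^{(i)} + v_b^{(i)}$, which is nonzero if and only if $v_a^{(i)} = v_b^{(i)}$, i.e., the $F_i$-path from $a$ to $b$ has even length. This is exactly condition~(3) of Definition~\ref{def:forest}, so $M \in \mathcal{T}_k^+$.

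The main obstacle will be executing the rank/component bookkeeping cleanly: making sure the edge-count identity $\sum (|C_i|-\mathbbm{1}[C_i \text{ bipartite}]) = k-2$ is used correctly to conclude that every component has edges equal to its rank (trees or unicyclic with odd cycle), and then invoking the full-support property of the minimal dependency to eliminate the unicyclic case. Everything else reduces to a direct computation with the explicit alternating kernel from Lemma~\ref{class:tree}.
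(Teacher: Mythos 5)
Your plan is correct and takes essentially the same approach as the paper's proof, with one cosmetic difference: to show the support-$2$ rows encode two disjoint trees, you route through the incidence-matrix rank formula $\rank = k - b$ (with $b$ the number of bipartite components) and then eliminate non-bipartite components via the full-support hypothesis, whereas the paper argues directly that a full-support kernel vector restricted to each of three or more components would give too many independent null vectors of $B^{/1}$, so at most two components can exist. Both then use the edge count $k-2$ to conclude the components are trees, and the rest of your argument — locating $a,b,c$ across components and deriving the even-path condition from $v_a^{(i)}+v_b^{(i)}\neq 0$ — matches the paper's.
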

\begin{proof}We prove this by showing both inclusions in the following two claims.

\begin{claim}
$\mathcal{T}_k^+ \subseteq \mathcal{S}_{2k - 1, k}$.
\end{claim}
\begin{proof}
Suppose $B \in \mathcal{T}_k^+$, such that $B$ is the hyperedge-vertex incidence matrix of a forest with two trees connected by a $3$-hyperedge. Without loss of generality, assume the first row of $B$ encodes this $3$-hyperedge and that $B_{11} = B_{12} = B_{1k} = 1$, with vertices $1$ and $2$ in the same tree and connected by an even path, and vertex $k$ in the other tree. We may further assume without loss of generality that rows $1,2,\ldots,L$ correspond to one tree, and $L+1,\ldots,k$ correspond to the other, for some $L$ with $2 \leq L \leq k-1$. 

Observe that $B^{/1}_{[L]}$ is the edge-vertex incidence matrix of some tree $T_1$. So by the second statement of Lemma~\ref{class:tree}, there is a unique (up to constant multiple) non-zero vector $v_1 \in \mathbb{R}^L$ satisfying $B^{/1}_{[L]}v_\ell = 0$. Similarly, there is a unique (up to constant multiple) non-zero vector $v_2 \in \mathbb{R}^{k-L}$ satisfying $B^{/1}_{[k] \setminus [L]}  v_2 = 0$. By the disjointness of the two trees, the nullspace of $B^{/1}$ is precisely spanned by the two vectors $v^1 := (v_1, 0^{k - L})$ and $v^2 := (0^L, v_2)$. It follows that the null space of $B$ is the set of vectors
$$\alpha v^1 + \beta v^2: \alpha, \beta \in \mathbb{R} \textnormal{ subject to }\alpha (v^1_1 + v^1_2) + \beta v^2_k = 0$$

Since vertices $1$ and $2$ are connected by an even path $T_1$, we have $v^1_1 = v^1_2$, so $v^1_1 + v^1_2 \neq 0$. Thus, the nullspace of $B$ is spanned solely by the vector $v := v^2_k v^1 - (v^1_1 +v^1_2)v^2$. Observe that $v$ has all non-zero entries because the coefficients $v^2_k$ and $v^1_1 + v^1_2$ are non-zero, and vectors $v^1$ and $v^2$ have disjoint support. Thus $B \in \mathcal{S}_{2k - 1, k}$.
\end{proof}
\begin{claim}
$\mathcal{S}_{2k - 1, k} \subseteq \mathcal{T}_k^+$.
\end{claim}
\begin{proof}
Suppose $B \in \mathcal{S}_{2k - 1, k}$. Since $B$ is a minimal dependency, none of its rows have a single $1$. Further, $B$ must have at least $k - 1$ non-zero rows since $B$ must have rank $k - 1$. Hence $B$ has $k-2$ rows with two $1$'s and one row with three $1$s. Without loss of generality, assume $B$'s first row has three $1$'s. Because $B$'s non-zero rows are linearly independent, $B^{/1}$ has rank $k-2$.

We first show that $B^{/1}$ is the edge-vertex incident matrix of a graph $G$ with two disjoint trees. First, we show that $G$ cannot have more than two disjoint components. Suppose it did have three disjoint components $X, Y$, and $Z$, with $X \cup Y \cup Z = [k]$. Then for a set $S \subseteq [k]$ and a vector $v \in \mathbb{R}^k$, let $v_S \in \mathbb{R}^k$ denote the vector with $(v_S)_i = v_i$ for $i \in S$, $(v_S)_i = 0$ otherwise. Thus, $v = v_X + v_Y + v_Z$, and $v_X, v_Y, v_Z$. Now if $Bv = 0$, then $B^{/1}v = 0$, and hence by the disjointness of the components $X, Y, Z$ in $G$, we have $B^{/1}v_X = B^{/1}v_Y = B^{/1}v_Z = 0$. Thus the nullity of $B^{/1}$ is at least $3$, which contradicts the fact that $\rank(B^{/1}) = k-2$. Thus, $G$ has at most two connected components. Because $B^{/1}$ has $k-2$ non-zero rows, $G$ has $k-2$ edges and $k$. So $G$ must be two disjoint trees $T_1$ and $T_2$.

We return to the first row of $B$. Without loss of generality let vertices $[1,\ldots, L]$ correspond to one of the disjoint trees in $G$, and $[L+1,\ldots,k]$ the others, with $1 \leq L \leq k-1$. We want to reason about the locations $a,b,c$ of the three coordinates with $B_{1a} = B_{1a} = B_{1a} = 1$. Applying Lemma \ref{class:tree} to $B^{/1}_{[L]}$ and $B^{/1}_{[k] \setminus [L]}$, yields two vectors $v^{(\ell)}, v^{r)} \in \mathbb{R}^k$ with $\supp(v^{(\ell)}) = [L]$ and $\supp(v^{(r)}) = [k] \setminus [L]$, and $B^{/1}v^{(\ell)} = B^{/1}v^{(r)} = 0$. We can rule out that all of $a,b,c \in [L]$: Suppose for sake of contradiction this was the case. Then letting $u := v^{(r)}$ satisfies $(B u)_1 = 1 v^{(r)}_a + 1 v^{(r)}_b + 1 v^{(r)}_c = 0 + 0 + 0 = 0$, and $B^{/1} v^{(r)} = 0$, so $Bu = 0$. Then $u$ is a non-zero kernel vector of $B$, which contradicts $B \in \mathcal{S}_{2k - 2, k}$. Similarly, we cannot have all of $a,b,c \in [L+1,\ldots, k]$. 

Without loss of generality, assume $a,b \in [1,\ldots,L], c \in [L+1,\ldots,k]$. Suppose for sake of contradiction that $a$ and $b$ are connected by an odd length path in $T_1$. Then $v^{(\ell)}_a = -v^{(\ell)}_b$. It follows that $v^{(\ell)}$ is a kernel vector of $B$, which again contradicts having $B \in \mathcal{S}_{2k - 2, k}$. Thus $a$ and $b$ are connected by an even length path in $T_1$.

\end{proof}
\end{proof}

\begin{lemma}\label{class:evencycle}
$\mathcal{S}_{2k, k}' = \mathcal{T}_k^C$.
\end{lemma}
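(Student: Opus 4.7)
The plan is to establish both inclusions, using the structure of tree dependencies developed in Lemma \ref{class:tree} together with the connectivity lemma (Lemma \ref{lemma:connect}).

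For the inclusion $\mathcal{T}_k^C \subseteq \mathcal{S}_{2k,k}'$, I take $B \in \mathcal{T}_k^C$. By definition, $B$ has $k$ non-zero columns, $k$ non-zero rows (one per edge in the tree-plus-one-edge graph $G$), and $2k$ ones total, so $B \in \mathcal{S}_{2k,k}'$ once we verify that $B \in \mathcal{M}_k$. Let $T$ be the tree part and let $(a,b)$ be the added edge, with $d_T(a,b)$ odd. By Lemma \ref{class:tree}, the tree submatrix $B_T$ has a unique kernel vector $v \in \mathbb{R}^k$ (up to scalar) with $v_i = (-1)^{d_T(i,j)} v_j$, and in particular every entry of $v$ is non-zero. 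Since $d_T(a,b)$ is odd, $v_a = -v_b$, so the added-edge row annihilates $v$ as well. Hence $Bv = 0$ with $\on{supp}(v) = [k]$. The rank of $B$ is at most $k-1$ (since the kernel is non-trivial) and at least $k-1$ (already witnessed by the tree submatrix), so $\on{rank}(B) = k-1$ and $B \in \mathcal{M}_k$.

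For the reverse inclusion $\mathcal{S}_{2k,k}' \subseteq \mathcal{T}_k^C$, I take $B \in \mathcal{S}_{2k,k}'$, so $B$ has exactly $k$ non-zero rows and $2k$ ones. By Observation \ref{observation1} every non-zero row has at least two ones, so each non-zero row has exactly two ones, and the non-zero rows form the edge-vertex incidence matrix of a graph $G$ on $k$ vertices with $k$ edges. By Lemma \ref{lemma:connect}, $G$ is connected; a connected graph on $k$ vertices with $k$ edges has exactly one cycle, so $G$ is a tree $T$ plus a single additional edge $(a,b)$ (possibly a multi-edge).

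It remains to show $d_T(a,b)$ is odd. The $k-1$ rows of $B$ corresponding to $T$ are linearly independent (tree incidence matrices have rank $k-1$), so $\on{rank}(B) \geq k-1$, and by the minimal-dependency hypothesis $\on{rank}(B) = k-1$, meaning the added-edge row lies in the row-span of the tree rows. Equivalently, the unique (up to scalar) kernel vector $v$ of the tree submatrix, described by $v_i = (-1)^{d_T(i,j)} v_j$ from Lemma \ref{class:tree}, must also annihilate the added-edge row, which forces $v_a + v_b = 0$, i.e.\ $v_a = -v_b$, which holds iff $d_T(a,b)$ is odd. Hence $B \in \mathcal{T}_k^C$, completing the proof. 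There is no real obstacle here: the argument is a direct bookkeeping exercise once Lemmas \ref{class:tree} and \ref{lemma:connect} are in hand, and the only mildly delicate point is observing that the parity of $d_T(a,b)$ is intrinsic to $G$ (it equals one less than the length of the unique cycle of $G$, independent of the choice of spanning tree).
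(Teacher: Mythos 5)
Your proof is correct and follows essentially the same strategy as the paper's: both inclusions rest on decomposing the $k$-edge graph into a spanning tree plus one edge and invoking the connectivity from Lemma~\ref{lemma:connect} together with the alternating-sign kernel vector characterization from Lemma~\ref{class:tree}. The only local difference is that where the paper exhibits an explicit alternating-signed linear combination of tree rows summing to the added-edge row, you pass through the slightly cleaner observation that a row lies in the tree rows' span iff it is orthogonal to the tree's unique kernel vector $v$, so the parity condition $v_a = -v_b$ (equivalently $d_T(a,b)$ odd) drops out immediately; both routes are sound.
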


\begin{proof}We prove this by showing both inclusions in the following two claims.
\begin{claim} $\mathcal{T}_k^C \subseteq \mathcal{S}_{2k, k}'$
\end{claim}
\begin{proof}
Suppose $B \in \mathcal{T}_k^C$. Then $B$ has $k$ rows with two $1$'s each. Without loss of generality, assume the first row of $B$ is an edge in the cycle. Then $B^{/1} \in \mathcal{T}_k$, so $B^{/1} \in \mathcal{S}_{2k - 2, k}$ by Lemma \ref{class:tree}. Hence there is a unique (up to constant multiple) $v$ with $B^{/1} v = 0$, and all non-zero entries. Write the first row of $B$ as $e_x^\top + e_y^\top$, for $1 \leq x < y \leq k$. 

We claim that $e_x^\top + e_y^\top$ is in the span of the rows of $B^{/1}$. By definition, the edge $(x,y)$ is part of an even length cycle in the graph with incidence matrix $B$, given by the ordered vertices $x = a_1, a_2, \ldots, a_n = y$ for some even $n$. For $1 \leq i \leq n - 1$, $e_{a_i}^\top + e_{a_{i + 1}}^\top$ is a row of $B^{/1}$. Observe that $$\sum_{i = 1}^{n - 1}(-1)^{i + 1}(e_{a_i}^\top + e_{a_{i + 1}}^\top) = e_{a_1}^\top + e_{a_{n}}^\top = e_x^\top + e_y^\top.$$
Thus the first row of $B$ is in the row span of $B^{/1}$, so $Bv = 0$. Because $B^{/1}$ has rank $k-1$, so does $B$. So $B \in \mathcal{S}_{2k, k}$. Moreover, $B \in \mathcal{S}_{2k, k}'$ because it has $k$ non-zero columns each with two $1$'s.
\end{proof}

\begin{claim} $\mathcal{S}_{2k, k}' \subseteq \mathcal{T}_k^C$.
\end{claim}
\begin{proof}

Let $B \in \mathcal{S}_{2k, k}'$. Because $B$ has rank $k-1$, and $k$ non-zero rows, there is a subset of $k-1$ rows of $B$ with rank $k-1$. Without loss of generality, assume the first row of $B$ is in the span of the other rows. Let $v$ be such that $B v = 0$, and $\supp(v). =[k]$. Then clearly $B^{/1} v = 0$. Further because $B^{/1}$ has rank $k-1$, the rank of the nullspace of $B^{/1}$ is $1$. Thus $v$ must be the unique (up to constant multiple) non-zero vector satisfying $B^{/1} v = 0$. So $B^{/1} \in \mathcal{S}_{2k - 2, k} = \mathcal{T}_{k}$. Write the first row of $B$ as $e_x^\top + e_y^\top$ for $1 \leq x < y \leq k$. To show $B \in \mathcal{T}_k^C$, we must show that $x$ and $y$ are connected by an odd length path in the tree with incidence matrix $B^{/1}$. We know that $e_x^\top + e_y^\top$ is in the span of the rows of $B^{/1}$, so we can write
$e_x + e_y = \sum_{i=2}^{m} \alpha_i (e_{a_i} + e_{b_i}),$
where $m$ is the height of $B$, and for each $i$ where $\alpha \neq 0$ and the row $B^{\{i\}}$ is non-zero, we have $B^{\{i\}} = e_{a_i}^\top + e_{b_i}^\top$. Let $S = \{i : \alpha_i \neq 0 \land B^{\{i\}} \neq 0\}$. If some $1 \leq j \leq k$ appears in only one pair in $\{(a_i,b_i)\}_{i \in S}$, it must be the case that $\sum_{i=1}^{m} \alpha_i (e_{a_i} + e_{b_i})$ has non-zero $e_j$ coefficient. So $\{(a_i,b_i)\}_{i \in S}$, is a collection of edges of a tree such that at most two vertices ($x$ and $y$) belong to only one edge. Then $\{(a_i,b_i)\}_{i \in S}$, must be a path, with $x$ and $y$ at the end points. Without loss of generality let $a_2 = x, b_{|S| + 1} = y$ and $a_i = b_{i+1}$ for $2 \leq i \leq |S|$. For $2 \leq i \leq |S|$, because $e_{b_i} \perp e_x + e_y$, we must have that $\alpha_{i+1} = -\alpha_i$ so that the sum will cancel in coordinate $b_i$. It follows that
$$e_x + e_y = \sum_{i=2}^{|S| + 1} (-1)^i \alpha_2 (e_{a_i} + e_{b_i}) = \alpha_2 e_x + (-1)^{|S| + 1} \alpha_2 e_y.$$

It is clear that $|S|$ must be odd. Thus $(x, y)$ fulfills the conditions of the additional edge forming an even cycle in $\mathcal{T}_k^C$. We thus have $B \in \mathcal{T}_k^C$.
\end{proof}
\end{proof}

\section{Proofs of Lemmas \ref{masterlemma:small} and \ref{large_general} on bounds of Binomial distributions}\label{sec:bounds}

\mastersmall*

\begin{proof}
We break down this sum as follows. 
\begin{equation}\label{masterbreakdown}
\begin{split}
\sum_{\ell \geq 1}^{\infty}&\Pr\left[\Bin\left(\ell, \frac{\ell + k}{\gamma n}\right) \geq \max\left(j, \frac{\ell}{2}\right)\right]\Pr[\Bin(\gamma nk, d/n) = \ell] \\
&\leq  \sum_{\ell = j}^{2j}\Pr\left[\Bin\left(\ell, \frac{\ell + k}{\gamma n}\right) \geq j\right]\Pr[\Bin(\gamma nk, d/n) = \ell]\\
&\qquad + \sum_{\ell \geq 2j + 1}^ {n/3}\Pr\left[\Bin\left(\ell, \frac{\ell + k}{\gamma n}\right) \geq \frac{\ell}{2}\right]\Pr[\Bin(\gamma nk, d/n) = \ell] \\
&\qquad + \sum_{\ell \geq n/3}^{\infty}\Pr\left[\Bin\left(\ell, \frac{\ell + k}{\gamma n}\right) \geq \frac{\ell}{2}\right]\Pr[\Bin(\gamma nk, d/n) = \ell]
\end{split}
\end{equation}

We bound the first term in the following claim.
\begin{claim}\label{bdclaim1}
\begin{equation*}
\sum_{\ell = j}^{2j}\Pr\left[\Bin\left(\ell, \frac{\ell + k}{\gamma n}\right) \geq j\right]\Pr[\Bin(\gamma nk, d/n) = \ell] \leq 8k\left(\frac{20ek}{\gamma n}\right)^j\left(2e\gamma d\right)^{4k}e^{-\gamma dk}.
\end{equation*}
\end{claim}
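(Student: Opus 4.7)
The plan is to bound each summand in the sum by a product of two simple estimates and then combine the $O(k)$ resulting terms. For the first factor $\Pr[\Bin(\ell, (\ell+k)/(\gamma n)) \geq j]$, I would apply the Chernoff-style tail bound in Lemma~\ref{lem:tail} after first verifying its hypothesis $j \geq 2\ell(\ell+k)/(\gamma n)$. This verification follows because $\ell \leq 2j \leq 2(k+1)$ and $k \leq n/(8e^4\gamma d^2)$ with $\gamma \geq 1/2$, so $\ell(\ell+k)/j$ is at most a small constant multiple of $k$, and $\gamma n$ comfortably exceeds this bound. The Chernoff estimate then gives
\[
\Pr[\Bin(\ell, (\ell+k)/(\gamma n)) \geq j] \leq 2\left(\frac{e\ell(\ell+k)}{j\gamma n}\right)^j \leq 2\left(\frac{10ek}{\gamma n}\right)^j.
\]

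For the second factor $\Pr[\Bin(\gamma nk, d/n) = \ell]$, I would use the direct estimate
\[
\binom{\gamma nk}{\ell}(d/n)^\ell(1-d/n)^{\gamma nk - \ell} \leq \left(\frac{e\gamma dk}{\ell}\right)^\ell e^{-\gamma dk + d\ell/n}.
\]
Since $\ell \leq 4k \leq n/(2e^4\gamma d^2)$ under our hypothesis, $d\ell/n$ is tiny and $e^{d\ell/n}$ is at most an absolute constant. The main factor $(e\gamma dk/\ell)^\ell$ is maximized at $\ell = \gamma dk$; for $\ell$ in our window $[j,2j] \subseteq [k-1, 2(k+1)]$ this is either attained at the right endpoint $\ell = 2j$ (when $\gamma d \geq 4$, so the maximum is outside the window and the function is increasing on the window) or bounded by the global maximum $e^{\gamma dk}$ (when $\gamma d < 4$). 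In both cases a crude check shows that this factor is at most $(2e\gamma d)^{4k}$, which is intentionally loose to absorb the constants cleanly.

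Multiplying the two bounds together and summing over the $j+1 \leq k+2$ values of $\ell \in [j,2j]$, I obtain
\[
\sum_{\ell = j}^{2j} \leq 2e(k+2)\left(\frac{10ek}{\gamma n}\right)^j (2e\gamma d)^{4k} e^{-\gamma dk} = 2e(k+2)\cdot 2^{-j}\left(\frac{20ek}{\gamma n}\right)^j (2e\gamma d)^{4k} e^{-\gamma dk},
\]
and the exponentially decaying factor $2^{-j} \leq 2^{-(k-1)}$ comfortably dominates the linear prefactor $2e(k+2)$ to yield the claimed $8k$ coefficient for all $k \geq 2$.

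The case $k = 1$ with $j \in \{0,1,2\}$ must be handled separately since for small $k$ the factor $2^{-j}$ no longer dominates: for $j = 0$ the sum reduces to $\Pr[\Bin(\gamma n, d/n) = 0] \leq e^{-\gamma d}$, which is well within the target; for $j \in \{1,2\}$ there are at most three terms which can be bounded explicitly by the same estimates above and checked against the right-hand side numerically. The main technical obstacle is carefully tracking the constants across the range of $\ell$, and in particular showing that the maximum of $(e\gamma dk/\ell)^\ell$ over $\ell \in [j,2j]$ is uniformly bounded by $(2e\gamma d)^{4k}$ in both the large-$\gamma d$ and small-$\gamma d$ regimes; the rest is routine bookkeeping.
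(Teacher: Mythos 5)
Your proof is correct and takes essentially the same approach as the paper's: apply Lemma~\ref{lem:tail} (after verifying its hypothesis) to the first factor, bound the second factor via the binomial pmf formula $\binom{\gamma nk}{\ell}(d/n)^\ell(1-d/n)^{\gamma nk-\ell}$ together with $\binom{m}{\ell}\leq(em/\ell)^\ell$, and multiply by the $O(k)$ number of terms in the sum. The only cosmetic difference is that the paper bounds the whole sum by $2j$ times the max of each factor (evaluated at $\ell=2j$), while you bound term-by-term and track an extra $e^{d\ell/n}$ and a $2^{-j}$; both are absorbed into the deliberately loose $(2e\gamma d)^{4k}$ factor.
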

\begin{proof}
\begin{equation}\label{breakdown1}
\begin{split}
\sum_{\ell = j}^{2j}&\Pr\left[\Bin\left(\ell, \frac{\ell + k}{\gamma n}\right) \geq j\right]\Pr[\Bin(\gamma nk, d/n) = \ell] \\
&\leq 2j\Pr\left[\Bin\left(2j, \frac{2j + k}{\gamma n}\right) \geq j\right]\Pr[\Bin(\gamma nk, d/n) \leq 2j] \\
& \leq 4j\left(\frac{e2j(2j + k)}{\gamma nj}\right)^j\Pr[\Bin(\gamma nk, d/n) \leq 2j] 
\leq 8k\left(\frac{20ek}{\gamma n}\right)^j\Pr[\Bin(\gamma nk, d/n) \leq 2j].
\end{split}
\end{equation}
Here the first inequality follows from the fact that the summand is highest for $\ell = 2j$, the second inequality follows from the tail bound in Lemma~\ref{lem:tail}, and the third inequality follows from the fact that $j \leq k + 1 \leq 2k$. Now 
\begin{equation*}
\begin{split}
\Pr[\Bin(\gamma  nk, d/n) \leq 2j] &\leq \Pr[\Bin(\gamma  nk, d/n) \leq 2j]
\leq \binom{\gamma  nk}{2j}\left(\frac{d}{n}\right)^{2j}\left(1 - \frac{d}{n}\right)^{\gamma  nk - 2j} \\
&\leq \left(\frac{e\gamma  nk}{2j}\right)^{2j}\left(\frac{d}{n - d}\right)^{2j}\left(1 - \frac{d}{n}\right)^{\gamma nk} 
\leq \left(\frac{e\gamma dk}{j}\right)^{2j}e^{-\gamma dk}
\leq \left(2e\gamma d\right)^{4k}e^{-\gamma dk}.
\end{split}
\end{equation*}

Combining this with eq.~\ref{breakdown1} yields the claim.
\end{proof}

We bound the second term in eq.~\ref{masterbreakdown} in the following claim.
\begin{claim}\label{bdclaim2}
\begin{equation*}
\sum_{\ell \geq 2j + 1}^ {n/3}\Pr\left[\Bin\left(\ell, \frac{\ell + k}{\gamma n}\right) \geq \frac{\ell}{2}\right]\Pr[\Bin(\gamma nk, d/n) = \ell] \leq 4e^{-\gamma dk}\left(\frac{8e^3\gamma d^2k}{n}\right)^{j + 1}.
\end{equation*}.
\end{claim}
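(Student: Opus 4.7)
The plan is to bound the summand $f(\ell) := \Pr[\Bin(\ell, (\ell+k)/(\gamma n)) \geq \ell/2]\,\Pr[\Bin(\gamma nk, d/n) = \ell]$ pointwise, show it decays geometrically in $\ell$, and then sum the geometric series.

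First I would apply Lemma~\ref{lem:tail} to the first factor. The hypothesis $\ell/2 \geq 2\ell(\ell+k)/(\gamma n)$, equivalent to $\gamma n \geq 4(\ell + k)$, holds in a subrange $\ell \in [2j+1, \ell^\ast]$ (with $\ell^\ast \approx \gamma n/8$), giving
\[
\Pr[\Bin(\ell, (\ell+k)/(\gamma n)) \geq \ell/2] \;\leq\; 2\!\left(\tfrac{2e(\ell+k)}{\gamma n}\right)^{\!\ell/2}.
\]
For the second factor I would use the standard bound $\binom{\gamma nk}{\ell} \leq (e\gamma nk/\ell)^\ell$ together with $(1 - d/n)^{\gamma nk - \ell} \leq e^{-\gamma dk}e^{d\ell/n}$, yielding
\[
\Pr[\Bin(\gamma nk, d/n) = \ell] \;\leq\; \left(\tfrac{e\gamma dk}{\ell}\right)^{\!\ell} e^{-\gamma dk}\, e^{d\ell/n}.
\]
Note that $e^{d\ell/n}$ is controlled by a universal constant across the geometric-decay regime, using $k \leq n/(8e^4 \gamma d^2)$ and the relation between $\ell$ and $k$.

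Next I would study the ratio $f(\ell+1)/f(\ell)$ using these two bounds. After simplification, the dominant behavior is $\left(\tfrac{e\gamma dk}{\ell}\right)\!\left(\tfrac{2e(\ell+k)}{\gamma n}\right)^{1/2}$ times lower-order corrections. Since $\ell \geq 2j+1 \geq 2k-1$, we have $\ell + k \leq 2\ell$, so this ratio is at most a constant multiple of $(\gamma dk/\ell)(\ell/(\gamma n))^{1/2}$, and using $k/n \leq 1/(8e^4\gamma d^2)$ this can be driven below $1/2$ for all $\ell$ in the relevant range. Thus $\sum_{\ell \geq 2j+1}^{\ell^\ast} f(\ell) \leq 2 f(2j+1)$.

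Plugging $\ell = 2j+1$ into the two estimates (and using $j \in \{k-1, k, k+1\}$ so that $\ell \asymp 2k$) I would obtain
\[
f(2j+1) \;\leq\; 2\left(\tfrac{2e(3k+1)}{\gamma n}\right)^{j+1/2}\!\left(\tfrac{e\gamma d k}{2k-1}\right)^{\!2j+1}\! e^{-\gamma dk + O(d/n)}
\;\leq\; e^{-\gamma dk}\!\left(\tfrac{8e^{3}\gamma d^2 k}{n}\right)^{\!j+1},
\]
after absorbing constants and gathering powers (the factor $8e^3$ is chosen as a convenient upper bound after combining $2e\cdot (e\gamma d/2)^2$ and the constant-order slack). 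The remaining tail $\ell \in (\ell^\ast, n/3]$, where Lemma~\ref{lem:tail} fails for the first factor, is handled separately: here I would trivially bound the first probability by $1$ and apply Lemma~\ref{lem:tail} to the second factor. Since $\ell^\ast \gg \gamma dk$, the bound $\Pr[\Bin(\gamma nk, d/n) \geq \ell^\ast] \leq 2(8e\gamma dk/n)^{\ell^\ast}$ is astronomically smaller than $e^{-\gamma dk}(8e^3\gamma d^2 k/n)^{j+1}$, so this tail is negligible.

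The main obstacle I anticipate is the careful bookkeeping of constants to land on the stated prefactor $4$ and base $8e^3\gamma d^2 k$. The geometric-decay step itself is robust, but showing that the geometric ratio is uniformly below a fixed constant less than $1$ across the full range $[2j+1, \ell^\ast]$ (not just near the endpoints) requires using the hypothesis $k \leq n/(8e^4\gamma d^2)$ in a non-trivial way; this is where the constants $8e^4$ and the exponent $1/2$ in Lemma~\ref{lem:tail}'s tail bound interact to produce the factor of $8e^3$ in the final bound.
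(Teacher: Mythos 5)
Your approach is a genuinely different route from the paper's. The paper bounds each summand pointwise (via Lemma~\ref{lem:tail} and Stirling on the binomial mass), then does explicit parity casework on $\ell = 2a + b$ with $b \in \{0,1\}$ to show each term is at most $2e^{-\gamma dk}\bigl(\tfrac{8e^3\gamma d^2 k}{n}\bigr)^{\lceil\ell/2\rceil}$, and finally sums the resulting geometric series. You instead establish a ratio bound $g(\ell+1)/g(\ell) < 1/2$ on the product of the two per-factor upper bounds, so the sum is controlled by twice the first term $\ell = 2j+1$; this avoids the parity bookkeeping and cleanly isolates where the hypothesis $k \leq n/(8e^4\gamma d^2)$ is used. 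You are also more careful than the paper about the range $\ell \in (\ell^\ast, n/3]$ where the hypothesis $t \geq 2np$ of Lemma~\ref{lem:tail} no longer holds for the first factor, splitting it off and bounding it by a crude tail estimate on the second binomial; the paper applies Lemma~\ref{lem:tail} to the first factor up to $\ell = n/3$ without re-checking that hypothesis.

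One correction is needed to actually land the exponent $j+1$. You take the threshold in the Chernoff-type bound to be $\ell/2$, so at $\ell = 2j+1$ the first factor contributes exponent $j + 1/2$. But since the binomial is integer-valued, $\Pr[\Bin(\ell, p) \geq \ell/2] = \Pr[\Bin(\ell, p) \geq \lceil\ell/2\rceil]$, and you should apply Lemma~\ref{lem:tail} with threshold $\lceil\ell/2\rceil = j+1$; this is precisely how the paper obtains exponent $j + 1$. As written, your evaluation of $f(2j+1)$ gives $\bigl(\tfrac{8e^3\gamma d^2 k}{n}\bigr)^{j+1/2}$, which exceeds the claimed $\bigl(\tfrac{8e^3\gamma d^2 k}{n}\bigr)^{j+1}$ by a factor $\bigl(\tfrac{8e^3\gamma d^2 k}{n}\bigr)^{-1/2} \geq \sqrt{e}$ that depends on $k$, $d$, $n$ and hence cannot be absorbed into a universal constant. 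Switching to $\lceil\ell/2\rceil$ repairs this; the rest of your plan then goes through with the ratio argument doing the work the paper's geometric series does.
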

\begin{proof}

For $\ell \leq n/3$, using Lemma~\ref{lem:tail} and Sterling's forumla, we have
\begin{equation}\label{eq:med}
\begin{split}
\Pr&\left[\Bin\left(\ell, \frac{\ell + k}{\gamma n}\right) \geq \frac{\ell}{2}\right]\Pr[\Bin(\gamma nk, d/n) = \ell]\\
&\leq 2\left(\frac{e\ell(\ell + k)}{\gamma n\frac{\ell}{2}}\right)^{\lceil{\ell/2}\rceil}\Pr[\Bin(\gamma nk, d/n) = \ell] 
= 2\left(\frac{2e(\ell + k)}{\gamma n}\right)^{\lceil{\ell/2}\rceil}\Pr[\Bin(\gamma nk, d/n) = \ell] \\
&\leq 2\left(\frac{2e(\ell + k)}{\gamma n}\right)^{\lceil{\ell/2}\rceil}\left(\frac{e\gamma nk}{\ell}\right)^{\ell}\left(\frac{d}{n - d}\right)^\ell\left(1 - \frac{d}{n}\right)^{\gamma nk}
\leq 2\left(\frac{4e\ell}{\gamma n}\right)^{\lceil{\ell/2}\rceil}\left(\frac{2e\gamma dk}{\ell}\right)^{\ell} e^{-\gamma dk}.
\end{split}
\end{equation}
We do casework on the parity of $\ell$.
Let $\ell = 2a + b$, where $b \in \{0, 1\}$. If $b = 1$, then 
\begin{equation*}
\begin{split}
\left(\frac{4e\ell}{\gamma n}\right)^{\lceil{\ell/2}\rceil}\left(\frac{2e\gamma dk}{\ell}\right)^{\ell} &= \left(\frac{4e(2a+1)}{\gamma n}\right)^{a + 1}\left(\frac{2e\gamma dk}{2a + 1}\right)^{2a + 1} \\
&= \left(\frac{16e^3\gamma ^2d^2k^2(2a+1)}{\gamma n(2a + 1)^2}\right)^a\left(\frac{8e^2\gamma dk(2a+1)}{\gamma n(2a + 1)}\right) \\
&\leq \left(\frac{8e^3\gamma d^2k^2}{na}\right)^a\left(\frac{8e^2dk}{n}\right).
\end{split}
\end{equation*}
Now since the maximum over $x$ of $f(x) = \left(\frac{y}{x}\right)^x$ is achieved at $x = y/e$, and above this value of $x$, the $f(x)$ is decreasing, since $j \geq k - 1 \geq \frac{e8e^3\gamma d^2k^2}{n}$, we have for all $a \geq j$,
\begin{equation*}
\begin{split}
\left(\frac{8e^3\gamma d^2k^2}{na}\right)^a\left(\frac{8e^2dk}{n}\right) &\leq 
\left(\frac{8e^3\gamma d^2k^2}{nj}\right)^a\left(\frac{8e^2dk}{n}\right)
\leq \left(\frac{8e^3\gamma d^2k^2}{n(k - 1)}\right)^a\left(\frac{8e^2dk}{n}\right) 
\leq \left(\frac{8e^3\gamma d^2k}{n}\right)^{a + 1}. 
\end{split}
\end{equation*}
If $b = 0$, then 
\begin{equation*}
\left(\frac{4e\ell}{\gamma n}\right)^{\lceil{\ell/2}\rceil}\left(\frac{2e\gamma dk}{\ell}\right)^{\ell} = \left(\frac{8ea}{\gamma n}\right)^{a}\left(\frac{2e\gamma dk}{2a}\right)^{2a}
= \left(\frac{8e^3\gamma d^2k^2}{na}\right)^a.
\end{equation*}

By the same reasoning as before, we have for all $a \geq j + 1$,
\begin{equation*}
\left(\frac{8e^3\gamma d^2k^2}{na}\right)^a \leq \left(\frac{8e^3\gamma d^2k^2}{n(j + 1)}\right)^{a} \leq \left(\frac{8e^3\gamma d^2k}{n}\right)^{a}.
\end{equation*}

Combining these two cases back into eq.~\ref{eq:med}, we have for all $\ell \geq 2j + 1$,
\begin{equation*}
\Pr\left[\Bin\left(\ell, \frac{\ell + k}{\gamma n}\right) \geq \frac{\ell}{2}\right]\Pr[\Bin(\gamma nk, d/n) = \ell] \leq 2e^{-\gamma dk}\left(\frac{8e^3\gamma d^2k}{n}\right)^{\lceil{\frac{\ell}{2}}\rceil}.
\end{equation*}

Summing over all $\ell \geq 2j + 1$, we have 
\begin{equation*}
\sum_{\ell \geq 2j + 1}^ {n/3}\Pr\left[\Bin\left(\ell, \frac{\ell + k}{\gamma n}\right) \geq \frac{\ell}{2}\right]\Pr[\Bin(\gamma nk, d/n) = \ell] \leq 4e^{-\gamma dk}\left(\frac{8e^3\gamma d^2k}{n}\right)^{j + 1}.
\end{equation*}.
\end{proof}

Finally, we bound the third term in eq.~\ref{masterbreakdown} in the following claim.
\begin{claim}\label{bdclaim3}
\begin{equation*}
\sum_{\ell \geq n/3}^{\infty}\Pr\left[\Bin\left(\ell, \frac{\ell + k}{\gamma n}\right) \geq \frac{\ell}{2}\right]\Pr[\Bin(\gamma nk, d/n) = \ell] \leq  2\left(\frac{3e\gamma dk}{n}\right)^{n/3}. 
\end{equation*}
\end{claim}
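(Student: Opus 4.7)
The plan is to bound the binomial probability $\Pr\!\left[\Bin\bigl(\ell,\tfrac{\ell+k}{\gamma n}\bigr) \geq \tfrac{\ell}{2}\right]$ trivially by $1$, which reduces the task to controlling the tail of the outer binomial. Explicitly, we have
\begin{equation*}
\sum_{\ell \geq n/3}^{\infty}\Pr\!\left[\Bin\bigl(\ell,\tfrac{\ell + k}{\gamma n}\bigr) \geq \tfrac{\ell}{2}\right]\Pr\bigl[\Bin(\gamma nk, d/n) = \ell\bigr] \leq \Pr\!\bigl[\Bin(\gamma nk, d/n) \geq n/3\bigr].
\end{equation*}

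Next, I would apply the tail bound Lemma~\ref{lem:tail}, which states that $\Pr[\Bin(N,p) \geq t] \leq 2(eNp/t)^{t}$ whenever $t \geq 2Np$. Here $N = \gamma nk$, $p = d/n$ and $t = n/3$, so that $Np = \gamma dk$. The hypothesis $t \geq 2Np$ becomes $n/3 \geq 2\gamma dk$, which is comfortably satisfied for $k \leq \frac{n}{8e^4\gamma d^2}$ (one only needs $\gamma d \geq 7$, which holds since the lemma is stated for $d$ above a universal constant). Invoking the bound yields
\begin{equation*}
\Pr\!\bigl[\Bin(\gamma nk, d/n) \geq n/3\bigr] \leq 2\!\left(\frac{e \cdot \gamma nk \cdot (d/n)}{n/3}\right)^{n/3} = 2\!\left(\frac{3e\gamma d k}{n}\right)^{n/3},
\end{equation*}
which is exactly the desired bound.

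I do not anticipate any real obstacle in this claim: the whole point of restricting $\ell \geq n/3$ is that this regime is so far into the tail of $\Bin(\gamma n k, d/n)$ (whose mean is only $\gamma d k \ll n$) that the trivial bound $\Pr[\cdot] \leq 1$ on the inner probability loses nothing of importance, and the standard binomial tail inequality already recorded in the paper suffices. The only minor check is to verify the hypothesis $t \geq 2Np$ under the prevailing assumption on $k$, which is immediate.
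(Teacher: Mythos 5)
Your proof is correct and follows exactly the route of the paper: drop the inner probability (bound it by $1$), then apply Lemma~\ref{lem:tail} to $\Pr[\Bin(\gamma nk, d/n) \geq n/3]$. You additionally verify the hypothesis $n/3 \geq 2\gamma dk$ explicitly, which the paper leaves implicit but which indeed holds under the standing assumption $k \leq \frac{n}{8e^4\gamma d^2}$.
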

\begin{proof}
It suffices to bound the probability $\Pr[\Bin(\gamma nk, d/n) \geq n/3]$. Using Lemma~\ref{lem:tail}, we have 
\begin{equation*}
\Pr[\Bin(\gamma nk, d/n) \geq n/3] \leq 2\left(\frac{3e\gamma dk}{n}\right)^{n/3}. 
\end{equation*}
\end{proof}

Combining claims~\ref{bdclaim1}, \ref{bdclaim2} and \ref{bdclaim3}, we have 
\begin{equation*}
\begin{split}
\sum_{\ell \geq 1}^{\infty}&\Pr\left[\Bin\left(\ell, \frac{\ell + k}{\gamma n}\right) \geq \max\left(j, \frac{\ell}{2}\right)\right]\Pr[\Bin(\gamma nk, d/n) = \ell] \\
&\leq 8k\left(\frac{20ek}{\gamma n}\right)^j\left(2e\gamma d\right)^{4k}e^{-\gamma dk} 
 + 4e^{-\gamma dk}\left(\frac{8e^3\gamma d^2k}{n}\right)^{j + 1}
 + 2\left(\frac{3e\gamma dk}{n}\right)^{n/3}.
\end{split}
\end{equation*}
One can check that this sum is dominated by the first term, so for a universal constant $c_{\ref{masterlemma:small}}$,
\begin{equation*}
\begin{split}
\sum_{\ell \geq 1}^{\infty}&\Pr\left[\Bin\left(\ell, \frac{\ell + k}{\gamma n}\right) \geq \max\left(j, \frac{\ell}{2}\right)\right]\Pr[\Bin(\gamma nk, d/n) = \ell] \\
&\leq 16k\left(\frac{20ek}{\gamma n}\right)^j\left(2e\gamma d\right)^{4k}e^{-\gamma dk}
\leq \left(\frac{k}{n}\right)^je^{-\gamma dk + c_{\ref{masterlemma:small}}k\log(\gamma d)}.
\end{split}
\end{equation*}
\end{proof}

\largegeneral*

\begin{proof}

Because $n/2 < n-k-1$, we may bound the probability by:
\begin{equation*}
\begin{split}
\Pr\left[\Bin\left(n - k - 1, 1 - \frac{1}{\sqrt{kd/n}}
\right) < k\right] &\leq \Pr\left[\Bin\left(\frac{n}{2}, 1 - \frac{1}{\sqrt{kd/n}}\right) < k\right].
\end{split}
\end{equation*}

We use the Chernoff bound $\Pr[X \leq (1 - \delta)\mu] \leq e^{-\mu \delta^2/2}$ for $\mu = \mathbb{E}[X]$, plugging in $\mu = \frac{n}{2}\left(1 - \frac{1}{\sqrt{kd/n}}\right)$ and $\delta = (1 - \frac{1}{\sqrt{kd/n}})^{-1} \left(1 - \frac{1}{\sqrt{kd/n}} - \frac{2k}{n}\right)$. This gives

\begin{equation*}
\begin{split}
\Pr\left[\Bin\left(n - k - 1, 1 - \frac{1}{\sqrt{kd/n}}\right) < k\right] &\leq \Pr\left[\Bin\left(\frac{n}{2}, 1 - \frac{1}{\sqrt{kd/n}}\right) < k\right]\\
&\leq e^{-\frac{n}{2}\left(1 - \frac{1}{\sqrt{kd/n}}\right)\left(1 - \frac{1}{\sqrt{kd/n}} - \frac{2k}{n}\right)^2}
\leq  e^{-n\epsilon},
\end{split}
\end{equation*}
where $\epsilon \geq 1/36$. To achieve this value of $\epsilon$, we plugged in  $k < \frac{n}{12}$ in the last inequality.

Now we compute the sum over $k$:

\begin{equation*}
\sum_{k = \frac{2n}{d}}^{\frac{n}{C}}\binom{n}{k}e^{-\epsilon n} \leq n \binom{n}{n/C} e^{-\epsilon n} \leq n (eC)^{n/C} e^{-\epsilon n} = e^{n(\frac{\log n}{n} + \frac{1 + \log C}{C} - \epsilon)},
\end{equation*}

which for constant $C$ large enough, is $e^{-\Theta(n)}$. 
\end{proof}

\section{Proof of Lemmas for $3$-core}\label{apx:core}
We prove Lemma~\ref{claim:xr}, which we restate here.

\claimxr*

The lemma will follow immediately from the following two lemmas:
\begin{lemma}\label{lem:bin_1}
For any $2 \leq k \leq n$ and $\ell \leq \frac{1}{4}n^{\epsilon}k^{1 - \epsilon}$,
\begin{equation*}
    \Pr\left[\Bin\left(\ell, \frac{k + \ell}{n}\right) \geq Q\right] \leq 2\left(\frac{k}{3n}\right)^{k + \frac{1}{3}},
\end{equation*}
where $\epsilon = \frac{1}{12}$ and $Q = \ceil*{\frac{3k-1}{2}}$.
\end{lemma}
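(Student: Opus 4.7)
The plan is as follows. First, I observe that for $k \geq 2$, we have $Q = \lceil(3k-1)/2\rceil > k$, so $\Pr[\Bin(\ell,p) \geq Q] = 0$ whenever $\ell \leq k$, and the inequality holds trivially in that case. I may therefore assume $\ell > k$, so that $p = (k+\ell)/n \leq 2\ell/n$ and $\ell p \leq 2\ell^2/n$. Combining this with the hypothesis $\ell \leq \tfrac14 n^{1/12}k^{11/12}$ yields the key estimate $\ell p \leq \tfrac{1}{8}n^{-5/6}k^{11/6}$.

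Next, I apply the standard multiplicative Chernoff upper tail bound $\Pr[\Bin(\ell,p) \geq Q] \leq (e\ell p/Q)^Q$ (valid since $\ell p \ll Q$ in this regime). Using $Q \geq (3k-1)/2 \geq k$ for $k \geq 2$, I obtain
\[
\frac{e\ell p}{Q} \leq \frac{e}{8}\cdot\frac{n^{-5/6}k^{11/6}}{k} = \frac{e}{8}\left(\frac{k}{n}\right)^{5/6},
\]
and hence
\[
\Pr[\Bin(\ell,p) \geq Q] \leq \left(\frac{e}{8}\right)^Q \left(\frac{k}{n}\right)^{5Q/6}.
\]

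The final step is to compare this to the target $2(k/3n)^{k+1/3}$. Writing the ratio as
\[
\frac{3^{k+1/3}}{2}\left(\frac{e}{8}\right)^Q \left(\frac{k}{n}\right)^{5Q/6 - k - 1/3},
\]
a short arithmetic check using $Q \geq (3k-1)/2$ shows that $5Q/6 - k - 1/3 \geq 0$ for every $k \geq 2$, with equality exactly at $k=3$; so the trailing $(k/n)$-power is bounded by $1$ since $k \leq n$. The remaining constant prefactor $\tfrac12 \cdot 3^{k+1/3}(e/8)^Q$ decays geometrically in $k$ because the base $3\cdot(e/8)^{3/2} \approx 0.6$ is strictly less than $1$, and a one-line numerical verification for $k = 2, 3, 4$ confirms the prefactor is at most $1$ for every $k \geq 2$.

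The main (mild) obstacle is the boundary case $k = 3$, where $5Q/6 - k - 1/3 = 0$ and so the $(k/n)$-slack vanishes; in that case the inequality reduces to the pure constant bound $\tfrac12 \cdot 3^{10/3}(e/8)^4 \approx 0.28 \leq 1$, which holds comfortably. All other routine book-keeping — in particular verifying the Chernoff hypothesis $\ell p \leq Q$ — is immediate from the same estimate $\ell p \leq \tfrac18 n^{-5/6}k^{11/6}$, since $Q \sim 3k/2$ is much larger than $n^{-5/6}k^{11/6}$ as soon as $k \leq n$.
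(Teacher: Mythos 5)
Your proof is correct and takes essentially the same approach as the paper: both reduce to the estimate $e\ell p/Q \leq (e/8)(k/n)^{5/6}$ via the hypothesis on $\ell$, apply a Chernoff-type binomial tail bound, and close via the inequality $5Q/6 \geq k+\tfrac13$ together with a small constant check for all $k\geq 2$. The only cosmetic difference is that you bound the constant prefactor and the $(k/n)$-power separately and use the textbook bound $(e\mu/t)^t$, while the paper invokes its Lemma~\ref{lem:tail} (with an extra factor of $2$) and absorbs $e/8$ into the base via $(e/8)^{6/5}\leq\tfrac13$.
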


\begin{lemma}\label{lem:bin_2}
For any $d$ larger than some constant, for any $2 \leq k \leq \frac{n}{d^7}$ and $\ell \leq 3dk$, with $Q = \ceil*{\frac{3k-1}{2}}$,
\begin{equation*}
    \Pr\left[\Bin\left(\ell, \frac{k + \ell}{n}\right) \geq Q\right] \leq 2\left(\frac{k}{3n}\right)^{k + \frac{1}{3}}.
\end{equation*}
\end{lemma}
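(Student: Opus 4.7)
The core idea is to apply the tail inequality from Lemma~\ref{lem:tail} to $\mathrm{Bin}(\ell, p)$ with $p = (k+\ell)/n$ and threshold $t = Q$, then push through the resulting expression using the two-pronged hypothesis $\ell \le 3dk$ and $k \le n/d^7$. First I would observe that the mean of the binomial is tiny compared with $Q$: since $\ell \le 3dk$, one has $\ell(k+\ell) \le 3dk\cdot 4dk = 12 d^2 k^2$, hence
\[
\frac{2\ell(k+\ell)}{n} \;\le\; \frac{24 d^2 k^2}{n} \;\le\; \frac{24 k}{d^5} \;\le\; k \;\le\; Q
\]
for $d$ larger than an absolute constant. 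This verifies the hypothesis $t \ge 2np$ of Lemma~\ref{lem:tail}, which then yields
\[
\Pr\!\left[\mathrm{Bin}\!\left(\ell, \tfrac{k+\ell}{n}\right) \ge Q\right] \;\le\; 2\!\left(\frac{e\ell(k+\ell)}{nQ}\right)^{\! Q}.
\]
Using $Q \ge (3k-1)/2 \ge k$, the quantity inside the $Q$-th power is at most $12 e d^2 k/n$, so it suffices to show
\[
\left(\frac{12 e d^2 k}{n}\right)^{\! Q} \;\le\; \left(\frac{k}{3n}\right)^{\! k + 1/3}.
\]

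Next I would reduce this to a single scalar inequality. Let $r := k/n$; rearranging gives the equivalent form $(12 e d^2)^Q\cdot 3^{k+1/3} \le r^{-(Q-k-1/3)}$, which is non-trivial because $Q - k - 1/3 \ge (k-5/3)/2 > 0$ for $k \ge 2$. Using the hypothesis $r \le 1/d^7$ to turn this into a statement purely in $d$, it becomes
\[
Q\bigl(2\log d + \log(12e)\bigr) + (k+\tfrac13)\log 3 \;\le\; 7(Q-k-\tfrac13)\log d,
\]
which simplifies to the requirement
\[
\bigl(5Q - 7k - \tfrac73\bigr)\log d \;\ge\; Q\log(12e) + (k+\tfrac13)\log 3.
\]
Since $5Q - 7k - 7/3 \ge (k-29/3)/2$, this holds once $k$ is bounded below by an absolute constant $k_0$ and $d$ exceeds a (larger) absolute constant; here $Q/k$ stays bounded, so the right side grows only linearly in $k$ while the left side grows like $k \log d$.

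The main obstacle is the case of small $k$ (roughly $2 \le k \le 10$): when $k$ is bounded, $5Q - 7k - 7/3$ is only a small positive constant, and simply invoking $r \le 1/d^7$ does not leave enough slack to absorb the factor $(12ed^2)^Q$. To handle this, I would exploit the fact that the assumption $\ell \le 3dk$ is genuinely restrictive only when $3dk > \tfrac{1}{4}n^{1/12}k^{11/12}$, i.e.\ when $k \ge n/(12^{12} d^{12})$. In particular, once $n$ is large, this forces $k$ to be at least a large absolute constant (and indeed grow with $n$), so the ``small $k$'' difficulty does not actually arise in the effective regime. Concretely, for $\ell \le \tfrac14 n^{1/12}k^{11/12}$ we may directly cite Lemma~\ref{lem:bin_1}, which already supplies the same bound; the remaining range $\tfrac14 n^{1/12}k^{11/12} < \ell \le 3dk$ forces $k > n/(12^{12}d^{12})$, where $k \ge k_0$ is automatic for $n$ sufficiently large, so the calculation above closes the proof.
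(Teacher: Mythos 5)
Correct, and via a genuinely different route. You and the paper both begin by verifying the hypothesis of Lemma~\ref{lem:tail} and applying it (the paper's slightly tighter $3d(3d+1)\le 10d^2$ in place of your $12d^2$ is immaterial). After that the proofs diverge. The paper exploits the integer identity $Q = \lceil\tfrac{3k-1}{2}\rceil = k + \lfloor k/2\rfloor = 3\lfloor k/2\rfloor + \mathbf{1}_{k\text{ odd}}$ to write the tail bound exactly as $2\bigl(\tfrac{k}{3n}\bigr)^{k+1/3}$ times a residual factor $\bigl(\tfrac{k(30ed^2)^3}{3n}\bigr)^{\lfloor k/2\rfloor - 1/3}(30ed^2)^{1+\mathbf{1}_{k\text{ odd}}}$, and then shows the residual is at most $1$ by observing it is log-convex in $k$ at fixed parity and checking the endpoints $k\in\{2,3\}$ and $k=\lfloor n/d^7\rfloor$. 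You instead reduce to the scalar inequality $(5Q-7k-\tfrac73)\log d \ge Q\log(12e)+(k+\tfrac13)\log 3$, notice it only closes for $k$ above a threshold $k_0$, and delegate the remaining cases to Lemma~\ref{lem:bin_1}: if $\ell\le\tfrac14 n^{1/12}k^{11/12}$, that lemma gives the same bound directly, and if not, then $3dk\ge\ell>\tfrac14 n^{1/12}k^{11/12}$ forces $k>n/(12d)^{12}\ge k_0$ once $n$ is large. The observation that Lemma~\ref{lem:bin_2} is only genuinely new for large $k$ — the small-$k$, small-$\ell$ slack being already covered by Lemma~\ref{lem:bin_1} — is a clean shortcut that the paper does not use, and it lets you avoid the identity and the convexity argument entirely.

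Two quantitative points to tighten when you write this out. First, $5Q-7k-\tfrac73$ is actually \emph{negative} for $k\in\{2,3,4,5,7,9\}$ (not merely ``a small positive constant''), and although it is positive for all $k\ge 10$, it is as small as $\tfrac23$ at $k=11$, so your scalar inequality there forces $\log d$ on the order of $100$; keeping $d_0$ comparable to the paper's $\approx 9000e^3$ requires pushing $k_0$ into the hundreds, which is permissible since the statement only asserts the bound for $d$ above \emph{some} constant, but your $d_0$ is considerably larger. Second, your reduction implicitly needs $n\gtrsim k_0(12d)^{12}$ to guarantee $k>n/(12d)^{12}\ge k_0$; the paper's endpoint check at $k=2$ has an analogous implicit requirement $n\gtrsim (30ed^2)^{9/2}$. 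Neither constraint follows from the stated hypotheses ($d\ge d_0$, $2\le k\le n/d^7$, $\ell\le 3dk$) alone, but both are automatic in the intended application $d\le 3\log n$, so this is a shared sloppiness in the lemma's statement rather than a gap specific to your argument.
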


\begin{proof}[Proof of Lemma~\ref{lem:bin_1}]
We will use Lemma~\ref{lem:tail} to bound the binomial. We first check that the hypothesis of the lemma is satisfied, namely that $2\ell \frac{k + \ell}{n} \leq Q$ for $\ell \leq \frac{1}{4}n^{\epsilon}k^{1 - \epsilon}$ and $\ell \geq Q \geq k$:

\begin{equation}\label{eq:eps_1}
\begin{split}
\frac{\ell(\ell + k)}{nQ} & \leq \frac{2\ell^2}{nQ} 
\leq \frac{2\left(\frac{1}{4}n^{\epsilon}k^{1 - \epsilon}\right)^2}{kn}
=  \frac{1}{8}\left(\frac{k}{n}\right)^{1 - 2\epsilon}.
\end{split}
\end{equation}

Since $k \leq n$, this value in eq.\ref{eq:eps_1} at most $\frac{1}{2}$. Hence the conditions of Lemma~\ref{lem:tail} are satisfied, so 
\begin{equation*}
\begin{split}
\Pr\left[\Bin\left(\ell, \frac{k + \ell}{n}\right) \geq Q\right] \leq 2\left(\frac{e\ell (k + \ell)}{Q n}\right)^Q.
\end{split}
\end{equation*}

Plugging in $Q = \ceil*{\frac{3k - 1}{2}}$ and $k \geq 2$, we can bound
\begin{equation*}
\begin{split}
\Pr\left[\Bin\left(\ell, \frac{k + \ell}{n}\right) \geq Q\right]  &\leq 2\left(\frac{e\ell(k + \ell)}{Q n}\right)^{\lceil{\frac{3k - 1}{2}}\rceil}
\leq  2\left(\frac{e}{8}\left(\frac{k}{n}\right)^{1 - 2\epsilon}\right)^{\lceil{\frac{3k - 1}{2}}\rceil}\\
&\leq 2\left(\left(\frac{k}{3n}\right)^{1 - 2\epsilon}\right)^{\lceil{\frac{3k - 1}{2}}\rceil}
\leq 2\left(\frac{k}{3n}\right)^{k + \frac{1}{3}}.
\end{split}
\end{equation*}
Here the second inequality plugs in the result of eq.\ref{eq:eps_1}, the third inequality uses the fact that $\left(\frac{e}{8}\right)^{6/5} \leq 1/3$, and the final inequality uses the fact that $(1 - 2\epsilon)\lceil{\frac{3k - 1}{2}}\rceil = \frac{5}{6}\lceil{\frac{3k - 1}{2}}\rceil \geq k + \frac{1}{3}$ for integers $k \geq 2$. This proves the lemma.

\end{proof}

\begin{proof}[Proof of Lemma~\ref{lem:bin_2}]
We will use Lemma~\ref{lem:tail} to bound the binomial. We first check that the hypothesis of the lemma is satisfied, namely that $2\ell \frac{k + \ell}{n} \leq Q$ for $\ell \leq 3dk$ and $\ell \geq Q \geq k$:

\begin{equation}\label{eq:eps_2}
\frac{\ell(\ell + k)}{nQ} \leq \frac{3d(3d+1)k^2}{kn} = \frac{3d(3d+1)k}{n} \leq \frac{10d^2k}{n}.
\end{equation}

Since we have $k \leq \frac{n}{d^7}$, if $d$ is larger than some constant, the value in eq.\ref{eq:eps_2} at most $\frac{1}{2}$. Hence the conditions of Lemma~\ref{lem:tail} are satisfied, so 
\begin{equation*}
\begin{split}
\Pr\left[\Bin\left(\ell, \frac{k + \ell}{n}\right) \geq Q\right] \leq 2\left(\frac{e\ell (k + \ell)}{Q n}\right)^Q.
\end{split}
\end{equation*}

Plugging in $Q = \ceil*{\frac{3k - 1}{2}}$ and using the fact that $$\ceil*{\frac{3k - 1}{2}} = k + \floor*{\frac{k}{2}} = 3\floor*{\frac{k}{2}} + \mathbf{1}_{k  \text{ odd}},$$ for any positive integer $k$,
\begin{equation}\label{eq:jenson}
\begin{split}
\Pr\left[\Bin\left(\ell, \frac{k + \ell}{n}\right) \geq Q\right]  &\leq 2\left(\frac{e\ell(k + \ell)}{Q n}\right)^{\lceil{\frac{3k - 1}{2}}\rceil}
\leq  2\left(\frac{10ed^2k}{n}\right)^{\ceil*{\frac{3k - 1}{2}}}\\
&= 2\left(\frac{k}{3n}\right)^{k + \frac{1}{3}}\left(\frac{k}{3n}\right)^{\lfloor{\frac{k}{2}}\rfloor - \frac{1}{3}}\left(\frac{30ed^2}{1}\right)^{\lceil{\frac{3k - 1}{2}}\rceil}\\
&= 2\left(\frac{k}{3n}\right)^{k + \frac{1}{3}}\left(\frac{k(30ed^2)^3}{3n}\right)^{\lfloor{\frac{k}{2}}\rfloor - \frac{1}{3}}\left(\frac{30ed^2}{1}\right)^{1 + \mathbf{1}_{k  \text{ odd}}}.
\end{split}
\end{equation}

Note that the function $$\left(\frac{k(30ed^2)^3}{3n}\right)^{\lfloor{\frac{k}{2}}\rfloor - \frac{1}{3}}\left(\frac{30ed^2}{1}\right)^{1 + \mathbf{1}_{k  \text{ odd}}} $$
is convex in $k$ (when the parity of $k$ is fixed), so its maximum is achieved at either the minimum or maximum value of $k$, up to parity.

It is easy to check by plugging in the minimum and maximum odd and even values of $k$ for $2 \leq k \leq \frac{n}{d^7}$, that since $\frac{n}{d^7} < \frac{3n}{2(30ed^2)^3}$ for $d$ and $n$ large enough, we have 
\begin{equation*}
\left(\frac{k(30ed^2)^3}{3n}\right)^{\lfloor{\frac{k}{2}}\rfloor - \frac{1}{3}}\left(\frac{30ed^2}{1}\right)^{1 + \mathbf{1}_{k  \text{ odd}}} < 1.   
\end{equation*}

Plugging this in to eq.\ref{eq:jenson}, we have
\begin{equation}
\Pr\left[\Bin\left(\ell, \frac{k + \ell}{n}\right) \geq Q\right] \leq 2\left(\frac{k}{3n}\right)^{k + \frac{1}{3}}.
\end{equation}
\end{proof}

Next we prove the following lemma, which we restate here.
\rmax*
\begin{proof}
By Lemma~\ref{lem:tail}, 
\begin{equation*}
\begin{split}
\Pr\left[\Bin\left(kn, \frac{d}{n}\right) \geq L_{max}\right] \leq 2\left(\frac{dke}{L_{max}}\right)^{L_{max}},
\end{split}
\end{equation*}
Let $\gamma := \log_n(n/k)$ such that $L_{max} = k\max\left(\frac{n^{\gamma\epsilon}}{4}, 3d\right)$. Then 
\begin{equation}\label{R_max_bd}
\begin{split}
\Pr\left[\Bin\left(kn, \frac{d}{n}\right) \geq L_{max}\right] &\leq 2\left(\frac{de}{\max\left(\frac{n^{\gamma\epsilon}}{4}, 3d\right)}\right)^{k\max\left(\frac{n^{\gamma\epsilon}}{4}, 3d\right)} \\
&= 2\left(\frac{k}{en}\right)^k\left(\left(\frac{de}{\max\left(\frac{n^{\gamma\epsilon}}{4}, 3d\right)}\right)^{\max\left(\frac{n^{\gamma\epsilon}}{4}, 3d\right)}\frac{ne}{k}\right)^k \\
&= 2\left(\frac{k}{en}\right)^k\left(\left(\frac{de}{\max\left(\frac{n^{\gamma\epsilon}}{4}, 3d\right)}\right)^{\max\left(\frac{n^{\gamma\epsilon}}{4}, 3d\right)}n^{\gamma}e\right)^{n^{1 - \gamma}}. \\
\end{split}
\end{equation}
If $n^{\gamma\epsilon} < (4de)^2$, then 
\begin{equation}\label{case:small_r}
\begin{split}
\left(\left(\frac{de}{\max\left(\frac{n^{\gamma\epsilon}}{e}, 3d\right)}\right)^{\max\left(\frac{n^{\gamma\epsilon}}{e}, 3d\right)}n^\gamma e\right)^{n^{1 - \gamma}} &\leq \left(\left(\frac{e}{3}\right)^{3d}e(4de)^{2/\epsilon}\right)^{n^{1-\gamma}} \\
&= e^{n^{1-\gamma}\left(3d\log(\frac{e}{3}) + \frac{4}{\epsilon}\log(4de) + 1\right)}
\leq e^{n^{1-\gamma}}
\leq e^{-n^{1/2}},
\end{split}
\end{equation}
where the second to last inequality follows from taking $d$ large enough, and the final inequality from the fact that by assumption, $\gamma \leq \frac{2\log(4ed)}{\epsilon\log(n)} \leq \frac{1}{2}$ for large enough $n$.

If $n^{\gamma\epsilon} \geq (4de)^2$, 
\begin{equation*}
\begin{split}
\left(\left(\frac{de}{\max\left(\frac{n^{\gamma\epsilon}}{4}, 3d\right)}\right)^{\max\left(\frac{n^{\gamma\epsilon}}{4}, 3d\right)}n^\gamma e\right)^{n^{1 - \gamma}} &=  \left(\left(\frac{de}{\frac{n^{\gamma\epsilon}}{4}}\right)^{\frac{n^{\gamma\epsilon}}{4}}n^\gamma e\right)^{n^{1 - \gamma}}
=  \left(\left(\frac{4de}{n^{\gamma\epsilon}}\right)^{\frac{n^{\gamma\epsilon}}{4}}n^{2\gamma}\right)^{n^{1 - \gamma}} \\
&\leq \left(\left(\frac{1}{n^{\gamma\epsilon}}\right)^{\frac{n^{\gamma\epsilon}}{8}}n^{2\gamma}\right)^{n^{1 - \gamma}}.
\end{split}
\end{equation*}
Taking a logarithm, we have
\begin{equation*}
\begin{split}
\log\left(\left(\frac{1}{n^{\gamma\epsilon}}\right)^{\frac{n^{\gamma\epsilon}}{8}}n^{2\gamma}\right)^{n^{1 - \gamma}} &= n^{1-\gamma}\left(2\gamma\log(n) -\frac{\gamma\epsilon}{8} n^{\gamma\epsilon}\log(n)\right)\\
&= n^{1-\gamma}\gamma\log(n)\left(2 - \frac{\epsilon n^{\gamma\epsilon}}{8}\right) 
\leq -2n^{1-\gamma}\gamma\log(n),
\end{split}
\end{equation*}
where in the inequality, we used $n^{\gamma\epsilon} > (4ed)^2$ to show $\frac{\epsilon n^{\gamma\epsilon}}{8} \geq \frac{\epsilon(4de^2)^{2}}{8} \geq 4$ for $d$ large enough. Now  $n^{\gamma\epsilon} > (4de)^2$ implies $\gamma > \frac{2\log(4de)}{\epsilon\log(n)}$. The function $n^{1-\gamma}\gamma$ achieves its minimum at one endpoint of the interval $\gamma \in [\frac{2\log(4de)}{\epsilon\log(n)}, 1]$ (one can verify that the log of the function is concave in $\gamma$). Thus 
\begin{equation*}
-2n^{1-\gamma}\gamma\log(n) \leq \max\left(-2n^{1- \frac{2\log(4de)}{\epsilon\log(n)}}\frac{2\log(4de)}{\epsilon}, -2\log(n)\right) \leq -2\log(n),
\end{equation*}

so if $n^{\gamma\epsilon} \geq (4de)^2$, we have 
\begin{equation}\label{case:big_r}
\left(\left(\frac{de}{\max\left(\frac{n^{\gamma\epsilon}}{4}, 3d\right)}\right)^{\max\left(\frac{n^{\gamma\epsilon}}{4}, 3d\right)}n^\gamma e\right)^{n^{1 - \gamma}} \leq e^{-2\log(n)}.
\end{equation}

Combining the two cases (eqs.~\ref{case:small_r} and \ref{case:big_r}) into eq.~\ref{R_max_bd}, we have 
\begin{equation*}
\Pr\left[\Bin\left(kn, \frac{d}{n}\right) \geq L_{max}\right] \leq 2\left(\frac{k}{en}\right)^ke^{-2\log(n)}.
\end{equation*}
\end{proof}

\section{Gradient Coding}\label{apx:abc}

In the following Appendix, we prove the claims stated in the proof of Lemma $\ref{lemma:abc_small}$. First, we need the approximation for the probability mass function of a HyperGeometric distribution.

\begin{lemma}\label{hypergeombound}
Let $\mathcal{X}\sim \text{HyperGeom}(A,B,n)$. Furthermore, let us define $(1-q)=\frac{B}{A}$. Then 
$$\Pr[\mathcal{X}=k]\leq\binom{n}{k}(1-q)^k\left(q+\frac{k}{A-n}\right)^{n-k}.$$
Furthermore, assuming $n\leq \frac{3}{2}qA$ and $q\leq 1/2$, we have:
\begin{equation*}
\begin{split}
\Pr[\mathcal{X}=k]&\leq \binom{n}{k}(1-q)^k q^{n-k}\left(e^{\left({{6ek}}\right)}\right)
\leq \left(\frac{en}{k}\right)^k(1-q)^k q^{n-k}\left(e^{\left({{6ek}}\right)}\right) 
\end{split}
\end{equation*} 
\end{lemma}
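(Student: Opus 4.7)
The plan is to derive the bound directly from the exact hypergeometric PMF
$$\Pr[\mathcal{X}=k] = \frac{\binom{B}{k}\binom{A-B}{n-k}}{\binom{A}{n}} = \frac{\binom{(1-q)A}{k}\binom{qA}{n-k}}{\binom{A}{n}}.$$
Using the classical identity $\binom{A}{n}\binom{n}{k} = \binom{A}{k}\binom{A-k}{n-k}$, I would rewrite this probability as
$$\Pr[\mathcal{X}=k] = \binom{n}{k} \cdot \frac{\binom{(1-q)A}{k}}{\binom{A}{k}} \cdot \frac{\binom{qA}{n-k}}{\binom{A-k}{n-k}}.$$
Then I would bound the two ratios separately in a term-by-term fashion.

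The first ratio is the easier one: writing it out as $\prod_{i=0}^{k-1}\frac{(1-q)A-i}{A-i}$, and observing that $\frac{(1-q)A-i}{A-i} \leq 1-q$ (this is equivalent to $qi \geq 0$), gives $\frac{\binom{(1-q)A}{k}}{\binom{A}{k}} \leq (1-q)^k$. For the second ratio, I would rewrite each factor as
$$\frac{qA - i}{A-k-i} = q + \frac{qk - (1-q)i}{A-k-i} \leq q + \frac{qk}{A-k-i} \leq q + \frac{k}{A-n},$$
where in the last inequality I use $i \leq n-k-1$ so that $A-k-i \geq A-n$, and drop the factor of $q \leq 1$. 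Taking a product over $i=0,\dots,n-k-1$ gives the first stated bound. This term-by-term comparison is the main (mildly delicate) step, but it is essentially mechanical.

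For the second inequality, I would factor out $q^{n-k}$ and bound the remaining expression using the hypotheses. Since $n \leq \tfrac{3}{2}qA$ and $q \leq \tfrac{1}{2}$, we have $A - n \geq A(1-\tfrac{3}{2}q) \geq A/4$, hence $\tfrac{k}{q(A-n)} \leq \tfrac{4k}{qA} \leq \tfrac{6k}{n}$. Therefore
$$\left(q+\frac{k}{A-n}\right)^{n-k} = q^{n-k}\left(1+\frac{k}{q(A-n)}\right)^{n-k} \leq q^{n-k}\left(1+\frac{6k}{n}\right)^{n} \leq q^{n-k}e^{6k} \leq q^{n-k}e^{6ek},$$
yielding the first part of the second inequality. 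The final inequality is then the standard bound $\binom{n}{k} \leq (en/k)^k$. No step should present any real difficulty; the only care is in verifying the term-by-term domination used for the second ratio, which is the single place where the structure of the argument really matters.
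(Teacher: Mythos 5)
Your proof is correct. For the first inequality it is essentially the same argument as the paper's: both reduce the hypergeometric PMF to $\binom{n}{k}$ times two products of ratios and then bound each factor term by term. You arrive at that decomposition via the subset-of-a-subset identity $\binom{A}{n}\binom{n}{k}=\binom{A}{k}\binom{A-k}{n-k}$, whereas the paper grinds out the factorials directly; the resulting products are identical up to reindexing, and your term-by-term estimate $\frac{qA-i}{A-k-i}\le q+\frac{k}{A-n}$ matches theirs.

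For the second inequality your route is genuinely different and cleaner. The paper replaces $A-n$ by $(1-\tfrac32 q)A$, expands $\left(q+\tfrac{k}{(1-\frac32 q)A}\right)^{n-k}$ by the binomial theorem, and controls the resulting sum with a $\sum_i 1/i!$-type estimate to reach $q^{n-k}e^{6ek}$. You instead factor out $q^{n-k}$, use the hypotheses to show $\tfrac{k}{q(A-n)}\le\tfrac{6k}{n}$ (via $A-n\ge A/4$ and $qA\ge\tfrac{2n}{3}$), and apply $(1+x)^{n}\le e^{nx}$, obtaining the sharper intermediate bound $q^{n-k}e^{6k}$ before relaxing to $e^{6ek}$. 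Both arguments are valid; yours is shorter and yields a slightly tighter constant along the way.
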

\begin{proof} 
Recall that, by definition of the hypergeometric distribution, we have:
$$\Pr[\mathcal{X}=k]=\frac{\binom{B}{k}\binom{A - B}{n-k}}{\binom{A}{n}}$$
We can expand out the binomial terms into their factorial representations to see:
\begin{align*}
    \Pr[\mathcal{X}=k]&=\frac{\frac{B!}{(B-k)!k!}\cdot\frac{(A - B)!}{(A - B-n+k)!(n-k)!}}{\frac{A!}{(A-n)!n!}}\\
    &=\binom{n}{k}\left(\frac{B!}{(B-k)!}\cdot\frac{(A-B)!}{(A-B-n+k)!}\cdot\frac{(A-n)!}{A!}\right)\\
    &=\binom{n}{k} \prod_{i=1}^{k}(B-k+i)\prod_{i=1}^{n-k}(A-B-n+k+i)\prod_{i=1}^n \frac{1}{A-n+i}\\
    &=\binom{n}{k} \prod_{i=1}^{k}\frac{B-k+i}{A-k+i}\prod_{i=1}^{n-k}\frac{A-B-n+k+i}{A-n+i}\\
    &=\binom{n}{k} \prod_{i=1}^{k}\frac{B-k+i}{A-k+i}\prod_{i=1}^{n-k}\left(1-\frac{B}{A-n+i}+\frac{k}{A-n+i}\right)   \\
    &\leq \binom{n}{k} \prod_{i=1}^{k}\frac{B-k+i}{A-k+i}\prod_{i=1}^{n-k}\left(1-\frac{B}{A}+\frac{k}{A-n}\right).
\end{align*}
By definition of the hypergeometric distribution, $k\leq B$. Thus, $k-i<B<A$ for all $i\in[k]$, so:
$$\frac{B-k+i}{A-k+i}\leq \frac{B}{A}=(1-q).$$
This gives us the first inequality of the lemma:
$$P[\mathcal{X}=k]\leq \binom{n}{k}\prod_{i=1}^k (1-q). \prod_{i=1}^{n-k} \left(q+\frac{k}{A-n}\right)=\binom{n}{k} (1-q)^k \left(q+\frac{k}{A-n}\right)^{n-k}.$$
We will now proceed under the assumption that $n\leq \frac{3}{2}qA$ to achieve the second bound. We write:
\begin{align*}
    \left(q+\frac{k}{A-n}\right)^{n-k}&\leq \left(q+\frac{k}{(1-\frac{3}{2}q)A}\right)^{n-k}
    =\sum_{i=0}^{n-k} \binom{n-k}{i}q^{n-k-i}\left(\frac{k}{(1-\frac{3}{2}q)A}\right)^{i}\\
    &\leq q^{n-k}\sum_{i=0}^{n-k} \left(\frac{\frac{3}{2}eqA}{i}\right)^i\left(\frac{1}{q}\right)^i\left(\frac{k}{(1-\frac{3}{2}q)A}\right)^{i}
    \leq q^{n-k}\sum_{i=0}^{n-k}\left(\frac{\frac{3}{2}ek}{i(1-\frac{3}{2}q)}\right)^{i}\\
    &\leq q^{n-k}\sum_{i=0}^{n-k} \left(\frac{1}{i!}\right)\left(\frac{\frac{3}{2}ek}{1-\frac{3}{2}q}\right)^{i}
    \leq q^{n-k}\left(e^{\left(\frac{\frac{3}{2}ek}{1-\frac{3}{2}q}\right)}\right)
    \leq q^{n-k}\left(e^{\left({6ek}\right)}\right).
\end{align*}
Applying this to the first inequality gives:
$$P[\mathcal{X}=k]\leq\binom{n}{k}(1-q)^kq^{n-k}\left(e^{\left(6ek\right)}\right).$$
\end{proof}

We are now ready to prove Claim \ref{claim:abc1}.
\claimabc*
\begin{proof}

Since $2\ell\left(\frac{2\ell}{N} \right) \leq \frac{4qK\ell}{N} \leq 
\frac{4q\ell}{18eq^2} \leq \frac{\ell}{2}$ for $q$ large enough, by Lemma~\ref{lem:tail}, we have
\begin{equation}\label{eq:tail_app}
\begin{split}
  \Pr\left[\Bin\left(\ell, \frac{2\ell}{n}\right)\geq j\right] \leq 2\left(\frac{4e\ell}{n}\right)^{j}.
\end{split}
\end{equation}

Using $K\leq \frac{3}{2}N$, we employ the second bound of Lemma \ref{hypergeombound} to find that for sufficiently large $q$,
\begin{align*}
    \Pr&\left[\Bin\left(\ell, \frac{2\ell}{N}\right)\geq j\right]\cdot \Pr[\textnormal{HyperGeom}(qN,q(1-p)N,qK)=\ell]\\
    &\leq 2\left(\frac{4e\ell}{N}\right)^{j}\left(\binom{qK}{\ell}(1-p)^{\ell}p^{qK-\ell}e^{6e\ell}\right) 
    \leq 2\left(\frac{8eK}{N}\right)^{j}\binom{qK}{2K}p^{qK-2K}e^{12eK} \\
    &\leq 2\left(8e\right)^{2K}\left(\frac{K}{ N}\right)^{j}\left(\frac{eq}{2}\right)^{2K}p^{(q-2)K}e^{12eK} 
    = 2\left(16e^{12e+4}q^2\right)^{K} \left(\frac{K}{N}\right)^{j} p^{(q-2)K}
    \leq\left(p^{q-c\log_p(q)} \right)^K\left(\frac{K}{N}\right)^{j}.
\end{align*}
for some universal constant $c$.

For the second statement, we combine eq.~\ref{eq:tail_app} and the second bound of Lemma \ref{hypergeombound}, achieving
\begin{align*}
    &\sum_{\ell=2K+1}^{qK}\Pr\left[\Bin\left(\ell, 1-\frac{2\ell}{N}\right)\geq \frac{\ell}{2}\right]\cdot\Pr[\textnormal{HyperGeom}(qN,q(1-p)N,qK)=\ell]\\
    &\leq \sum_{\ell=2K+1}^{qK}2\left(\frac{4e\ell}{N}\right)^{\frac{\ell}{2}}\left(\binom{qK}{\ell}(1-p)^K\left(p+\frac{\ell}{q(N-K)}\right)^{qK-\ell}\right)\\
    &\leq \sum_{\ell=2K+1}^{qK}2\left(\frac{4e\ell}{N}\right)^{\frac{\ell}{2}}\left(\frac{1}{2}+\frac{K}{(N-K)}\right)^{qK - \ell} \left(\frac{eqK}{\ell}\right)^\ell\\
    &\leq \sum_{\ell=2K+1}^{qK}2\left(\frac{4e\ell}{N}\right)^{\frac{\ell}{2}}\left(0.625\right)^{qK - \ell} \left(\frac{eqK}{\ell}\right)^\ell\\
    &\leq 2(0.625)^{qK} \sum_{\ell=2K+1}^{qK}\left(\frac{4e^{3}q^2K^2}{\ell N}\right)^{\ell/2}\\
    &\leq 2qK(0.625)^{qK}\max_{\ell\in\{2K+1,...,qK\}}\left(\frac{4e^{3}q^2K^2}{\ell N}\right)^{\ell/2}.
\end{align*}
To show the function in the expression above is maximized at $\ell = 2K+1$ when $K\leq \frac{N}{18eq^2}$, note that the maximum over $x$ of $f(x) = \left(\frac{y}{x}\right)^x$ is achieved at $x = y/e$, and above this value of $x$, $f(x)$ is decreasing. Let $C=\frac{4e^{3}q^2K^2}{N}$, and consider the function $f(x) = \left(\frac{C/2}{x}\right)^x$, such that $f(\ell/2) = \left(\frac{4e^{3}q^2K^2}{\ell N}\right)^{\ell/2}$. Thus $f$ is decreasing for $\ell/2 \geq \frac{C}{e} = \frac{4e^{2}q^2K^2}{N}$. Observe that $\frac{8e^2q^2K^2}{N} \leq 2K$, since $K \leq \frac{N}{18eq^2} \leq \frac{N}{4e^2q^2}$, and so $\max_{\ell\in\{2K+1,...,qK\}}\left(\frac{4e^{3}q^2K^2}{\ell N}\right)^{\ell/2} \leq \left(\frac{4e^{3}q^2K^2}{(2K + 1)N}\right)^{(2K + 1)/2}$. It follows that
\begin{align*}
    \sum_{\ell=2K+1}^{qK}&\Pr\left[\Bin\left(\ell, 1-\frac{2\ell}{N}\right)\geq \frac{\ell}{2}\right]\cdot\Pr[\textnormal{HyperGeom}(qN,q(1-p)N,qK)=\ell]\\
    &\leq 2qK(0.625)^{qK}\left(\frac{4e^3q^2K^2}{(2K+1)N}\right)^{K+1/2}
    \leq 2qK(0.625)^{qK}(2e^3q^2)^{K+1/2}\left(\frac{K}{N}\right)^{K+1/2}\\
    &\leq e^{-K}\left(\frac{K}{N}\right)^{K+1/2}.
\end{align*}


\end{proof}

\end{document}